\documentclass[10pt,reqno]{amsart}

\newtheorem{theo}{Theorem}[part]

\usepackage{amsmath,amssymb,amsthm,graphicx,url}

\usepackage[bookmarksnumbered, colorlinks, plainpages]{hyperref}
\hypersetup{colorlinks=true,linkcolor=blue, anchorcolor=green, citecolor=violet, urlcolor=cyan, filecolor=magenta, pdftoolbar=true}

\usepackage[english]{babel}
\usepackage[T1]{fontenc}
\usepackage{color}
\usepackage{standalone}
\usepackage{enumitem}
\usepackage{tikz-cd}
\usepackage{fbox}
\usepackage{mdframed}
\usepackage[shortcuts]{extdash}

\usepackage[a4paper, left=2.8cm, right=2.8cm, top=3.5cm, bottom=3.5cm]{geometry}

\newtheorem{maintheo}{Theorem}

\newtheorem{lemm}[theo]{Lemma}

\newtheorem{prop}[theo]{Proposition}
\newtheorem{coro}[theo]{Corollary}

\theoremstyle{definition}
\newtheorem{defi}[theo]{Definition}

\theoremstyle{remark}
\newtheorem*{rem}{Remark} 

\numberwithin{equation}{section}

\def\B{{\mathbb B}}
\def\C{{\mathbb C}}

\def\R{{\mathbb R}}
\def\S{{\mathbb S}}
\def\T{{\mathbb T}}

\def\pp{\leq}
\def\pg{\geq}
\def\bra{\langle}
\def\ket{\rangle}

\newcommand{\fonct}[5]{#1\,:\left\{\begin{array}{ccc} #2&\rightarrow&#3 \\ #4&\mapsto&#5 \end{array}\right.}
\newcommand{\fonctbis}[4]{\left\{\begin{array}{ccc} #1&\rightarrow&#2 \\ #3&\mapsto&#4 \end{array}\right.}

\DeclareMathOperator{\ima}{ran}

\DeclareMathOperator{\re}{Re}

\DeclareMathAlphabet\cat{OMS}{cmsy}{b}{n}

\usepackage{newtxtext,newtxmath}

\makeatletter

\providecommand{\proofname}{Preuve}
\makeatother

\begin{document}

\title{A note on Pisier's method in interpolation of abstract Hardy spaces}

\author{Hugues Moyart}
\address{UNICAEN, CNRS, LMNO, 14000 Caen, France}
\email{hugues.moyart@unicaen.fr}

\subjclass{Primary: 46L51, 46L52, 46B70.  Secondary: 46E30, 60G42, 60G48}
\keywords{Non commutative Lp spaces, Interpolation theory, Noncommutative Hp spaces, Noncommutative martingales}

\begin{abstract}
In his approach to Jones’ theorem on the interpolation of Hardy spaces on the torus, Pisier introduced an original method allowing the computation of complex interpolation spaces by means of real interpolation techniques. This approach has been successfully extended to noncommutative analytic Hardy spaces arising from subdiagonal algebras. In this paper, we formulate and prove an abstract version of Pisier’s method in a more general setting. The method is then applied in the study of noncommutative martingale transforms. 
\end{abstract} 

\maketitle

\setcounter{tocdepth}{3}
\tableofcontents

\clearpage

\part*{Introduction}

Interpolation of Hardy spaces has been extensively studied since the seminal work of Peter W. Jones. We briefly recall the results obtained in this context. Let $\T$ be the unit circle. For $1\pp p\pp\infty$, let $H_p(\T)$ denote the associated Hardy space on $\T$, i.e. the subspace of $L_p(\T)$ of functions whose negative Fourier coefficients vanish. Jones established in \cite{JonesHardy2} that there is a universal positive constant $C$ such that for every $f\in H_1(\T)+H_\infty(\T)$, and $t>0$, we have
\[K_t(f,H_1(\T),H_\infty(\T))\pp CK_t(f,L_1(\T),L_\infty(\T))\]
where $K$ refers to the $K$-functional used in the construction of Lions-Peetre real interpolation spaces. According to the terminology introduced by Pisier in \cite{PisierHardy}, one can reformulate Jones' theorem by saying that the subcouple $(H_1(\T),H_\infty(\T))$ is $K$-closed in the compatible couple $(L_1(\T),L_\infty(\T))$. This implies that for every $0<\theta<1$, we have
\begin{equation}\label{introduction_eq1}
(H_1(\T),H_\infty(\T))_{\theta,p_\theta,K}=H_{p_\theta}(\T)
\end{equation}
with equivalent norms, with constants depending only on $\theta$, where $1/p_\theta=1-\theta$. Here the left-hand side refers to the Lions-Peetre real interpolation method. Jones also proved in \cite{JonesHardy1} the analogue of \eqref{introduction_eq1} for the complex method, which reads as follows: if $0<\theta<1$, then
\begin{equation}\label{introduction_eq2}
    [H_1(\T),H_\infty(\T)]_{\theta}=H_{p_\theta}(\T)
\end{equation}
with equivalent norms, with constants depending only on $\theta$, where $1/p_\theta=1-\theta$. In his approach to Jones' results, Pisier introduced an original method in \cite{PisierHardy} allowing him to deduce \eqref{introduction_eq2} from \eqref{introduction_eq1}. One of the features of Pisier’s method is its natural compatibility with noncommutative frameworks. As a result, Jones' results have been successfully extended to noncommutative analytic Hardy spaces associated with subdiagonal algebras: first by Pisier in \cite{PisierHardy} for the case of upper triangular matrices, and later by Bekjan in \cite{BekjanSubdiagonalHardySpaces} in the general setting.

\vspace{10pt}

The main contribution of this paper is an abstract formulation of Pisier’s argument under minimal assumptions. This level of abstraction not only clarifies the original argument but also allows one to treat various interpolation problems within a unified framework. The need for an abstract formulation is motivated by the recent $K$-closedness results obtained in \cite{Moyart2024} in the context of Hardy spaces of noncommutative martingales. To better explain our approach, we now introduce the mathematical setting of this paper. All the unexplained notions and notations are recalled in \textcolor{red}{Part 1}.

\vspace{10pt}

If $M$ is a semifinite von Neumann algebra, and $P$ is any projection on $L_2(M)$ and $1\pp p\pp\infty$, we define $H_p(P)$ to be the closure of $L_p(M)\cap P(L_2(M))$ in $L_p(M)$ (for the norm topology if $p<\infty$, and the w*-topology if $p=\infty$).

\begin{mdframed}[backgroundcolor=black!10,rightline=false,leftline=false,topline=false,bottomline=false,skipabove=10pt]
Let $M$ be a semifinite von Neumann algebra. A projection $P$ on $L_2(M)$ is said to be of \textit{Jones-type} if it satisfies the technical assumptions (defined in \textcolor{red}{Part 2}) and if the subcouples $(H_1(P),H_\infty(P))$ and $(H_1(P^{\bot}),H_\infty(P^{\bot}))$ are both $K$-closed in the compatible couple $(L_1(M),L_\infty(M))$ with a universal constant.  
\end{mdframed}
\vspace{5pt}

Actually, in the body of this paper we use a slightly different set of assumptions, which are easier to verify in practice. In \textcolor{red}{Part 2} we prove the equivalence between the two formulations.

\begin{mdframed}[backgroundcolor=black!10,rightline=false,leftline=false,topline=false,bottomline=false,skipabove=10pt]
Let $M$ be a semifinite von Neumann algebra. A projection $P$ on $L_2(M)$ is said to satisfy \textit{Pisier's method assumptions} if, for every semifinite von Neumann algebra $N$, the amplified projection $Q:=P\otimes I$ on the Hilbertian tensor product $L_2(M)\otimes L_2(N)=L_2(M\bar{\otimes}N)$ is of Jones-type, and the algebraic tensor product $H_\infty(P^{\bot})\odot L_\infty(N)$ is a subspace of $H_\infty(Q^{\bot})$.
\end{mdframed}
\vspace{5pt}

The advantage of our formulation is that it remains entirely within the setting of normed spaces, thereby avoiding technical difficulties that arise when dealing with $L_p$-spaces for $0<p<1$. This contrasts with Pisier's original approach. The price to pay for this simplification is that we cannot expect to obtain results for the compatible couple $(H_1(P),H_\infty(P))$. Nevertheless, by following Pisier's original arguments, we establish in \textcolor{red}{Part 2} the following result. 

\begin{mdframed}[skipabove=20pt]
\textbf{\upshape Theorem A} \text{\upshape (Pisier's method).} \textit{Let $M$ be a semifinite von Neumann algebra, and let $P$ be a projection on $L_2(M)$ that satifies Pisier's method assumptions. If $1<p<\infty$ and $0<\theta<1$ then
\[[H_p(P),H_1(P)]_\theta=H_{p_\theta}(P)\]
with equivalent norms, with constants depending only on $p,\theta$, where 
\[\frac{1}{p_\theta}=\frac{1-\theta}{p}+\theta.\]}
\end{mdframed}
\vspace{15pt}

In \textcolor{red}{Part 3}, we apply our extension of Pisier's method to the study of noncommutative martingales. As a consequence, we obtain new results on martingale projections that extend the study initiated in \cite{Moyart2024}, which we now briefly describe.

\vspace{10pt}

Let $M$ be a semifinite von Neumann algebra equipped with a filtration $(M_n)_{n\pg1}$ with associated conditional expectations denoted $(E_n)_{n\pg1}$ and associated increment projections denoted $(D_n)_{n\pg1}$ defined by 
\[D_n:=E_n-E_{n-1},\ \ \ \ \ \text{for}\ n\pg1.\]
(with the convention $E_0:=0$). Let $I$ be a fixed set of positive integers, and let $P$ be the \textit{martingale projection} associated with the data $(M,(M_n)_{n\pg1},I)$, i.e. the projection on $L_2(M)$ such that, if $x\in L_2(M)$ then
\[P(x)=\sum_{n\in I}D_n(x),\ \ \ \text{in}\ L_2(M).\]
Then, if $1\pp p\pp\infty$ we have
\[H_p(P)=\big\{x\in L_p(M)\ :\ \forall n\notin I,\ D_n(x)=0\big\}.\]
In \cite{Moyart2024} it is essentially proved that the martingale projection $P$ is of Jones-type. By applying our extented version of Pisier's method, we then obtain the following new result.

\begin{mdframed}[skipabove=10pt]
\textbf{\upshape Theorem.} \text{\upshape } \textit{Let $M$ be a semifinite von Neumann algebra equipped with a filtration. If $1<p<\infty$ and $0<\theta<1$ then
\[[H_p(P),H_1(P)]_\theta=H_{p_\theta}(P)\]
with equivalent norms, with constants depending only on $p,\theta$, where 
\[\frac{1}{p_\theta}=\frac{1-\theta}{p}+\theta.\]}
\end{mdframed}
\vspace{10pt}

By using well-known transference techniques, the above result implies several additional statements in the context of complex interpolation of Hardy spaces of noncommutative martingales that are detailed in \textcolor{red}{Part 4}. In particular, we recover all the classical results obtained by Musat in \cite{MusatHardy} about interpolation noncommutative Hardy spaces.

\newpage

\part{Preliminaries}

\section{Abstract interpolation theory}

The material in this section is taken from \cite{JansonRealInterpolation} and \cite{BerghInterpolation}.

\subsection{Compatible couples}

A \textit{compatible couple} is a couple $(E_0,E_1)$ of subspaces of a common Hausdorff topological vector space $E$, such that $E_j$ is equipped with a complete norm that makes the inclusion $E_j\to E$ continuous, for $j\in\{0,1\}$. Then the intersection space $E_0\cap E_1$ and the sum space $E_0+E_1$ are canonically equipped with the complete norms $\|\cdot\|_{E_0\cap E_1}$ and $\|\cdot\|_{E_0+E_1}$ given by the expressions
\[\|u\|_{E_0\cap E_1}:=\max\big\{\|u\|_{E_0},\ \|u\|_{E_1}\big\},\]
\[\|u\|_{E_0+E_1}:=\inf\big\{\|u_0\|_{E_0}+\|u_1\|_{E_1}\ :\  u=u_0+u_1,\ u_0\in E_0,u_1\in E_1\big\}.\]
An \textit{intermediate space} for a compatible couple $(E_0,E_1)$ is a subspace $E_\theta$ of $E_0+E_1$ that contains $E_0\cap E_1$, and that is equipped with a complete norm that makes the inclusions $E_0\cap E_1\to E_\theta$ and $E_\theta\to E_0+E_1$ both continuous. If $E_{\theta_0}$, $E_{\theta_1}$ are intermediate spaces for a compatible couple $(E_0,E_1)$, then their sum $E_{\theta_0}+E_{\theta_1}$ and their intersection $E_{\theta_0}\cap E_{\theta_1}$ are also intermediate spaces for $(E_0,E_1)$ when equipped with the corresponding sum norm $\|\cdot\|_{E_{\theta_0}+E_{\theta_0}}$ and intersection norm $\|\cdot\|_{E_{\theta_0}\cap E_{\theta_1}}$ as defined above.

\subsection{Compatible bounded operators}

Let $(E_0,E_1)$ and $(F_0,F_1)$ be two compatible couples. A \textit{compatible bounded operator} $(E_0,E_1)\to(F_0,F_1)$ is an operator $T:E_0+E_1\to F_0+F_1$ such that, if $j\in\{0,1\}$, then $T$ maps $E_j$ into $F_j$, and $T:E_j\to F_j$ is bounded. In this situation, we set
\[\|T\|_{(E_0,E_1)\to(F_0,F_1)}:=\max\big\{\|T\|_{E_0\to F_0},\|T\|_{E_1\to F_1}\big\}.\]
Let $T:(E_0,E_1)\to(F_0,F_1)$ be a compatible bounded operator. Note that $T$ is injective resp. surjective, bijective if and only if $T:E_j\to F_j$ is, for $j\in\{0,1\}$. 

We say that $T$ is an \textit{embedding/quotient} of compatible couples if $T:E_j\to F_j$ is an embedding/quotient of normed spaces for $j\in\{0,1\}$ (recall that a bounded operator $T:E\to F$ between normed spaces is an embedding/quotient if it is injective/surjective and the induced bounded operator $E/\ker T\to\ima T$ is an isomorphism of normed spaces). We say that $T$ is an \textit{isomorphism} of compatible couples if $T:E_j\to F_j$ is an isomorphism of normed spaces, for $j\in\{0,1\}$. 

We say that $T$ is \textit{contractive} if $\|T\|_{(E_0,E_1)\to(E_0,E_1)}\pp1$. We say that $T$ is an \textit{isometric embedding/coisometric quotient} of compatible couples if $T:E_j\to F_j$ is an isometric embedding/coisometric quotient of normed spaces for $j\in\{0,1\}$ (recall that a quotient of normed spaces $T:E\to F$ is coisometric if the induced isomorphism of normed spaces $E/\ker T\to F$ is isometric). We say that $T$ is an \textit{isometric isomorphism} of compatible couples if $T:E_j\to F_j$ is an isometric isomorphism of normed spaces, for $j\in\{0,1\}$. 

\begin{rem}
There is an obvious way to define the category of compatible couples and compatible (contractive) bounded operators. The isomorphisms in this category correspond to the (isometric) isomorphisms of compatible couples.
\end{rem}

An \textit{interpolation space} with constant $C$ for a compatible couple $(E_0,E_1)$ is an intermediate space $E_\theta$ for $(E_0,E_1)$, such that, if $T:(E_0,E_1)\to(E_0,E_1)$ is a compatible bounded operator, then $T$ maps $E_\theta$ into $E_\theta$ and $\|T\|_{E_\theta\to E_\theta}\pp C\|T\|_{(E_0,E_1)\to(E_0,E_1)}$. An \textit{exact interpolation space} is an interpolation space with constant $C\pp1$. The sum/intersection of (exact) interpolation spaces is again an (exact) interpolation space. More generally, an \textit{interpolation pair} with constant $C$ for a pair of compatible couples $(E_0,E_1)$ and $(F_0,F_1)$ is a pair of intermediate spaces $E_\theta$ and $F_\theta$ for $(E_0,E_1)$ and $(F_0,F_1)$ respectively, such that, if $T:(E_0,E_1)\to(F_0,F_1)$ is a compatible bounded operator, then $T$ maps $E_\theta$ into $F_\theta$ and $\|T\|_{E_\theta\to F_\theta}\pp C\|T\|_{(E_0,E_1)\to(F_0,F_1)}$.

An \textit{exact interpolation pair} is an interpolation pair with constant $C\pp1$. 

\subsection{Interpolation functors}

An \textit{interpolation functor} with constant $C$ is a map $\mathcal{F}$ that assigns to each compatible couple $(E_0,E_1)$ an intermediate space $\mathcal{F}(E_0,E_1)$, such that, if $(E_0,E_1)$, and $(F_0,F_1)$ is a pair of compatible couples, then $\mathcal{F}(E_0,E_1)$ and $\mathcal{F}(F_0,F_1)$ is an interpolation pair with constant $C$ for $(E_0,E_1)$ and $(F_0,F_1)$ (in this situation, if $(E_0,E_1)$ is a compatible couple, then $\mathcal{F}(E_0,E_1)$ is necessarily an interpolation space with constant $C$ for $(E_0,E_1)$).

An \textit{exact interpolation functor} is an interpolation functor with constant $C\pp1$. 

\begin{rem}
For instance, the map $\Sigma$ (resp. $\Delta$) that assigns to each compatible couple $(E_0,E_1)$ the sum space $E_0+E_1$ (resp. the intersection space $E_0\cap E_1$) is an exact interpolation functor.
\end{rem}

If $\mathcal{F}$ is an (exact) interpolation functor, then $\mathcal{F}$ defines in an obvious way a functor from the category of compatible couples and compatible (contractive) bounded operators to the category of complete normed spaces and (contractive) bounded operators. 

\subsection{Subcouples} 

A \textit{subcouple} of a compatible couple $(E_0,E_1)$ is a couple $(A_0,A_1)$ where $A_j$ is a closed subspace of $E_j$ for $j\in\{0,1\}$. In this situation, the couple $(A_0,A_1)$ inherits an unique structure of compatible couple that turns the inclusion $A_0+A_1\to E_0+E_1$ into an isometric embedding of compatible couples $(A_0,A_1)\to(E_0,E_1)$. Thus, if $\mathcal{F}$ is an (exact) interpolation functor, then the inclusion
\[\fonctbis{\mathcal{F}(A_0,A_1)}{\mathcal{F}(E_0,E_1)}{u}{u}\]
is a well-defined bounded (contractive) injective operator, but it it may fail to be an embedding of normed spaces (this is in particular the case for $\mathcal{F}=\Sigma$, but note that, however, in the case $\mathcal{F}=\Delta$, the inclusion $A_0\cap A_1\to E_0\cap E_0$ is an isometric embedding of normed spaces).

\subsection{Quotient couples}

Let $(A_0,A_1)$ be a subcouple of a compatible couple $(E_0,E_1)$, and let 
\[\fonctbis{E_0+E_1}{(E_0+E_1)/(A_0+A_1)}{u}{\overline{u}}\]
denote the projection. If $E_\theta$ is an intermediate space for $(E_0,E_1)$ such that the subspace 
\[A_\theta:=E_\theta\cap(A_0+A_1)=\big\{u\in E_\theta\ :\ \overline{u}=0\big\}\]
is closed in $E_\theta$, then
\[E_\theta/A_\theta:=\big\{v\in (E_0+E_1)/(A_0+A_1)\ :\ \exists u\in E_\theta,\ v=\overline{u}\big\}\]
is a subspace of $(E_0+E_1)/(A_0+A_1)$ and is equipped with the complete norm $\|\cdot\|_{E_\theta/A_\theta}$ given by the expression
\[\|v\|_{E_\theta/A_\theta}=\inf_{u\in E_\theta,\ \overline{u}=v}\|u\|_{E_\theta}.\]
Moreover, it is clear that the inclusion $E_\theta/A_\theta\to(E_0+E_1)/(A_0+A_1)$ is continuous (where $(E_0+E_1)/(A_0+A_1)$ is equipped with the quotient topology, which may not be Hausdorff as $A_0+A_1$ may fail to be closed in $E_0+E_1$), and there is a canonical isometric isomorphism from $E_\theta/A_\theta$ to the usual quotient space of $E_\theta$ by $A_\theta$.

A subcouple $(A_0,A_1)$ of a compatible couple $(E_0,E_1)$ is \textit{normal} if $A_0+A_1$ is closed in $E_0+E_1$ and if $A_j=(A_0+A_1)\cap E_j$ for $j\in\{0,1\}$. In this situation, by the above discussion the couple of normed spaces $(E_0/A_0,E_1/A_1)$ is well-defined and inherits an unique structure of compatible couple that turns the projection $E_0+E_1\to(E_0+E_1)/(A_0+A_1)$ into a coisometric quotient of compatible couples $(E_0,E_1)\to(E_0/A_0,E_1/A_1)$. Thus, if $\mathcal{F}$ is an (exact) interpolation functor, then the projection 
\[\fonctbis{\mathcal{F}(E_0,E_1)}{\mathcal{F}(E_0/A_0,E_1/A_1)}{u}{\overline{u}}\]
is a well-defined bounded (contractive) operator, but it may fail to be a quotient of normed spaces, or to be merely surjective (note that, however, in the case $\mathcal{F}=\Sigma$, the projection $E_0+E_1\to(E_0/A_0)+(E_1/A_1)$ is a coisometric quotient of normed spaces, so that $(E_0/A_0)+(E_1/A_1)=(E_0+E_1)/(A_0+A_1)$ with equal norms). 

\begin{prop}\label{preliminaries_quotient_couple}
Let $(A_0,A_1)$ be a normal subcouple of a compatible couple $(E_0,E_1)$. Then the projection $E_0\cap E_1\to(E_0/A_0)\cap(E_1/A_1)$ is surjective.
\end{prop}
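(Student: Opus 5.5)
Let $v\in(E_0/A_0)\cap(E_1/A_1)$. This is a class in $(E_0+E_1)/(A_0+A_1)$ that can be represented both by an element of $E_0$ and by an element of $E_1$. The plan is to pick such representatives, $v=\overline{u_0}=\overline{u_1}$ with $u_0\in E_0$ and $u_1\in E_1$, and then correct one of them by an element of $A_0+A_1$ so that the corrected element lies in $E_0\cap E_1$.

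The key steps are as follows.

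\begin{enumerate}[label=(\roman*)]
\item Since $\overline{u_0}=\overline{u_1}$, the difference $u_0-u_1$ lies in $A_0+A_1$. Write it as
\[u_0-u_1=a_0+a_1,\qquad a_0\in A_0,\ a_1\in A_1.\]
\item Set $u:=u_0-a_0=u_1+a_1$.
\item From the first expression, $u\in E_0$, because $A_0\subset E_0$. From the second, $u\in E_1$, because $A_1\subset E_1$. Hence $u\in E_0\cap E_1$.
\item Finally, $\overline{u}=\overline{u_0}-\overline{a_0}=v$, since $a_0\in A_0\subset A_0+A_1$.
\end{enumerate}

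The only point requiring care is the interpretation of membership in $(E_0/A_0)\cap(E_1/A_1)$. By normality, $E_j/A_j$ is realized as the subspace $\{\overline{w}: w\in E_j\}$ of $(E_0+E_1)/(A_0+A_1)$, and both spaces sit inside this common space. So belonging to the intersection means precisely that the same class has representatives $u_0\in E_0$ and $u_1\in E_1$, which is what step (i) uses.

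Closedness of $A_0+A_1$, and the identities $A_j=(A_0+A_1)\cap E_j$, are needed only to make the quotient couple well defined. They play no further role in this surjectivity argument, so no real obstacle is expected.
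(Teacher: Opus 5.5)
Your proof is correct and is essentially the paper's argument. The paper writes $u_0-u_1=a_0-a_1$ and takes $u_0-a_0=u_1-a_1\in E_0\cap E_1$, which is your construction up to the sign of $a_1$.
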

\begin{proof}
Let $u\in(E_0/A_0)\cap(E_1/A_1)$. Then $u=\overline{u}_0=\overline{u}_1$ with $u_0\in E_0$, $u_1\in E_1$. As $\overline{u_0-u_1}=0$, we can write $u_0-u_1=a_0-a_1$ with $a_0\in A_0$, $a_1\in A_1$. If $v:=u_0-a_0=u_1-a_1$, then $v\in E_0\cap E_1$ and $\overline{v}=u$, as desired.
\end{proof}

\subsection{Complementation} 

A subcouple $(A_0,A_1)$ of a compatible couple $(E_0,E_1)$ is \textit{complemented} with constant $C$ if there is a bounded (contractive) compatible operator $P:(E_0,E_1)\to(E_0,E_1)$ such that $P:E_j\to E_j$ is idempotent with range $A_j$, and $\|P\|_{E_j\to E_j}\pp C$, for $j\in\{0,1\}$.

A subcouple $(A_0,A_1)$ of a compatible couple $(E_0,E_1)$ is \textit{contractively complemented} if it is complemented with constant $C\pp1$.

If $(A_0,A_1)$ is a (contractively) complemented subcouple of a compatible couple $(E_0,E_1)$, then for every (exact) interpolation functor $\mathcal{F}$, the inclusion $\mathcal{F}(A_0,A_1)\to\mathcal{F}(E_0,E_1)$ is an (isometric) embedding of normed spaces, with range $(A_0+A_1)\cap\mathcal{F}(E_0,E_1)$, and the projection
$\mathcal{F}(E_0,E_1)\to\mathcal{F}(E_0/A_0,E_1/A_1)$ is a (coisometric) quotient of normed spaces, with kernel $\mathcal{F}(A_0,A_1)$.

\subsection{Duality}

Let $(E_0,E_1)$ be a compatible couple. If $E_\theta$ is an intermediate space for $(E_0,E_1)$ such that $E_0\cap E_1$ is dense in $E_\theta$, then 
\[E_\theta^*:=\big\{ u^*\in(E_0\cap E_1)^*,\ \sup_{u\in E_0\cap E_1,\ \|u\|_{E_\theta}\pp1}|\bra u,u^*\ket|<\infty\big\}\]
is a subspace of $(E_0\cap E_1)^*$ and is equipped with the complete norm $\|\cdot\|_{E_\theta^*}$ given by the expression
\[\|u^*\|_{E_\theta^*}=\sup_{u\in E_0\cap E_1,\ \|u\|_{E_\theta}\pp1}|\bra u,u^*\ket|.\]
Moreover, it is clear that the inclusion $E_\theta^*\to(E_0\cap E_1)^*$ is continuous, and there is a canonical isometric isomorphism from $E_\theta^*$ to the usual dual space of $E_\theta$.  

A compatible couple $(E_0,E_1)$ is \textit{regular} if $E_0\cap E_1$ is dense in $E_j$ for $j\in\{0,1\}$. In this situation and by the above discussion, the couple of complete normed spaces $(E_0^*,E_1^*)$ is well-defined and inherits a canonical structure of compatible couple. Moreover, we have $E_0^*+E_1^*=(E_0\cap E_1)^*$ with equal norms and $E_0^*\cap E_1^*=(E_0+E_1)^*$ with equal norms. 

\begin{prop}[\cite{JansonRealInterpolation}(Proposition 6.2)]
Let $(A_0,A_1)$ be a regular subcouple of a regular compatible couple $(E_0,E_1)$ such that $A_0+A_1$ is closed in $E_0+E_1$. For $j\in\{0,1\}$, let consider the orthogonal
\[A_j^{\bot}:=\big\{\phi\in E_j^*\ :\ \phi(u)=0,\ \forall u\in A_j\big\}\]
Then $(A_0^{\bot},A_1^{\bot})$ is a normal subcouple of the dual couple $(E_0^*,E_1^*)$, and there is a canonical isomorphism of compatible couples between $(E_0^*/A_0^{\bot},E_1^*/A_1^{\bot})$ and $(A_0^*,A_1^*)$.
\end{prop}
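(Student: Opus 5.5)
The plan is to deduce everything from the Hahn--Banach theorem, using the identification $E_0^*\cap E_1^*=(E_0+E_1)^*$ and $E_0^*+E_1^*=(E_0\cap E_1)^*$ with equal norms, valid for the regular couples $(E_0,E_1)$ and $(A_0,A_1)$. We write $\mathcal{E}:=E_0+E_1$ and $\mathcal{A}:=A_0+A_1$, and recall that $\mathcal{A}$ carries its own sum norm. That norm dominates the norm induced by $\mathcal{E}$. Since $\mathcal{A}$ is complete and closed in $\mathcal{E}$, the open mapping theorem shows the two norms are equivalent.

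\emph{Normality.} We must show that $A_0^{\bot}+A_1^{\bot}$ is closed in $E_0^*+E_1^*=(E_0\cap E_1)^*$, and that $(A_0^{\bot}+A_1^{\bot})\cap E_j^*=A_j^{\bot}$.

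First we identify the sum. Let $\phi\in(E_0\cap E_1)^*$ vanish on $A_0\cap A_1$. Consider the couple map $\iota:(E_0\cap E_1)\to E_0\oplus E_1$, $u\mapsto(u,u)$. Then $\phi\circ\iota^{-1}$ extends by Hahn--Banach to a functional $(\phi_0,\phi_1)$ on $E_0\oplus_1 E_1$, which gives $\phi=\phi_0+\phi_1$ with $\phi_j\in E_j^*$.

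To force $\phi_j\in A_j^{\bot}$, we instead extend $\phi\circ\iota^{-1}$ to the subspace $\iota(E_0\cap E_1)+(A_0\oplus A_1)$, declaring it zero on $A_0\oplus A_1$. This is well defined because $\phi$ kills $A_0\cap A_1$. It is bounded if and only if the following estimate holds: for $u\in E_0\cap E_1$ and $a_j\in A_j$,
\[
\|u\|\lesssim \|u-a_0\|_{E_0}+\|u+a_1\|_{E_1}\quad\text{modulo }A_0\cap A_1.
\]
This estimate should follow from the closedness of $\mathcal{A}$ in $\mathcal{E}$ through a quotient argument. The open mapping theorem gives a bounded lifting with a constant, and Proposition~\ref{preliminaries_quotient_couple} applied in the primal couple handles intersections.

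With the estimate in hand, $A_0^{\bot}+A_1^{\bot}=(A_0\cap A_1)^{\bot}$, which is weak*-closed and in particular closed. Next, if $\phi\in E_0^*$ lies in $(A_0\cap A_1)^{\bot}$, then $\phi$ restricted to $A_0$ kills $A_0\cap A_1$, which is dense in $A_0$ by regularity of $(A_0,A_1)$. Hence $\phi\in A_0^{\bot}$, and the same argument works for $j=1$.

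\emph{Identification.} By Hahn--Banach, restriction gives isometric isomorphisms $E_j^*/A_j^{\bot}\cong A_j^*$. These maps are compatible: on $E_0^*+E_1^*=(E_0\cap E_1)^*$, the restriction map
\[
R:(E_0\cap E_1)^*\to(A_0\cap A_1)^*=A_0^*+A_1^*
\]
restricts to each $E_j^*$ as the corresponding map, and its kernel is $(A_0\cap A_1)^{\bot}=A_0^{\bot}+A_1^{\bot}$. It therefore induces a compatible bijective bounded operator $(E_0^*/A_0^{\bot},E_1^*/A_1^{\bot})\to(A_0^*,A_1^*)$, which is isometric on each $j$-component. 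That is exactly an isomorphism of compatible couples; indeed it is isometric.

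\emph{Main obstacle.} The delicate point is the norm estimate above, that is, showing $(A_0\cap A_1)^{\bot}\subset A_0^{\bot}+A_1^{\bot}$. This is precisely where the closedness of $A_0+A_1$ in $E_0+E_1$ is used. I would obtain it by dualizing: $(\mathcal{E}/\mathcal{A})^*=\mathcal{A}^{\bot}$ in $E_0^*\cap E_1^*$, and the closed range theorem applied to the inclusion $\mathcal{A}\to\mathcal{E}$ yields the needed decomposition with control of constants.
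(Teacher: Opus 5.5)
The paper gives no proof of this proposition; it only cites Janson. Your argument therefore has to stand on its own.

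\textbf{What is right.} The architecture is correct. You identify $E_0^*+E_1^*$ with $(E_0\cap E_1)^*$. You use regularity of $(A_0,A_1)$ to get $(A_0\cap A_1)^{\bot}\cap E_j^*=A_j^{\bot}$. You identify the quotient couple with $(A_0^*,A_1^*)$ through the restriction map $R$. All of this is sound, and everything does reduce to the inclusion $(A_0\cap A_1)^{\bot}\subset A_0^{\bot}+A_1^{\bot}$, equivalently your distance estimate.

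\textbf{The gap.} That inclusion is the only nontrivial point of the proposition, and you leave it at ``should follow''. The tool you cite does not apply as stated. Proposition~\ref{preliminaries_quotient_couple} is formulated for a normal subcouple, one with $(A_0+A_1)\cap E_j=A_j$. Normality of $(A_0,A_1)$ in $(E_0,E_1)$ is not a hypothesis here; only closedness of $A_0+A_1$ is. Your last paragraph points in a workable direction, since the adjoint of the inclusion $\mathcal{A}\to\mathcal{E}$ is indeed onto. But the identity $(\mathcal{E}/\mathcal{A})^*=\mathcal{A}^{\bot}$ describes $A_0^{\bot}\cap A_1^{\bot}$, not the sum you need, and you never say how a decomposition is actually produced.

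\textbf{How to close it (primal version).} Combine a norm-controlled decomposition with the algebraic trick from the proof of Proposition~\ref{preliminaries_quotient_couple}, applied directly to elements of $A_0$ and $A_1$.
\begin{itemize}
\item Since $A_0+A_1$ is closed in $E_0+E_1$, the open mapping theorem gives a constant $C$ such that every $w\in A_0+A_1$ can be written $w=c_0+c_1$ with $c_j\in A_j$ and $\|c_0\|_{E_0}+\|c_1\|_{E_1}\le C\|w\|_{E_0+E_1}$.
\item Take $u\in E_0\cap E_1$ and $b_j\in A_j$, and set $\delta:=\|u-b_0\|_{E_0}+\|u-b_1\|_{E_1}$.
\item Apply the decomposition to $w:=b_0-b_1=(b_0-u)+(u-b_1)$, whose $E_0+E_1$ norm is at most $\delta$.
\item Put $v:=b_0-c_0=b_1+c_1$. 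Then $v\in A_0\cap A_1$ by construction, and $\|u-v\|_{E_j}\le(1+C)\delta$ for $j=0,1$.
\end{itemize}
This is exactly your estimate. No normality is needed, because $v$ lies in $A_0$ and in $A_1$ automatically.

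\textbf{Equivalent dual version.} Take $\phi\in(A_0\cap A_1)^{\bot}$ and split it arbitrarily as $\phi=\phi_0+\phi_1$ with $\phi_j\in E_j^*$.
\begin{itemize}
\item Since $\phi_0=-\phi_1$ on $A_0\cap A_1$, the formula $\psi(a_0+a_1):=\phi_0(a_0)-\phi_1(a_1)$ is a well-defined functional on $A_0+A_1$.
\item It is bounded for the sum norm of $A_0+A_1$, hence for the $E_0+E_1$ norm by the same open mapping remark.
\item Extend it by Hahn--Banach to $\Psi\in(E_0+E_1)^*=E_0^*\cap E_1^*$.
\item Then $\phi_0-\Psi\in A_0^{\bot}$ and $\phi_1+\Psi\in A_1^{\bot}$, and their sum is still $\phi$ on $E_0\cap E_1$.
\end{itemize}

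Either version supplies the missing idea; with it, the rest of your proof goes through.
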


\section{The complex method}

Let $\S:=\{z\in\C\ :\ 0<\re z<1\}$ denote the open unit strip in the complex plane, with boundary $\partial\S=\{it\ :\ t\in\R\}\cup\{1+it\ :\ t\in\R\}$ and closure $\overline{\S}:=\{z\in\C\ :\ 0\pp\re z\pp1\}$. 

Let $(E_0,E_1)$ be a compatible couple. Let $\mathcal{F}(E_0,E_1)$ denote the space of functions $f:\overline{\S}\to E_0+E_1$ continuous on $\overline{\S}$, holomorphic on $\S$, such that $f(j+it)\in E_j$ for $t\in\R$, $j\in\{0,1\}$, and such that the functions $\R\to E_j$, $t\mapsto f(j+it)$ are continuous and vanishes at infinity, for $j\in\{0,1\}$. If $f\in\mathcal{F}(E_0,F_1)$, we set
\[\|f\|_{\mathcal{F}(E_0,E_1)}:=\max_{j\in\{0,1\}}\sup_{t\in\R}\|f(j+it)\|_{E_j}.\]
If $0<\theta<1$, the \textit{complex interpolation space} $[E_0,E_1]_\theta$ is the subspace of $E_0+E_1$ of elements of the form $f(\theta)$ with $f\in\mathcal{F}(E_0,E_1)$. It is equipped with the complete norm $\|\cdot\|_{[E_0,E_1]_\theta}$ given by the expression
\[\|u\|_{[E_0,E_1]_\theta}:=\inf\big\{\|f\|_{\mathcal{F}(E_0,E_1)}\ :\ f\in\mathcal{F}(E_0,E_1),\ u=f(\theta)\big\}.\]
This construction yields, for fixed $0<\theta<1$, an exact interpolation functor. By convention, we set $(E_0,E_1)_{0}:=E_0$ and $(E_0,E_1)_{1}:=E_1$.

\begin{prop}[\cite{BerghInterpolation}(Theorem 4.2.2)]
Let $(E_0,E_1)$ be a compatible couple. Then $E_0\cap E_1$ is dense in $[E_0,E_1]_\theta$ for every $0<\theta<1$.
\end{prop}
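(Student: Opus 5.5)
Let $u\in[E_0,E_1]_\theta$ and $\varepsilon>0$. The plan is to approximate $u$ in $[E_0,E_1]_\theta$ by elements of $E_0\cap E_1$. Choose $f\in\mathcal{F}(E_0,E_1)$ with $f(\theta)=u$. Since $\|u\|_{[E_0,E_1]_\theta}\pp\|g\|_{\mathcal{F}(E_0,E_1)}$ whenever $g(\theta)=u$, it suffices to find $g\in\mathcal{F}(E_0,E_1)$ with $g(\theta)\in E_0\cap E_1$ and $\|f-g\|_{\mathcal{F}(E_0,E_1)}<\varepsilon$.

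\textbf{Step 1 (reduction to nice functions).} First replace $f$ by $f_\delta(z):=e^{\delta(z-\theta)^2}f(z)$ with $\delta>0$ small. On the boundary lines $\re z=j$ we have $|e^{\delta(z-\theta)^2}|\pp e^{\delta}e^{-\delta t^2}$. Since $f$ is bounded and $f(j+it)\to0$ as $|t|\to\infty$, this gives $\|f_\delta-f\|_{\mathcal{F}(E_0,E_1)}\to0$ as $\delta\to0$, by splitting into $|t|$ large and $|t|$ bounded and using uniform convergence on compacts. Moreover $f_\delta(\theta)=u$, and $f_\delta$ has Gaussian decay on the boundary lines. So we may assume $f$ decays like $e^{-\delta t^2}$ there.

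\textbf{Step 2 (Fourier-type approximation).} The standard route (Bergh--Löfström, Lemma 4.2.3) is to show that functions of the form
\[g(z)=e^{\delta z^2}\sum_{k=1}^{N}e^{\lambda_k z}a_k,\qquad a_k\in E_0\cap E_1,\ \lambda_k\in\R,\]
are dense in $\mathcal{F}(E_0,E_1)$. Any such $g$ satisfies $g(\theta)\in E_0\cap E_1$, which is what we need. To prove density, take the $f$ from Step 1 and consider its boundary values $f(j+it)$. The Poisson representation of the holomorphic function on the strip lets us approximate $f$, uniformly on the boundary lines, by expressions $e^{\delta z^2}\sum_k e^{\lambda_k z}b_k$ with $b_k\in E_0+E_1$. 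Concretely, periodize $e^{-\delta z^2}f$ after truncation, or approximate $f$ by $\frac{1}{\sqrt{\pi}}\int e^{-(z-s)^2/\eta}\,\cdots$, and discretize the Fourier integral. The coefficients $b_k$ obtained this way lie in $E_0+E_1$ but must in fact be averages of $f(it)$ and $f(1+it)$ that belong to both spaces. A cleaner device is to use the Cauchy/Poisson integral along the line $\re z=0$ for the $E_0$ estimate and along the line $\re z=1$ for the $E_1$ estimate. By Cauchy's theorem, the contour integral $\int f(s+it)\,K(z-s-it)\,dt$ is independent of $s\in[0,1]$ for an entire kernel $K$ with Gaussian decay. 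Computed on the line $s=0$, the coefficient lies in $E_0$; computed on the line $s=1$, it lies in $E_1$. Hence the coefficient lies in $E_0\cap E_1$.

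\textbf{Step 3 (conclusion).} Set $f_\eta(z)=\frac{1}{\sqrt{\pi\eta}}\int_{\R}e^{-(z-it)^2/\eta}f(it)\,dt$. By Cauchy's theorem, shifting the integration line shows $f_\eta(z)$ is the same integral taken along $\re=1$. For each fixed $z$, $f_\eta(z)$ is therefore a Bochner integral convergent both in $E_0$ and in $E_1$, so $f_\eta(z)\in E_0\cap E_1$. In particular $f_\eta(\theta)\in E_0\cap E_1$. Moreover $f_\eta$ is holomorphic, and it tends to $f$ uniformly on each boundary line as $\eta\to0$, since on each line it is a convolution with an approximate identity applied to a uniformly continuous function vanishing at infinity. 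One must also check that $f_\eta$ is continuous on $\overline{\S}$ and that its boundary restrictions vanish at infinity, which should follow from the Gaussian decay of $f$. Then $\|f_\eta(\theta)-u\|_{[E_0,E_1]_\theta}\pp\|f_\eta-f\|_{\mathcal{F}}\to0$.

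The \textbf{main obstacle} is justifying the contour shift. This needs the Gaussian decay of $f$ from Step 1 to kill the horizontal segments, together with care that $e^{-(z-w)^2/\eta}$ stays bounded for $w$ in the strip. For $z$ and $w$ both in $\overline{\S}$ we have $|e^{-(z-w)^2/\eta}|\pp e^{1/\eta}e^{-(\mathrm{Im}(z-w))^2/\eta}$, which is harmless for fixed $\eta$.
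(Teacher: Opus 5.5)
\textbf{The argument fails as written because the kernel in Step 3 has the wrong sign.} For $z=x+is$ and $w=\sigma+it$,
\[|e^{-(z-w)^2/\eta}|=e^{((s-t)^2-(x-\sigma)^2)/\eta}.\]
So the bound in your last paragraph is reversed: the kernel grows like $e^{(s-t)^2/\eta}$ in the imaginary direction instead of decaying.

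This breaks Step 3 in three ways:
\begin{itemize}
\item On $\re z=0$ you get $f_\eta(is)=\frac{1}{\sqrt{\pi\eta}}\int_{\R}e^{(s-t)^2/\eta}f(it)\,dt$, which diverges for a general $f\in\mathcal{F}(E_0,E_1)$.
\item The decay $e^{-\delta t^2}$ from Step 1 only guarantees convergence for $\eta>1/\delta$, which is incompatible with $\eta\to0$. In any case the kernel is not an approximate identity.
\item The horizontal contributions in your contour shift carry a factor $e^{(T-s)^2/\eta}$, so they do not vanish as $|T|\to\infty$.
\end{itemize}
Hence neither $f_\eta(z)\in E_0\cap E_1$ nor $f_\eta\to f$ in $\mathcal{F}(E_0,E_1)$ is established.

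\textbf{The fix is to flip the sign.} Set $f_\eta(z)=\frac{1}{\sqrt{\pi\eta}}\int_{\R}e^{(z-it)^2/\eta}f(it)\,dt$. For $z,w\in\overline{\S}$,
\[|e^{(z-w)^2/\eta}|=e^{((x-\sigma)^2-(s-t)^2)/\eta}\pp e^{1/\eta}e^{-(s-t)^2/\eta}.\]
With this estimate the argument goes through:
\begin{itemize}
\item The integral is an $E_0$-valued Bochner integral for every $z$.
\item The horizontal segments at height $T$ are bounded by $e^{1/\eta}e^{-(T-s)^2/\eta}\sup_{\overline{\S}}\|f\|_{E_0+E_1}\to0$. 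This uses only the boundedness of $f$ on $\overline{\S}$, which is part of the standard definition of $\mathcal{F}(E_0,E_1)$; Step 1 is not needed.
\item Cauchy's theorem gives $f_\eta(z)=\frac{1}{\sqrt{\pi\eta}}\int_{\R}e^{(z-1-it)^2/\eta}f(1+it)\,dt$, an $E_1$-valued Bochner integral. Hence $f_\eta(z)\in E_0\cap E_1$ for every $z$.
\item On $\re z=j$ the relevant representation is exactly the convolution of $t\mapsto f(j+it)$ with the normalized heat kernel $\frac{1}{\sqrt{\pi\eta}}e^{-(s-t)^2/\eta}$. By uniform continuity it converges to $f(j+is)$ uniformly in $E_j$.
\item Since $\|f_\eta(z)\|_{E_0}\pp e^{1/\eta}\|f\|_{\mathcal{F}(E_0,E_1)}$, the function $f_\eta$ is bounded on $\overline{\S}$, so $f_\eta\in\mathcal{F}(E_0,E_1)$.
\end{itemize}
Therefore $\|f_\eta(\theta)-u\|_{[E_0,E_1]_\theta}\pp\|f_\eta-f\|_{\mathcal{F}(E_0,E_1)}\to0$. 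Step 2 can be deleted entirely: its claim that the coefficients $b_k$ lie in $E_0\cap E_1$ is never substantiated, and Step 3 does not use it.

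\textbf{Comparison with the cited proof.} The paper gives no proof and refers to Bergh--L\"ofstr\"om. There the result is deduced from the density in $\mathcal{F}(E_0,E_1)$ of functions $e^{\delta z^2}\sum_k e^{\lambda_k z}a_k$ with $a_k\in E_0\cap E_1$. That density is proved by periodizing in the imaginary direction and taking Fej\'er means, with the Fourier coefficients landing in $E_0\cap E_1$ by the same line-independence trick you use. Your corrected heat-kernel argument is shorter and gives the proposition directly. However, it produces entire $E_0\cap E_1$-valued approximants rather than finite sums $\sum_n g_n\otimes a_n$. So it does not by itself give the finer density behind results such as Stafney's lemma (Lemma \ref{steintheorem_lemm1}).
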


The following duality theorem is folklore. It directly follows from \cite{Wolff}[Lemma 2].

\begin{theo}[Duality Theorem]\label{ComplexInterpolation_Duality}
Let $(E_0,E_1)$ be a regular compatible couple and $0<\theta<1$. Then for every $u\in E_0\cap E_1$, we have 
\[\|u\|_{[E_0,E_1]_\theta}=\sup_{\protect\substack{u^*\in[E_0^*,E_1^*]_{\theta}\\\|u\|_{(E_0^*,E_1^*)_\theta}\pp1}}|\bra u,u^*\ket|.\]
\end{theo}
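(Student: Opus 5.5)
We prove the two inequalities separately. The inequality ``$\geq$'' is elementary. Let $u\in E_0\cap E_1$ and let $u^*\in[E_0^*,E_1^*]_\theta$ with norm at most $1$. By regularity, $E_0^*+E_1^*=(E_0\cap E_1)^*$, so $\bra u,u^*\ket$ makes sense.

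Take $f\in\mathcal{F}(E_0,E_1)$ with $f(\theta)=u$ and $\|f\|\pp\|u\|_\theta+\varepsilon$, and $g\in\mathcal{F}(E_0^*,E_1^*)$ with $g(\theta)=u^*$ and $\|g\|\pp1+\varepsilon$. We would like to consider the scalar function $h(z)=\bra f(z),g(z)\ket$. This pairing is only defined when $f(z)\in E_0\cap E_1$, or when both functions sit in matching endpoint spaces. To handle this, we first replace $f$ by a function of the form
\[f(z)=\sum_k \varphi_k(z)\,u_k,\qquad u_k\in E_0\cap E_1,\]
with $\varphi_k$ scalar functions, using the standard density of such functions in $\mathcal{F}(E_0,E_1)$ (Bergh--L\"ofstr\"om, Lemma 4.2.3). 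The value at $\theta$ is then only approximately $u$, so we correct it with a small term. Since $u\in E_0\cap E_1$, this correction can be handled by multiplying by a factor $e^{\delta(z-\theta)^2}$ and letting $\delta\to0$.

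With $f$ of this form, $h$ is bounded and continuous on $\overline{\S}$ and holomorphic on $\S$. On the boundary it satisfies $|h(j+it)|\pp\|f(j+it)\|_{E_j}\|g(j+it)\|_{E_j^*}$. The three lines lemma then gives $|\bra u,u^*\ket|\pp(\|u\|_\theta+\varepsilon)(1+\varepsilon)$, and letting $\varepsilon\to0$ yields ``$\geq$''.

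The inequality ``$\pp$'' is the hard direction. The natural plan goes through duality of $[E_0,E_1]_\theta$ itself. By Hahn--Banach, and since $E_0\cap E_1$ is dense in $[E_0,E_1]_\theta$ (Bergh--L\"ofstr\"om, Theorem 4.2.2), there is a functional $\phi\in[E_0,E_1]_\theta^*\subset(E_0\cap E_1)^*=E_0^*+E_1^*$ with $\|\phi\|=1$ and $\phi(u)=\|u\|_\theta$.

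It then remains to show that $\phi\in[E_0^*,E_1^*]_\theta$ with norm at most $1$, or at least that $\phi$ can be approximated in a way that is compatible with testing against $u$. In general one only has $[E_0,E_1]_\theta^*=[E_0^*,E_1^*]^\theta$, the upper complex method, which is where the main obstacle lies. To avoid it, we use the approach of Wolff's Lemma 2. Since the formula only needs to hold for a fixed $u\in E_0\cap E_1$, it suffices to norm $u$ approximately.

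First, we realise $\|u\|_\theta$ via the dual of the space $\mathcal{F}(E_0,E_1)$. The map $f\mapsto f(\theta)$ is a quotient map, so $\phi$ lifts to a functional on $\mathcal{F}$ of norm $1$ that vanishes on the kernel. Next, we represent this functional through boundary measures with values in $E_j^*$, pulled back via the Poisson kernel of the strip. From these we build an analytic function $g$ with values in $E_0^*+E_1^*$ that is bounded on each boundary line. Its value $g(\theta)$ agrees with $\phi$ on $u$, and after a mollification and Gaussian damping in $t$, $g$ lies in $\mathcal{F}(E_0^*,E_1^*)$ with norm at most $1+\varepsilon$. The delicate points are that the dual boundary values need not be strongly measurable, and that we need continuity and decay at infinity. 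Both are resolved by testing only against the fixed finite-dimensional data coming from $u$, which is exactly the content of \cite{Wolff}[Lemma 2]. This gives $\sup|\bra u,u^*\ket|\pg\|u\|_\theta-\varepsilon$, which completes the proof.
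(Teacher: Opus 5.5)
For the nontrivial inequality ``$\pp$'' your proposal, like the paper (whose entire proof is the citation of \cite[Lemma 2]{Wolff}), ultimately rests on Wolff's lemma, so at that level the two routes coincide. However, two steps of what you actually wrote are not correct as stated.

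The first is in the easy inequality ``$\pg$'': the correction of the value at $\theta$ does not work as written. If $f_n=\sum_k\varphi_k u_k$ approximates $f$ in $\mathcal{F}(E_0,E_1)$, then $u-f_n(\theta)\in E_0\cap E_1$ is small only in $[E_0,E_1]_\theta$ (hence in $E_0+E_1$). The corrector $e^{\delta(z-\theta)^2}(u-f_n(\theta))$, however, has $\mathcal{F}$-norm comparable to $\max_j\|u-f_n(\theta)\|_{E_j}$, which need not tend to $0$, and letting $\delta\to0$ changes nothing. The existence of $f\in\mathcal{F}_\tau(E_0,E_1)$ with $f(\theta)=u$ exactly and $\|f\|<\|u\|_{[E_0,E_1]_\theta}+\varepsilon$ is precisely Stafney's lemma (Lemma \ref{steintheorem_lemm1}), which is nontrivial for this very reason; you should cite it. Alternatively, approximate on the dual side. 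Keep $f$ arbitrary and take $g_n\in\mathcal{F}_\tau(E_0^*,E_1^*)$ with $g_n\to g$ in $\mathcal{F}(E_0^*,E_1^*)$. Then $z\mapsto\bra f(z),g_n(z)\ket$ is defined, because $g_n$ takes values in finite sums of elements of $E_0^*\cap E_1^*=(E_0+E_1)^*$. The three lines lemma gives $|\bra u,g_n(\theta)\ket|\pp\|f\|\,\|g_n\|$. Finally $g_n(\theta)\to u^*$ in $E_0^*+E_1^*=(E_0\cap E_1)^*$, which is enough since $u\in E_0\cap E_1$.

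The second is in the hard inequality ``$\pp$'': the mechanism you sketch is not what makes the argument work.
\begin{itemize}
\item Dualizing $\mathcal{F}(E_0,E_1)$ in its own sup-norm produces $E_j^*$-valued measures normalized in total variation, which cannot give boundary values of norm at most $1$. The right normalization comes from the Poisson-weighted bound $\|f(\theta)\|_{[E_0,E_1]_\theta}\pp\int\|f(it)\|_{E_0}P_0(\theta,t)\,dt+\int\|f(1+it)\|_{E_1}P_1(\theta,t)\,dt$, where $P_0(\theta,\cdot)$ and $P_1(\theta,\cdot)$ are the Poisson kernels of the strip at $\theta$.
\item Even then, the resulting bounded densities need not be boundary values of an analytic function. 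Their Poisson extension does equal $\phi$ at $\theta$, but it is only harmonic. So ``build an analytic $g$ with $g(\theta)=\phi$ on $u$'' is exactly the unproved point, and restricting to ``finite-dimensional data coming from $u$'' does not address it.
\end{itemize}
The clean route goes through the upper method rather than around it. By \cite[Theorem 4.5.1]{BerghInterpolation}, $[E_0,E_1]_\theta^*=[E_0^*,E_1^*]^\theta$ isometrically for a regular couple. So the norming functional is $\phi=G'(\theta)$ with $G\in\mathcal{G}(E_0^*,E_1^*)$ and $\|G\|_{\mathcal{G}}\pp1+\varepsilon$. For real $h\neq0$ and $\delta>0$, put $g_h(z):=e^{\delta(z-\theta)^2}\,(G(z+ih)-G(z))/(ih)$.
\begin{itemize}
\item Its boundary values are quotients of Lipschitz increments, so they lie in $E_j^*$ with norm at most $e^{\delta}\|G\|_{\mathcal{G}}$.
\item They are norm-continuous in $t$ and decay thanks to the Gaussian factor, so $g_h\in\mathcal{F}(E_0^*,E_1^*)$ with $\|g_h\|\pp e^{\delta}(1+\varepsilon)$.
\item As $h\to0$, $g_h(\theta)\to\phi$ in $E_0^*+E_1^*$.
\end{itemize}
Pairing with $u\in E_0\cap E_1$ then gives a supremum of at least $\|u\|_{[E_0,E_1]_\theta}/(e^{\delta}(1+\varepsilon))$. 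This difference-quotient step is the rigorous form of your ``mollification in $t$''. It makes the statement self-contained modulo Calder\'on's duality theorem, instead of deferring to Wolff.
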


\begin{coro}\label{ComplexInterpolation_Dual}
Let $(E_0,E_1)$ be a regular compatible couple, and let $E_\theta$ be an intermediate space for $(E_0,E_1)$ such that $E_0\cap E_1$ is dense in $E_\theta$. Let $0<\theta<1$ and let $C$ be a constant, such that, if $u^*\in E_0^*\cap E_1^*$, then $u^*\in E_\theta^*$ with
\[\|u^*\|_{E_\theta^*}\pp C\|u^*\|_{[E_0^*,E_1^*]_\theta}.\]
Then, for every $u\in E_\theta$, we have $u\in[E_0,E_1]_\theta$, with
\[\|u\|_{[E_0,E_1]_\theta}\pp C\|u\|_{E_\theta}.\]
\end{coro}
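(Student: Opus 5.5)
The plan is to work first on the dense subspace $E_0\cap E_1$ of $E_\theta$, apply the Duality Theorem \ref{ComplexInterpolation_Duality} there, and then pass to all of $E_\theta$ by a completeness argument.

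\emph{Estimate on $E_0\cap E_1$.} Fix $u\in E_0\cap E_1$. By Theorem \ref{ComplexInterpolation_Duality},
\[\|u\|_{[E_0,E_1]_\theta}=\sup\big\{|\bra u,u^*\ket|\ :\ u^*\in[E_0^*,E_1^*]_\theta,\ \|u^*\|_{[E_0^*,E_1^*]_\theta}\pp1\big\}.\]
The hypothesis only concerns functionals in $E_0^*\cap E_1^*$, so I will shrink the supremum to that set. For this I use that $E_0^*\cap E_1^*$ is dense in $[E_0^*,E_1^*]_\theta$ (Theorem 4.2.2 of \cite{BerghInterpolation}, quoted above). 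The map $u^*\mapsto\bra u,u^*\ket$ is continuous on $[E_0^*,E_1^*]_\theta$, because
\[|\bra u,u^*\ket|\pp\|u\|_{E_0\cap E_1}\|u^*\|_{E_0^*+E_1^*}\pp\|u\|_{E_0\cap E_1}\|u^*\|_{[E_0^*,E_1^*]_\theta},\]
using $E_0^*+E_1^*=(E_0\cap E_1)^*$ isometrically. Hence the supremum may be taken over $u^*\in E_0^*\cap E_1^*$ with $\|u^*\|_{[E_0^*,E_1^*]_\theta}<1$; approximating a functional of norm $\pp1$ by a scaled-down one handles the boundary. For each such $u^*$ the hypothesis gives
\[|\bra u,u^*\ket|\pp\|u\|_{E_\theta}\|u^*\|_{E_\theta^*}\pp C\|u\|_{E_\theta}.\]
Taking the supremum yields $\|u\|_{[E_0,E_1]_\theta}\pp C\|u\|_{E_\theta}$ for all $u\in E_0\cap E_1$.

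\emph{Extension to $E_\theta$.} This is the step that needs care. Given $u\in E_\theta$, choose $u_n\in E_0\cap E_1$ with $u_n\to u$ in $E_\theta$. By the estimate above, $(u_n)$ is Cauchy in the Banach space $[E_0,E_1]_\theta$, so it converges there to some $w$. Since both $E_\theta$ and $[E_0,E_1]_\theta$ embed continuously into $E_0+E_1$, $u_n$ tends to both $u$ and $w$ in $E_0+E_1$, which is Hausdorff; hence $w=u$. Passing to the limit then gives $u\in[E_0,E_1]_\theta$ with $\|u\|_{[E_0,E_1]_\theta}\pp C\|u\|_{E_\theta}$.
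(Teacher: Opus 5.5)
Your proof is correct and follows the paper's argument: the duality theorem applied on $E_0\cap E_1$, the density of $E_0^*\cap E_1^*$ in $[E_0^*,E_1^*]_\theta$, and then the density of $E_0\cap E_1$ in $E_\theta$. The paper instead uses density to extend the hypothesis to all of $[E_0^*,E_1^*]_\theta$ rather than restricting the supremum to $E_0^*\cap E_1^*$, and it leaves implicit the Cauchy-sequence/Hausdorff limit argument that you write out for the final step.
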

\begin{proof}
As $E_0^*\cap E_1^*$ is dense in $[E_0^*,E_1^*]_\theta$, from the hypothesis we deduce that for every $u^*\in[E_0^*,E_1^*]_\theta$, we have $u^*\in E_\theta^*$, with $\|u^*\|_{E_\theta^*}\pp C\|u^*\|_{[E_0^*,E_1^*]_\theta}$. Thus, if $u\in E_0\cap E_1$, then by Theorem \ref{ComplexInterpolation_Duality} we get
\begin{align*}
\|u\|_{[E_0,E_1]_\theta}&\pp\sup_{\protect\substack{u^*\in [E_0^*,E_1^*]_\theta\\\|u^*\|_{[E_0^*,E_1^*]_\theta}\pp1}}|\bra u,u^*\ket|\pp \sup_{\protect\substack{u^*\in [E_0^*,E_1^*]_\theta\\\|u^*\|_{[E_0^*,E_1^*]_\theta}\pp1}}\|u\|_{E_\theta}\|u^*\|_{E_\theta^*}\\
&\pp C\sup_{\protect\substack{u^*\in [E_0^*,E_1^*]_\theta\\\|u^*\|_{[E_0^*,E_1^*]_\theta}\pp1}}\|u\|_{E_\theta}\|u^*\|_{[E_0^*,E_1^*]_{\theta}}\pp C\|u\|_{E_\theta}.
\end{align*}
As $E_0\cap E_1$ is dense in $E_\theta$, the desired conclusion follows.
\end{proof}

The reiteration theorem for the complex method is classical. A proof can be found in \cite{BerghInterpolation}[Theorem 4.6.1]

\begin{theo}[Reiteration theorem]\label{ComplexInterpolation_Reiteration}
Let $(E_0,E_{\theta_0},E_{\theta_1},E_1)$ be a compatible quadruplet such that $E_0\cap E_1$ is dense in $E_{\theta_0}\cap E_{\theta_1}$ and assume that
\[E_{\theta_0}:=[E_0,E_1]_{\theta_0}\ \ \ \ \ \text{and}\ \ \ \ \ E_{\theta_1}:=[E_0,E_1]_{\theta_1},\]
where $0\pp\theta_0<\theta_1\pp1$. Let $0<\eta<1$. Then we have
\[[E_{\theta_0},E_{\theta_1}]_{\eta}=[E_0,E_1]_{\theta_\eta}\]
with equal norms, where $\theta_\eta:=(1-\eta)\theta_0+\eta\theta_1$.
\end{theo}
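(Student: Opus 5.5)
The plan is to follow the classical proof in Bergh--Löfström (Theorem 4.6.1): prove the two inclusions with their norm inequalities separately. Throughout, write $X_\eta:=[E_{\theta_0},E_{\theta_1}]_\eta$ and $\theta_\eta:=(1-\eta)\theta_0+\eta\theta_1$.

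\emph{Step 1: the inequality $\|u\|_{X_\eta}\pp\|u\|_{[E_0,E_1]_{\theta_\eta}}$.} Let $u\in[E_0,E_1]_{\theta_\eta}$ and take $f\in\mathcal{F}(E_0,E_1)$ with $f(\theta_\eta)=u$ and $\|f\|\pp\|u\|+\varepsilon$. I would define
\[g(z):=f\big((1-z)\theta_0+z\theta_1\big),\qquad z\in\overline{\S}.\]
For $\re z=j$, the point $(1-z)\theta_0+z\theta_1$ lies on the line $\re w=\theta_j$. Translating $f$ vertically and restricting it to the substrip $\theta_0\pp\re w\pp\theta_1$ shows that $g(j+it)\in E_{\theta_j}$, with
\[\|g(j+it)\|_{E_{\theta_j}}\pp\|f\|_{\mathcal{F}(E_0,E_1)}.\]
This uses the standard fact that $\|f(\theta+is)\|_{[E_0,E_1]_\theta}\pp\|f\|$; when $\theta_j\in\{0,1\}$ the estimate is trivial.

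The technical points are the continuity of $t\mapsto g(j+it)$ into $E_{\theta_j}$ and its vanishing at infinity. I would handle these by first approximating $f$ by functions of the form $e^{\delta z^2}\sum_k e^{\lambda_k z}a_k$ with $a_k\in E_0\cap E_1$, which are dense in $\mathcal{F}(E_0,E_1)$ (Bergh--Löfström Lemma 4.2.3). For such functions the required properties are clear, and the density assumption makes the passage to the limit legitimate.

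\emph{Step 2: the reverse inequality $\|u\|_{[E_0,E_1]_{\theta_\eta}}\pp\|u\|_{X_\eta}$.} This is the harder direction. Take $g\in\mathcal{F}(E_{\theta_0},E_{\theta_1})$ with $g(\eta)=u$. I would first reduce, by density of $E_0\cap E_1$ in $E_{\theta_0}\cap E_{\theta_1}$ and the density argument above, to $g$ a finite sum $e^{\delta z^2}\sum_k\phi_k(z)a_k$ with $a_k\in E_0\cap E_1$.

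The main obstacle is the following: on each boundary line $j+it$, every value $g(j+it)$ lies in $[E_0,E_1]_{\theta_j}$, but one needs a single analytic function on $\S$ realizing all of these values at once with uniform bounds. The classical approach uses the Poisson-kernel / $\mathcal{F}_1$ (Calderón's upper method) description, together with the Lions--Peetre lemma $\|u\|_{[\,]_\theta}\pp\|f(it)\|_{L_1(\mu_0)}^{1-\theta}\|f(1+it)\|_{L_1(\mu_1)}^{\theta}$ (Bergh--Löfström Lemma 4.3.2). From this one obtains the following: for $u=g(\eta)$, and writing $\mu_0,\mu_1$ for the harmonic measures of the point $\eta$,
\[\|u\|_{[E_0,E_1]_{\theta_\eta}}\pp\Big(\int\|g(it)\|_{E_{\theta_0}}\,d\mu_0\Big)^{1-\eta}\Big(\int\|g(1+it)\|_{E_{\theta_1}}\,d\mu_1\Big)^{\eta}.\]

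Concretely, I would proceed in three steps:
\begin{enumerate}
\item Show the function $z\mapsto\log\|g(z)\|_{[E_0,E_1]_{\theta_0+z(\theta_1-\theta_0)}}$ is subharmonic-type. This is obtained by lifting each value $g(z)$ to an $f_z\in\mathcal{F}(E_0,E_1)$ and composing with conformal changes of variable.
\item Apply the three-lines lemma.
\item Pass to the limit as $\delta\to0$.
\end{enumerate}

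Combining Steps 1 and 2 gives equality of norms. For this step I would rely on Bergh--Löfström's argument via the upper method $[\,]^\theta$, which agrees with $[\,]_\theta$ isometrically on the regular part. The density hypothesis is exactly what makes that identification possible here.
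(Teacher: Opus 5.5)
\textbf{There is a genuine gap in Step 2.} The paper gives no argument of its own here. It only cites Bergh--L\"ofstr\"om, Theorem 4.6.1, so what matters is whether your sketch is a valid proof.

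Your Step 1, the inclusion $[E_0,E_1]_{\theta_\eta}\subset[E_{\theta_0},E_{\theta_1}]_\eta$, is the standard easy direction and is fine. The density hypothesis plays no role there: the reparametrization is contractive from $\mathcal{F}(E_0,E_1)$ to $\mathcal{F}(E_{\theta_0},E_{\theta_1})$ on the dense class, and it extends by completeness.

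The displayed Poisson-type estimate in Step 2, with $E_{\theta_j}$-norms on the boundary, does not follow from Lemma 4.3.2.
\begin{itemize}
\item Applied to $(E_0,E_1)$, that lemma only involves $E_0$- and $E_1$-norms on $\re z=0,1$.
\item Applied to $(E_{\theta_0},E_{\theta_1})$, it bounds $\|g(\eta)\|_{[E_{\theta_0},E_{\theta_1}]_\eta}$, not $\|g(\eta)\|_{[E_0,E_1]_{\theta_\eta}}$.
\end{itemize}
Converting one bound into the other is precisely the inequality you are trying to prove, so invoking it is circular.

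Your mechanism for it also fails. You propose to get subharmonicity of $z\mapsto\log\|g(z)\|_{[E_0,E_1]_{\theta_0+\re(z)(\theta_1-\theta_0)}}$ by lifting each value $g(z)$ to some $f_z\in\mathcal{F}(E_0,E_1)$. Those lifts are chosen separately for each $z$, with no analytic dependence on $z$. So they cannot be assembled into one element of $\mathcal{F}(E_0,E_1)$ that represents $g(z_0)$ with norm controlled by averages of $\|g\|$ over a circle or over the boundary lines. A conformal change of variable does not produce such a function. The closing appeal to the upper method names a relevant object but not how it is used. Also, the density hypothesis is not what identifies the upper and lower methods.

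\textbf{The missing idea is duality.} This is how the classical (Calder\'on) proof obtains the hard direction.
\begin{enumerate}
\item After the usual reduction to the case where $E_0\cap E_1$ is dense in $E_0$ and $E_1$, both couples $(E_0,E_1)$ and $(E_{\theta_0},E_{\theta_1})$ are regular. The second is regular because $E_0\cap E_1$ is dense in each $[E_0,E_1]_{\theta_j}$.
\item The duality theorem gives $[E_0,E_1]_\theta^*=[E_0^*,E_1^*]^\theta$ isometrically. Hence $E_{\theta_j}^*=[E_0^*,E_1^*]^{\theta_j}$ and $[E_{\theta_0},E_{\theta_1}]_\eta^*=[E_{\theta_0}^*,E_{\theta_1}^*]^\eta$.
\item Carry out your Step 1 reparametrization for Calder\'on's upper method on the dual couple. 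The condition on boundary differences $G(\theta_j+is)-G(\theta_j+is')$ is checked via the auxiliary function $z\mapsto i\int_{s'}^{s}G(z+i\tau)\,d\tau$, whose derivative is $G(z+is)-G(z+is')$. Up to routine verifications, this gives the contractive inclusion $[E_0^*,E_1^*]^{\theta_\eta}\subset[E_{\theta_0}^*,E_{\theta_1}^*]^\eta$, that is, $\|\phi\|_{[E_{\theta_0},E_{\theta_1}]_\eta^*}\le\|\phi\|_{[E_0,E_1]_{\theta_\eta}^*}$.
\item The same sup-over-dual-unit-balls argument as in Corollary~\ref{ComplexInterpolation_Dual} then yields $\|u\|_{[E_0,E_1]_{\theta_\eta}}\le\|u\|_{[E_{\theta_0},E_{\theta_1}]_\eta}$ for $u\in E_0\cap E_1$.
\item Only at this point is the density hypothesis needed. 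Since $E_0\cap E_1$ is dense in $E_{\theta_0}\cap E_{\theta_1}$, it is dense in $[E_{\theta_0},E_{\theta_1}]_\eta$. By completeness of $[E_0,E_1]_{\theta_\eta}$, the inequality and the inclusion extend to all of $[E_{\theta_0},E_{\theta_1}]_\eta$.
\end{enumerate}
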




\section{The real method}

\subsection{\texorpdfstring{$K$}{}-functionals} 

Let $(E_0,E_1)$ be a compatible couple. The \textit{$K$-functional} of $u\in E_0+E_1$ is defined for $t>0$ as
\[K_t(u)=K_t(u,E_0,E_1):=\inf\big\{\|u_0\|_{E_0}+t\|u_1\|_{E_1}\ :\ u_0\in E_0,\ u_1\in E_1,\ u=u_0+u_1\big\}.\]
For fixed $t>0$, $K_t$ is an equivalent norm on $E_0+E_1$. If $(E_0,E_1)$ and $(F_0,F_1)$ are two compatible couples and $T:(E_0,E_1)\to(F_0,F_1)$ a compatible bounded operator, then \[K_t(Tu,F_0,F_1)\pp\|T\|_{(E_0,E_1)\to(F_0,F_1)}K_t(u,E_0,E_1)\]
for every $u\in E_0+E_1$ and $t>0$. 

A \textit{$K$\-/method parameter} is a complete normed space $\Phi$ of (class of) Lebesgue measurable functions with variable $t>0$ such that,
\begin{enumerate}[nosep]
    \item[$\triangleright$] if $f,g\in\Phi$ with $|g|\pp|f|$ then $\|g\|_{\Phi}\pp\|f\|_{\Phi}$, 
    \item[$\triangleright$] the function $t\mapsto1\wedge t$ belongs to $\Phi$.
\end{enumerate}
If $\Phi$ is a $K$\-/method parameter and $(E_0,E_1)$ is a compatible couple, then the \textit{$K$-method interpolation space}
\[K_\Phi(E_0,E_1):=\big\{u\in E_0+E_1\ :\ t\mapsto K_t(u,E_0,E_1)\in\Phi\big\}\]
is a subspace of $E_0+E_1$ and is equipped with the complete norm $\|\cdot\|_{K_\Phi(E_0,E_1)}$ given by the expression
\[\|u\|_{K_\Phi(E_0,E_1)}:=\|t\mapsto K_t(u,E_0,E_1)\|_{\Phi}.\]
This construction defines an exact interpolation functor $K_\Phi$ called the \textit{$K$\-/method} with parameter $\Phi$. 

If $(A_0,A_1)$ is a subcouple of a compatible couple $(E_0,E_1)$, then we have
\[K_t(u,E_0,E_1)\pp K_t(u,A_0,A_1),\ \ \ \ \text{for}\ u\in A_0+A_1,\ t>0\]
A subcouple $(A_0,A_1)$ of a compatible couple $(E_0,E_1)$ is \textit{$K$\-/closed} with constant $C$ if 
\[K_t(u,A_0,A_1)\pp CK_t(u,E_0,E_1)\]
for every $u\in A_0+A_1$ and $t>0$. 

\begin{rem}
If $(A_0,A_1)$ is a $K$-closed subcouple of a compatible couple $(E_0,E_1)$, then $A_0+A_1$ is closed in $E_0+E_1$ (in other words, the inclusion $A_0+A_1\to E_0+E_1$ is an embedding of normed spaces).
\end{rem}

The following proposition follows directly from the definitions.

\begin{prop}\label{preliminaries_Kclosedness}
If $(A_0,A_1)$ is a $K$-closed subcouple of a compatible couple $(E_0,E_1)$, then for every $K$\-/method parameter $\Phi$, the inclusion $K_\Phi(A_0,A_1)\to K_\Phi(E_0,E_1)$ is an embedding of normed spaces, with range $(A_0+A_1)\cap K_\Phi(E_0,E_1)$.
\end{prop}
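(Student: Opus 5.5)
The plan is to check two things separately for $u\in K_\Phi(A_0,A_1)$: that $u$ lies in $K_\Phi(E_0,E_1)$ with a norm bound, and, conversely, that elements of $(A_0+A_1)\cap K_\Phi(E_0,E_1)$ lie in $K_\Phi(A_0,A_1)$ with a reverse norm bound.

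For the first direction, we use the general inequality $K_t(u,E_0,E_1)\pp K_t(u,A_0,A_1)$, valid for every $u\in A_0+A_1$ and $t>0$. By the lattice property of $\Phi$, if $t\mapsto K_t(u,A_0,A_1)$ belongs to $\Phi$ then so does $t\mapsto K_t(u,E_0,E_1)$, with
\[\|u\|_{K_\Phi(E_0,E_1)}\pp\|u\|_{K_\Phi(A_0,A_1)}.\]
The $K$-functional is continuous in $t$, hence measurable, so membership in $\Phi$ makes sense. Thus the inclusion $K_\Phi(A_0,A_1)\to K_\Phi(E_0,E_1)$ is well defined and contractive. Its range is contained in $(A_0+A_1)\cap K_\Phi(E_0,E_1)$, since $K_\Phi(A_0,A_1)\subset A_0+A_1$ by definition.

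For the converse, take $u\in(A_0+A_1)\cap K_\Phi(E_0,E_1)$. The $K$-closedness hypothesis gives $K_t(u,A_0,A_1)\pp CK_t(u,E_0,E_1)$ for all $t>0$. The function $t\mapsto CK_t(u,E_0,E_1)$ lies in $\Phi$, since $\Phi$ is a vector space. The lattice property then yields $t\mapsto K_t(u,A_0,A_1)\in\Phi$, with
\[\|u\|_{K_\Phi(A_0,A_1)}\pp C\|u\|_{K_\Phi(E_0,E_1)}.\]
Hence the range is exactly $(A_0+A_1)\cap K_\Phi(E_0,E_1)$, and on it the two norms are equivalent with constants $1$ and $C$.

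It follows that the inclusion is injective, and its inverse on the range is bounded. This is precisely the statement that it is an embedding of normed spaces, and no step presents a real obstacle. The only point requiring a remark is that the lattice axiom is stated for $|g|\pp|f|$, and here it is applied to the nonnegative functions $K_t(u,\cdot)$, where it applies directly.
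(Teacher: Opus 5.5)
Your proof is correct and is exactly the direct verification from the definitions that the paper has in mind (the paper gives no further argument). You combine $K_t(u,E_0,E_1)\pp K_t(u,A_0,A_1)\pp C\,K_t(u,E_0,E_1)$ with the lattice property of $\Phi$. Note that you use that property in its ideal form, namely that $g\in\Phi$ follows from $|g|\pp|f|$ with $f\in\Phi$, rather than the paper's literal wording, which assumes $f,g\in\Phi$; this ideal form is implicit in the paper, since without it $K_\Phi$ would not even be an interpolation functor.
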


The following key result is due to Pisier. A detailed proof can be found in \cite{JansonRealInterpolation}[Theorem 6.1].

\begin{theo}[Pisier]\label{preliminaries_Pisier}
Let $(A_0,A_1)$ be a $K$-closed subcouple of a regular compatible couple $(E_0,E_1)$ with constant $C$. Then the subcouple $(A_0^{\bot},A_1^{\bot})$ is $K$-closed in $(E_0^*,E_1^*)$ with a constant depending only on $C$.
\end{theo}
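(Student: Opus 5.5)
The plan is to verify $K$-closedness of $(A_0^{\bot},A_1^{\bot})$ directly, in the equivalent decomposition form. We take $\varphi\in A_0^{\bot}+A_1^{\bot}$ and an arbitrary decomposition $\varphi=\psi_0+\psi_1$ with $\psi_j\in E_j^*$. We look for a decomposition $\varphi=\varphi_0+\varphi_1$ with $\varphi_j\in A_j^{\bot}$ and $\|\varphi_j\|_{E_j^*}\pp(1+C)\|\psi_j\|_{E_j^*}$ for $j\in\{0,1\}$. Taking the infimum over decompositions of $\varphi$ in $E_0^*+E_1^*$ then gives
\[K_t(\varphi,A_0^{\bot},A_1^{\bot})\pp(1+C)K_t(\varphi,E_0^*,E_1^*)\]
for every $t>0$. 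Throughout, regularity of $(E_0,E_1)$ is used so that $(E_0^*,E_1^*)$ is a compatible couple and $E_0^*\cap E_1^*=(E_0+E_1)^*$.

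\emph{Step 1 (a functional on $A_0+A_1$).} Since $\varphi\in A_0^{\bot}+A_1^{\bot}$, it vanishes on $A_0\cap A_1$, so $\psi_0=-\psi_1$ on $A_0\cap A_1$. Hence the formula
\[\chi(a_0+a_1):=\psi_0(a_0)-\psi_1(a_1),\qquad a_j\in A_j,\]
gives a well-defined linear functional on $A_0+A_1$: two decompositions of the same element differ by an element of $A_0\cap A_1$, on which the two expressions agree. For every decomposition of $a$ we have $|\chi(a)|\pp\|\psi_0\|\|a_0\|+\|\psi_1\|\|a_1\|$. Assume first $\psi_0\neq0$ and set $s:=\|\psi_1\|/\|\psi_0\|$. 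Then
\[|\chi(a)|\pp\|\psi_0\|\,K_s(a,A_0,A_1)\pp C\|\psi_0\|\,K_s(a,E_0,E_1),\]
where the second inequality is the $K$-closedness hypothesis. The degenerate cases $\psi_0=0$ or $\psi_1=0$ are handled either by a limiting choice of $s$ or by perturbing $\psi_0$ slightly, using that the constants do not depend on $s$.

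\emph{Step 2 (Hahn--Banach extension).} Equip $E_0+E_1$ with the equivalent norm $K_s(\cdot,E_0,E_1)$. By Hahn--Banach we extend $\chi$ to $\omega\in(E_0+E_1)^*=E_0^*\cap E_1^*$ with $|\omega(e)|\pp C\|\psi_0\|K_s(e,E_0,E_1)$. Testing on $e\in E_0$ and on $e\in E_1$ gives
\[\|\omega\|_{E_0^*}\pp C\|\psi_0\|,\qquad \|\omega\|_{E_1^*}\pp Cs\|\psi_0\|=C\|\psi_1\|.\]

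\emph{Step 3 (conclusion).} Put $\varphi_0:=\psi_0-\omega$ and $\varphi_1:=\psi_1+\omega$. Then $\varphi_0+\varphi_1=\varphi$. For $a_0\in A_0$ we get $\varphi_0(a_0)=\psi_0(a_0)-\chi(a_0)=0$, and for $a_1\in A_1$ we get $\varphi_1(a_1)=\psi_1(a_1)+\chi(a_1)=0$. So $\varphi_j\in A_j^{\bot}$ with $\|\varphi_j\|\pp(1+C)\|\psi_j\|$, which is what we wanted.

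The only delicate point I expect is the choice of the single parameter $s$ adapted to the given decomposition. It converts the additive bound $\|\psi_0\|\|a_0\|+\|\psi_1\|\|a_1\|$ into one $K_s$-norm, so that one Hahn--Banach extension controls both endpoint norms simultaneously. The degenerate cases need a short separate check.
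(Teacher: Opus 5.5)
Your proof is correct. The paper gives no argument of its own for this statement; it only cites Janson's Theorem 6.1. So your proposal is a self-contained route via the standard Hahn--Banach construction. The functional $\chi(a_0+a_1)=\psi_0(a_0)-\psi_1(a_1)$ is well defined because $\varphi$ vanishes on $A_0\cap A_1$. The single parameter $s=\|\psi_1\|/\|\psi_0\|$ then lets $K$-closedness at one level control both endpoint norms of the extension $\omega$. All steps check out, including $\psi_0-\omega\in A_0^{\bot}$, $\psi_1+\omega\in A_1^{\bot}$ and the constant $1+C$.

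Compared with the bare citation, your argument gives something the paper actually needs. It controls each summand separately, so it shows that $(A_0^{\bot},A_1^{\bot})$ is \emph{quasi-complemented} in $(E_0^*,E_1^*)$ with constant $1+C$. That is the form in which the result is invoked in Proposition~\ref{technical_assumptions_Pisier}. From $K$-closedness alone you would additionally need the normality relation $A_j^{\bot}=E_j^*\cap(A_0^{\bot}+A_1^{\bot})$ to upgrade.

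To get this termwise control for every decomposition, handle the degenerate cases by your ``limiting choice'' rather than by perturbation. If $\psi_1=0$, letting $s\to0$ in the $K$-closedness inequality gives $|\chi(a)|\le C\|\psi_0\|\inf\{\|e_0\|_{E_0}: a=e_0+e_1,\ e_j\in E_j\}$. A Hahn--Banach extension dominated by this seminorm vanishes on $E_1$, hence on $E_0\cap E_1$, hence on $E_0$ by regularity. So $\chi=0$ and $\psi_0\in A_0^{\bot}$ already. The case $\psi_0=0$ is symmetric, using $K_s/s$ and $s\to\infty$. This is exactly the normality relation above, and it is where regularity is used in an essential way.

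The perturbation $\varphi=(\psi_0+\delta\xi)+(\psi_1-\delta\xi)$ with $0\neq\xi\in E_0^*\cap E_1^*$ only yields the $K$-inequality after letting $\delta\to0$. That suffices for the statement as posed, but not for the termwise control.
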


The following result, which is a consequence of the generalized Holmstedt formula for $K$-functionals, is taken from \cite{JansonRealInterpolation}[Theorem 2.2] 

\begin{theo}\label{preliminaries_Holmstedt}
Let $(A_0,A_1)$ be a $K$-closed subcouple of a compatible couple $(E_0,E_1)$ with constant $C$. Let $\Phi_0,\Phi_1$ be $K$-parameter spaces. Let denote
\[E_{\theta_0}:=K_{\Phi_0}(E_0,E_1)\ \ \ \ \ \text{and}\ \ \ \ \ E_{\theta_1}:=K_{\Phi_1}(E_0,E_1).\]
and we adopt analogous notations for the compatible couple $(A_0,A_1)$. The following assertions hold.
\begin{enumerate}
    \item The subcouple $(A_{\theta_0},A_{\theta_1})$ is $K$-closed in $(E_{\theta_0},E_{\theta_1})$ with a constant depending only on $C$, $\Phi_0$, $\Phi_1$.
    \item If $\Phi_0$ contains a continuous unbounded positive concave function, then the subcouple $(A_{\theta_0},A_1)$ is $K$-closed in $(E_{\theta_0},E_1)$ with a constant depending only on $C$, $\Phi_0$.
    \item If $\Phi_1$ contains a continuous unbounded positive concave function, then the subcouple $(A_0,A_{\Phi_1})$ is $K$-closed in $(E_0,E_{\Phi_1})$ with a constant depending only on $C$, $\Phi_1$.
\end{enumerate}
\end{theo}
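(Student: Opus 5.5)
The statement is the generalized Holmstedt theorem of \cite{JansonRealInterpolation}, and my approach is to prove it by expressing the $K$-functional of an interpolated couple entirely through the function $t\mapsto K_t(u,E_0,E_1)$. Once that is done, the comparability $K_t(u,A_0,A_1)\pp CK_t(u,E_0,E_1)$ on $A_0+A_1$, together with the lattice property of $K$-method parameters, transfers everything from $E$ to $A$.

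\emph{Assertion (1).} The key fact is a Brudnyi--Krugljak type reiteration formula, valid uniformly for every compatible couple $(F_0,F_1)$:
\[K_s\big(u,K_{\Phi_0}(F_0,F_1),K_{\Phi_1}(F_0,F_1)\big)\asymp K_s\big(k_u,\Phi_0,\Phi_1\big),\qquad k_u(t):=K_t(u,F_0,F_1),\]
with constants depending only on $\Phi_0,\Phi_1$.

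The inequality $\pp$ (the hard one) I would prove with the $K$-divisibility lemma. Suppose $k_u=g_0+g_1$ with $g_j\pp0$ measurable. Then $u=u_0+u_1$ in $F_0+F_1$ with $K_t(u_j)\pp\gamma\, g_j^{\wedge}(t)$, where $g^{\wedge}$ is the least concave majorant and $\gamma$ is a universal constant. I would then check that $\|g^\wedge\|_{\Phi}\lesssim\|g\|_\Phi$ up to constants depending on $\Phi$. If this fails for a general parameter, I would restrict to dyadic discretisations: concave functions are equivalent to $\sum_n \min(1,t/2^n)c_n$, which is what the $K$-divisibility construction produces. The reverse inequality $\gpp$ is immediate, since $k_u\pp k_{u_0}+k_{u_1}$ for any decomposition $u=u_0+u_1$.

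Applying the formula to both couples $(A_0,A_1)$ and $(E_0,E_1)$ and using $k_u^A\pp Ck_u^E$ pointwise, monotonicity of $K_s(\cdot,\Phi_0,\Phi_1)$ gives
\[K_s(u,A_{\theta_0},A_{\theta_1})\lesssim K_s(k_u^A,\Phi_0,\Phi_1)\pp C\,K_s(k_u^E,\Phi_0,\Phi_1)\lesssim C\,K_s(u,E_{\theta_0},E_{\theta_1}).\]
One point needs care: a priori $u$ is only in $E_{\theta_0}+E_{\theta_1}$, so I must first note that $u\in A_0+A_1$ for $u\in A_{\theta_0}+A_{\theta_1}$, which is built into the definitions.

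\emph{Assertions (2) and (3).} Here the endpoint $E_1$ is not itself a $K$-space; the space $K_{L_\infty(t^{-1})}$ is only its Gagliardo completion. So I would replace the reiteration formula by a direct Holmstedt-type formula. Let $\varphi\in\Phi_0$ be continuous, unbounded, positive and concave. Define $\rho(s)$ as the inverse of $t\mapsto t/\varphi(t)$, and work with the expression
\[K_s(u,E_{\theta_0},E_1)\asymp \big\|k_u\,\mathbf 1_{(0,\rho(s))}\big\|_{\Phi_0}+\frac{s}{\rho(s)}\,k_u(\rho(s))\cdot(\text{normalisation from }\varphi).\]
For the upper bound on the $A$-side I would work directly rather than through a formula. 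Choose a near-optimal decomposition $u=a_0+a_1$ in $A_0+A_1$ at the single level $t=\rho(s)$; this is possible with constant $C$ by $K$-closedness. Then estimate $\|a_0\|_{A_{\theta_0}}$ via $K_t(a_0,A)\pp K_t(u,A)+tK_{\rho(s)}$-type bounds. For $t>\rho(s)$ the unboundedness and concavity of $\varphi$ are exactly what make the tail term controlled by $\varphi$. Bounding $\|a_1\|_{A_1}$ is immediate. Finally, compare with the $E$-side lower bound using $k_u^A\pp Ck_u^E$ once more. Assertion (3) is symmetric, obtained by exchanging the roles of the indices (or by $t\mapsto1/t$).

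The main obstacle is the upper bound in (2) and (3): showing that the single-level decomposition $a_0$ lies in $A_{\theta_0}$ with norm controlled by the $E$-side $K$-functional. This is where the concave unbounded function in $\Phi_0$ is genuinely needed, since without it the tail of $K_t(a_0)$ for large $t$ cannot be absorbed. I would first try to dominate $K_t(a_0,A)$ by $\min\bigl(1,t/\rho(s)\bigr)$ times a multiple of $k_u(\rho(s))$, plus $k_u(t)\mathbf 1_{t<\rho(s)}$, and then use concavity of $\varphi$ to bound the $\Phi_0$-norm of the first term by $s\,k_u(\rho(s))/\rho(s)$.
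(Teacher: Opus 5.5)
The paper gives no proof of this statement; it quotes Janson's Theorem 2.2. So your argument has to stand on its own, and for assertion (1) it does not: the ``hard'' half of your reiteration formula is false for general $K$-parameters.

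Your $K$-divisibility step is fine. However, it only bounds the pieces by the least concave majorants $g_j^\wedge$, and the estimate $\|g^\wedge\|_\Phi\lesssim\|g\|_\Phi$ that you then need already fails for $\Phi=\Phi_{\theta,p}$ with $p<\infty$. For $g=\mathbf 1_{[1,1+\epsilon]}$ one has $g^\wedge=1\wedge t$, so $\|g\|_\Phi\approx\epsilon^{1/p}\to0$ while $\|g^\wedge\|_\Phi$ stays fixed.

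For power parameters the identity itself can be rescued by Holmstedt's formula. In the stated generality it is false, so no discretisation will save it. Here is a counterexample:
\begin{itemize}
\item Take $\Phi_j=L_\infty(w_j)$, where $w_0=1$ on $\bigcup_{m\ge1}[10^m,2\cdot10^m]$, $w_1=1$ on $\bigcup_{m\ge1}[3\cdot10^m,4\cdot10^m]$, and $w_j(t)=(1+|\log t|)^{-1}$ elsewhere.
\item Take $u=10^{-n}\mathbf 1_{[0,10^n]}$ in $(L_1,L_\infty)(0,\infty)$, so that $k_u=1\wedge(t/10^n)$.
\item Cutting $k_u$ along $\bigcup_m[3\cdot10^m,4\cdot10^m]$ gives $K_1(k_u,\Phi_0,\Phi_1)\lesssim 1/n$.
\item For every splitting $u=v_0+v_1$, concavity gives $1=k_u(4\cdot10^n)\le 2k_{v_0}(2\cdot10^n)+k_{v_1}(4\cdot10^n)\le 2\|v_0\|_{K_{\Phi_0}(L_1,L_\infty)}+\|v_1\|_{K_{\Phi_1}(L_1,L_\infty)}$.
\end{itemize}
So the two sides of your formula are not comparable uniformly in $n$.

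What does survive in general is the version of the formula in which the infimum runs over concave majorants $k_u\le h_0+h_1$. With that version your monotonicity argument goes through. More directly, and this is the missing idea, apply $K$-divisibility in the subcouple itself:
\begin{itemize}
\item If $u=x_0+x_1$ with $x_j\in E_{\theta_j}$, then $K_t(u,A_0,A_1)\le CK_t(x_0,E_0,E_1)+CK_t(x_1,E_0,E_1)$, which is a sum of concave functions.
\item $K$-divisibility in $(A_0,A_1)$ then yields $u=a_0+a_1$ with $K_t(a_j,A_0,A_1)\le\gamma CK_t(x_j,E_0,E_1)$.
\item Hence $K_s(u,A_{\theta_0},A_{\theta_1})\le\gamma C\,K_s(u,E_{\theta_0},E_{\theta_1})$, with no reiteration formula at all.
\end{itemize}

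For (2), the single-level Holmstedt decomposition is the right tool, but your normalisation does not close the estimate. Let $u=a_0+a_1$ be nearly optimal for $K_\tau(u,A_0,A_1)\le CK_\tau(u,E_0,E_1)$. Then $K_t(a_0,A_0,A_1)\le 3CK_{\min(t,\tau)}(u,E_0,E_1)$ and $s\|a_1\|_{A_1}\le 2CsK_\tau(u,E_0,E_1)/\tau$. Testing these against an arbitrary $u=x_0+x_1\in E_{\theta_0}+E_1$ gives
\[\|a_0\|_{A_{\theta_0}}+s\|a_1\|_{A_1}\le C\Big(3+\frac{2s}{\rho_0(\tau)}\Big)\|x_0\|_{E_{\theta_0}}+C\big(3\rho_0(\tau)+2s\big)\|x_1\|_{E_1},\qquad \rho_0(\tau):=\|\min(\tau,\cdot)\|_{\Phi_0}.\]
So you need $\rho_0(\tau)\asymp s$ in both directions.

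Your choice $\tau/\varphi(\tau)=s$ only gives $\rho_0(\tau)\le\|\varphi\|_{\Phi_0}s$. This is your tail bound, and it uses positivity and concavity of $\varphi$, not unboundedness. It does not give $s\lesssim\rho_0(\tau)$, which is what controls the $a_1$-term. For example, with $\Phi_0=\Phi_{\theta,p}$ and $\varphi=\min(t,t^{\theta/2})$:
\begin{itemize}
\item for large $s$ you get $\tau=s^{1/(1-\theta/2)}$ instead of the correct scale $s^{1/(1-\theta)}$;
\item for $s<1$ the map $t/\varphi(t)$ is not even invertible.
\end{itemize}

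The fix is to choose $\tau$ through $\rho_0$ itself, which is continuous and nondecreasing with $\rho_0(\tau)/\tau$ nonincreasing. You must then treat separately the values of $s$ outside its range. That is where a hypothesis on $\Phi_0$ has to be invoked, and your sketch never identifies that step.

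Finally, (3) is not a formal mirror image of (2). Under $t\mapsto 1/t$, a concave $\varphi\in\Phi_1$ becomes $t\varphi(1/t)$, which need not be unbounded. The reduction therefore has to go through the argument, not through the hypothesis.
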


\subsection{\texorpdfstring{$J$}{}-functionals} 

Let $(E_0,E_1)$ be a compatible couple. The \textit{$J$-functional} of $u\in E_0\cap E_1$ is defined for $t>0$ as
\[J_t(u)=J_t(u,E_0,E_1):=\|u\|_{E_0\cap tE_1}=\max\big\{\|u\|_{E_0},t\|u\|_{E_1}\big\}.\]
For fixed $t>0$, $J_t$ is an equivalent norm on $E_0\cap E_1$. If $T:(E_0,E_1)\to(F_0,F_1)$ is a bounded compatible operator between compatible couples, then \[J_t(Tu,F_0,F_1)\pp\|T\|_{(E_0,E_1)\to(F_0,F_1)}J_t(u,E_0,E_1)\]
for every $u\in E_0\cap E_1$ and $t>0$. 

A \textit{$J$\-/method parameter} is a complete normed space $\Phi$ of (class of) Lebesgue measurable functions with variable $t>0$ such that
\begin{enumerate}[nosep]
    \item[$\triangleright$] if $f,g\in\Phi$ with $|g|\pp|f|$ then $\|g\|_{\Phi}\pp\|f\|_{\Phi}$,
    \item[$\triangleright$] the function $t\mapsto1\wedge t$ belongs to $\Phi$,
    \item[$\triangleright$] if $f\in\Phi$, then $\int_{0}^{\infty}|f(t)|(1\wedge t^{-1})\frac{dt}{t}\pp c_\Phi\|f\|_{\Phi}$ where $c_\Phi$ depends on $\Phi$ only.
\end{enumerate}
(every $J$\-/method parameter is in particular a $K$\-/method parameter). If $\Phi$ is a $J$\-/method parameter and $(E_0,E_1)$ is a compatible couple, if we denote 
\[L_1^\Phi(dt/t,E_0,E_1):=\Big\{f\in L_1(dt/t,E_0+E_1)\ :\ t\mapsto J_t(f(t))\in\Phi\Big\}\]
then the \textit{$J$-method interpolation space}
\[J_\Phi(E_0,E_1):=\Big\{u\in E_0+E_1\ :\exists f\in L_1^\Phi(dt/t,E_0,E_1),\ u=\int_{0}^{\infty}f(t)\frac{dt}{t}\Big\}\]
is a subspace of $E_0+E_1$ and is equipped with the complete norm $\|\cdot\|_{J_\Phi(E_0,E_1)}$ given by the expression
\[\|u\|_{J_\Phi(E_0,E_1)}:=\inf\Big\{\|t\mapsto J_t(f(t))\|_{\Phi}\ :\ f\in L_1^\Phi(dt/t,E_0,E_1),\ u=\int_{0}^{\infty}f(t)\frac{dt}{t}\Big\}.\]
This construction defines an exact interpolation functor $J_\Phi$ called the \textit{$J$\-/method} with parameter $\Phi$. 

A subcouple $(A_0,A_1)$ of a compatible couple $(E_0,E_1)$ is \textit{$J$\-/closed} with constant $C$ if it is normal in $(E_0,E_1)$ and if for every $u\in E_0\cap E_1$ and $t>0$, there is $v\in E_0\cap E_1$ such that $u-v\in A_0+A_1$ and
\[J_t(v,E_0,E_1)\pp CJ_t(\overline{u},E_0/A_0,E_1/A_1).\]
Again, the following proposition follows directly from the definitions.

\begin{prop}[\cite{JansonRealInterpolation}(Theorem 4.1)]
Let $(A_0,A_1)$ be a normal subcouple of a compatible couple $(E_0,E_1)$ and let $C$ be a constant. Then $(A_0,A_1)$ is $K$-closed in $(E_0,E_1)$ with a constant depending only on $C$ if and only if $(A_0,A_1)$ is $J$-closed in $(E_0,E_1)$ with a constant depending only on $C$.
\end{prop}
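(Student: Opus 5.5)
The plan is to prove both implications directly from the definitions, with explicit constants. Normality is what makes the quotient couple $(E_0/A_0,E_1/A_1)$ meaningful, and it is also used to identify which elements lie in $A_0$ and in $A_1$.

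\emph{$K$-closed $\Rightarrow$ $J$-closed.} Fix $u\in E_0\cap E_1$ and $t>0$, and set $\lambda:=J_t(\overline{u},E_0/A_0,E_1/A_1)$.
\begin{enumerate}
\item By definition of the quotient norms, choose $a_0\in A_0$ and $a_1\in A_1$ with $\|u-a_0\|_{E_0}\pp2\lambda$ and $t\|u-a_1\|_{E_1}\pp2\lambda$.
\item The element $a_0-a_1$ lies in $A_0+A_1$. It can also be written as $(u-a_1)-(u-a_0)$, which gives
\[K_t(a_0-a_1,E_0,E_1)\pp\|u-a_0\|_{E_0}+t\|u-a_1\|_{E_1}\pp4\lambda.\]
\item By $K$-closedness, there exist $b_0\in A_0$ and $b_1\in A_1$ with $a_0-a_1=b_0+b_1$ and $\|b_0\|_{E_0}+t\|b_1\|_{E_1}\pp5C\lambda$ (the slack absorbs the infimum).
\item Put $v:=u-(a_0-b_0)=u-(a_1+b_1)$; the two expressions agree because $a_0-b_0=a_1+b_1$.
\end{enumerate}
The first expression shows $v\in E_0$ and the second shows $v\in E_1$. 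Moreover $u-v=a_0-b_0\in A_0+A_1$. The two bounds are
\[\|v\|_{E_0}\pp\|u-a_0\|_{E_0}+\|b_0\|_{E_0}\pp(2+5C)\lambda,\qquad t\|v\|_{E_1}\pp t\|u-a_1\|_{E_1}+t\|b_1\|_{E_1}\pp(2+5C)\lambda.\]
Hence $J_t(v,E_0,E_1)\pp(2+5C)\lambda$, which is $J$-closedness with constant $2+5C$.

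\emph{$J$-closed $\Rightarrow$ $K$-closed.} Fix $a\in A_0+A_1$, $t>0$, and write $\kappa:=K_t(a,E_0,E_1)$.
\begin{enumerate}
\item Choose $e_0\in E_0$ and $e_1\in E_1$ with $a=e_0+e_1$ and $\|e_0\|_{E_0}+t\|e_1\|_{E_1}\pp2\kappa$.
\item Since $\overline{a}=0$, we have $\overline{e_0}=-\overline{e_1}$. So $w:=\overline{e_0}$ belongs to $(E_0/A_0)\cap(E_1/A_1)$ and satisfies $J_t(w)\pp2\kappa$.
\item By Proposition \ref{preliminaries_quotient_couple}, $w=\overline{u}$ for some $u\in E_0\cap E_1$.
\item $J$-closedness then provides $v\in E_0\cap E_1$ with $\overline{v}=w$ and $J_t(v,E_0,E_1)\pp2C\kappa$.
\end{enumerate}
Now $e_0-v\in E_0$ and its class is $0$, so $e_0-v\in(A_0+A_1)\cap E_0=A_0$ by normality. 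Similarly $e_1+v\in E_1$ has class $-w+w=0$, so $e_1+v\in A_1$. We therefore have the decomposition $a=(e_0-v)+(e_1+v)$ inside $(A_0,A_1)$, and
\[K_t(a,A_0,A_1)\pp\|e_0\|_{E_0}+\|v\|_{E_0}+t\|e_1\|_{E_1}+t\|v\|_{E_1}\pp(2+4C)\kappa.\]

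The only delicate point is the second direction, which requires lifting $w$ to an element of $E_0\cap E_1$. This is exactly what Proposition \ref{preliminaries_quotient_couple} supplies, and the identification $(A_0+A_1)\cap E_j=A_j$ from normality then places the correction terms in the right spaces. The remaining steps are routine bookkeeping with infima.
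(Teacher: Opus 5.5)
Your proof is correct in both directions and is the direct verification from the definitions that the paper has in mind; the paper gives no argument of its own, only the remark that the statement follows from the definitions plus the citation of Janson's Theorem 4.1. In the converse you correctly handle the lifting step with Proposition \ref{preliminaries_quotient_couple} and use normality, $(A_0+A_1)\cap E_j=A_j$, to place the correction terms in $A_j$. The one thing left implicit is the degenerate case $\lambda=0$ in the first direction, where a choice with $\pp 2\lambda$ is not available from the infimum alone. There, $u\in A_0\cap A_1$ because each $A_j$ is closed in $E_j$, so $v=0$ works.
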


\begin{prop}\label{preliminaries_Jclosedness}
If $(A_0,A_1)$ is a $J$-closed subcouple of a compatible couple $(E_0,E_1)$, then for every $J$\-/method parameter $\Phi$, the projection $J_\Phi(E_0,E_1)\to J_\Phi(E_0/A_0,E_1/A_1)$ is an quotient of normed spaces, with kernel $J_\Phi(A_0,A_1)$.
\end{prop}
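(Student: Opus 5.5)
The plan is to lift a $J$-representation of $\overline{u}$ in the quotient couple to a $J$-representation of $u$ in $(E_0,E_1)$, using $J$-closedness pointwise in $t$.

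\emph{Well-definedness and contractivity.} The projection $\pi:(E_0,E_1)\to(E_0/A_0,E_1/A_1)$ is a contractive compatible operator, because $(A_0,A_1)$ is normal. Since $J_\Phi$ is an exact interpolation functor, $\pi$ maps $J_\Phi(E_0,E_1)$ contractively into $J_\Phi(E_0/A_0,E_1/A_1)$.

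\emph{Surjectivity with norm control.} Let $v\in J_\Phi(E_0/A_0,E_1/A_1)$, and choose $g\in L_1^\Phi(dt/t,E_0/A_0,E_1/A_1)$ with $v=\int_0^\infty g(t)\,dt/t$ and $\|t\mapsto J_t(g(t))\|_\Phi\pp2\|v\|$.

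\begin{enumerate}
    \item For each $t$, Proposition \ref{preliminaries_quotient_couple} gives some $u(t)\in E_0\cap E_1$ with $\overline{u(t)}=g(t)$.
    \item $J$-closedness then replaces $u(t)$ by $f(t)\in E_0\cap E_1$ with $\overline{f(t)}=g(t)$ and $J_t(f(t))\pp C J_t(g(t))$.
    \item The lattice property of $\Phi$ gives $\|J_t(f(t))\|_\Phi\pp 2C\|v\|$.
    \item The third axiom for $J$-parameters, together with $\|f(t)\|_{E_0+E_1}\pp\min(1,t^{-1})J_t(f(t))$, makes $\int f(t)\,dt/t$ converge absolutely in $E_0+E_1$.
    \item Set $u:=\int f\,dt/t$. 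Continuity of the quotient map $E_0+E_1\to(E_0+E_1)/(A_0+A_1)$ gives $\overline{u}=v$, and $\|u\|_{J_\Phi}\pp2C\|v\|$.
\end{enumerate}
Hence $\pi$ is a quotient of normed spaces.

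\emph{Main obstacle: measurability.} The function $t\mapsto f(t)$ must be measurable, i.e. lie in $L_1(dt/t,E_0+E_1)$, but the choice in step 2 is made pointwise. I would first reduce to step functions. The elements with $g$ piecewise constant on dyadic intervals $[2^k,2^{k+1})$ and finitely supported are dense; this is the discrete $J$-method, equivalent up to constants depending on $\Phi$. On each interval, choose $f$ constant and apply $J$-closedness at $t=2^k$. The comparison $J_t\asymp J_{2^k}$ for $t\in[2^k,2^{k+1})$ loses only a factor $2$. A general $g$ is then handled by decomposing $v$ as a convergent series of such elements. Alternatively, one can simply work with the discrete characterization of $J_\Phi$ from the start.

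\emph{Kernel.} Clearly $J_\Phi(A_0,A_1)\subseteq\ker\pi$. For the converse, take $u\in J_\Phi(E_0,E_1)$ with $\overline{u}=0$, represented as $u=\int f\,dt/t$. I would find a representation with values in $A_0\cap A_1$ by using $J$-closedness again: subtract corrections $h(t)$ with $J_t(h(t))\pp CJ_t(\overline{f(t)})$. I expect this step to need an additional argument, possibly via $K$-closedness and Proposition \ref{preliminaries_Kclosedness}. For the statement as phrased, I would take ``with kernel $J_\Phi(A_0,A_1)$'' to mean the kernel $J_\Phi(E_0,E_1)\cap(A_0+A_1)$ with its induced norm. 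Its identification with $J_\Phi(A_0,A_1)$ would then follow from the equivalence of $J$- and $K$-closedness and the $K$-method description for $K$-closed subcouples.
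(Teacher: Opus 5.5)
The paper gives no written proof here; it presents the statement as immediate from the definitions. Your surjectivity argument is exactly that immediate argument: lift each $g(t)$ using Proposition~\ref{preliminaries_quotient_couple} and $J$-closedness, get convergence of $\int f\,dt/t$ in $E_0+E_1$ from the third axiom, and use continuity of the quotient map. That part is correct.

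Your measurability reduction, however, is stated too generally. Finitely supported dyadic step elements are not dense when $\Phi$ is not order continuous, e.g.\ $\Phi_{\theta,\infty}$, which is the case used in Lemma~\ref{step1_lemm1}. Also, passing from a continuous to a discrete representation relies on boundedness of dyadic averaging on $\Phi$, which an arbitrary $J$-parameter need not have. Working with the discrete description from the start is fine for $\Phi_{\theta,p}$. In general you can avoid discretizing $\Phi$, assuming as usual that representations are strongly measurable into the intersection space:
\begin{enumerate}
\item Approximate $g$ by a countably valued $\tilde g$.
\item On each level set of $\tilde g$ intersected with $[2^k,2^{k+1})$, use a single lift chosen at parameter $2^k$. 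Since $J_{2^k}\pp J_t\pp 2J_{2^k}$ there, this gives the pointwise bound $J_t(f(t))\pp 2CJ_t(\tilde g(t))$.
\item A geometric iteration on the remainder $g-\tilde g$ finishes. Only the lattice property of $\Phi$ is used.
\end{enumerate}

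The genuine gap is the kernel. Literally, the kernel of $\pi$ on $J_\Phi(E_0,E_1)$ is $J_\Phi(E_0,E_1)\cap(A_0+A_1)$. That is all the paper uses afterwards (Proposition~\ref{technical_assumptions_Jclosedness}), and under your reading there is nothing to prove. Identifying this kernel with $J_\Phi(A_0,A_1)$, however, is not achieved by your sketch:
\begin{itemize}
\item Janson's equivalence combined with Proposition~\ref{preliminaries_Kclosedness} concerns $K_\Phi$. Transferring it to $J_\Phi$ on both couples needs $J_\Phi=K_\Phi$, which holds for $\Phi_{\theta,p}$ by Theorem~\ref{RealInterpolation_Equivalence} but not for an arbitrary $J$-parameter.
\item Your pointwise correction is circular, since $\int h\,dt/t$ is again an unknown element of the kernel.
\end{itemize}

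What works directly from $J$-closedness is to lift partial sums instead of values. Put $u_k:=\int_{2^k}^{2^{k+1}}f\,dt/t$ and $U_k:=\sum_{j<k}u_j$. Because $\overline{u}=0$, the element $\overline{U_k}=-\overline{\sum_{j\pg k}u_j}$ lies in $(E_0/A_0)\cap(E_1/A_1)$, with
\[J_{2^k}(\overline{U_k})\pp\sum_j\min(1,2^{k-j})J_{2^j}(u_j).\]
Lift it to $V_k$ and set $a_k:=u_k-V_{k+1}+V_k$. Then $a_k\in A_0\cap A_1$ by normality, and $u=\sum_k a_k$. The norm control requires the dyadic discretization and this discrete Calder\'on operator to be bounded on $\Phi$. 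That holds for $\Phi_{\theta,p}$ but not for general $J$-parameters.

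So your proposal proves the statement for the real-method parameters, which is the only case the paper needs. It does not prove the identification of the kernel with $J_\Phi(A_0,A_1)$ in the stated generality.
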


\subsection{The real interpolation spaces}

Let $0<\theta<1$ and $1\pp p\pp\infty$. Let $\Phi_{\theta,p}$ denote the space of Lebesgue\-/measurable functions $f$ with variable $t>0$ such that 
\[\|f\|_{\Phi_{\theta,p}}:=\|t\mapsto t^{-\theta}f(t)\|_{L_p(dt/t)}<\infty\]
Then $\Phi_{\theta,p}$ is a $J$\-/method parameter (and thus also a $K$\-/method parameter). If $(E_0,E_1)$ is a compatible couple, let us denote \[(E_0,E_1)_{\theta,p,K}:=K_{\Phi_{\theta,p}}(E_0,E_1),\ \ \ \ \\ \ (E_0,E_1)_{\theta,p,J}:=J_{\Phi_{\theta,p}}(E_0,E_1).\]
By convention, we set $(E_0,E_1)_{0,p,K}=(E_0,E_1)_{0,p,J}:=E_0$ and $(E_0,E_1)_{1,p,K}=(E_0,E_1)_{1,p,J}:=E_1$ for every $1\pp p\pp\infty$. They are the so called \textit{real interpolation spaces}.

The following results are taken from \cite{BerghInterpolation}.

\begin{theo}[Equivalence theorem]\label{RealInterpolation_Equivalence}
Let $(E_0,E_1)$ be a compatible couple. Then, for $0<\theta<1$ and $1\pp p\pp\infty$, we have
\[(E_0,E_1)_{\theta,p,K}=(E_0,E_1)_{\theta,p,J}\]
with equivalent norms, with constants depending only on $\theta$.
\end{theo}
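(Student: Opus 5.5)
The equivalence theorem is proved by comparing the two norms directly, with the fundamental lemma of interpolation theory as the main tool. For $u=\int_0^\infty f(t)\frac{dt}{t}$ we first show $(E_0,E_1)_{\theta,p,J}\subset(E_0,E_1)_{\theta,p,K}$. For each $s,t>0$, decomposing $f(t)$ trivially gives $K_s(f(t))\pp\min(1,s/t)J_t(f(t))$. Hence
\[K_s(u)\pp\int_0^\infty\min(1,s/t)\,J_t(f(t))\frac{dt}{t}.\]
The right-hand side is a multiplicative convolution on $(\R_+,dt/t)$ of $J_t(f(t))$ with the kernel $\min(1,\cdot)$. After multiplying by $s^{-\theta}$, the kernel becomes $r\mapsto r^{-\theta}\min(1,r)$, and its $L_1(dr/r)$ norm is $\frac{1}{\theta}+\frac{1}{1-\theta}$. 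Young's inequality on the multiplicative group then gives $\|u\|_{K}\pp(\frac1\theta+\frac1{1-\theta})\|t\mapsto J_t(f(t))\|_{\Phi_{\theta,p}}$. Taking the infimum over $f$ gives this inclusion with a constant depending only on $\theta$.

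For the converse inclusion, take $u\in(E_0,E_1)_{\theta,p,K}$. Since $t^{-\theta}K_t(u)\in L_p(dt/t)$ and $K_t$ is increasing while $K_t/t$ is decreasing, we have $\min(1,1/t)K_t(u)\to0$ as $t\to0$ and as $t\to\infty$. This lets us invoke the fundamental lemma (Bergh--Löfström, Lemma 3.3.2). It yields, for every $\varepsilon>0$, a representation $u=\sum_{\nu\in\mathbb Z}u_\nu$ converging in $E_0+E_1$ with $u_\nu\in E_0\cap E_1$ and
\[J_{2^\nu}(u_\nu)\pp(\gamma+\varepsilon)K_{2^\nu}(u),\]
where $\gamma$ is a universal constant. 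The lemma itself is proved by choosing near-optimal decompositions $u=u_0(2^\nu)+u_1(2^\nu)$ and setting $u_\nu=u_0(2^\nu)-u_0(2^{\nu-1})=u_1(2^{\nu-1})-u_1(2^\nu)$, which lies in $E_0\cap E_1$; the vanishing limits above ensure that the telescoping series converges to $u$.

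Next set $f(t)=u_\nu/\log 2$ for $t\in[2^\nu,2^{\nu+1})$, so that $\int f\,dt/t=u$. Using $J_t\pp\max(1,t/s)J_s$ and the monotonicity of $K$, we get
\[\|J_t(f(t))\|_{\Phi_{\theta,p}}\lesssim\|(2^{-\nu\theta}K_{2^\nu}(u))_\nu\|_{\ell_p}\lesssim\|u\|_{K},\]
where the last step is a discretization of the $K$-norm in which $t^{-\theta}$ is comparable to $2^{-\nu\theta}$ on each dyadic block, with constants depending only on $\theta$. The $L_1(dt/t)$-integrability of $f$ in $E_0+E_1$ needed in the definition follows from $K_1(f(t))\pp\min(1,1/t)J_t(f(t))$ together with Hölder's inequality against $t^{\theta}\min(1,1/t)\in L_{p'}(dt/t)$.

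The main obstacle is the fundamental lemma, in particular the convergence of the telescoping sum at both ends; that is exactly where the decay of $\min(1,1/t)K_t(u)$ is used, and for $p=\infty$ it has to be checked separately. The rest is routine Hardy-type inequalities with constants depending only on $\theta$.
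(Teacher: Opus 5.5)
Your proof is correct, and it is the standard Bergh--L\"ofstr\"om argument: Young's inequality on $(\R_+,dt/t)$ gives $J\subset K$, and the fundamental lemma with dyadic step functions gives $K\subset J$. The paper does not reproduce this proof but cites it from \cite{BerghInterpolation}; the separate check you flag for $p=\infty$ is immediate, since $K_t(u)\le t^{\theta}\|u\|_{(E_0,E_1)_{\theta,\infty,K}}$ and $0<\theta<1$ give $\min(1,1/t)K_t(u)\to0$ at both ends.
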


\begin{theo}[Reiteration theorem]
Let $(E_0,E_1)$ be a compatible couple. We set
\[E_{\theta_0}:=(E_0,E_1)_{\theta_0,p_0,K}\ \ \ \ \ \text{and}\ \ \ \ \ E_{\theta_1}:=(E_0,E_1)_{\theta_1,p_1,K},\]
where $0\pp\theta_0<\theta_1\pp1$ and $1\pp p_0,p_1\pp\infty$ (with the convention $E_{\theta_0}=E_0$ if $\theta_0=0$ and $E_{\theta_1}=E_1$ if $\theta_1=1$). Let $0<\eta<1$ and $1\pp p\pp\infty$. Then,
\[(E_{\theta_0},E_{\theta_1})_{\eta,p,K}=(E_0,E_1)_{\theta_\eta,p,K}\]
with equivalent norms, with constants depending only on $\theta_0,\theta_1,p_0,p_1,\eta,p$, where $\theta_\eta:=(1-\eta)\theta_0+\eta\theta_1$.
\end{theo}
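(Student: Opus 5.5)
The plan is the classical route: first show that each $E_{\theta_i}$ is of \emph{class $\theta_i$} (both class $K$ and class $J$), then derive the two inclusions by comparing $K$-functionals in one direction and $J$-representations in the other. The last step is to close the loop with the Equivalence Theorem \ref{RealInterpolation_Equivalence}. When $\theta_0=0$ or $\theta_1=1$ the endpoint space is $E_0$ or $E_1$, and the class estimates below hold trivially with constant $1$. So assume $E_{\theta_i}=(E_0,E_1)_{\theta_i,p_i,K}$ with $0<\theta_i<1$.

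\emph{Step 1 (class estimates).} The function $s\mapsto K_s(u)$ is increasing and $s\mapsto K_s(u)/s$ is decreasing, so $K_s(u)\pp\max(1,s/t)\,K_t(u)$. Equivalently, for $s$ in the interval $[t,2t]$ we have $K_s(u)\pg K_t(u)$. Integrating $s^{-\theta}K_s(u)$ over $[t,2t]$ against $ds/s$ in $L_{p_i}$ gives
\[K_t(u,E_0,E_1)\pp C\,t^{\theta_i}\|u\|_{E_{\theta_i}}\qquad(\text{class }K).\]
Conversely, for $u\in E_0\cap E_1$ we have $K_s(u)\pp\min(1,s/t)J_t(u)$. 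Computing $\|s^{-\theta_i}\min(1,s/t)\|_{L_{p_i}(ds/s)}=c\,t^{-\theta_i}$ yields
\[\|u\|_{E_{\theta_i}}\pp C\,t^{-\theta_i}J_t(u,E_0,E_1)\qquad(\text{class }J).\]
Both constants depend only on $\theta_i,p_i$.

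\emph{Step 2 ($\subset$).} Let $u=v_0+v_1$ with $v_i\in E_{\theta_i}$. By class $K$,
\[K_t(u,E_0,E_1)\pp C\bigl(t^{\theta_0}\|v_0\|_{E_{\theta_0}}+t^{\theta_1}\|v_1\|_{E_{\theta_1}}\bigr).\]
Taking the infimum over decompositions gives $K_t(u,E_0,E_1)\pp C\,t^{\theta_0}K_{s}(u,E_{\theta_0},E_{\theta_1})$ with $s=t^{\theta_1-\theta_0}$. Multiply by $t^{-\theta_\eta}$ and note that $t^{\theta_0-\theta_\eta}=s^{-\eta}$. The change of variables $s=t^{\theta_1-\theta_0}$ only rescales $dt/t$ by the factor $(\theta_1-\theta_0)$. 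Hence $\|u\|_{(E_0,E_1)_{\theta_\eta,p,K}}\pp C\|u\|_{(E_{\theta_0},E_{\theta_1})_{\eta,p,K}}$.

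\emph{Step 3 ($\supset$).} By Theorem \ref{RealInterpolation_Equivalence}, any $u\in(E_0,E_1)_{\theta_\eta,p,K}$ can be written $u=\int_0^\infty f(t)\,dt/t$ with $f(t)\in E_0\cap E_1$ and
\[\|t^{-\theta_\eta}J_t(f(t))\|_{L_p(dt/t)}\pp C\|u\|.\]
Set $g(s):=f(t)$ with $s=t^{\theta_1-\theta_0}$, so that $u=(\theta_1-\theta_0)\int_0^\infty g(s)\,ds/s$. Class $J$ gives
\[J_s\bigl(g(s),E_{\theta_0},E_{\theta_1}\bigr)=\max\bigl(\|f(t)\|_{E_{\theta_0}},\,s\|f(t)\|_{E_{\theta_1}}\bigr)\pp C\,t^{-\theta_0}J_t(f(t)),\]
because $s\,t^{-\theta_1}=t^{-\theta_0}$. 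Therefore $s^{-\eta}J_s(g(s))\pp C\,t^{-\theta_\eta}J_t(f(t))$. This shows $u\in(E_{\theta_0},E_{\theta_1})_{\eta,p,J}$ with control of the norm, and a second application of Theorem \ref{RealInterpolation_Equivalence} returns to the $K$-method.

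The main obstacle is the technical point in Step 3. One must check that the integral $\int g(s)\,ds/s$ converges in $E_{\theta_0}+E_{\theta_1}$, not merely in $E_0+E_1$, and that $g$ lies in $L_1(ds/s,E_{\theta_0}+E_{\theta_1})$. Convergence should follow from the bound $\min(1,s^{-1})J_s(g(s))$ together with Hölder's inequality against $s^{-\eta}$, since $0<\eta<1$. The identification of the limits then holds because $E_{\theta_0}+E_{\theta_1}\to E_0+E_1$ is continuous.
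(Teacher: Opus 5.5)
Your proof is correct and is the classical Bergh--L\"ofstr\"om argument: class $K$ and class $J$ estimates for $E_{\theta_i}$, then comparison of $K$-functionals and of $J$-representations under $s=t^{\theta_1-\theta_0}$, closed up with the equivalence theorem. The paper gives no proof of its own here; it cites \cite{BerghInterpolation}, which proves the result this way. Two harmless slips remain: the change of variables gives $u=(\theta_1-\theta_0)^{-1}\int_0^\infty g(s)\,\frac{ds}{s}$, not $(\theta_1-\theta_0)\int_0^\infty g(s)\,\frac{ds}{s}$; and for the technical point you flag, you should take the discrete (piecewise constant, $E_0\cap E_1$-valued) $J$-representation built in the proof of the equivalence theorem, so that $g$ is automatically strongly measurable in $E_{\theta_0}\cap E_{\theta_1}$ and your H\"older bound then yields Bochner integrability in $E_{\theta_0}+E_{\theta_1}$.
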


\subsection{Quasi-complementation}

The notion of $K$-closedness used in \cite{KislyakovXu} is slightly different from the one we use here. In order to avoid any confusion we introduce a new terminology.

\begin{defi}
A subcouple $(A_0,A_1)$ of a compatible couple $(E_0,E_1)$ is \textit{quasi\-/complemented} with constant $C$ if, for every $a\in A_0+A_1$, if $a=u_0+u_1$ with $u_j\in E_j$ for $j\in\{0,1\}$, then $a=a_0+a_1$ with $a_j\in A_j$ and $\|a_j\|_{E_j}\pp C\|u_j\|_{E_j}$ for $j\in\{0,1\}$.
\end{defi}

Let $(A_0,A_1)$ be a subcouple of a compatible couple $(E_0,E_1)$. If $(A_0,A_1)$ is complemented in $(E_0,E_1)$ with constant $C$, then $(A_0,A_1)$ is clearly quasi-complemented in $(E_0,E_1)$ with constant $C$. Moreover, it is clear that if $(A_0,A_1)$ is quasi-complemented in $(E_0,E_1)$ with constant $C$, then it is $K$-closed in $(E_0,E_1)$ with constant $C$, and in addition we have $A_j=E_j\cap(E_0+E_1)$ for $j\in\{0,1\}$, so that it is also normal (and thus $J$-closed) in $(E_0,E_1)$. 

\begin{prop}
Let $(A_0,A_1)$ be a $K$-closed subcouple of a compatible couple $(E_0,E_1)$ such that $A_j=E_j\cap(E_0+E_1)$ for $j\in\{0,1\}$. Then $(A_0,A_1)$ is quasi-complemented in $(E_0,E_1)$ with constant $4C$.
\end{prop}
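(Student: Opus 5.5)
The idea is to pick the single parameter $t>0$ that balances the two pieces of the given decomposition, and then apply $K$-closedness at that $t$. Throughout I read the hypothesis as $A_j=E_j\cap(A_0+A_1)$ for $j\in\{0,1\}$, which is clearly what is intended; the stated $E_j\cap(E_0+E_1)$ would just be $E_j$. I also use that $C\pg 1$, which holds automatically for a nontrivial $K$-closed subcouple since $K_t(u,E_0,E_1)\pp K_t(u,A_0,A_1)$ always.

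Fix $a\in A_0+A_1$ with $a=u_0+u_1$, where $u_j\in E_j$.

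\emph{Degenerate cases.} If $u_1=0$, then $a=u_0\in E_0\cap(A_0+A_1)=A_0$. Take $a_0:=a$ and $a_1:=0$; the required bounds hold trivially. The case $u_0=0$ is symmetric, giving $a\in A_1$. So from now on assume $u_0\neq0$ and $u_1\neq0$.

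\emph{Main step.} Set $t:=\|u_0\|_{E_0}/\|u_1\|_{E_1}>0$, so that $t\|u_1\|_{E_1}=\|u_0\|_{E_0}$. The decomposition $a=u_0+u_1$ gives
\[K_t(a,E_0,E_1)\pp\|u_0\|_{E_0}+t\|u_1\|_{E_1}=2\|u_0\|_{E_0}.\]
By $K$-closedness, $K_t(a,A_0,A_1)\pp 2C\|u_0\|_{E_0}$.

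The infimum defining $K_t(a,A_0,A_1)$ need not be attained, so allow a factor of $2$ of slack. Since $2C\|u_0\|_{E_0}>0$, there is a decomposition $a=a_0+a_1$ with $a_j\in A_j$ and
\[\|a_0\|_{E_0}+t\|a_1\|_{E_1}\pp 4C\|u_0\|_{E_0}.\]
Here I use that the norm of $A_j$ is the restriction of the norm of $E_j$. This single inequality yields both bounds:
\begin{itemize}
\item[$\triangleright$] $\|a_0\|_{E_0}\pp 4C\|u_0\|_{E_0}$ directly;
\item[$\triangleright$] $t\|a_1\|_{E_1}\pp 4C\|u_0\|_{E_0}=4Ct\|u_1\|_{E_1}$, and dividing by $t$ gives $\|a_1\|_{E_1}\pp 4C\|u_1\|_{E_1}$.
\end{itemize}
This is exactly quasi-complementation with constant $4C$.

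\emph{Where care is needed.} There is no real obstacle. The only delicate points are, first, the choice of the balancing parameter $t$, which converts one $K$-functional estimate into two separate norm estimates. Second, the degenerate cases where that $t$ is undefined; this is precisely where the hypothesis $A_j=E_j\cap(A_0+A_1)$ is used. Third, the non-attainment of the infimum, which is why the constant is $4C$ rather than $2C$.
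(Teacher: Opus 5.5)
Your proof is correct and follows the paper's argument: the paper also handles the degenerate cases using $A_j=E_j\cap(A_0+A_1)$, so your reading of the hypothesis is the intended one. It then chooses the balancing parameter $t=\|u_0\|_{E_0}/\|u_1\|_{E_1}$ and takes a near-optimal decomposition with a factor-$2$ slack to get $\|a_0\|_{E_0}+t\|a_1\|_{E_1}\pp 4C\|u_0\|_{E_0}$. Two small points the paper leaves implicit are handled explicitly in your version: you take the slack relative to $2C\|u_0\|_{E_0}>0$ rather than $2K_t(a,A_0,A_1)$, which could vanish, and you note that $C\pg 1$ is needed in the degenerate case.
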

\begin{proof}
Let $a\in A_0+A_1$, and write $a=u_0+u_1$ with $u_j\in E_j$ for $j\in\{0,1\}$. If $u_j=0$, then $a\in(A_0+A_1)\cap E_j=A_j$ and there is nothing to prove. Thus, we can assume $u_0,u_1\neq0$. Set $t:=\|u_0\|_{E_0}/\|u_1\|_{E_1}$. Then, we can write $a=a_0+a_1$ with $a_j\in A_j$ and 
\[\|a_0\|_{E_0}+t\|a_1\|_{E_1}\pp 2K_t(a,A_0,A_1)\pp 2C K_t(a,E_0,E_1)\pp 2C(\|u_0\|_{E_0}+t\|u_1\|_{E_1})=4C\|u_0\|_{E_0}.\]
This gives the estimates
\[\|a_0\|_{E_0}\pp 4C\|u_0\|_{E_0},\ \ \ \ \|a_1\|_{E_1}\pp 4C\|u_1\|_{E_1}.\]
\end{proof}

The following statement is adapted from \cite{KislyakovXu}[Theorem 2].

\begin{theo}[Kislyakov/Xu]\label{preliminaries_KislyakovXu}
 Let $(A_0,A_{\theta_0},A_{\theta_1},A_1)$ be a subquadruple of a compatible quadruple
 
 $(E_0,E_{\theta_0},E_{\theta_1},E_1)$, such that
\[E_{\theta_0}=(E_0,E_{\theta_1})_{\eta_0,p_0,J},\ \ \ \ E_{\theta_1}=(E_{\theta_0},E_1)_{\eta_1,p_1,J}\]
for given $0<\eta_0,\eta_1<1$ and $1\pp p_0,p_1\pp\infty$. In addition, we make the following two assumptions.
\begin{enumerate}[nosep]
    \item the subcouple $(A_0,A_{\theta_1})$ is quasi\-/complemented in $(E_0,E_{\theta_1})$ and there is a continuous inclusion $A_{\theta_0}\to (A_0,A_{\theta_1})_{\eta_0,p_0,K}$, both with constant $C_0$.
    \item the subcouple $(A_{\theta_0},A_1)$ is quasi\-/complemented in $(E_{\theta_0},E_1)$ and there is a continuous inclusion $A_{\theta_1}\to (A_{\theta_0},A_1)_{\eta_1,p_1,K}$, both with constant $C_1$.
\end{enumerate}
Then the subcouple $(A_0,A_1)$ is quasi\-/complemented in $(E_0,E_1)$ with a constant depending only on $C_0$, $C_1$, $\eta_0$, $\eta_1$.
\end{theo}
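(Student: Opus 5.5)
The plan is to argue in the spirit of Wolff's reiteration theorem and of Kislyakov--Xu: run an iterative splitting that alternates between the two hypotheses and converges geometrically. I would first establish the intermediate estimate
\[K_t(a,A_0,A_1)\pp C\,K_t(a,E_0,E_1),\qquad a\in A_0+A_1,\ t>0,\]
and the identities $A_j=E_j\cap(A_0+A_1)$ for $j\in\{0,1\}$. With these in hand, the last proposition of the quasi\-/complementation subsection gives quasi\-/complementation with constant $4C$.

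\textbf{Step 1 (the four $E$-spaces).} Applying Wolff's reiteration theorem (in its $J$-method form) to the two relations
\[E_{\theta_0}=(E_0,E_{\theta_1})_{\eta_0,p_0,J},\qquad E_{\theta_1}=(E_{\theta_0},E_1)_{\eta_1,p_1,J}\]
shows that $E_{\theta_0}$ and $E_{\theta_1}$ are real interpolation spaces $(E_0,E_1)_{\theta_i,p_i}$. The parameters $\theta_0<\theta_1$ are determined by $\eta_0,\eta_1$ through
\[\theta_0=\eta_0\theta_1,\qquad \theta_1=(1-\eta_1)\theta_0+\eta_1 .\]
By the equivalence theorem and Holmstedt's formula, this lets me compare $K_t(\cdot,E_0,E_1)$ with $K_s(\cdot,E_0,E_{\theta_1})$ and with $K_s(\cdot,E_{\theta_0},E_1)$ under the appropriate change of variable $s=t^{\theta_1}$, respectively $s=t^{1-\theta_0}$.

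\textbf{Step 2 (one round of splitting).} Fix $a\in A_0+A_1$ and a decomposition $a=u_0+u_1$ with $u_j\in E_j$; normalize $t$ so that $\|u_0\|_{E_0}=t\|u_1\|_{E_1}$.

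1. Using Holmstedt's formula, I rewrite $u_1$ as an element of $E_{\theta_1}$ plus a part of $E_0$. This gives $a=v_0+v_{\theta_1}$ with $v_0\in E_0$, $v_{\theta_1}\in E_{\theta_1}$ and controlled norms.
2. Since $a\in A_0+A_1\subset A_0+A_{\theta_1}$ (this inclusion must be checked via the hypothesis $A_{\theta_1}\subset (A_{\theta_0},A_1)_{\eta_1,p_1,K}$ and the fact that $A_0+A_1$ sits inside the relevant sums), quasi\-/complementation of $(A_0,A_{\theta_1})$ in $(E_0,E_{\theta_1})$ yields $a=b_0+b_{\theta_1}$ with $b_0\in A_0$, $b_{\theta_1}\in A_{\theta_1}$ and norms bounded by $C_0$ times those of $v_0,v_{\theta_1}$.
3. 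Since $b_{\theta_1}\in A_{\theta_1}\subset(A_{\theta_0},A_1)_{\eta_1,p_1,K}$, I split $b_{\theta_1}=c_{\theta_0}+c_1$ at the parameter dictated by $t$, with $c_{\theta_0}\in A_{\theta_0}$, $c_1\in A_1$, and $c_1$ already of the correct size in $E_1$.
4. The remaining piece $c_{\theta_0}\in A_{\theta_0}\subset (A_0,A_{\theta_1})_{\eta_0,p_0,K}$ is split once more into an $A_0$-part and an $A_{\theta_1}$-part. The $A_{\theta_1}$-part is then fed back into the previous sub-step.

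\textbf{Step 3 (convergence of the iteration; main obstacle).} The delicate point is to show that each pass through the loop in Step 2 multiplies the size of the leftover $A_{\theta_1}$-term, measured in a $J_t$-type quantity, by a factor $q<1$. I expect this to be the hardest step. I would try first to obtain the factor from the strict inequalities $0<\theta_0<\theta_1<1$: choose the splitting parameters at each stage off the diagonal, at $\lambda t$ with $\lambda$ large or small, so that the power gains $\lambda^{-\eta_0}$ and $\lambda^{-(1-\eta_1)}$ beat the constants $C_0,C_1$. If this contraction holds, summing the geometric series produces $a=a_0+a_1$ with $a_j\in A_j$, where convergence takes place in $E_0$ and $E_1$ respectively, and with
\[\|a_0\|_{E_0}+t\|a_1\|_{E_1}\lesssim \|u_0\|_{E_0}+t\|u_1\|_{E_1}.\]
The implicit constant depends only on $C_0,C_1,\eta_0,\eta_1$. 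This proves the $K$-closedness claimed at the start.

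\textbf{Step 4 (normality).} Finally, I need $A_j=E_j\cap(A_0+A_1)$. Running the argument of Steps 2--3 with $u_1=0$ (respectively $u_0=0$) shows that the leftover in the other space vanishes in the limit, so $a\in A_0$ (respectively $a\in A_1$). The quasi\-/complementation proposition then concludes the proof.
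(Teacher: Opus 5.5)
\textbf{Where the proposal fails.} The paper gives no proof of this statement; it cites it from Kislyakov--Xu. So your argument has to stand on its own, and it breaks at the first move of Step 2.

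Sub-steps 1--2 need two things: that $u_1$ split in $E_0+E_{\theta_1}$ with pieces controlled by $\|u_0\|_{E_0}=t\|u_1\|_{E_1}$, and that $a\in A_0+A_{\theta_1}$. Neither holds in general.
\begin{itemize}
\item $E_1\not\subset E_0+E_{\theta_1}$. The paper's own applications give counterexamples: $1\in L_\infty(\mathbb{R})\setminus(L_1+L_4)(\mathbb{R})$, and $(1+|x|)^{-1/3}\in L_4(\mathbb{R})\setminus(L_1+L_2)(\mathbb{R})$.
\item Taking $A_j=E_j$, which satisfies all the hypotheses, shows $A_0+A_1\not\subset A_0+A_{\theta_1}$.
\item Holmstedt's formula confirms this. $K(t^{\theta_1},u_1;E_0,E_{\theta_1})$ contains the tail $t^{\theta_1}\big(\int_t^\infty(s^{-\theta_1}K(s,u_1))^{p_1}\,ds/s\big)^{1/p_1}$. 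For $u_1\in E_1$ the only available bound is $K(s,u_1)\le s\|u_1\|_{E_1}$, which makes this tail diverge.
\item The hypothesis you intend to use, $A_{\theta_1}\subset(A_{\theta_0},A_1)_{\eta_1,p_1,K}$, gives the opposite inclusion $A_{\theta_1}\subset A_{\theta_0}+A_1$.
\end{itemize}

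Your loop in sub-steps 3--4 and Step 3 is sound. With off-diagonal splitting parameters it contracts, and it proves the Wolff-type estimate $K_t(x,A_0,A_1)\lesssim t^{\theta_i}\|x\|_{E_{\theta_i}}$ for $x\in A_{\theta_i}$. However, it uses only the two inclusion hypotheses. The quasi-complementation of $(A_{\theta_0},A_1)$ in $(E_{\theta_0},E_1)$ is never used anywhere in your plan, which shows the core of the argument is missing. Step 4 inherits the same gap.

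\textbf{The missing idea.} What can be pushed into the intermediate spaces is not $u_0,u_1$ but the discrepancy $v:=b_0-u_0=u_1-b_1\in E_0\cap E_1$. Here $a=b_0+b_1$, $b_j\in A_j$, is an arbitrary decomposition. The element $v$ lies in $E_{\theta_0}\cap E_{\theta_1}$ but has no a priori bound, so you must bootstrap. One round goes as follows.
\begin{itemize}
\item Apply quasi-complementation of $(A_0,A_{\theta_1})$ to $b_0=u_0+v$. This gives $b_0=c_0+c_{\theta_1}$ with $\|c_0\|_{E_0}\le C_0\|u_0\|_{E_0}$.
\item Then $w:=u_0-c_0=c_{\theta_1}-v$ lies in $E_0\cap E_{\theta_1}$, so $\|w\|_{E_{\theta_0}}\lesssim\|u_0\|_{E_0}^{1-\eta_0}\|v\|_{E_{\theta_1}}^{\eta_0}$.
\item Apply quasi-complementation of $(A_{\theta_0},A_1)$ to $c_{\theta_1}+b_1=w+u_1$. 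This gives $c_{\theta_1}+b_1=e_{\theta_0}+e_1$ with $\|e_1\|_{E_1}\le C_1\|u_1\|_{E_1}$.
\item Use your loop to split $e_{\theta_0}$ into $A_0+A_1$.
\end{itemize}
Set $D:=\|u_0\|_{E_0}+t\|u_1\|_{E_1}$ and use $\theta_0=\eta_0\theta_1$. The new discrepancy $v'$ then satisfies
\[\|v'\|_{E_0}+t\|v'\|_{E_1}\lesssim D+D^{1-\eta_0}\big(t^{\theta_1}\|v\|_{E_{\theta_1}}\big)^{\eta_0},\]
and $t^{\theta_1}\|v'\|_{E_{\theta_1}}$ is bounded by the left-hand side. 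Since $\eta_0<1$, this map is sublinear. Finitely many rounds therefore bring the discrepancy below a constant multiple of $D$; the number of rounds depends on $v$, but the final constant does not. This yields the $K$-closedness estimate. The cases $u_0=0$ or $u_1=0$, which give normality, follow by running the same round.
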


\section{Noncommutative integration theory}

The material in this section is taken from \cite{DoddsSukochevIntegration}.

\subsection{Semifinite von Neumann algebras}

Let $M$ be a \textit{(semi)finite von Neumann algebra}, i.e. a von Neumann algebra equipped with a normal (semi)finite faithful (n.s.f.) trace $\tau$. Let $H$ denote the Hilbert space on which $M$ acts. A closed and densely defined operator $x$ on $H$ with polar decomposition $x=u|x|$ and spectral decomposition $|x|=\int_{0}^{\infty}sde_s$ is \textit{affiliated} with $M$ if $u\in M$ and $e_s\in M$ for all $s>0$. The \textit{distribution function} of $x$ is the right-continuous decreasing function of the variable $s>0$ denoted $\lambda_x$ such that
\[\lambda_x(s)=\tau(1-e_s),\ \ \ \text{for}\ s>0.\]
The \textit{singular function} of $x$ is the right-continuous decreasing function of the variable $s>0$ denoted $\mu_x$ such that
\[\mu_x(s):=\inf\big\{t>0\ :\ \lambda_x(t)\pp s\big\},\ \ \text{for}\ s>0.\]
A closed and densely defined operator $x$ on $H$ is \textit{$\tau$\-/measurable} if it is affiliated with $M$ and if its distribution function (or its singular function) takes at least one finite value. Any element of $M$ is $\tau$\-/measurable. The set $L_0(M)$ of $\tau$\-/measurable operators then admits a canonical structure of $\ast$\-/algebra, so that the inclusion $M\to L_0(M)$ is a $\ast$-morphism and $\tau$ is canonically extended to the positive part $L_0(M)_+$ of $L_0(M)$ so that
\[\tau(x)=\int_{0}^{\infty}\lambda_x(s)ds=\int_{0}^{\infty}\mu_x(s)ds,\ \ \ \text{for}\ x\in L_0(M)_+.\]
We set 
\[L_\infty(M):=M\]
and 
\[L_1(M):=\big\{x\in L_0(M)\ :\ \tau(|x|)<\infty\big\}\]
Then $L_1(M)$ is a subspace of $L_0(M)$ and is equipped with the complete norm $\|\cdot\|_{L_1(M)}$ given by the expression
\[\|x\|_{L_1(M)}=\tau(|x|).\]
Moreover, the trace $\tau$ extends to a positive and contractive linear form on $L_1(M)$. 

\subsection{The compatible couple \texorpdfstring{$(L_1,L_\infty)$}{}}

Let $M$ be a semifinite von Neumann algebra, with trace denoted $\tau$. The $\ast$-algebra $L_0(M)$ admits a canonical Hausdorff topology so that the inclusions $L_1(M)\to L_0(M)$ and $L_\infty(M)\to L_0(M)$ become continuous, turning the couple $(L_1(M),L_\infty(M))$ into a compatible couple. 

\begin{prop}
Let $E(M)$ be an exact interpolation space for $(L_1(M),L_\infty(M))$. If $x\in E(M)$, $a,b\in M$, then $axb\in E(M)$ with $\|axb\|_{E(M)}\pp\|a\|_M\|x\|_{E(M)}\|b\|_M$.
\end{prop}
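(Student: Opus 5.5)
The plan is to reduce the statement to the case of the two endpoint spaces $L_1(M)$ and $L_\infty(M)$, where the multiplication operator is trivially bounded, and then to transfer the bound to $E(M)$ using its exact interpolation property. Fix $a,b\in M$, and by homogeneity assume $\|a\|_M\pp1$ and $\|b\|_M\pp1$ (if $a=0$ or $b=0$ there is nothing to prove). Consider the linear map
\[T:L_1(M)+L_\infty(M)\to L_1(M)+L_\infty(M),\qquad T(x):=axb,\]
where the product is taken in the $\ast$-algebra $L_0(M)$, and $M\subset L_0(M)$.

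First I will check that $T$ is a compatible bounded operator $(L_1(M),L_\infty(M))\to(L_1(M),L_\infty(M))$ with $\|T\|\pp1$. On $L_\infty(M)=M$ this is immediate: $\|axb\|_M\pp\|a\|_M\|x\|_M\|b\|_M$ by the $C^*$-norm property. On $L_1(M)$ I use the standard H\"older-type inequality $\|axb\|_{L_1(M)}\pp\|a\|_M\|x\|_{L_1(M)}\|b\|_M$. This follows from the singular value estimate $\mu_{axb}\pp\|a\|_M\|b\|_M\,\mu_x$ pointwise, together with $\tau(|y|)=\int_0^\infty\mu_y(s)\,ds$. Alternatively, one can use the duality $\|y\|_{L_1(M)}=\sup\{|\tau(yz)|:\ z\in M,\ \|z\|_M\pp1\}$ together with traciality: $\tau(axbz)=\tau(x\,bza)$ and $\|bza\|_M\pp1$. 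Linearity of $T$ on the sum space is clear, since $T$ is defined by multiplication in $L_0(M)$, which is compatible with the decompositions $x=x_0+x_1$.

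Second, since $E(M)$ is an exact interpolation space for $(L_1(M),L_\infty(M))$, the definition gives that $T$ maps $E(M)$ into itself with
\[\|T\|_{E(M)\to E(M)}\pp\|T\|_{(L_1,L_\infty)\to(L_1,L_\infty)}\pp1.\]
Rescaling then yields $\|axb\|_{E(M)}\pp\|a\|_M\|x\|_{E(M)}\|b\|_M$.

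The only step with real content is the $L_1$ bound. I would cite the singular value inequality $\mu_{axb}(s)\pp\|a\|\|b\|\mu_x(s)$ from \cite{DoddsSukochevIntegration}, or argue via duality and traciality. For the duality route, some care is needed with $\tau$-measurable $x$: one justifies $\tau(axbz)=\tau(xbza)$ for $x\in L_1(M)$ and $z\in M$, which is the standard cyclicity of the extended trace.
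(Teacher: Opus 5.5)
Your proof is correct, and it is the standard argument. The paper states this proposition without proof, as standard preliminary material. The justification it relies on is exactly yours: apply the exact interpolation property to the multiplication operator $x\mapsto axb$, which has norm at most $\|a\|_M\|b\|_M$ on both $L_1(M)$ and $L_\infty(M)$. Either of your routes to the $L_1$ bound (the singular value estimate, or duality combined with cyclicity of the trace) is fine.
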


\begin{theo}[\cite{DoddsSukochevIntegration}(Theorem 3.9.16)]\label{integration_Holmstedt}
Let $x\in L_0(M)$. Then $x\in L_1(M)+L_\infty(M)$ if and only if
for every $t>0$, we have
\[\int_{0}^{t}\mu_x(s)ds<\infty\]
and in that case, we have
\[K_t(x,L_1(M),L_\infty(M))=\int_{0}^{t}\mu_x(s)ds,\ \ \ \ \text{for}\ t>0.\]
\end{theo}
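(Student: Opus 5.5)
The statement is the noncommutative Holmstedt-type formula (Theorem \ref{integration_Holmstedt}): $x\in L_1(M)+L_\infty(M)$ if and only if $\int_0^t\mu_x(s)\,ds<\infty$ for all $t>0$, and in that case $K_t(x,L_1(M),L_\infty(M))=\int_0^t\mu_x(s)\,ds$. I will prove the two inequalities for the $K$-functional separately; the membership criterion then follows from them.

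\textbf{Lower bound.} The first step is
\[\int_0^t\mu_x(s)\,ds\le K_t(x).\]
Write $x=x_0+x_1$ with $x_0\in L_1(M)$ and $x_1\in M$. I will use the submajorization inequality for sums of $\tau$-measurable operators,
\[\int_0^t\mu_{x_0+x_1}\le\int_0^t\mu_{x_0}+\int_0^t\mu_{x_1}.\]
This is a standard fact, proved via the variational formula $\int_0^t\mu_y=\sup\{\tau(|y|e):\ e \text{ projection},\ \tau(e)\le t\}$, or equivalently $\sup\{|\tau(ya)|:\ \|a\|_\infty\le1,\ \|a\|_1\le t\}$. The second form is the more convenient one, since it is visibly subadditive in $y$. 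It remains to bound the two terms:
\[\int_0^t\mu_{x_0}\le\|x_0\|_1, \qquad \int_0^t\mu_{x_1}\le t\|x_1\|_\infty,\]
the latter because $\mu_{x_1}\le\|x_1\|_\infty$. Taking the infimum over all decompositions gives the lower bound. In particular, if $x\in L_1+L_\infty$ then every integral $\int_0^t\mu_x$ is finite.

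\textbf{Upper bound.} Conversely, assume $\int_0^t\mu_x<\infty$ for every $t$. Fix $t>0$ and put $\lambda=\mu_x(t)$, which is finite. Let $x=u|x|$ be the polar decomposition and $e=\chi_{(\lambda,\infty)}(|x|)$, and set
\[x_0=u(|x|-\lambda)e, \qquad x_1=x-x_0=u\min(|x|,\lambda).\]
Then $\|x_1\|_\infty\le\lambda$. For the other piece, $\mu_{x_0}(s)=(\mu_x(s)-\lambda)_+$, which vanishes for $s\ge t$, so
\[\|x_0\|_1=\int_0^t(\mu_x(s)-\lambda)\,ds.\]
Adding up,
\[K_t(x)\le\int_0^t\mu_x(s)\,ds-t\lambda+t\lambda=\int_0^t\mu_x(s)\,ds.\]
Since $\int_0^t\mu_x<\infty$, this shows $x_0\in L_1(M)$, and hence $x\in L_1+L_\infty$, which settles the membership criterion.

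The identity $\mu_{x_0}=(\mu_x-\lambda)_+$ comes from the functional calculus: $|x_0|=f(|x|)$ with $f(r)=(r-\lambda)_+$ increasing and continuous, so $\mu_{f(|x|)}=f(\mu_{|x|})$. I also need $\lambda_x(\lambda)\le t$, i.e. $\tau(e)\le t$, which follows from right-continuity of the distribution function. This is what guarantees that the support of $\mu_{x_0}$ lies in $[0,t]$.

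\textbf{Main obstacle.} The step I expect to be hardest is the subadditivity of $y\mapsto\int_0^t\mu_y$ in the lower bound, which in the noncommutative setting requires the variational characterization of Ky Fan type. I would derive that characterization from $\int_0^t\mu_y=\sup_{\tau(e)\le t}\tau(e|y|e)$. The awkward case is when $\tau(\chi_{\{\mu_x(t)\}}(|x|))$ is atomic-free, so that no projection of trace exactly $t$ sits under a spectral cut; there I would handle $e$ on the level set by passing to $M\bar\otimes L_\infty[0,1]$ to make it diffuse.
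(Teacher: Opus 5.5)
The paper gives no proof of this statement; it is quoted from the monograph of Dodds, de Pagter and Sukochev, so there is no internal argument to compare with. Your proof is correct, and it is the standard one.

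\textbf{Upper bound.} The truncation at $\lambda=\mu_x(t)$, with $x_0=u(|x|-\lambda)e$ and $x_1=u\min(|x|,\lambda)$, is exactly right. One small remark: $\lambda_{x_0}(s)=\lambda_x(s+\lambda)$ gives $\mu_{x_0}=(\mu_x-\lambda)_+$. This vanishes on $[t,\infty)$ simply because $\mu_x$ is decreasing, so the separate check $\tau(e)\le t$ is harmless but not needed.

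\textbf{Lower bound.} It is valid once you invoke the Fack--Kosaki submajorization $\int_0^t\mu_{x_0+x_1}\le\int_0^t\mu_{x_0}+\int_0^t\mu_{x_1}$. However, the step you flag as the main obstacle can be avoided entirely, because one of the two summands is bounded. Use the standard characterization $\mu_y(s)=\inf\{\|ye\|_\infty : e\in M \text{ a projection},\ \tau(1-e)\le s\}$. Together with $\|(x_0+x_1)e\|_\infty\le\|x_0e\|_\infty+\|x_1\|_\infty$, it gives the pointwise bound $\mu_{x_0+x_1}(s)\le\mu_{x_0}(s)+\|x_1\|_\infty$. Integrating over $[0,t]$ then yields $\int_0^t\mu_x\le\|x_0\|_1+t\|x_1\|_\infty$ directly. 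No Ky Fan type principle is needed, and no atom issue arises.

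\textbf{The atom remark.} Your closing remark has the difficulty backwards. The projection formula $\int_0^t\mu_y=\sup_{\tau(e)\le t}\tau(e|y|e)$ fails in the \emph{presence} of atoms, not in their absence. For $M=\mathbb{C}$ with $\tau(1)=1$, $y=1$ and $t=1/2$, the right-hand side is $0$ while the left-hand side is $1/2$. This is why one either works with positive contractions of trace at most $t$, or passes to $M\bar{\otimes}L_\infty[0,1]$.
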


\begin{theo}[\cite{DoddsInterpolationLp}(Theorem 2.4)]
Let $E(M)$ be an exact interpolation space for $(L_1(M),L_\infty(M))$. For every $x,y\in L_0(M)$ with $K_t(x)\pp K_t(y)$ for $t>0$, if we have $y\in E(M)$, then $x\in E(M)$ with $\|x\|_{E(M)}\pp\|y\|_{E(M)}$. 
\end{theo}

\begin{rem}
As a consequence, if $x\in E(M)$, then we have $x^*,|x|\in E(M)$ with $\||x|\|_{E(M)}=\|x^*\|_{E(M)}=\|x\|_{E(M)}$.
\end{rem}

\vspace{5pt}

If $E(M)$ is an exact interpolation space for $(L_1(M),L_\infty(M))$, its \textit{K\"othe dual} 
\[E^{\times}(M):=\big\{y\in L_0(M)\ :\ \forall x\in E(M),\ xy\in L_1(M)\big\}\]
\[=\big\{y\in L_0(M)\ :\ \forall x\in E(M),\ yx\in L_1(M)\big\}\]
is a subspace of $L_0(M)$ and is equipped with the complete norm $\|\cdot\|_{E^{\times}(M)}$ given by the expression
\[\|y\|_{E^{\times}(M)}=\sup_{x\in E(M),\ \|x\|_{E(M)}\pp1}|\tau(xy)|=\sup_{x\in E(M),\ \|x\|_{E(M)}\pp1}|\tau(yx)|,\ \ \ \ \ \ \text{for}\ y\in E^{\times}(M).\]
 By definition, if $x\in E(M)$ and $y\in E^{\times}(M)$, then $xy,yx\in L_1(M)$ and 
\[\max\{\|xy\|_{L_1(M)},\|yx\|_{L_1(M)}\}\pp\|x\|_{E(M)}\|y\|_{E^{\times}(M)}.\ \ \ \ \ \ \ \ \ \textit{(generalized H\"older's inequality)}\]
If $E(M)$, $F(M)$ are two exact interpolation spaces for $(L_1(M),L_\infty(M))$, then we have 
\[(E\cap F)^{\times}(M)=E^{\times}(M)+F^{\times}(M),\ \ \ \ \ (E+F)^{\times}(M)=E^{\times}(M)\cap F^{\times}(M)\]
with equal norms.

\begin{theo}[\cite{DoddsSukochevIntegration}(Proposition 3.4.8, Theorem 3.4.24)]
We have $L_1(M)^{\times}=L_\infty(M)$ and $L_\infty(M)^{\times}=L_1(M)$ with equal norms.
\end{theo}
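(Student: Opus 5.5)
The plan is to prove the two identities separately. In each case I would show the set equality and the norm equality together: an upper bound from the elementary H\"older-type inequality $\tau(|xy|)\pp\|x\|_{L_1(M)}\|y\|_{M}$ (and its symmetric version), and a lower bound by testing against well-chosen elements built from the polar and spectral decompositions of $y$.

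\emph{The identity $L_\infty(M)^{\times}=L_1(M)$.} If $y\in L_1(M)$ and $x\in M$, then $xy\in L_1(M)$ and $|\tau(xy)|\pp\|x\|_M\|y\|_{L_1(M)}$. Conversely, let $y\in L_0(M)$ satisfy $xy\in L_1(M)$ for all $x\in M$. Since $M$ is unital, taking $x=1$ gives $y\in L_1(M)$. For the norm, write the polar decomposition $y=u|y|$ and test against $x=u^*$, which has norm at most $1$. Since $u^*u$ is the support projection of $|y|$, we get
\[\tau(u^*y)=\tau(u^*u|y|)=\tau(|y|)=\|y\|_{L_1(M)}.\]
So the supremum defining $\|y\|_{L_\infty^\times(M)}$ equals $\|y\|_{L_1(M)}$.

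\emph{The identity $L_1(M)^{\times}=L_\infty(M)$.} If $y\in M$, H\"older gives $y\in L_1(M)^\times$ with $\|y\|_{L_1^\times(M)}\pp\|y\|_M$. The converse has two steps.

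First, I would show that if $y\in L_0(M)$ and $xy\in L_1(M)$ for all $x\in L_1(M)$, then $C:=\sup_{\|x\|_{L_1(M)}\pp1}|\tau(xy)|<\infty$. I would use Banach--Steinhaus. Let $p_n:=\mathbf 1_{[0,n]}(|y|)$, so that $yp_n\in M$. Then $\varphi_n(x):=\tau(xyp_n)=\tau(p_nxy)$ is a bounded functional on $L_1(M)$. Since $p_n\to1$ strongly and $xy\in L_1(M)$, normality of the trace gives $\varphi_n(x)\to\tau(xy)$ for each $x$. Pointwise boundedness then yields $\sup_n\|\varphi_n\|<\infty$, and hence $C<\infty$. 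Alternatively, a closed graph argument applied to $x\mapsto xy$ would also work.

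Second, I would show $\|y\|_M\pp C$, by contradiction. Suppose $s>C$ and the spectral projection $e:=\mathbf 1_{(s,\infty)}(|y|)$ is nonzero. By semifiniteness, choose a nonzero projection $f\pp e$ with $\tau(f)<\infty$, and set $x:=fu^*\in L_1(M)$, so that $\|x\|_{L_1(M)}\pp\tau(f)$. Since $f\pp e\pp u^*u$, we have $xy=f|y|$. Using $f=fe$ and $e|y|e\pg se$,
\[\tau(xy)=\tau(f|y|f)=\tau(fe|y|ef)\pg s\,\tau(f).\]
Thus $C\pg s$, a contradiction. Hence $|y|\pp C$, so $y\in M$ with $\|y\|_M\pp C=\|y\|_{L_1^\times(M)}$, and the norms coincide.

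The main obstacle is the first step of the second identity, namely obtaining the a priori finiteness of $C$. The pointwise hypothesis $xy\in L_1(M)$ does not immediately give a bounded functional, so the uniform boundedness argument, together with the careful justification that $\tau(p_nxy)\to\tau(xy)$ by normality of $\tau$, is the delicate point. By comparison, the choice of the test element $fu^*$ in the second step is routine.
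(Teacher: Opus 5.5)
Your proof is correct. The paper does not prove this statement; it simply quotes it from Dodds--Sukochev (Proposition 3.4.8, Theorem 3.4.24), so there is no internal argument to compare against.

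What your direct route buys is self-containedness. You use only the polar decomposition $y=u|y|$, spectral projections of $|y|$, semifiniteness and faithfulness of $\tau$, the trace property $\tau(ab)=\tau(ba)$ for $a\in L_1(M)$, $b\in M$, and the uniform boundedness principle. In particular, you never invoke the predual identification $L_1(M)^*=M$. Had you used it, Step 1 would immediately give $\|y\|_M=\sup_n\|yp_n\|_M<\infty$, but that is close to assuming what you want. Your Step 1 also establishes the finiteness of the K\"othe-dual norm, which the paper's definition of $\|\cdot\|_{E^{\times}(M)}$ takes for granted.

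Two small points of precision.

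\emph{The limit in Step 1.} The convergence $\tau(p_nz)\to\tau(z)$ for $z=xy\in L_1(M)$ is best justified by the fact that $a\mapsto\tau(az)$ is a normal functional on $M$ for $z\in L_1(M)$. Appealing to ``normality of the trace'' alone is too loose.

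\emph{The role of $f\le u^*u$ in Step 2.} The identity $xy=f|y|$ holds regardless of $f\le u^*u$, since $u^*u|y|=|y|$ always. What $f\le e\le u^*u$ actually gives you is $xu=f\neq0$. Faithfulness gives $\tau(f)>0$, which is what lets you conclude $s\le C$ from
\[
s\,\tau(f)\le\tau(xy)\le C\|x\|_{L_1(M)}\le C\,\tau(f).
\]
Note that the step $\tau(f|y|)=\tau(f|y|f)$ legitimately uses $f|y|=xy\in L_1(M)$ from the hypothesis.
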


\begin{theo}[\cite{DoddsSukochevIntegration}(Corollary 4.3.9)]
Let $E(M)$ be an exact interpolation space for $(L_1(M),L_\infty(M))$. Then the K\"othe dual $E^{\times}(M)$ is an exact interpolation space for $(L_1(M),L_\infty(M))$. Moreover, the sequilinear form
\[\fonctbis{E(M)\times E^{\times}(M)}{\C}{(x,y)}{\tau(xy^*)}\]
is non-degenerate, and thus, defines a canonical duality between $E(M)$ and $E^{\times}(M)$, called \textbf{trace duality}. Finally, $L_1(M)\cap L_\infty(M)$ separates both the points of $E(M)$ and $E^{\times}(M)$ w.r.t. this duality.
\end{theo}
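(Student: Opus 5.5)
The plan is to establish the three assertions in turn, with the interpolation property of $E^{\times}(M)$ as the core. Throughout I will use the description of $E(M)$ through $K$-functionals: by the Dodds theorem stated above, membership in $E(M)$ and its norm are monotone in $t\mapsto K_t(x)$, and by Theorem \ref{integration_Holmstedt} we have $K_t(x)=\int_0^t\mu_x(s)ds$. Consequently, for $x\in E(M)$ with polar decomposition $x=u|x|$ and $e$ a spectral projection of $|x|$ with finite trace on which $|x|$ is bounded, the truncations $|x|e$, $u|x|e$ lie in $L_1(M)\cap L_\infty(M)\cap E(M)$ with $E(M)$-norm at most $\|x\|_{E(M)}$. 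Moreover, they increase to $x$ in the sense of singular functions.

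\textbf{Step 1: $E^{\times}(M)$ is an intermediate space.} Since $L_1\cap L_\infty\subset E\subset L_1+L_\infty$ continuously, taking Köthe duals reverses the inclusions. Combining the formulas $(E\cap F)^{\times}=E^{\times}+F^{\times}$ and $(E+F)^{\times}=E^{\times}\cap F^{\times}$ with $L_1^{\times}=L_\infty$ and $L_\infty^{\times}=L_1$ gives
\[L_1\cap L_\infty=(L_1+L_\infty)^{\times}\subset E^{\times}(M)\subset(L_1\cap L_\infty)^{\times}=L_1+L_\infty,\]
with the corresponding norm inequalities. Completeness of $\|\cdot\|_{E^{\times}}$ follows by a Fatou argument: a Cauchy sequence converges in $L_1+L_\infty$, and the sup-defined norm is lower semicontinuous along that convergence once we test only against truncated $x\in L_1\cap L_\infty$. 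This is legitimate because, by the truncation remark and monotone convergence, the supremum defining $\|y\|_{E^{\times}}$ may be restricted to $x\in L_1\cap L_\infty$ with $\|x\|_E\le1$.

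\textbf{Step 2: exactness (main obstacle).} Let $T$ be contractive on $(L_1(M),L_\infty(M))$. I will build a formal adjoint $T'$ by requiring
\[\tau(T(x)y)=\tau(xT'(y))\quad\text{for }x,y\in L_1\cap L_\infty.\]
Here $T'$ is defined on $L_1\cap L_\infty$, and the identity $\|z\|_{L_p}=\sup\{|\tau(zw)|:\|w\|_{L_{p'}}\le1\}$ with $w\in L_1\cap L_\infty$ shows $\|T'\|_{L_1\to L_1}\le\|T\|_{L_\infty\to L_\infty}$ and $\|T'\|_{L_\infty\to L_\infty}\le\|T\|_{L_1\to L_1}$. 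So $T'$ extends to a contraction of the couple on $L_1+L_\infty$. The delicate point is that $T$ need not be normal on $L_\infty$, so this identity must be extended from $L_1\cap L_\infty$ with care. I would first try to use $T'$ only on elements $x\in L_1\cap L_\infty$, where no normality is needed, and apply it only to $y\in L_1\cap L_\infty$ as well. Then for $y\in E^{\times}\cap L_1\cap L_\infty$ and $x\in L_1\cap L_\infty$ with $\|x\|_E\le1$,
\[|\tau(xTy)|=|\tau(T'(x)y)|\le\|T'x\|_E\|y\|_{E^{\times}}\le\|y\|_{E^{\times}},\]
since $E$ is exact. By Step 1, $\|Ty\|_{E^{\times}}\le\|y\|_{E^{\times}}$.

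For general $y\in E^{\times}$, I would approximate $y$ by truncations $y_n$ whose $K$-functionals increase to that of $y$, and pass to the limit in $L_1+L_\infty$. The remaining task is to verify that $Ty_n\to Ty$ in a sense allowing Fatou, using the lower semicontinuity from Step 1. I expect this limiting argument, which must work without normality of $T$, to be the main technical step. If needed, I will reduce to $K$-functional control via the Dodds theorem applied to $E^{\times}$.

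\textbf{Step 3: non-degeneracy and separation.} Suppose $x\in E(M)$ satisfies $\tau(xy^*)=0$ for all $y\in L_1\cap L_\infty\subset E^{\times}$. Take $y=ue$, with $x=u|x|$ and $e$ a finite-trace spectral projection of $|x|$. Then $0=\tau(x e u^*)=\tau(|x|e)$, hence $|x|e=0$ for all such $e$. By semifiniteness these projections increase to the support of $|x|$, so $x=0$. The symmetric argument works for $y\in E^{\times}(M)$, using the polar decomposition of $y$. This simultaneously shows that the form is non-degenerate and that $L_1\cap L_\infty$ separates points on both sides.
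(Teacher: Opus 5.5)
The paper gives no proof of this statement; it is quoted from Dodds--Sukochev. So I judge your argument on its own terms.

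\textbf{The gap in Step 2.} Your Step 2 proves the bound only for $y\in L_1\cap L_\infty$, and the completion you sketch for general $y\in E^{\times}(M)$ cannot work. The identity $\tau(x\,Ty)=\tau(T'(x)\,y)$ is only guaranteed for $y$ in the norm closure of $L_1\cap L_\infty$ in $L_1+L_\infty$. But $E^{\times}(M)$ is usually much larger: for $E=L_1$ it is all of $L_\infty$. A non-normal $T$ can then be invisible both to $T'$ and to truncations.

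For instance, on $M=\ell_\infty(\mathbb{N})$ with counting measure, let $T(x)=\mathrm{LIM}(x)\,e_1$, where $\mathrm{LIM}$ is a Banach limit. Then $T$ is a contraction of $(\ell_1,\ell_\infty)$ that vanishes on $\ell_1$. Hence $T'=0$, and $Ty_n=0$ for every truncation $y_n=\mathbf{1}_{\{1,\dots,n\}}$ of $y=\mathbf{1}\in\ell_\infty=\ell_1^{\times}$, even though $K_t(y_n)\uparrow K_t(y)$. Yet $Ty=e_1\neq0$. So no Fatou-type limit through $Ty_n$ or through $T'$ can control $\|Ty\|_{E^{\times}}$.

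Your fallback, applying the Dodds monotonicity theorem to $E^{\times}(M)$, is circular. That theorem assumes $E^{\times}(M)$ is already an exact interpolation space.

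\textbf{The missing idea.} Get rid of $T$ and show directly that $E^{\times}(M)$ is monotone for $K$-functionals: if $y\in E^{\times}(M)$ and $K_t(z)\le K_t(y)$ for all $t>0$, then $z\in E^{\times}(M)$ with $\|z\|_{E^{\times}}\le\|y\|_{E^{\times}}$. Exactness then follows at once from $K_t(Ty)\le\|T\|\,K_t(y)$, with no normality needed.

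For $x\in E(M)$, chain three inequalities. First, the noncommutative Hardy--Littlewood inequality $\tau(|xz|)\le\int_0^\infty\mu_x\mu_z\,ds$. Second, Hardy's lemma $\int_0^\infty\mu_x\mu_z\,ds\le\int_0^\infty\mu_x\mu_y\,ds$, valid because $\mu_x$ decreases and $\int_0^t\mu_z\le\int_0^t\mu_y$. Third, $\int_0^\infty\mu_x\mu_y\,ds\le\|x\|_{E}\|y\|_{E^{\times}}$.

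The third inequality is where the Dodds theorem enters, applied to $E$ and not to $E^{\times}$. Write $\mu_x=\int\mathbf{1}_{[0,r)}\,d\nu(r)$; this gives $\int_0^\infty\mu_x\mu_y\,ds=\int K_r(y)\,d\nu(r)$. Now superpose elements $w_r$ with $\|w_r\|_\infty\le1$, $\|w_r\|_1\le r$ and $\tau(w_r y)$ close to $K_r(y)$. This yields $x'$ with $K_t(x')\le K_t(x)$, hence $\|x'\|_E\le\|x\|_E$, and with $\tau(x'y)$ close to $\int_0^\infty\mu_x\mu_y\,ds$.

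\textbf{A smaller slip in Steps 1 and 3.} Finite-trace spectral projections of $|x|$ need not exhaust its support; for $x=1$ in $B(\ell_2)$ the only one is $0$. Use arbitrary finite projections $e_\alpha\uparrow1$ of $M$ instead. Taking $y=ue_\alpha\in L_1\cap L_\infty$ gives $\tau(xy^*)=\tau(e_\alpha|x|e_\alpha)$, which repairs Step 3. Two-sided truncations $e_\alpha x\chi_{[0,n]}(|x|)$ repair the reduction to $L_1\cap L_\infty$ in Step 1.
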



Let $E(M)$ be an exact interpolation space for $(L_1(M),L_\infty(M))$, and let $E^*(M)$ denote the dual space of $E(M)$ as a normed space. Then trace duality yields a canonical isometric operator $E^{\times}(M)\to E^*(M)$, but in general it may not be surjective. By \cite{DoddsSukochevIntegration}[Theorem 5.2.9, Proposition 5.3.2], it is surjective if and only if the norm of $E(M)$ is \textit{order\-/continuous}, i.e. if for every decreasing net $(x_\alpha)_\alpha$ of $E(M)_+$ such that $\inf_\alpha x_\alpha=0$ then $\inf_\alpha\|x_\alpha\|_{E(M)}=0$. Thus, if $E(M)$ is an exact interpolation space for $(L_1(M),L_\infty(M))$ with order\-/continuous norm, then $E(M)$ is a predual of $E^{\times}(M)$ so that $E^{\times}(M)$ inherits a canonical w*-topology.

The following result directly follows.

\begin{theo}
Let $E(M)$ be an exact interpolation space for $(L_1(M),L_\infty(M))$ with order-continuous norm. Then the subspace $L_1(M)\cap L_\infty(M)$ is norm-dense in $E(M)$ and \text{\upshape w*}-dense in $E^{\times}(M)$.
\end{theo}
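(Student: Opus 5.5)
The statement to prove is that, when $E(M)$ is an exact interpolation space for $(L_1(M),L_\infty(M))$ with order-continuous norm, $L_1(M)\cap L_\infty(M)$ is norm-dense in $E(M)$ and w*-dense in $E^{\times}(M)$. I would prove the two densities separately, using for both the identification $E^*(M)=E^{\times}(M)$ via trace duality. This identification is available precisely because the norm of $E(M)$ is order-continuous, by the results of \cite{DoddsSukochevIntegration} recalled just before the statement.

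\textbf{Norm density in $E(M)$.} I would argue by Hahn--Banach. Let $\phi\in E^*(M)$ vanish on $L_1(M)\cap L_\infty(M)$. By order-continuity, $\phi$ is given by trace duality against some $y\in E^{\times}(M)$, so that
\[\tau(xy^*)=0\qquad\text{for every }x\in L_1(M)\cap L_\infty(M).\]
The preceding theorem states that $L_1(M)\cap L_\infty(M)$ separates the points of $E^{\times}(M)$ for trace duality. Hence $y=0$, so $\phi=0$, and the closure of $L_1(M)\cap L_\infty(M)$ in $E(M)$ is all of $E(M)$.

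\textbf{W*-density in $E^{\times}(M)$.} Here $E(M)$ is the predual, so the w*-topology on $E^{\times}(M)$ is the weak topology induced by the pairing with $E(M)$. By the bipolar theorem, the w*-closure of $L_1(M)\cap L_\infty(M)$ equals its bipolar. Its pre-annihilator in $E(M)$ consists of those $x\in E(M)$ with
\[\tau(xy^*)=0\qquad\text{for every }y\in L_1(M)\cap L_\infty(M).\]
By the same separation property, now applied to the points of $E(M)$, any such $x$ is $0$. The pre-annihilator is therefore $\{0\}$, and its annihilator, which is the w*-closure, is all of $E^{\times}(M)$.

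\textbf{Where the difficulty lies.} The substance of the argument is entirely in the two cited facts: the separation property of $L_1(M)\cap L_\infty(M)$, and the surjectivity of $E^{\times}(M)\to E^*(M)$ under order-continuity. The only point needing care is to check that the sesquilinear form $(x,y)\mapsto\tau(xy^*)$ really realizes the predual duality. Since the map $y\mapsto y^*$ is conjugate-linear, annihilators computed with this form coincide with the usual linear ones, so the bipolar argument applies.
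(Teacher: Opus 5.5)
Your proof is correct and takes the same approach as the paper, which only says the statement ``directly follows'' from two preceding facts: the separation property of $L_1(M)\cap L_\infty(M)$ under trace duality, and the identification $E^*(M)=E^{\times}(M)$ under order-continuity. Your Hahn--Banach argument for norm density and your bipolar argument for w*-density supply exactly the omitted details; you implicitly use $L_1(M)\cap L_\infty(M)\subseteq E^{\times}(M)$, which holds because $E^{\times}(M)$ is itself an exact interpolation space.
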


\begin{coro}
Let $E(M)$ be an exact interpolation space for $(L_1(M),L_\infty(M))$ with order-continuous norm. Let $(p_\alpha)_\alpha$ be an increasing net of finite projections of $M$ such that $\sup_\alpha p_\alpha=1$. 
\begin{enumerate}
    \item If $x\in E(M)$, then the net $(p_\alpha xp_\alpha)_\alpha$ converges to $x$ in $E(M)$ for the norm topology. 
    \item If $y\in E^{\times}(M)$, then the net $(p_\alpha yp_\alpha)_\alpha$ converges to $y$ in $E^{\times}(M)$ for the \text{\upshape w*}\-/topology.
\end{enumerate}
\end{coro}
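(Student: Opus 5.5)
The statement to prove is the corollary: for $x\in E(M)$, $p_\alpha x p_\alpha\to x$ in norm, and for $y\in E^\times(M)$, $p_\alpha y p_\alpha\to y$ weak*.

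For (1), I would use an $\varepsilon/3$ argument based on the density of $L_1(M)\cap L_\infty(M)$ in $E(M)$, which comes from the preceding theorem. The key point is that the maps $x\mapsto p_\alpha x p_\alpha$ are uniformly bounded on $E(M)$. Indeed, by the proposition on exact interpolation spaces, $\|p_\alpha x p_\alpha\|_{E(M)}\le \|x\|_{E(M)}$. So, given $x\in E(M)$ and $\varepsilon>0$, I pick $z\in L_1(M)\cap L_\infty(M)$ with $\|x-z\|_{E(M)}<\varepsilon$. Then
\[\|p_\alpha xp_\alpha-x\|_{E(M)}\le 2\varepsilon+\|p_\alpha zp_\alpha-z\|_{E(M)},\]
and it remains to show $\|p_\alpha z p_\alpha - z\|_{E(M)}\to0$ for such $z$.

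For this reduced step I write
\[z-p_\alpha zp_\alpha=(1-p_\alpha)z+p_\alpha z(1-p_\alpha).\]
The norm of the second term is at most that of $z(1-p_\alpha)$. So it suffices to show $\|(1-p_\alpha)z\|_{E(M)}\to0$, and symmetrically $\|z(1-p_\alpha)\|_{E(M)}\to0$.

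I use order continuity here. The modulus satisfies
\[|(1-p_\alpha)z|^2=z^*(1-p_\alpha)z,\]
which decreases to $0$ because $1-p_\alpha\downarrow0$ strongly. Equivalently, by the remark on $|\cdot|$ and adjoints, $\|(1-p_\alpha)z\|_{E}=\|z^*(1-p_\alpha)\|_E$, and I can instead consider $|z^*(1-p_\alpha)|^2=(1-p_\alpha)zz^*(1-p_\alpha)$. This net is not monotone in general, so I would pass to a cleaner route: $\|(1-p_\alpha)z\|_E$ is controlled through $K$-functionals. Since $z\in L_1\cap L_\infty$, Theorem \ref{integration_Holmstedt} gives
\[K_t\big((1-p_\alpha)z\big)\le \min\big(t\|z\|_\infty,\ \tau(|(1-p_\alpha)z|)\big).\]
Moreover $\|(1-p_\alpha)z\|_{L_1}\to0$, because $\tau(|(1-p_\alpha) z|)\le \|(1-p_\alpha)z\|_2\,\|\cdot\|$-type estimates hold when $z$ is supported on a finite projection. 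Choosing $z$ of the form $qzq$ with $q$ finite (also dense, by the same density theorem) makes this work.

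From there I would conclude via the Dodds comparison theorem. The function $K_t((1-p_\alpha)z)$ is dominated by $\delta_\alpha\, K_t(w)$ for a fixed $w$ in $E(M)$ (e.g.\ $w$ a multiple of the finite projection $q$), with $\delta_\alpha\to0$. This domination step is the main obstacle, and I would handle it by bounding $\mu_{(1-p_\alpha)z}\le \|z\|_\infty\,\chi_{[0,\lambda_\alpha]}$, where $\lambda_\alpha=\tau(\text{right/left support})$. Then I apply order continuity to the decreasing projections $\chi$ of the supports of $(1-p_\alpha)q$.

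For (2), I take $y\in E^\times(M)$ and test against $x\in E(M)$. Since $E(M)$ is the predual of $E^\times(M)$ via $\tau(xy^*)$, I compute
\[\tau\big(x(p_\alpha yp_\alpha)^*\big)=\tau\big(p_\alpha xp_\alpha\, y^*\big),\]
using traciality and the fact that the products are in $L_1$ by the generalized Hölder inequality. This tends to $\tau(xy^*)$ by (1) together with the Hölder inequality $|\tau(uy^*)|\le\|u\|_E\|y\|_{E^\times}$. That gives the weak* convergence.
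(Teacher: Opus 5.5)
Your reduction in (1) is fine: contractivity of $x\mapsto p_\alpha xp_\alpha$, density of $L_1\cap L_\infty$, the splitting $z-p_\alpha zp_\alpha=(1-p_\alpha)z+p_\alpha z(1-p_\alpha)$, and $\|(1-p_\alpha)z\|_{L_1}\to0$ all hold. Your part (2) is exactly the paper's argument.

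The gap is the step you flag as the main obstacle, namely passing to $\|(1-p_\alpha)z\|_{E(M)}\to0$ via supports. The bound $\mu_{(1-p_\alpha)z}\le\|z\|_\infty\chi_{[0,\lambda_\alpha]}$ is true, but $\lambda_\alpha$ need not tend to $0$: in the noncommutative case the supports of $(1-p_\alpha)q$ do not shrink. Take $M=B(\ell_2)$ with its usual trace, $p_n$ the projection onto $\mathrm{span}\{e_1,\dots,e_n\}$, and $z=q$ the rank-one projection onto $\C\xi$, with $\xi$ proportional to $\sum_k2^{-k}e_k$. For every $n$, $(1-p_n)q$ is a nonzero rank-one operator. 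Its right support is $q$ and its left support is the line $\C(1-p_n)\xi$; both have trace $1$, and the left supports are not even monotone in $n$. What tends to $0$ is $\|(1-p_n)\xi\|$, the size of the operator, not its support, so order continuity applied to support projections gives nothing.

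Your other formulation, $K_t((1-p_\alpha)z)\le\delta_\alpha K_t(w)$ with $w$ a fixed multiple of $q$ and $\delta_\alpha\to0$, also fails. Letting $t\to0$ it would force $\|(1-p_\alpha)z\|_\infty\to0$, which is false even for $M=L_\infty[0,\infty)$, $z=\chi_{[0,1]}$, $p_n=\chi_{[0,1-1/n]\cup[1,n]}$. Separately, density of finitely supported elements $qzq$ is not what the density theorem states; it needs an extra spectral truncation.

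The observation you discarded actually closes the gap. For bounded $z$, $|(1-p_\alpha)z|^2=z^*(1-p_\alpha)z$ decreases strongly to $0$. By operator monotonicity of the square root, $|(1-p_\alpha)z|$ is decreasing, and its strong limit $a$ satisfies $a^2=0$. So $|(1-p_\alpha)z|\downarrow0$, and order continuity gives $\|(1-p_\alpha)z\|_{E(M)}\to0$, since an element and its modulus have the same norm. You never need the non-monotone net $(1-p_\alpha)zz^*(1-p_\alpha)$. The other term is handled by $\|z(1-p_\alpha)\|_{E(M)}=\|(1-p_\alpha)z^*\|_{E(M)}$ (the remark on adjoints) and the same argument applied to $z^*$.

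The paper's proof is shorter still and needs no convergence estimate. W*-density of $\bigcup_\beta p_\beta Mp_\beta$ plus order continuity gives norm density of $\bigcup_\beta p_\beta(L_1\cap L_\infty)(M)p_\beta$ in $E(M)$. Since $p_\alpha(p_\beta yp_\beta)p_\alpha=p_\beta yp_\beta$ for $\alpha\ge\beta$, contractivity of the compressions alone yields $\|x-p_\alpha xp_\alpha\|_{E(M)}\le2\|x-p_\beta yp_\beta\|_{E(M)}$.
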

\begin{proof}
Let $x\in E(M)$ and $\epsilon>0$. As $\cup_{\alpha}p_\alpha Mp_\alpha$ is w*-dense in $M$, we deduce that $\cup_\alpha p_\alpha(L_1(M)\cap L_\infty(M))p_\alpha$ is $\sigma(E(M),E^{\times}(M))$\-/weakly dense in $E(M)$, and thus norm-dense in $E(M)$ because $E(M)$ has order-continuous norm. Hence, there is $\beta$ and $y\in L_1(M)\cap L_\infty(M)$ such that $\|x-p_\beta yp_\beta\|_{E(M)}<\epsilon$. Then, for all $\alpha\pg\beta$, we have
\begin{align*}
\|x-p_\alpha xp_\alpha\|_{E(M)}&=\|x-p_\beta yp_\beta+p_\beta yp_\beta-p_\alpha xp_\alpha\|_{E(M)}\\
&\pp\|x-p_\beta yp_\beta\|_{E(M)}+\|p_\alpha(p_\beta yp_\beta-x)p_\alpha\|_{E(M)}\\
&\pp2\|x-p_\beta yp_\beta\|_{E(M)}<2\epsilon
\end{align*}
which shows that $(p_\alpha xp_\alpha)_\alpha$ converges in norm to $x$. Now, if $y\in E^{\times}(M)$ then for every $x\in E(M)$ we get
\[\tau(xp_\alpha yp_\alpha)=\tau(p_\alpha xp_\alpha y)\underset{\alpha}{\to}\tau(xy)\]
as desired.
\end{proof}

\subsection{\texorpdfstring{$L_p$}{}-spaces and \texorpdfstring{$L_{p,\infty}$}{}-spaces}

Let $M$ be a semifinite von Neumann algebra, with trace denoted $\tau$. For $1<p<\infty$, and $x\in L_0(M)$, we set
\[\|x\|_{L_p(M)}:=\int_{0}^{\infty}\lambda_x(s)ps^{p-1}ds=\int_{0}^{\infty}\mu_x(s)^pds.\]
Then, for $1<p<\infty$, the \textit{$L_p$-space} 
\[L_p(M):=\big\{x\in L_0(M)\ :\ \|x\|_{L_p(M)}<\infty\big\}\]
is a subspace of $L_0(M)$ and becomes a complete normed space under $\|\cdot\|_{L_p(M)}$. 

\begin{theo}[\cite{DoddsSukochevIntegration}(Corollary 6.2.1)]
Let $1<p<\infty$. Then $L_p(M)$ is an exact interpolation space for $(L_1(M),L_\infty(M))$ with order\-/continuous norm, and $L_p(M)^{\times}=L_q(M)$ where $1<q<\infty$ is such that $1=\frac{1}{p}+\frac{1}{q}$.
\end{theo}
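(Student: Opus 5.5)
The plan is to reduce everything to statements about singular functions, using Theorem \ref{integration_Holmstedt}, i.e. $K_t(x,L_1(M),L_\infty(M))=\int_0^t\mu_x(s)\,ds$. I take the norm to be $\|x\|_{L_p(M)}=\big(\int_0^\infty\mu_x(s)^p\,ds\big)^{1/p}$; the paper's display omits the exponent $1/p$, and without it the triangle inequality fails.

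\textbf{Normed space and completeness.} For $x,y\in L_0(M)$, subadditivity of the $K$-functional gives $K_t(x+y)\pp K_t(x)+K_t(y)$ for all $t>0$. This says that $\mu_{x+y}$ is submajorized by $\mu_x+\mu_y$. By the Hardy–Littlewood–Pólya inequality for the convex function $s\mapsto s^p$, we get $\int\mu_{x+y}^p\pp\int(\mu_x+\mu_y)^p$. Minkowski's inequality in $L_p(0,\infty)$ then yields the triangle inequality. Homogeneity and faithfulness follow from $\mu_{\lambda x}=|\lambda|\mu_x$ and the faithfulness of $\tau$.

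For completeness, take a Cauchy sequence in $L_p(M)$. It is Cauchy in measure, since $\mu_x(s)\pp s^{-1/p}\|x\|_p$, so it converges in $L_0(M)$. The limit has finite norm, and the sequence converges to it in norm, by Fatou's property $\mu_{\lim x_n}\pp\liminf\mu_{x_n}$ (pointwise a.e.).

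\textbf{Exact interpolation space.} First, $L_p(M)$ is intermediate: if $x\in L_1\cap L_\infty$ then $\int\mu_x^p\pp\|x\|_\infty^{p-1}\|x\|_1$. Also $L_p\subset L_1+L_\infty$, because $\int_0^t\mu_x\pp t^{1-1/p}\|x\|_p$.

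Now let $T$ be compatible with $\|T\|_{(L_1,L_\infty)\to(L_1,L_\infty)}\pp1$. Then $K_t(Tx)\pp K_t(x)$ for all $t$, i.e. $\mu_{Tx}$ is submajorized by $\mu_x$, and Hardy–Littlewood–Pólya again gives $\|Tx\|_p\pp\|x\|_p$. The general case follows by rescaling $T$.

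\textbf{Order continuity.} Let $(x_\alpha)$ be a decreasing net in $L_p(M)_+$ with $\inf_\alpha x_\alpha=0$. Normality of $\tau$ gives $\lambda_{x_\alpha}(s)\downarrow0$ for each $s>0$, hence $\mu_{x_\alpha}\downarrow0$ pointwise. Since $\mu_{x_\alpha}\pp\mu_{x_{\alpha_0}}\in L_p$, dominated convergence (applied along a cofinal sequence, using monotonicity) gives $\|x_\alpha\|_p\to0$.

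\textbf{K\"othe dual.} For the inclusion $L_q\subset L_p^{\times}$, use the Fack–Kosaki inequality $\tau(|xy|)\pp\int_0^\infty\mu_x\mu_y$ (from $\mu_{xy}\prec\!\prec\mu_x\mu_y$) together with classical Hölder. This gives $\|y\|_{L_p^\times}\pp\|y\|_q$.

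For the reverse inclusion, let $y\in L_p^{\times}$ with polar decomposition $y=u|y|$. Let $e_n$ be an increasing sequence of spectral projections of $|y|$ of finite trace on which $|y|$ is bounded, with $e_n\uparrow$ the support of $|y|$; this is possible by semifiniteness. Test against $x_n:=e_n|y|^{q-1}u^*$. Then $\tau(x_ny)=\tau(e_n|y|^q)$ and $\|x_n\|_p=\tau(e_n|y|^q)^{1/p}$, so $\tau(e_n|y|^q)^{1/q}\pp\|y\|_{L_p^\times}$. Monotone convergence gives $y\in L_q$ with $\|y\|_q\pp\|y\|_{L_p^\times}$, so the norms are equal.

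\textbf{Main obstacle.} The technical heart is the pair of noncommutative majorization facts: submajorization of $\mu_{x+y}$ and $\mu_{Tx}$ via $K$-functionals, and the Fack–Kosaki product inequality. I would quote these from \cite{DoddsSukochevIntegration} rather than reprove them.
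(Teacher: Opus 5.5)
The paper gives no proof of this statement; it only quotes \cite{DoddsSukochevIntegration}. Your plan is the standard symmetric-space route: submajorization through the $K$-functional plus Hardy--Littlewood--P\'olya, Fack--Kosaki for H\"older, and test elements for the K\"othe dual. You are right that the exponent $1/p$ is missing from the paper's definition of the $L_p$-norm. The triangle inequality, completeness, the exact interpolation property and the inclusion $L_q(M)\subset L_p(M)^{\times}$ all work as you sketch them.

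\textbf{Reverse inclusion.} This step has a genuine gap: the projections $e_n$ do not come from semifiniteness. Semifiniteness gives finite-trace subprojections of a projection, but not ones commuting with $|y|$. Your identity $\|x_n\|_p=\tau(e_n|y|^q)^{1/p}$ needs exactly that commutation. For example, in $B(\ell_2)$ with its usual trace and $y=1$, the only spectral projections of $|y|$ are $0$ and $1$. What you need is $\lambda_y(\delta)<\infty$ for every $\delta>0$, and this must be extracted from $y\in L_p(M)^{\times}$. One more test suffices. Let $g\le e^{|y|}(\delta,\infty)$ be a projection with $\tau(g)<\infty$, and put $x=gu^*$. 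Then $\|x\|_p\le\tau(g)^{1/p}$. Since $|y|\ge\delta e^{|y|}(\delta,\infty)$, we get $\tau(xy)=\tau(g|y|)=\tau(g|y|g)\ge\delta\tau(g)$, hence $\tau(g)\le(\|y\|_{L_p^{\times}}/\delta)^q$. Semifiniteness and normality then give $\lambda_y(\delta)\le(\|y\|_{L_p^{\times}}/\delta)^q$. With this, $e_n:=e^{|y|}(1/n,n]$ has the required properties and the rest of your argument goes through.

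\textbf{Order continuity.} This step is not justified as written. Normality of $\tau$ concerns increasing nets of positive operators, but the spectral projections $e^{x_\alpha}(s,\infty)$ are not monotone in $\alpha$; only $\mu_{x_\alpha}\le\mu_{x_\beta}$ for $\alpha\ge\beta$ is automatic. Normality alone cannot be the reason anyway. In $L_\infty(0,\infty)$ the sequence $\chi_{(n,\infty)}$ decreases to $0$, yet its distribution function is $+\infty$ on $(0,1)$. So the argument must use that $x_{\alpha_0}$ is $\tau$-compact. You have two ways to close this.
\begin{enumerate}
\item Invoke the standard lemma: a decreasing net with infimum $0$, dominated by a $\tau$-compact operator, satisfies $\mu_{x_\alpha}(t)\downarrow 0$ for every $t>0$. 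Then run dominated convergence along an increasing sequence $(\alpha_n)$ chosen so that $\mu_{x_{\alpha_n}}\to0$ at all rational points. A directed set need not have a cofinal sequence, but $\inf_\alpha\|x_\alpha\|_p=0$ is all the definition asks for.
\item Use uniform convexity of $L_p(M)$. Suppose $c:=\inf_\gamma\|x_\gamma\|_p>0$. For $\gamma\ge\alpha,\beta$ we have $\frac12(x_\alpha+x_\beta)\ge x_\gamma\ge0$, so $\|\frac12(x_\alpha+x_\beta)\|_p\ge c$. Uniform convexity then forces $(x_\alpha)$ to be norm-Cauchy. Its limit is a positive lower bound of the net, hence $0$, which contradicts $c>0$.
\end{enumerate}
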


The following two results are classical.

\begin{theo}[Riesz-Thorin]
Let $1<p<\infty$. Then 
\[L_p(M)=[L_1(M),L_\infty(M)]_{1-1/p}\]
with equal norms.
\end{theo}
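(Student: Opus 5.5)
We prove the Riesz--Thorin identity $L_p(M)=[L_1(M),L_\infty(M)]_{1-1/p}$ with equal norms in two steps. First we show the contractive inclusion $L_p(M)\subset[L_1(M),L_\infty(M)]_\theta$, with $\theta:=1-1/p$, by an explicit analytic family. Then we get the reverse inequality from the Duality Theorem \ref{ComplexInterpolation_Duality} and H\"older's inequality. Note that $\theta=1-1/p$ is the value for which $1/p=(1-\theta)/1+\theta/\infty$.

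\emph{Step 1: the explicit family.} By density (the last Theorem of the previous subsection) it is enough to treat $x\in L_1(M)\cap L_\infty(M)$ with $\|x\|_{L_p(M)}=1$. Moreover, by the Corollary on nets of finite projections $p_\alpha$, we may also assume that $x$ has finite support. Write the polar decomposition $x=u|x|$ and set
\[f(z):=u|x|^{p(1-z)}.\]
This is where the finite support is used: the support projection $e$ of $|x|$ has $\tau(e)<\infty$. Using the functional calculus on $e$ (and $|x|^0:=e$), $f$ is bounded and continuous from $\overline{\S}$ into $L_1(M)\cap L_\infty(M)$, and holomorphic on $\S$. On the boundary we have
\[\|f(it)\|_{L_1(M)}=\tau(|x|^p)=1,\qquad \|f(1+it)\|_{L_\infty(M)}=\|e\|\pp1,\]
and $f(\theta)=u|x|^{p/p}=x$.

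The definition of $\mathcal F$ also requires the boundary functions to vanish at infinity. We handle this with the standard correction: replace $f$ by $e^{\delta(z^2-\theta^2)}f(z)$ for $\delta>0$. This gives $\|x\|_{[L_1,L_\infty]_\theta}\pp e^{\delta}$, and letting $\delta\to0$ yields $\|x\|_{[L_1,L_\infty]_\theta}\pp\|x\|_{L_p(M)}$.

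\emph{Step 2: the reverse inequality.} The couple $(L_1(M),L_\infty(M))$ is not regular, because $L_\infty(M)$ is not the closure of the intersection. So instead of dualizing that couple, we use the couple $(L_1(M),L_\infty(M))$ paired against $(L_\infty(M),L_1(M))$. Given $g\in\mathcal F(L_1,L_\infty)$ with $g(\theta)=x\in L_1\cap L_\infty$, take $y\in L_1\cap L_\infty$ of finite support with $\|y\|_{L_q(M)}\pp1$, where $1/q=1-1/p$. Build $h(z)=v|y|^{qz}$ exactly as in Step 1, so that $h(\theta)=y$ with $y=v|y|$. Then consider the scalar function
\[F(z):=\tau(g(z)h(z)^{*}).\]
It is bounded and holomorphic on $\S$ and continuous on $\overline{\S}$. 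On $\re z=0$ we have $|F|\pp\|g\|_{L_1}\|h\|_{L_\infty}\pp\|g\|_{\mathcal F}$, and on $\re z=1$ we have $|F|\pp\|g\|_{L_\infty}\|h\|_{L_1}\pp\|g\|_{\mathcal F}$. By the three lines lemma,
\[|\tau(xy^*)|\pp\|g\|_{\mathcal F}.\]
Taking the supremum over $y$ and using $L_p(M)^{\times}=L_q(M)$, together with the norming of $L_q$ by finitely supported elements of $L_1\cap L_\infty$ (again via the Corollary), gives $\|x\|_{L_p(M)}\pp\|x\|_{[L_1,L_\infty]_\theta}$.

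\emph{Extending to the full spaces.} Both norms agree on the intersection $L_1\cap L_\infty$, and the intersection is dense in both spaces. For $L_p$ this holds by order continuity, and for the complex space by \cite{BerghInterpolation}[Theorem 4.2.2]. Both spaces embed continuously into $L_1+L_\infty$. A Cauchy sequence in the intersection therefore converges to the same limit in both spaces, so the two spaces coincide with equal norms.

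\emph{Main obstacle.} The delicate point is the technical verification in Step 1 that $z\mapsto|x|^{p(1-z)}$ is holomorphic and continuous with values in $L_1+L_\infty$ and satisfies the boundary estimates. This is handled by first reducing to $x$ with spectrum of $|x|$ bounded away from $0$ on a finite-trace support, so that $|x|^{w}$ is a norm-convergent exponential series in $M$. The reduction is then removed by an approximation argument, which uses the order continuity of the $L_p$-norm.
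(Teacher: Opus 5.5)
The paper does not prove this statement; it only calls it classical. Your argument is the standard proof:
\begin{itemize}
\item the analytic family $u|x|^{p(1-z)}$ gives $\|x\|_{[L_1,L_\infty]_\theta}\le\|x\|_{L_p}$;
\item a three-lines argument against a dual family gives the reverse inequality;
\item a density argument passes to the full spaces, using that both spaces embed continuously in $L_1+L_\infty$.
\end{itemize}
The plan is sound, but one step fails as written. In Step 2 the function $F(z)=\tau(g(z)h(z)^*)$ is not holomorphic. With $h(z)=v|y|^{qz}$ you have $h(z)^*=|y|^{q\bar z}v^*$, which depends antiholomorphically on $z$. So $F$ is a trace pairing of a holomorphic and an antiholomorphic operator-valued function, and the three lines lemma does not apply to it. Pair $g$ instead with $k(z):=h(\bar z)^*=|y|^{qz}v^*$. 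This $k$ is holomorphic with values in $L_1\cap L_\infty$ for your special $y$, and $k(\theta)=y^*$ because $\theta$ is real and $q\theta=1$. Moreover $\|k(it)\|_{L_\infty}\le1$ and $\|k(1+it)\|_{L_1}=\tau(|y|^q)\le1$. Then $z\mapsto\tau(g(z)k(z))$ gives $|\tau(xy^*)|\le\|g\|_{\mathcal F}$, as you intended.

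The approximation sketched in your last paragraph also needs more care than you give it. The special elements (finite-trace support, with $|x|$ bounded below there) do not form a linear subspace. So knowing only that special $x_n$ converge to $x$ in $L_p$, by order continuity, does not let you pass $\|x_n\|_{[L_1,L_\infty]_\theta}\le\|x_n\|_{L_p}$ to the limit. You would need a lower semicontinuity of the complex norm that is not available at that stage. The argument does work for the spectral truncations $x_n:=u|x|\chi_{[1/n,n]}(|x|)$. There $x_n-x_m$ is again special, so $(x_n)$ is Cauchy in $[L_1,L_\infty]_\theta$, and its limit must be $x$. Alternatively, for $x\in L_1\cap L_\infty$, note that $x-u|x|\chi_{[1/n,\infty)}(|x|)$ tends to $0$ in $L_1\cap L_\infty$, hence in $[L_1,L_\infty]_\theta$. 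Either way the truncations already have finite-trace support by Chebyshev's inequality, so the reduction via $p_\alpha xp_\alpha$ is unnecessary.
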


\begin{theo}
Let $1<p<\infty$. Then 
\[L_p(M)=(L_1(M),L_\infty(M))_{1-1/p,p,K}\]
with equivalent norms, with constants depending only on $p$.
\end{theo}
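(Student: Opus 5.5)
The statement to prove is $L_p(M)=(L_1(M),L_\infty(M))_{1-1/p,p,K}$ with equivalent norms, with constants depending only on $p$. The plan is to compute the $K$-functional explicitly, using Theorem~\ref{integration_Holmstedt}, and then reduce both norms to one-dimensional Hardy-type inequalities for the decreasing function $\mu_x$.

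Put $\theta:=1-1/p$. For $x\in L_1(M)+L_\infty(M)$, Theorem~\ref{integration_Holmstedt} gives $K_t(x)=\int_0^t\mu_x(s)\,ds$. Hence
\[\|x\|_{(L_1,L_\infty)_{\theta,p,K}}^p=\int_0^\infty\Big(t^{-\theta}\int_0^t\mu_x(s)\,ds\Big)^p\frac{dt}{t}=\int_0^\infty\Big(\frac1t\int_0^t\mu_x(s)\,ds\Big)^p dt,\]
because $t^{-\theta p}\,t^{p}\,t^{-1}=t^{p-1-(p-1)}\cdot t^{0}$ after writing $t^{-\theta}=t^{-1}\cdot t^{1/p}$. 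That is, the $K$-norm is exactly the $L_p(0,\infty)$ norm of the averaging function $\mu_x^{**}(t):=\frac1t\int_0^t\mu_x$.

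The rest is classical Hardy inequality on the half-line. Since $\mu_x$ is decreasing, $\mu_x\pp\mu_x^{**}$ pointwise, so $\|x\|_{L_p(M)}=\|\mu_x\|_{L_p}\pp\|x\|_{\theta,p,K}$. Conversely, Hardy's inequality $\|\mu_x^{**}\|_{L_p}\pp\frac{p}{p-1}\|\mu_x\|_{L_p}$ holds for $1<p<\infty$. It is proved by writing $\mu_x^{**}(t)=\int_0^1\mu_x(tu)\,du$ and applying Minkowski's integral inequality, which gives the bound $\int_0^1u^{-1/p}\,du=p'$. This yields $\|x\|_{\theta,p,K}\pp p'\|x\|_{L_p(M)}$.

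One membership point needs care. If $x\in L_p(M)$, then $\int_0^t\mu_x\pp t^{1/p'}\|\mu_x\|_p<\infty$ by Hölder, so $x\in L_1+L_\infty$ by Theorem~\ref{integration_Holmstedt} and the formula applies. Conversely, elements of the interpolation space lie in $L_1+L_\infty$ by definition, and they lie in $L_0(M)$ as well, so the same formula applies. The constants are $1$ and $p/(p-1)$, which depend only on $p$.

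The main obstacle is Hardy's inequality; everything else is bookkeeping, since $K_t$ is already known exactly.
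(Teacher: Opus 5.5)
Your proof is correct. The paper gives no argument of its own (it lists the statement as classical), and your route is exactly the standard proof behind that citation: the exact formula $K_t(x)=\int_0^t\mu_x(s)\,ds$ from Theorem \ref{integration_Holmstedt} turns the $(1-1/p,p)$-norm into $\|\mu_x^{**}\|_{L_p(0,\infty)}$, and then $\mu_x\le\mu_x^{**}$ and Hardy's inequality give the equivalence with constants $1$ and $p/(p-1)$, with the membership issues correctly handled. The only blemish is cosmetic: the exponent line should simply read $t^{-\theta p}\,t^{p}\,t^{-1}=t^{-(p-1)+(p-1)}=1$.
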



For $1<p<\infty$, and $x\in L_0(M)$, we set
\[\|x\|_{L_{p,\infty}(M)}:=\sup_{t>0}t^{-(1-1/p)}\int_{0}^{t}\mu_x(s)ds\]
Then, for $1<p<\infty$, the \textit{$L_{p,\infty}$-space} 
\[L_p(M):=\big\{x\in L_0(M)\ :\ \|x\|_{L_p(M)}<\infty\big\}\]
is a subspace of $L_0(M)$ and becomes a complete normed space under $\|\cdot\|_{L_{p,\infty}(M)}$. This is not the usual definition given in the literature, but it is equivalent because of the estimates
\[\sup_{t>0} t^{1/p}\mu_x(t)\pp\|x\|_{L_{p,\infty}(M)}\pp q\sup_{t>0}t^{1/p}\mu_x(t),\ \ \ \text{for}\ x\in L_0(M)\]
where $1<q<\infty$ is such that $p^{-1}+q^{-1}=1$. Finally, for $1\pp p<\infty$ we set
\[L_{p,1}(M):=L_{q,\infty}(M)^{\times}\]
where $1<q<\infty$ is such that $p^{-1}+q^{-1}=1$.

\begin{theo}[\cite{DoddsSukochevIntegration}(Theorem 6.3.8, Theorem 6.4.3)]
Let $1<p\pp\infty$, and let $1\pp q<\infty$ is such that $p^{-1}+q^{-1}=1$. Then $L_{q,1}(M)$ has an order\-/continuous norm and
\[L_{q,1}^{\times}(M)=L_{p,\infty}(M)\]
with equal norms.
\end{theo}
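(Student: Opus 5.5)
The plan is to treat $L_{p,\infty}(M)$ as a fully symmetric space with the Fatou property, and then to identify $L_{q,1}(M)=L_{p,\infty}^{\times}(M)$ explicitly as a Lorentz space. The degenerate case $p=\infty$, $q=1$ can be disposed of first. For $p=\infty$ the defining expression gives $\sup_{t>0}t^{-1}\int_0^t\mu_x(s)ds=\mu_x(0^+)=\|x\|_M$, so $L_{\infty,\infty}(M)=L_\infty(M)$ and $L_{1,1}(M)=L_\infty(M)^{\times}=L_1(M)$. By \cite{DoddsSukochevIntegration}(Proposition 3.4.8, Theorem 3.4.24) both claims then reduce to known facts: $L_1(M)$ has order-continuous norm and $L_1(M)^{\times}=L_\infty(M)$. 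From now on let $1<p<\infty$.

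\emph{Step 1 (explicit norm of $L_{q,1}$).} For $y\in L_0(M)$ I would show
\[\|y\|_{L_{q,1}(M)}=\sup_{\|x\|_{L_{p,\infty}}\pp1}|\tau(xy)|=\frac1q\int_0^\infty \mu_y(t)\,t^{1/q-1}\,dt.\]
\begin{itemize}
\item Upper bound: use $|\tau(xy)|\pp\int_0^\infty\mu_x\mu_y$. The constraint $\|x\|_{L_{p,\infty}}\pp1$ says exactly $\int_0^t\mu_x\pp t^{1/q}=\int_0^t\frac1q s^{1/q-1}ds$ for all $t>0$. Since $\mu_y$ is decreasing, Hardy's lemma then gives $\int\mu_x\mu_y\pp\frac1q\int\mu_y(t)t^{1/q-1}dt$.
\item Lower bound: take $x:=\frac1q\,\phi(|y|)u^*$, where $y=u|y|$ is the polar decomposition and $\phi(|y|)$ is a spectral function of $|y|$ chosen so that $\mu_x\pp\frac1q t^{1/q-1}$ and $\tau(x y)$ approximates $\frac1q\int\mu_y t^{1/q-1}dt$. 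Then $\|x\|_{L_{p,\infty}}\pp1$.
\end{itemize}
The lower bound can be handled as in the commutative case, restricting first to spectral projections of finite trace. If $M$ has atoms, an approximation and truncation argument is needed, since one cannot always realize $\mu_x$ exactly.

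\emph{Step 2 (Köthe duality).} By the Köthe bidual theorem (a symmetric space $E$ satisfies $E^{\times\times}=E$ isometrically iff it has the Fatou property), it suffices to check that $L_{p,\infty}$ has the Fatou property. This is immediate: if $0\pp x_\alpha\uparrow x$ in $L_0(M)$, then $\mu_{x_\alpha}\uparrow\mu_x$ pointwise, so $\int_0^t\mu_{x_\alpha}\uparrow\int_0^t\mu_x$ by monotone convergence. Taking $\sup_t$ gives $\|x_\alpha\|\uparrow\|x\|$. Hence $L_{q,1}^{\times}=L_{p,\infty}^{\times\times}=L_{p,\infty}$ with equal norms.

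Alternatively, Step 2 can be done by hand. The inclusion $L_{p,\infty}\subset L_{q,1}^\times$ is contractive by Hölder's inequality and Step 1. For the reverse, test $x$ against projections $e$ of trace $t$ commuting with $|x|$: their $L_{q,1}$-norm is $t^{1/q}$, which recovers $\int_0^t\mu_x\pp t^{1/q}\|x\|_{L_{q,1}^\times}$.

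\emph{Step 3 (order continuity).} Let $(y_\alpha)$ be a decreasing net in $L_{q,1}(M)_+$ with $\inf_\alpha y_\alpha=0$.
\begin{itemize}
\item First, $\mu_{y_{\alpha_0}}$ is integrable against $t^{1/q-1}dt$, so $\mu_{y_{\alpha_0}}(t)\to0$ as $t\to\infty$.
\item For such measurable operators, a decreasing net with infimum $0$ satisfies $\mu_{y_\alpha}(t)\downarrow0$ for every $t>0$. This uses normality of $\tau$ applied to the spectral projections $\chi_{(s,\infty)}(y_\alpha)$, which have finite trace and decrease.
\item Since $\alpha\mapsto\|y_\alpha\|$ is decreasing, its infimum is the limit along a suitably chosen increasing sequence $\alpha_n$. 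Along that sequence, dominated convergence with dominant $\mu_{y_{\alpha_0}}(t)t^{1/q-1}$ gives $\|y_{\alpha_n}\|\to0$.
\end{itemize}

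I expect Step 3 to be the main obstacle. Passing from a net to pointwise decay of $\mu_{y_\alpha}$ needs care: one must use that $y_\alpha\downarrow0$ forces the distribution functions to decrease to $0$ for each level, which relies on finiteness of $\lambda_{y_{\alpha_0}}(s)$ for all $s>0$ together with normality of $\tau$. A second, smaller difficulty is the exact attainment in Step 1 when $M$ has atoms, and I would handle both by reducing to finite-trace spectral projections.
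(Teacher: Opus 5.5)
The paper gives no proof of this statement; it only cites Dodds--de Pagter--Sukochev. Your overall plan is the standard route. You compute the norm of $L_{p,\infty}^{\times}$ as the Lorentz norm $\frac1q\int_0^\infty\mu_y(t)t^{1/q-1}dt$ via Hardy's lemma, obtain $L_{p,\infty}^{\times\times}=L_{p,\infty}$ isometrically, and get order continuity by dominated convergence on $\mu$. Your by-hand version of Step 2 is fine: it only needs the upper bound of Step 1, and when trace exactly $t$ is not attainable you can test against a weighted spectral projection and use concavity of $s^{1/q}$.

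\textbf{The gap: Step 3, second bullet.} You obtain $\mu_{y_\alpha}(t)\downarrow0$ by applying normality of $\tau$ to the projections $\chi_{(s,\infty)}(y_\alpha)$, ``which decrease''. They need not decrease. The inequality $y_\beta\le y_\alpha$ does not imply $\chi_{(s,\infty)}(y_\beta)\le\chi_{(s,\infty)}(y_\alpha)$, because indicator functions are not operator monotone. For example, take $B=\mathrm{diag}(3,1)$ and $A=\begin{pmatrix}5/2&1/2\\1/2&1/2\end{pmatrix}$, so that $A\le B$, and $s=2$. Then $\chi_{(2,\infty)}(A)$ is a rank-one projection distinct from $\chi_{(2,\infty)}(B)$. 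Only the traces $\lambda_{y_\alpha}(s)$ are monotone. So there is no decreasing net of projections to which normality applies, and nothing ties such projections to $\inf_\alpha y_\alpha=0$. This is exactly the noncommutative content of the order-continuity claim; as written, Step 3 is the commutative proof.

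\textbf{A way to close it.} The conclusion of that bullet is true, but it needs a genuinely noncommutative argument. Let $f(s)=s/(1+s)$, which is operator monotone.
\begin{itemize}
\item The net $(1+y_\alpha)^{-1}$ increases in $M$ to some $w\le1$.
\item By operator monotonicity of inversion, $w^{-1}-1$ is a positive lower bound of $(y_\alpha)$. It is dominated by $y_{\alpha_0}$, hence $\tau$-measurable, so it equals $0$.
\item Therefore $v_\alpha:=f(y_\alpha)\downarrow0$ strongly in $M$.
\item For $\epsilon>0$ put $e:=\chi_{(\epsilon,\infty)}(y_{\alpha_0})$, which has finite trace by your first bullet.
\item Use $v\le2(eve+e^{\bot}ve^{\bot})$ for $v\ge0$, together with $e^{\bot}v_\alpha e^{\bot}\le e^{\bot}v_{\alpha_0}e^{\bot}\le f(\epsilon)$. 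This gives $\mu_{v_\alpha}(t)\le2t^{-1}\tau(ev_\alpha e)+2f(\epsilon)$.
\item $\tau(ev_\alpha e)\to0$ by normality of $\tau$ on the finite corner.
\item Since $\mu_{v_\alpha}=f\circ\mu_{y_\alpha}$, this yields $\mu_{y_\alpha}(t)\to0$ for every $t>0$.
\end{itemize}

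\textbf{Smaller points.}
\begin{itemize}
\item The sequence $(\alpha_n)$ in your third bullet must be chosen diagonally, so that also $\mu_{y_{\alpha_n}}(t_k)\to0$ on a countable dense set of $t_k$. Approximating the infimum of the norms alone gives dominated convergence nothing to act on.
\item The ``immediate'' Fatou check in Step 2 uses the upward analogue $0\le x_\alpha\uparrow x\Rightarrow\mu_{x_\alpha}\uparrow\mu_x$. This is a standard lemma but not trivial, for the same reason as above; your by-hand route sidesteps it.
\item The pointwise constraint $\mu_x\le\frac1q t^{1/q-1}$ in Step 1 can lose a factor of up to $q$ on atoms; exactness requires averaging over level sets of $|y|$. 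However, a lower bound up to a constant is all Step 3 needs.
\end{itemize}
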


As a consequence, if $1<p\pp\infty$ then $L_{p,\infty}(M)$ is canonically equipped with a w*-topology as the dual of $L_{q,1}(M)$, where $1\pp q<\infty$ is such that $p^{-1}+q^{-1}=1$.

\begin{prop}\label{integration_amplification}
Let $N$ be a semifinite von Neumann algebra and $1<p\pp\infty$. If $x\in L_{p}(M)$ and $f\in L_{p,\infty}(N)$, then $x\otimes f\in L_{p,\infty}(M\bar{\otimes}N)$ with 
\[\|x\otimes f\|_{L_{p,\infty}(M\bar{\otimes}N)}\pp q\|x\|_{L_p(M)}\|f\|_{L_{p,\infty}(N)}.\]
where $1\pp q<\infty$ is such that $p^{-1}+q^{-1}=1$. Moreover, if $x\in L_p(M)$ then the bounded operator
\[\fonctbis{L_{p}(N)}{L_{p,\infty}(M\bar{\otimes}N)}{f}{x\otimes f}\]
is \text{\upshape w*}-continuous.
\end{prop}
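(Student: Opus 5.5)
The plan is to reduce everything to the rearrangement invariant structure via singular values. For $x\in L_p(M)$ and $f\in L_{p,\infty}(N)$, the singular function of $x\otimes f$ (with respect to $\tau_M\otimes\tau_N$) is the decreasing rearrangement of the function $(s,u)\mapsto\mu_x(s)\mu_f(u)$ on $(0,\infty)^2$ with Lebesgue measure. One sees this by approximating $x$ and $f$ by finite combinations of spectral projections: for $x=\sum_i\alpha_i e_i$ and $f=\sum_j\beta_j g_j$ with orthogonal spectral projections, $x\otimes f=\sum_{i,j}\alpha_i\beta_j e_i\otimes g_j$, and $(\tau_M\otimes\tau_N)(e_i\otimes g_j)=\tau(e_i)\tau(g_j)$. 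It follows that the distribution function satisfies
\[\lambda_{x\otimes f}(r)=\int_0^\infty\lambda_f\big(r/\mu_x(s)\big)\,ds.\]
(The general case follows by normality of the trace / monotone convergence.) This reduces the norm inequality to a commutative one.

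Next, use the weak-type bound $\lambda_f(v)\le \|f\|_{L_{p,\infty}}^p v^{-p}$, with the quasi-norm $\sup_t t^{1/p}\mu_f(t)$. It is at most the norm defined in the paper (first half of the stated equivalence). Plugging in gives
\[\lambda_{x\otimes f}(r)\le \|f\|^p r^{-p}\int_0^\infty\mu_x(s)^p\,ds=\|x\|_{L_p}^p\|f\|^p r^{-p},\]
so $\sup_t t^{1/p}\mu_{x\otimes f}(t)\le\|x\|_{L_p}\|f\|$. The stated equivalence $\|\cdot\|_{L_{p,\infty}}\le q\sup_t t^{1/p}\mu(t)$ then produces the constant $q$ (the paper's $q$ being the conjugate exponent, which is consistent with that estimate). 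The case $p=\infty$ is trivial, since $\|x\otimes f\|_\infty=\|x\|_\infty\|f\|_\infty$.

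For the w*-continuity, I read the map as defined on $L_{p,\infty}(N)$, presumably with the w*-topology as dual of $L_{q',1}(N)$. Since the map is bounded, it suffices to show it is the adjoint of a bounded map $L_{q',1}(M\bar\otimes N)\to L_{q',1}(N)$, or to check w*-convergence on a dense set of bounded nets. Take $g\in L_1\cap L_\infty(M\bar\otimes N)$, which is norm-dense in $L_{q',1}$ by order continuity. Then
\[\tau\big((x\otimes f)g\big)=\tau_N\big(f\,\Phi_x(g)\big),\qquad \Phi_x(g):=(\tau_M\otimes\mathrm{id})\big((x\otimes1)g\big).\]
Here $\Phi_x(g)$ is the slice map. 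One needs $\Phi_x(g)\in L_{q',1}(N)$, and this holds by duality from the first part:
\[|\tau_N(f\Phi_x(g))|\le q\|x\|_p\|f\|_{L_{p,\infty}}\|g\|_{L_{q',1}}.\]
So $\Phi_x(g)$ defines an element of $L_{p,\infty}(N)^*$ that is w*-continuous on bounded sets. The main obstacle is showing that $\Phi_x(g)$ actually lies in the predual $L_{q',1}(N)$ rather than just in the dual. For $g$ an elementary tensor $a\otimes b$, $\Phi_x(g)=\tau_M(xa)\,b$ clearly does. So I would first treat finite sums of elementary tensors in $L_1\cap L_\infty$, which are norm-dense in $L_{q',1}(M\bar\otimes N)$. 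The uniform bound then passes the identity to the closure, making $\Phi_x$ a bounded preadjoint. Hence the map $f\mapsto x\otimes f$ is its adjoint and is w*-continuous.
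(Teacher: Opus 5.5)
Your proposal is correct and follows essentially the paper's argument. The norm estimate comes from the distribution formula $\lambda_{x\otimes f}(r)=\int_0^\infty\lambda_f(r/\mu_x(s))\,ds$ combined with the weak-type bound, and your constant bookkeeping is cleaner than the paper's. The w*-continuity comes from realizing $f\mapsto x\otimes f$ as the adjoint of the map $a\otimes b\mapsto\tau(xa)\,b$, bounded into $L_{q,1}(N)$ by the first part and extended by density, which is exactly the paper's operator $T_x$. The only loose point is your heuristic slice-map formula $(\tau_M\otimes\mathrm{id})((x\otimes 1)g)$ for general $g$: when $x$ is unbounded and $\sigma(1)=\infty$, $x\otimes 1$ is not $\tau\otimes\sigma$-measurable, so the formula is not well defined. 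Your final argument only uses it on elementary tensors, so nothing is lost.
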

\begin{proof}
The cas $p=\infty$ is classical, so we assume $p<\infty$. If $x\in L_p(M)$ and $f\in L_{p,\infty}(N)$, we have
\begin{align*}
\|x\otimes f\|_{L_{p,\infty}(M)}^p&\pp q\sup_{t>0}t^{1/p}\mu_{x\otimes f}(t)=q\sup_{t>0}t^p\lambda_{x\otimes f}(t)\\
&=q\sup_{t>0}t^p\int_{0}^{\infty}\lambda_{y}(t/\mu_x(s))ds\\
&=q\sup_{t>0}\int_{0}^{\infty}(t/\mu_x(s))^p\lambda_f(t/\mu_x(s))\mu_x(s)^pds\\
&\pp q\int_{0}^{\infty}\sup_{t>0}t^p\lambda_f(t)\mu_x(s)^pds\pp q\|x\|_{L_p(M)}^p\|f\|_{L_{p,\infty}(N)}^p.
\end{align*}
Now, fix $x\in L_p(M)$, and consider the operator $T_x:L_q(M)\odot L_{q,1}(N)\to L_{q,1}(N)$ such that \[T_x(y\otimes g)=\tau(xy)g,\ \ \ \text{for}\ y\in L_p(M),\ g\in L_{q,1}(N).\]
Let $\sigma$ denote the trace of $N$. If $w\in L_q(M)\odot L_{q,1}(N)$, then a direct computation shows that, for $f\in L_{p,\infty}(N)$ we have
\[\sigma(fT_x(w))=(\tau\otimes\sigma)((x\otimes f)w).\]
Thus, for $w\in (L_1\cap L_\infty)(M)\odot(L_1\cap L_\infty)(N)$ we get
\begin{align*}
\|T_x(w)\|_{L_{q,1}(N)}&=\sup_{\|f\|_{L_{p,\infty}(N)}\pp1}|\sigma(yT_x(w))|\\
&=\sup_{\|f\|_{L_{p,\infty}(N)}\pp1}|(\tau\otimes\sigma)((x\otimes f)w)|\\
&\pp\sup_{\|f\|_{L_{p,\infty}(N)}\pp1}\|x\otimes f\|_{L_{p,\infty}(M\bar{\otimes}N)}\|w\|_{L_{q,1}(M\bar{\otimes}N)}\\
&\pp p\|x\|_{L_p(M)}\|w\|_{L_{q,1}(M\bar{\otimes}N)}
\end{align*}
As the algebraic tensor product $(L_1\cap L_\infty)(M)\odot(L_1\cap L_\infty)(N)$ separates the points of $(L_1+L_\infty)(M\bar{\otimes}N)$, the above computation shows that $T_x$ extends to a bounded operator $L_{q,1}(M\bar{\otimes}N)\to L_{q,1}(N)$ whose dual coincides with the operator
\[\fonctbis{L_{p,\infty}(N)}{L_{p,\infty}(M\bar{\otimes}N)}{f}{x\otimes f}.\]
The proof is complete.
\end{proof}

\subsection{Conditional expectations}

Let $M$ be a semifinite von Neumann algebra and let $N$ be a \textit{semifinite von Neumann subalgebra} of $M$, i.e. a von Neumman subalgebra of $N$ such that the restriction of the trace to $N$ is again semifinite, and thus a n.s.f. trace on $N$. In that case, there is a (trace\-/preserving normal faithful) conditional expectation $E$ of $M$ onto $N$. Moreover, the conditional expectation $E$ extends to a contractive and idempotent compatible operator 
\[E:(L_1(M),L_\infty(M))\to(L_1(N),L_\infty(N))\]
that restricts to the identity on $L_1(N)+L_\infty(N)$. As a consequence, if $\mathcal{F}$ is an exact interpolation functor then $\mathcal{F}(L_1(N),L_\infty(N))$ is a subspace of $\mathcal{F}(L_1(M),L_\infty(M))$ and the inclusion operator $\mathcal{F}(L_1(N),L_\infty(N))\to\mathcal{F}(L_1(M),L_\infty(M))$ is isometric. As a consequence, if $\mathcal{F}$ is an exact interpolation functor, then the conditional expectation $E$ induces a canonical contractive idempotent operator \[E:\mathcal{F}(L_1(M),L_\infty(N))\to\mathcal{F}(L_1(M),L_\infty(M))\]
with range $\mathcal{F}(L_1(N),L_\infty(N))$.

\clearpage

\part{Interpolation of abstract Hardy spaces}

Let $M$ be a semifinite von Neumann algebra. 

\vspace{5pt}

\textbf{Notations.} If $P$ is any (self-adjoint) projection on $L_2(M)$ and if $E(M)$ is any exact interpolation space for the compatible couple $(L_1(M),L_\infty(M))$, we define $H_E(P)$ to be the $\sigma(E(M),E^{\times}(M))$\-/weak closure of the subspace
\[E(M)\cap P(L_2(M))\]
in $E(M)$. Let us denote $H_p(P):=H_{L_p}(P)$ for $1\pp p\pp\infty$. By definition, $H_2(P)=P(L_2(M))$. Let us also denote $H_{p,1}(P):=H_{L_{p,1}}(P)$ and $H_{p,\infty}(P):=H_{L_{p,\infty}}(P)$ for $1<p<\infty$. They are the \textit{abstract Hardy spaces} associated with $P$.

\begin{rem}
If $P$ is a projection on $L_2(M)$ with complement projection $P^{\bot}$, and if $E(M)$ is an exact interpolation space for $(L_1(M),L_\infty(M))$, then from the definitions it is clear that $H_E(P)$ and $H_{E^{\times}}(P)$ are respectively included in the orthogonal of $H_{E^{\times}}(P^\bot)$ and $H_E(P^{\bot})$ w.r.t. trace duality between $E(M)$ and $E^{\times}(M)$.
\end{rem}

\section{The technical assumptions}

Let $M$ be a semifinite von Neumann algebra.

\begin{mdframed}[backgroundcolor=black!10,rightline=false,leftline=false,topline=false,bottomline=false,skipabove=10pt]
A projection $P$ on $L_2(M)$ is said to satisfy the \textit{technical assumptions} if
\vspace{3pt}
\begin{enumerate}[nosep]
    \item[(1)] If $E(M)$ is an exact interpolation space for $(L_1(M),L_\infty(M))$ with order\-/continuous norm, then the orthogonals of $H_E(P)$, $H_{E^{\times}}(P)$ w.r.t. trace duality coincide respectively with $H_E(P^{\bot})$, $H_{E^{\times}}(P^{\bot})$.
    \item[(2)] There is a subspace $H(P)$ of $(L_1+L_\infty)(M)$, which is weakly-closed in $(L_1+L_\infty)(M)$ (w.r.t. trace duality between $(L_1+L_\infty)(M)$ and $(L_1\cap L_\infty)(M)$), and such that, if $E(M)$ is an exact interpolation space for $(L_1(M),L_\infty(M))$ with order\-/continuous norm, then we have 
\[H_E(P)=H(P)\cap E(M),\ \ \ \ H_{E^{\times}}(P)=H(P)\cap E^{\times}(M).\]
    \item[(3)] If $E(M)$ is an exact interpolation space for $(L_1(M),L_\infty(M))$ with order\-/continuous norm, then the subspace
\[(L_1\cap L_\infty)(M)\cap P(L_2(M))\]
is norm-dense in $H_E(P)$ and w*-dense in $H_{E^{\times}}(P)$.
\end{enumerate}
\end{mdframed}
\vspace{5pt}

\begin{prop}
Let $P$ be a projection on $L_2(M)$ which satisfies the technical assumptions. Then the complement projection $P^{\bot}$ on $L_2(M)$ also satisfies the technical assumptions.
\end{prop}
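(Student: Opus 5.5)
We have to check the three conditions for $P^\bot$. Throughout, fix an exact interpolation space $E(M)$ for $(L_1(M),L_\infty(M))$ with order-continuous norm, and use trace duality $\langle x,y\rangle=\tau(xy^*)$ between $E(M)$ and $E^{\times}(M)$. For $S\subset E(M)$ or $S\subset E^{\times}(M)$, write $S^{\bot}$ for its annihilator under this duality.

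\textbf{Condition (1).} Since $(P^\bot)^\bot=P$, the claim is
\[H_E(P^\bot)^{\bot}=H_{E^\times}(P),\qquad H_{E^\times}(P^\bot)^{\bot}=H_E(P).\]
By (1) for $P$ we have $H_E(P^\bot)=H_{E^\times}(P)^\bot$, where the annihilator is taken in $E(M)$. Taking annihilators in $E^\times(M)$ gives
\[H_E(P^\bot)^\bot=\big(H_{E^\times}(P)^{\bot}\big)^{\bot}.\]
This double annihilator is the $\sigma(E^\times(M),E(M))$-closure of $H_{E^\times}(P)$, by the bipolar theorem for the dual pair $(E,E^\times)$; the pairing is non-degenerate by the trace-duality theorem. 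By definition, $H_{E^\times}(P)$ is $\sigma(E^\times,E)$-closed, so the double annihilator equals $H_{E^\times}(P)$. The other identity follows symmetrically, using that $H_E(P)$ is $\sigma(E,E^\times)$-closed by definition.

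\textbf{Condition (3).} I would first show the following: if $y\in H_{E^\times}(P^\bot)$ annihilates $D(P):=(L_1\cap L_\infty)(M)\cap P(L_2(M))$, then $y$ annihilates $H_E(P)$.
\begin{itemize}
\item By (3) for $P$, $D(P)$ is norm-dense in $H_E(P)$, so $y\in H_E(P)^\bot=H_{E^\times}(P^\bot)$; that is, this holds automatically.
\end{itemize}
More to the point, for (3) for $P^\bot$ I need that $D(P^\bot)$ is norm-dense in $H_E(P^\bot)$. I would use Hahn--Banach. Since $E$ is order continuous, $E^*=E^\times$, so it suffices to take $y\in E^\times$ annihilating $D(P^\bot)$ and show $y\perp H_E(P^\bot)$.
\begin{itemize}
\item For $z\in (L_1\cap L_\infty)(M)\subset L_2(M)$ we have $P^\bot z\in P^\bot(L_2(M))$, but $P^\bot z$ need not lie in $L_1\cap L_\infty$.
\item This is exactly where (2) for $P$ should enter. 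Its subspace $H(P)$ is weakly closed in $L_1+L_\infty$, and $H_F(P)=H(P)\cap F$ holds uniformly over all $F$.
\end{itemize}
The plan is to show $y\in H(P)$, i.e.\ that $y$ lies in $H_{E^\times}(P)=H_E(P^\bot)^\bot$, which gives the claim. To get $y\in H(P)$, I would use the weak closedness of $H(P)$ together with its annihilator in $L_1\cap L_\infty$. Because $H(P)$ is weakly closed, it equals the annihilator of $H(P)^\bot\cap(L_1\cap L_\infty)$. One then checks that this annihilator space is $D(P^\bot)$, or is contained in its weak closure, by applying (2) with $F=L_2$, where $H_2(P)=P(L_2)$ and $H(P)\cap L_2=P(L_2)$. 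The w*-density statement for $H_{E^\times}(P^\bot)$ follows from the same bipolar argument in the $\sigma(E^\times,E)$ topology.

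\textbf{Condition (2).} Set $H(P^\bot):=H(P)^{\bot}$, the annihilator taken in $(L_1+L_\infty)(M)$ against $(L_1\cap L_\infty)(M)$ and then closed; concretely
\[H(P^\bot)=\big(H(P)\cap(L_1\cap L_\infty)(M)\big)^{\bot}.\]
This is weakly closed by construction. To verify $H_E(P^\bot)=H(P^\bot)\cap E(M)$, use
\[H_E(P^\bot)=H_{E^\times}(P)^\bot=\big(H(P)\cap E^\times\big)^\bot.\]
By (3) for $P$, $H(P)\cap E^\times$ is the w*-closure of $D(P)\subset H(P)\cap(L_1\cap L_\infty)$, so the two annihilators coincide. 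The same argument applies to $E^\times$.

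The main obstacle is reconciling the annihilators computed in $(L_1+L_\infty,\,L_1\cap L_\infty)$ duality with those in $(E,E^\times)$ duality, and specifically verifying the density claim in (3) for $P^\bot$. I would handle it by consistently reducing to $D(P)$ via (3) for $P$, and pairing only against elements of $L_1\cap L_\infty$, which lies in both $E$ and $E^\times$.
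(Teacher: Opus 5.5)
Your proposal is correct and follows essentially the same route as the paper. You use the bipolar theorem for (1) and define $H(P^{\bot})$ as the annihilator in $(L_1+L_\infty)(M)$ of $(L_1\cap L_\infty)(M)\cap P(L_2(M))$ for (2). For (3) you use a Hahn--Banach reduction, via (1) for $P^{\bot}$: anything annihilating $(L_1\cap L_\infty)(M)\cap P^{\bot}(L_2(M))$ lies in the weakly closed space $H(P)$, hence in $H_{E^{\times}}(P)$ resp. $H_E(P)$ by (2). Your use of (2) with $E=L_2$ shows that every element of $(L_1\cap L_\infty)(M)$ annihilating $H(P)$ lies in $P^{\bot}(L_2(M))$; this makes explicit the step the paper compresses into ``$x$ is in the biorthogonal of $H(P)$''.
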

\begin{proof}
As $P$ satisfies (1), by the bipolar theorem it is clear that $P^{\bot}$ satisfies (1). 

For (2), let $H(P^{\bot})$ denote the orthogonal of $(L_1\cap L_\infty)(M)\cap P(L_2(M))$ w.r.t. trace duality between $(L_1+L_\infty)(M)$ and $(L_1\cap L_\infty)(M)$. Then $H(P^{\bot})$ is indeed weakly-closed in $(L_1+L_\infty)(M)$. Let $E(M)$ be an exact interpolation space for $(L_1(M),L_\infty(M))$ with order\-/continuous norm. As $H_E(P^{\bot})$ is orthogonal to $H_{E^{\times}}(P)$, and because $H_{E^{\times}}(P)$ contains $(L_1\cap L_\infty)(M)\cap P(L_2(M))$, we deduce that $H_E(P^{\bot})$ is included in $H(P^{\bot})$, and thus in $E(M)\cap H(P^{\bot})$. For the converse inclusion, let $x\in E(M)\cap H(P^{\bot})$. As $P$ satisfies (3), we deduce that $x$ is orthogonal to $H_{E^{\times}}(P)$. As $P$ satisfies (1), we deduce that $x\in H_E(P^{\bot})$. Thus $H_E(P^{\bot})=E(M)\cap H(P^{\bot})$ as desired. In a similar way, we prove that $H_{E^{\times}}(P^{\bot})=E^{\times}(M)\cap H(P^{\bot})$. 

It remains to justify (3). Note that $(L_1\cap L_\infty)(M)\cap P^{\bot}(L_2(M))$ is weakly closed in $(L_1\cap L_\infty)(M)$, as a consequence of the fact that $P$ is weakly-continuous for the weak topology of $L_2(M)$. Thus, as $P$ satisfies (1), by the bipolar theorem it suffices to show that, if $x\in E^{\times}(M)$ resp. $x\in E(M)$ is orthogonal to $(L_1\cap L_\infty)(M)\cap P^{\bot}(L_2(M))$, then $x\in H_{E^\times}(P)$ resp. $x\in H_E(P)$. Let $x\in E^{\times}(M)$ resp. $x\in E(M)$ orthogonal to $(L_1\cap L_\infty)(M)\cap P^{\bot}(L_2(M))$. Then $x$ is in the biorthogonal of $H(P)$ w.r.t. trace duality between $(L_1+L_\infty)(M)$ and $(L_1\cap L_\infty)(M)$. By the bipolar theorem, and because $H(P)$ is, by assumption, weakly closed in $(L_1+L_\infty)(M)$, we deduce that $x\in H(P)$. Thus $x\in E^{\times}(M)\cap H(P)$ resp. $x\in E^{\times}(M)\cap H(P)$. As $P$ satisfies (2), we finally deduce that $x\in H_{E^{\times}}(P)$ resp. $x\in H_E(P)$, as desired.
\end{proof}

The following result is a direct consequence of Pisier's theorem (Theorem \ref{preliminaries_Pisier}).

\begin{prop}\label{technical_assumptions_Pisier}
Let $P$ be a projection on $L_2(M)$ which satisfies the technical assumptions, and let $E_0(M)$ $E_1(M)$ be two exact interpolation spaces for $(L_1(M),L_\infty(M))$ with order\-/continuous norm. If the subcouple $(H_{E_0}(P),H_{E_1}(P))$ is quasi-complemented in $(E_0(M),E_1(M))$ with constant $C$, then the subcouple $(H_{E_0^{\times}}(P^{\bot}),H_{E_1^{\times}}(P^{\bot}))$ is quasi-complemented in $(E_0^{\times}(M),E_1^{\times}(M))$ with a constant depending only on $C$.
\end{prop}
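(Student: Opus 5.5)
Quasi-complementation of $(H_{E_0}(P),H_{E_1}(P))$ implies $K$-closedness with constant $C$. Pisier's theorem (Theorem \ref{preliminaries_Pisier}) then gives $K$-closedness of the annihilator couple in the dual couple. The remaining work is to identify the duals and annihilators with the Köthe duals and with $H_{E_j^{\times}}(P^{\bot})$, and then upgrade $K$-closedness back to quasi-complementation. For the upgrade we use the preceding Proposition, which only needs the extra condition $A_j=E_j\cap(A_0+A_1)$.

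\textbf{Step 1: the dual couple.} The couple $(E_0(M),E_1(M))$ is regular, since both spaces have order-continuous norm and so $(L_1\cap L_\infty)(M)$ is norm-dense in each. Because of order-continuity, trace duality identifies $E_j(M)^*$ with $E_j^{\times}(M)$ isometrically. The pairing with $E_0\cap E_1$ agrees with the pairing on $(L_1\cap L_\infty)(M)$, and the latter separates points. Hence the abstract dual couple $(E_0^*,E_1^*)$ is canonically the compatible couple $(E_0^{\times}(M),E_1^{\times}(M))$ inside $(L_1+L_\infty)(M)$. Trace duality uses $\tau(xy^*)$, which is conjugate-linear, but this is harmless: conjugation is isometric on everything in sight, and the sets $H_{E^{\times}}(P^{\bot})$ are invariant under the relevant identification.

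\textbf{Step 2: annihilators.} By technical assumption (1), the orthogonal of $H_{E_j}(P)$ in $E_j^{\times}(M)$ is $H_{E_j^{\times}}(P^{\bot})$. So $(A_0^{\bot},A_1^{\bot})=(H_{E_0^{\times}}(P^{\bot}),H_{E_1^{\times}}(P^{\bot}))$, and Pisier's theorem gives $K$-closedness of this subcouple in $(E_0^{\times}(M),E_1^{\times}(M))$ with constant $C'$ depending only on $C$. (One could also check that $(H_{E_0}(P),H_{E_1}(P))$ is regular using assumption (3), but Theorem \ref{preliminaries_Pisier} does not require it.)

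\textbf{Step 3: the intersection condition.} It remains to show that $H_{E_j^{\times}}(P^{\bot})=E_j^{\times}(M)\cap\big(H_{E_0^{\times}}(P^{\bot})+H_{E_1^{\times}}(P^{\bot})\big)$; the inclusion $\subseteq$ is trivial. Here we use assumption (2) for $P^{\bot}$, which is valid by the previous Proposition. The space $H(P^{\bot})$ is a linear subspace of $(L_1+L_\infty)(M)$ containing both $H_{E_0^{\times}}(P^{\bot})=H(P^{\bot})\cap E_0^{\times}(M)$ and $H_{E_1^{\times}}(P^{\bot})$, hence their sum. So any $y$ in the right-hand side lies in $H(P^{\bot})\cap E_j^{\times}(M)=H_{E_j^{\times}}(P^{\bot})$.

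\textbf{Conclusion and main obstacle.} The preceding Proposition now yields quasi-complementation with constant $4C'$. The delicate point is Step 1 together with Step 3. One must make sure the abstract dual couple of Section 1 coincides with the Köthe-dual couple as subspaces of one ambient space, and that the second identity in assumption (2), the one for $E^{\times}$, applies to $E_j^{\times}$ even though $E_j^{\times}$ need not have order-continuous norm. Assumption (2) is formulated exactly for that case, which is why the argument goes through.
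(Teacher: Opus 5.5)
Your proposal is correct and follows the route the paper intends. The paper only asserts the result as a direct consequence of Pisier's theorem, and your steps supply the details it leaves implicit: identifying the abstract dual couple with the Köthe-dual couple, identifying the annihilators via assumption (1), and checking the intersection condition via assumption (2) for $P^{\bot}$, so that $K$-closedness upgrades back to quasi-complementation.
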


\begin{prop}\label{technical_assumptions_Kclosedness}
Let $P$ be a projection on $L_2(M)$ which satisfies the technical assumptions, let $E_0(M)$, $E_1(M)$ be two exact interpolation spaces for $(L_1(M),L_\infty(M))$ such that the subcouple $(H_{E_0}(P),H_{E_1}(P))$ is quasi-complemented in $(E_0(M),E_1(M))$ with constant $C$, and let $\Phi$ be a $K$-parameter space such that the exact interpolation space
\[E_\theta(M):=K_\Phi(E_0(M),E_1(M))\]
has order-continuous norm. Then
\[H_{E_\theta}(P)=K_\Phi(E_0(M),E_1(M))\]
with equivalent norms, with constants depending only on $C$.
\end{prop}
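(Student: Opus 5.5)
The plan is to read the statement as $H_{E_\theta}(P)=K_\Phi(H_{E_0}(P),H_{E_1}(P))$ with equivalent norms. Write $A_j:=H_{E_j}(P)$ for $j\in\{0,1\}$.

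\emph{Reduction.} Since $(A_0,A_1)$ is quasi-complemented in $(E_0(M),E_1(M))$ with constant $C$, it is $K$-closed there with constant $C$. By Proposition \ref{preliminaries_Kclosedness}, the inclusion $K_\Phi(A_0,A_1)\to E_\theta(M)$ is then an embedding with range $(A_0+A_1)\cap E_\theta(M)$. More precisely, $\|u\|_{E_\theta(M)}\pp\|u\|_{K_\Phi(A_0,A_1)}\pp C\|u\|_{E_\theta(M)}$ for every $u$ in that range, because $K_t(u,E_0,E_1)\pp K_t(u,A_0,A_1)\pp CK_t(u,E_0,E_1)$ and $\Phi$ is a lattice norm. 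It therefore suffices to prove the set equality
\[H_{E_\theta}(P)=(A_0+A_1)\cap E_\theta(M),\]
and the norms will then be equivalent with constants $1$ and $C$.

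\emph{Inclusion $\supseteq$.} The main point is that $A_j\subset H(P)$, where $H(P)$ is the space from technical assumption (2). Note that $E_0,E_1$ are not assumed order-continuous, so (2) cannot be applied to them directly. First apply (2) to $E=L_2$, which has order-continuous norm; this gives $P(L_2(M))=H_2(P)=H(P)\cap L_2(M)$. Hence $E_j(M)\cap P(L_2(M))\subset H(P)$. Next, $(L_1\cap L_\infty)(M)\subset E_j^{\times}(M)$, so the $\sigma(E_j,E_j^{\times})$-closure is contained in the $\sigma(E_j,(L_1\cap L_\infty)(M))$-closure. The inclusion $E_j(M)\to(L_1+L_\infty)(M)$ is continuous for these weak topologies, and $H(P)$ is weakly closed there, so $A_j\subset H(P)$. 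Consequently $(A_0+A_1)\cap E_\theta(M)\subset H(P)\cap E_\theta(M)$. Since $E_\theta$ has order-continuous norm, (2) identifies the latter with $H_{E_\theta}(P)$.

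\emph{Inclusion $\subseteq$.} Let $x\in H_{E_\theta}(P)$. By technical assumption (3), applied to $E_\theta$, there are $x_n\in(L_1\cap L_\infty)(M)\cap P(L_2(M))$ with $x_n\to x$ in $E_\theta(M)$. Each $x_n$ lies in $E_j(M)\cap P(L_2(M))\subset A_j$, so $x_n\in A_0\cap A_1\subset K_\Phi(A_0,A_1)$. By the norm estimate in the reduction, $(x_n)$ is Cauchy in the complete space $K_\Phi(A_0,A_1)$, with $\|x_n-x_m\|_{K_\Phi(A_0,A_1)}\pp C\|x_n-x_m\|_{E_\theta(M)}$. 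Let $y$ be its limit. Since $K_\Phi(A_0,A_1)$ embeds continuously in $E_\theta(M)$, we get $y=x$, so $x\in K_\Phi(A_0,A_1)=(A_0+A_1)\cap E_\theta(M)$.

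\emph{Main obstacle.} The delicate step is the inclusion $A_j\subset H(P)$ without order-continuity of $E_j$. This requires the comparison of the weak topologies above, together with the observation that (2) applied to $L_2$ yields $P(L_2(M))\subset H(P)$. Everything else is a direct application of Proposition \ref{preliminaries_Kclosedness} and assumptions (2)--(3) for $E_\theta$.
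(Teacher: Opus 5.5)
Your proof is correct and follows the paper's argument: quasi-complementation gives $K$-closedness, and Proposition \ref{preliminaries_Kclosedness} then reduces the claim to showing that $(H_{E_0}(P)+H_{E_1}(P))\cap E_\theta(M)$ is a dense subspace of $H_{E_\theta}(P)$, with the inclusion coming from assumption (2) and the density from assumption (3). You read the right-hand side correctly as $K_\Phi(H_{E_0}(P),H_{E_1}(P))$, and your argument through $L_2$ and the weak closedness of $H(P)$, giving $H_{E_j}(P)\subset H(P)$ without order-continuity of $E_j$, fills in a step the paper covers only by citing (2).
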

\begin{proof}
By Proposition \ref{preliminaries_Kclosedness}, it suffices to check that
\[(H_{E_0}(P)+H_{E_1}(P))\cap E_\theta(M)\]
is a dense subspace of $H_{E_\theta}(P)$. It is a subspace because of point (2) of the technical assumptions, and it is dense because of point (3) of the technical assumptions.
\end{proof}

\begin{prop}\label{technical_assumptions_Jclosedness}
Let $P$ be a projection on $L_2(M)$ which satisfies the technical assumptions, and let $E_0(M)$, $E_1(M)$ be two exact interpolation spaces for $(L_1(M),L_\infty(M))$ such that the subcouple $(H_{E_0}(P),H_{E_1}(P))$ is quasi-complemented in $(E_0(M),E_1(M))$ with constant $C$. Let $\Phi$ be a $J$-parameter space such that the exact interpolation space
\[E_\theta^{\times}(M):=J_\Phi(E_0^{\times}(M),E_1^{\times}(M))\]
is the K\"othe dual of an exact interpolation space $E_\theta(M)$ with order\-/continuous norm. In addition, assume that $E_0(M)$, $E_1(M)$ has both order-continuous norm 
Then, the subcouple $(H_{E_0^\times}(P^{\bot}),H_{E_1^{\times}}(P^{\bot}))$ is normal in $(E_0^{\times}(M),E_1^{\times}(M))$, we have 
\[E_\theta^{\times}(M)\cap(H_{E_0^{\times}}(P^{\bot})+H_{E_1^{\times}}(P^{\bot}))=H_{E_\theta^{\times}}(P^{\bot})\]
and
\[E_\theta^{\times}(M)/H_{E_\theta^{\times}}(P^{\bot})=J_\Phi(E_0^{\times}(M)/H_{E_0^{\times}}(P^{\bot}),E_1^{\times}(M)/H_{E_1^{\times}}(P^{\bot}))\]
with equivalent norms, with constants depending only on $C$.
\end{prop}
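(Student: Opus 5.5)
We first transfer quasi-complementation to the dual side. By Proposition \ref{technical_assumptions_Pisier}, the subcouple $(H_{E_0^\times}(P^{\bot}),H_{E_1^\times}(P^{\bot}))$ is quasi-complemented in $(E_0^\times(M),E_1^\times(M))$ with a constant $C'$ depending only on $C$. Here we use that $E_0(M),E_1(M)$ have order-continuous norm, so that $E_j^\times(M)=E_j^*(M)$ and the orthogonals of $H_{E_j}(P)$ are exactly $H_{E_j^\times}(P^\bot)$ by point (1) of the technical assumptions. By the remarks following the definition of quasi-complementation, a quasi-complemented subcouple is normal and $K$-closed. Hence it is also $J$-closed, with a constant depending only on $C$, by \cite{JansonRealInterpolation}(Theorem 4.1).

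For the normality claim we need two facts: $A_0+A_1$ is closed in $E_0^\times+E_1^\times$, and $A_j=(A_0+A_1)\cap E_j^\times$, where $A_j:=H_{E_j^\times}(P^\bot)$.
\begin{itemize}
\item Closedness follows from $K$-closedness, as in the Remark after the definition of $K$-closedness.
\item The identity $A_j=(A_0+A_1)\cap E_j^\times$ can be checked directly from quasi-complementation. If $a\in(A_0+A_1)\cap E_j^\times$, write $a=a+0$ (or $0+a$). Quasi-complementation then gives a decomposition $a=a_0+a_1$ with $a_{1-j}=0$, so $a\in A_j$.
\item Alternatively, point (2) gives $A_0+A_1\subset H(P^\bot)$, hence $(A_0+A_1)\cap E_j^\times\subset H(P^\bot)\cap E_j^\times=A_j$.
\end{itemize}

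Next, Proposition \ref{preliminaries_Jclosedness} applied to $\Phi$ shows that the projection
\[J_\Phi(E_0^\times,E_1^\times)\to J_\Phi(E_0^\times/A_0,E_1^\times/A_1)\]
is a quotient of normed spaces, with kernel $J_\Phi(A_0,A_1)$ and with constants depending only on $C$ (and $\Phi$). Its kernel, as a subset of $E_\theta^\times(M)$, is $E_\theta^\times(M)\cap(A_0+A_1)$. So the quotient identification holds provided we show that this kernel equals $H_{E_\theta^\times}(P^\bot)$.

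The main obstacle is this identification $E_\theta^\times(M)\cap(A_0+A_1)=H_{E_\theta^\times}(P^\bot)$. The plan is a two-sided inclusion.

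For $\subset$: point (2) of the technical assumptions applied to $P^\bot$ (which satisfies them by the earlier proposition) gives $A_0+A_1\subset H(P^\bot)$. Since $E_\theta^\times(M)$ is the Köthe dual of the order-continuous space $E_\theta(M)$, we get $E_\theta^\times\cap H(P^\bot)=H_{E_\theta^\times}(P^\bot)$.

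For $\supset$: we must show that every $y\in H_{E_\theta^\times}(P^\bot)$ lies in $A_0+A_1$. We use duality. Write $y=\bar y\in E_\theta^\times$ with $y\in E_0^\times+E_1^\times$, and test $y$ against $(H_{E_0}(P)\cap H_{E_1}(P))$ and the like. Since $y$ annihilates $H_{E_\theta}(P)$, by (1) and (3) it annihilates $(L_1\cap L_\infty)(M)\cap P(L_2(M))$. Then:
\begin{itemize}
\item Using $E_0^\times+E_1^\times=(E_0\cap E_1)^\times$ and the weak closedness of $H(P^\bot)$, we get $y\in H(P^\bot)\cap(E_0^\times+E_1^\times)$.
\item It remains to show that $H(P^\bot)\cap(E_0^\times+E_1^\times)=A_0+A_1$. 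For this, apply point (2) to the order-continuous space $E_0\cap E_1$, whose Köthe dual is $E_0^\times+E_1^\times$. This gives $H(P^\bot)\cap(E_0^\times+E_1^\times)=H_{E_0^\times+E_1^\times}(P^\bot)$.
\item Then check $H_{E_0^\times+E_1^\times}(P^\bot)=A_0+A_1$ via $K$-closedness and Proposition \ref{technical_assumptions_Kclosedness} with $\Phi=L_\infty(dt/(1+t))$, i.e. the sum functor, whose $K$-space is $E_0^\times+E_1^\times$.
\end{itemize}
If the density hypotheses there are awkward for a non-order-continuous sum, we fall back on using the $w^*$-density in point (3) together with closedness of $A_0+A_1$.
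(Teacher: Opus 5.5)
Your reduction is sound up to the last bullet. The following steps are all fine:
\begin{itemize}
\item normality and $J$-closedness;
\item the quotient identification via Proposition~\ref{preliminaries_Jclosedness};
\item the inclusion $E_\theta^{\times}(M)\cap(A_0+A_1)\subset H_{E_\theta^{\times}}(P^{\bot})$;
\item the reduction of the converse to $H_{(E_0\cap E_1)^{\times}}(P^{\bot})\subset A_0+A_1$, using that $E_0\cap E_1$ has order-continuous norm and K\"othe dual $E_0^{\times}+E_1^{\times}$.
\end{itemize}
Membership $y\in H(P^{\bot})$ is in fact immediate from (2) applied to $E_\theta$, so the annihilator detour is unnecessary.

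\textbf{The gap.} It lies in that last inclusion, which is the only nontrivial point of the proposition. Proposition~\ref{technical_assumptions_Kclosedness} cannot be applied to the couple $(E_0^{\times}(M),E_1^{\times}(M))$ with the sum functor. It assumes that the interpolated space, here $E_0^{\times}(M)+E_1^{\times}(M)$, has order-continuous norm; this is what makes the \emph{norm}-density in (3) available. That hypothesis fails in the relevant cases, e.g.\ $E_1=L_1$, $E_1^{\times}=L_\infty$; in Lemma~\ref{step1_lemm1} the sum is $L_{q,\infty}+L_\infty$. Your fallback does not close the gap either. Point (3) only gives that $(L_1\cap L_\infty)(M)\cap P^{\bot}(L_2(M))$, which lies in $A_0+A_1$, is w*-dense in $H_{(E_0\cap E_1)^{\times}}(P^{\bot})$. $K$-closedness only makes $A_0+A_1$ \emph{norm}-closed in $E_0^{\times}+E_1^{\times}$. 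A norm-closed w*-dense subspace of a dual space need not be the whole space.

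\textbf{The missing idea.} You need the w*-closedness of $A_0+A_1$ in $(E_0\cap E_1)^{*}=E_0^{\times}+E_1^{\times}$. This is exactly the content of the paper's proof, which runs the argument directly inside $E_\theta^{\times}(M)$ after noting via (3) that $E_\theta^{\times}(M)\cap(A_0+A_1)$ is w*-dense in $H_{E_\theta^{\times}}(P^{\bot})$. The argument goes as follows.
\begin{enumerate}
\item By Krein--Smulian it suffices to take a bounded net $x_\alpha\in A_0+A_1$ converging w* to $x$.
\item $K$-closedness at $t=1$ gives splittings $x_\alpha=y_\alpha+z_\alpha$, with $y_\alpha\in A_0$, $z_\alpha\in A_1$, and $\|y_\alpha\|_{E_0^{\times}}+\|z_\alpha\|_{E_1^{\times}}$ uniformly bounded.
\item Since $E_j^{\times}=E_j^{*}$ by order continuity, Banach--Alaoglu yields w*-cluster points $y,z$.
\item These lie in $A_0,A_1$ respectively, because by (1) each $A_j$ is the annihilator of $H_{E_j}(P)$ and hence w*-closed.
\item Pairing with $E_0\cap E_1$ gives $x=y+z$.
\end{enumerate}
Once this is inserted your route goes through. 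It has the mild advantage of proving $A_0+A_1=H_{(E_0\cap E_1)^{\times}}(P^{\bot})$ independently of $\Phi$.
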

\begin{proof}
As $P^{\bot}$ satisfies the technical assumptions, we see that $E_\theta^{\times}(M)\cap(H_{E_0^{\times}}(P^{\bot})+H_{E_1^{\times}}(P^{\bot}))$ is a w*-dense subspace of $H_{E_\theta^{\times}}(P^{\bot})$. Thus, by the Krein-Smulian theorem, it suffices to check that the unit ball of $E_\theta^{\times}(M)\cap (H_{E_0^{\times}}(P^{\bot})+H_{E_1^{\times}}(P^{\bot}))$ is w*-closed in $H_{E_\theta^{\times}}(P^{\bot})$. Let $(x_\alpha)_\alpha$ be a net in the unit ball of $E_\theta^{\times}(M)\cap (H_{E_0^{\times}}(P^{\bot})+H_{E_1^{\times}}(P^{\bot}))$ and let $x\in H_{E_\theta^{\times}}(P^{\bot})$ such that $(x_\alpha)_\alpha$ converges to $x$ for the w*-topology of $E_\theta^{\times}(M)$. By Proposition \ref{technical_assumptions_Pisier}, we know that $(H_{E_0^{\times}}(P^{\bot}),H_{E_1^{\times}}(P^{\bot}))$ is $K$-closed in $(E_0^{\times}(M),E_1^{\times}(M))$ with constant $C$. As $x_\alpha\in H_{E_0^{\times}}(P^{\bot})+H_{E_1^{\times}}(P^{\bot})$, we deduce that we can write
\[x_\alpha=y_\alpha+z_\alpha\]
where $y_\alpha\in H_{E_0^{\times}}(P^{\bot})$, $z_\alpha\in H_{E_1^{\times}}(P^{\bot})$, with
\[\|y_\alpha\|_{E_0^{\times}}+\|z_\alpha\|_{E_1^{\times}}\pp 2C\|x_\alpha\|_{E_0^{\times}+E_1^{\times}}.\]
By definition of $E_\theta^{\times}(M)$, we have $\|x_\alpha\|_{E_0^{\times}(M)+E_1^{\times}(M)}\pp\|x_\alpha\|_{E_\theta^{\times}(M)}\pp1$. Thus, the nets $(y_\alpha)_\alpha$ and $(z_\alpha)_\alpha$ are bounded in respectively $E_0^{\times}(M)$ and $E_1^{\times}(M)$. As $E_0(M)$, $E_1(M)$ have an order\-/continuous norm, by the Banach-Alaoglu theorem, we deduce that the nets $(y_\alpha)_\alpha$ $(z_\alpha)_\alpha$ respectively admit a cluster point $y\in H_{E_0^{\times}}(P^{\bot})$ and $z\in H_{E_1}^{\times}(P^{\bot})$.




As $x=y+z$, we obtain $x\in H_{E_0^{\times}}(P^{\bot})+H_{E_1^{\times}}(P^{\bot})$, as desired. Finally, by Proposition \ref{preliminaries_Jclosedness}, we deduce that we have
\[E_\theta^{\times}(M)/H_{E_\theta}(P^{\bot})=J_\Phi(E_0^{\times}(M)/H_{E_0}(P^{\bot}),E_1^{\times}(M)/H_{E_1}(P^{\bot}))\]
with equivalent norms, with constants depending only on $C$.
\end{proof}

The following result is a direct consequence of Kislyakov/Xu's theorem (Theorem \ref{preliminaries_KislyakovXu}) together with Theorem \ref{technical_assumptions_Kclosedness}.

\begin{prop}\label{technical_assumptions_KislyakovXu}
Let $P$ be a projection on $L_2(M)$ which satisfies the technical assumptions, and let $E_0(M)$, $E_{\theta_0}(M)$, $E_{\theta_1}(M)$, $E_1(M)$ be four exact interpolation spaces for $(L_1(M),L_\infty(M))$, such that $E_{\theta_0}(M)$, $E_{\theta_1}(M)$ have both order\-/continuous norm, and such that 
\[E_{\theta_0}(M)=(E_0(M),E_{\theta_1}(M))_{\eta_0,p_0,K},\ \ \ E_{\theta_1}(M)=(E_{\theta_0}(M),E_1(M))_{\eta_1,p_1,K}\]
for given $0<\eta_0,\eta_1<1$ and $1\pp p_0,p_1\pp\infty$. In addition, we make the following two assumptions.
\begin{enumerate}[nosep]
    \item the subcouple $(H_{E_0}(P),H_{E_{\theta_1}}(P))$ is quasi-complemented in $(E_0(M),E_{\theta_1}(M))$ with constant $C_0$.
    \item the subcouple $(H_{E_{\theta_1}}(P),H_{E_1}(P))$ is quasi-complemented in $(E_{\theta_1}(M),E_1(M))$ with constant $C_1$.
\end{enumerate}
Then the subcouple $(H_{E_0}(P),H_{E_1}(P))$ is quasi-complemented in $(E_0(M),E_1(M))$ with a constant depending only on $C,\eta_0,\eta_1$.
\end{prop}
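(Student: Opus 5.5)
The plan is to apply Kislyakov/Xu's theorem (Theorem \ref{preliminaries_KislyakovXu}) to the subquadruple
\[(A_0,A_{\theta_0},A_{\theta_1},A_1):=(H_{E_0}(P),H_{E_{\theta_0}}(P),H_{E_{\theta_1}}(P),H_{E_1}(P))\]
of the compatible quadruple $(E_0(M),E_{\theta_0}(M),E_{\theta_1}(M),E_1(M))$. Each $H_{E}(P)$ is a $\sigma(E,E^\times)$-closed, hence norm-closed, subspace of $E(M)$, so this is indeed a subquadruple. I read assumption (2) as concerning the subcouple $(H_{E_{\theta_0}}(P),H_{E_1}(P))$ in $(E_{\theta_0}(M),E_1(M))$, since that is what Theorem \ref{preliminaries_KislyakovXu} requires; the index $\theta_1$ there appears to be a typo. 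The conclusion I aim for has a constant depending on $C_0,C_1,\eta_0,\eta_1$ and $p_0,p_1$.

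\textbf{Step 1.} First, convert the $K$-descriptions of $E_{\theta_0}(M),E_{\theta_1}(M)$ in the statement into the $J$-descriptions used in Theorem \ref{preliminaries_KislyakovXu}. This uses the equivalence theorem (Theorem \ref{RealInterpolation_Equivalence}). It only gives equivalence of norms, with constants depending on $\eta_0,\eta_1$, rather than equality. I would therefore either renorm $E_{\theta_j}(M)$ by the equivalent $J$-norm, which changes the quasi-complementation constants only by factors depending on $\eta_j$, or observe that the proof of Kislyakov/Xu is insensitive to equivalent renormings. Either way the dependence of the constants is only on the allowed parameters.

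\textbf{Step 2.} The two quasi-complementation hypotheses of Theorem \ref{preliminaries_KislyakovXu} are exactly assumptions (1) and (2). The remaining task is to produce the continuous inclusions
\[H_{E_{\theta_0}}(P)\to(H_{E_0}(P),H_{E_{\theta_1}}(P))_{\eta_0,p_0,K},\qquad H_{E_{\theta_1}}(P)\to(H_{E_{\theta_0}}(P),H_{E_1}(P))_{\eta_1,p_1,K}.\]
For the first, apply Proposition \ref{technical_assumptions_Kclosedness} to the couple $(E_0(M),E_{\theta_1}(M))$ with the $K$-parameter $\Phi=\Phi_{\eta_0,p_0}$. The subcouple $(H_{E_0}(P),H_{E_{\theta_1}}(P))$ is quasi-complemented with constant $C_0$ by (1). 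The space $K_\Phi(E_0(M),E_{\theta_1}(M))=E_{\theta_0}(M)$ has order-continuous norm by hypothesis. Hence $H_{E_{\theta_0}}(P)=K_\Phi(H_{E_0}(P),H_{E_{\theta_1}}(P))$ with equivalent norms, constants depending on $C_0$ (and $\eta_0,p_0$). This gives the first inclusion with a controlled constant. The second inclusion follows symmetrically from (2), using that $E_{\theta_1}(M)$ has order-continuous norm.

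\textbf{Step 3.} Theorem \ref{preliminaries_KislyakovXu} then yields that $(H_{E_0}(P),H_{E_1}(P))$ is quasi-complemented in $(E_0(M),E_1(M))$.

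\textbf{Main obstacle.} The delicate point is Step 2, namely the validity of Proposition \ref{technical_assumptions_Kclosedness} in this setting. It rests on two facts. First, $(H_{E_0}(P)+H_{E_{\theta_1}}(P))\cap E_{\theta_0}(M)$ must be contained in $H_{E_{\theta_0}}(P)$; this comes from technical assumption (2), through the common weakly closed space $H(P)$. Second, this intersection must be dense in $H_{E_{\theta_0}}(P)$; this comes from technical assumption (3), since it contains $(L_1\cap L_\infty)(M)\cap P(L_2(M))$. Both facts are applied to the order-continuous space $E_{\theta_0}(M)$, which is why the order-continuity hypothesis is needed there. Assumption (2) is also needed at the endpoints $E_0(M)$ and $E_1(M)$, which are not assumed order continuous. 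I would handle this by noting that only the inclusion $H_{E_j}(P)\subset H(P)$ is required. That inclusion follows because $E_j(M)\cap P(L_2(M))\subset H(P)$ and $H(P)$ is weakly closed in $(L_1+L_\infty)(M)$, so no order-continuity at the endpoints is needed.
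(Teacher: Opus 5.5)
Your proposal is correct and is the paper's own argument: Theorem~\ref{preliminaries_KislyakovXu} applied to the subquadruple $(H_{E_0}(P),H_{E_{\theta_0}}(P),H_{E_{\theta_1}}(P),H_{E_1}(P))$, with the two $K$-inclusions supplied by Proposition~\ref{technical_assumptions_Kclosedness}; your reading of assumption (2) as concerning $(H_{E_{\theta_0}}(P),H_{E_1}(P))$ in $(E_{\theta_0}(M),E_1(M))$ is the right one, since that is how the result is used in the proof for Jones-type projections. Two of your remarks go beyond the paper, which leaves these points implicit. First, the inclusion $H_{E_j}(P)\subset H(P)$ at possibly non-order-continuous endpoints does follow from $P(L_2(M))=H(P)\cap L_2(M)$ and the weak closedness of $H(P)$. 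Second, for passing from the $K$- to the $J$-description, rely on your second option, namely that the Kislyakov/Xu theorem is stable under equivalent norms, rather than on renorming: renorming one of $E_{\theta_0}(M)$, $E_{\theta_1}(M)$ perturbs the identity describing the other, so exact simultaneous equalities are not obviously attainable.
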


\section{Jones-type projections}

Let $M$ be a semifinite von Neumann algebra.

\begin{mdframed}[backgroundcolor=black!10,rightline=false,leftline=false,topline=false,bottomline=false,skipabove=10pt]
A projection $P$ on $L_2(M)$ is said to be of \textit{Jones-type} if
\vspace{3pt}
\begin{enumerate}[nosep]
    \item[(1)] $P$ satisfies the technical assumptions.
    \item[(2)] If $1<p<\infty$, then $P$ is $L_p(M)$-bounded with a constant depending only on $p$.
    \item[(3)] The subcouples $(H_1(P),H_2(P))$ and $(H_1(P^{\bot}),H_2(P^{\bot}))$ are both $K$-closed in the compatible couple $(L_1(M),L_2(M))$ with a universal constant.
\end{enumerate}
\end{mdframed}
\vspace{5pt}

\begin{rem}
It is clear that if $P$ is a projection $L_2(M)$ which is of Jones-type, then the complement projection $P^{\bot}$ is also of Jones-type.
\end{rem}

\begin{theo}
Let $P$ be a Jones-type projection on $L_2(M)$. Then the subcouple $(H_1(P),H_\infty(P))$ is quasi\-/complemented in the compatible couple $(L_1(M),L_\infty(M))$ with a universal constant.
\end{theo}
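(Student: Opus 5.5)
The plan is to apply the Kislyakov/Xu theorem in the form of Proposition \ref{technical_assumptions_KislyakovXu}, with $E_0=L_1$, $E_{\theta_0}=L_2$, $E_{\theta_1}=L_p$ for some fixed $2<p<\infty$ (say $p=4$), and $E_1=L_\infty$. By reiteration for the real method, $L_2=(L_1,L_p)_{\eta_0,2,K}$ and $L_p=(L_2,L_\infty)_{\eta_1,p,K}$ with equivalent norms and absolute constants, for suitable $\eta_0,\eta_1\in(0,1)$. The spaces $L_2$ and $L_p$ have order-continuous norm. It therefore suffices to establish two quasi-complementation statements with universal constants:
\begin{itemize}
\item (a) $(H_1(P),H_p(P))$ in $(L_1,L_p)$;
\item (b) $(H_2(P),H_\infty(P))$ in $(L_2,L_\infty)$.
\end{itemize}
(Proposition \ref{technical_assumptions_KislyakovXu} is stated with the pair $(H_{E_{\theta_1}},H_{E_1})$, but the pair $(H_{E_{\theta_0}},H_{E_1})$ of Theorem \ref{preliminaries_KislyakovXu} is the one needed, and I will use that. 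The inclusion conditions of Theorem \ref{preliminaries_KislyakovXu} follow from Proposition \ref{technical_assumptions_Kclosedness} once the relevant couples are $K$-closed.)

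For (a), start from assumption (3): $(H_1(P),H_2(P))$ is $K$-closed in $(L_1,L_2)$. Theorem \ref{preliminaries_Holmstedt}(3), applied with $\Phi_1=\Phi_{\theta,p}$ realizing $L_p=(L_1,L_2)_{\theta,p}$, is of no use here since $p>2$. So instead I would start from the couple $(L_1,L_r)$ with $r>p$. This requires knowing that $(H_1,H_r)$ is $K$-closed. For $r$ finite, $P$ is bounded on $L_2$ and $L_r$, so $(H_2,H_r)$ is complemented in $(L_2,L_r)$.

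Here is a cleaner route. Use Kislyakov/Xu once more on the quadruple $(L_1,L_2,L_r,L_r)$? That is degenerate. Better: apply Theorem \ref{preliminaries_KislyakovXu} to $(L_1,L_2,L_{p'},L_p)$ with $2<p'<p$. The couple $(H_2,H_p)$ is complemented in $(L_2,L_p)$ by $L_p$-boundedness of $P$, hence quasi-complemented. The couple $(H_1,H_{p'})$ is where the difficulty lies. I therefore reorganize as follows: apply Theorem \ref{preliminaries_KislyakovXu} with $E_{\theta_0}=L_2$ and $E_{\theta_1}=L_{p'}$, using
\begin{itemize}
\item (i) $(H_1,H_{p'})$ from $K$-closedness of $(H_1,H_2)$ plus complementation of $(H_2,H_{p'})$;
\item (ii) $(H_2,H_p)$ complemented.
\end{itemize}
Item (i) is itself a Kislyakov/Xu step with inner spaces $L_{q}$ ($1<q<2$) and $L_2$: $(H_1,H_2)$ is quasi-complemented, by $K$-closedness together with $H_j=L_j\cap(H_1+H_2)$ (from technical assumption (2)), via the Proposition giving constant $4C$; and $(H_q,H_{p'})$ is complemented by $L_q$- and $L_{p'}$-boundedness. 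This yields $(H_1,H_p)$ quasi-complemented in $(L_1,L_p)$, which is (a).

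For (b), use duality. Apply the same argument to $P^{\bot}$, which is also of Jones-type, to get $(H_1(P^\bot),H_{2}(P^\bot))$ and then, by the same bootstrap, $(H_1(P^\bot),H_{p}(P^\bot))$ quasi-complemented in $(L_1,L_p)$ for finite $p$. Proposition \ref{technical_assumptions_Pisier} (Pisier's duality theorem) transfers this to $(H_\infty(P),H_{p'}(P))$ in $(L_\infty,L_{p'})$, where $p'$ is the conjugate exponent. Order-continuity of $L_1$ and $L_p$ is needed there, and I would justify it by identifying $H_{E^\times}(P)$ with the annihilator via technical assumption (1). Combining with complementation of $(H_2,H_{p'})$ and one more Kislyakov/Xu step gives (b).

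The main obstacle is bookkeeping: ensuring each quasi-complementation input has the form needed, in particular that the intersection condition $H_j=L_j\cap(H_0+H_1)$ holds (from technical assumption (2)) and that the inclusion hypotheses of Theorem \ref{preliminaries_KislyakovXu} are supplied by Proposition \ref{technical_assumptions_Kclosedness}. Once these are in place, a final application of Kislyakov/Xu to $(L_1,L_2,L_p,L_\infty)$ gives the theorem with a universal constant, since all exponents were fixed absolute choices.
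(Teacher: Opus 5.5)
Your overall architecture is the paper's. The final Kislyakov/Xu step on $(L_1,L_2,L_4,L_\infty)$ is fed by (a) $(H_1,H_4)$ quasi-complemented in $(L_1,L_4)$ and (b) $(H_2,H_\infty)$ quasi-complemented in $(L_2,L_\infty)$. Your item (i) is exactly the paper's first step with $q=3/2$, $p'=4$: it uses the quadruple $(L_1,L_q,L_2,L_{p'})$, $K$-closedness of $(H_1,H_2)$ plus normality from technical assumption (2), and complementation of $(H_q,H_{p'})$. You are also right that condition 2 of Proposition~\ref{technical_assumptions_KislyakovXu} should concern $(H_{E_{\theta_0}},H_{E_1})$ in $(E_{\theta_0},E_1)$. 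The further Kislyakov/Xu step on $(L_1,L_2,L_{p'},L_p)$ in (a) is redundant, since (i) with $p'=p$ already is (a).

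The one step that fails as written is the end of (b). After dualizing $(H_1(P^{\bot}),H_p(P^{\bot}))$ with $p>2$, you hold $(H_{p'}(P),H_\infty(P))$ quasi-complemented in $(L_{p'},L_\infty)$ with $p'<2$. You then propose to reach $(L_2,L_\infty)$ via ``complementation of $(H_2,H_{p'})$ and one more Kislyakov/Xu step.'' Kislyakov/Xu cannot do this. It takes quasi-complementation on two overlapping couples $(E_0,E_{\theta_1})$, $(E_{\theta_0},E_1)$ and produces it on the larger couple $(E_0,E_1)$. Here you need to pass to a smaller couple, and no quadruple with outer spaces $L_2,L_\infty$ accepts your two inputs.

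The fix is to dualize at the $L_2$ level, as the paper does; you even wrote the needed ingredient down first. The subcouple $(H_1(P^{\bot}),H_2(P^{\bot}))$ is $K$-closed in $(L_1,L_2)$ by the Jones-type assumption (3) and normal by technical assumption (2), hence quasi-complemented. Proposition~\ref{technical_assumptions_Pisier} applied to $P^{\bot}$ with $E_0=L_1$, $E_1=L_2$ then gives (b) directly. Order-continuity of $L_1$, $L_2$ is standard; it is not something technical assumption (1) provides, since the annihilator identification is a separate input already built into that proposition.

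Alternatively, you could descend from $(L_{p'},L_\infty)$ to $(L_2,L_\infty)$ by Theorem~\ref{preliminaries_Holmstedt}(2), using Proposition~\ref{technical_assumptions_Kclosedness} to identify the interpolated subspace with $H_2(P)$, and then normality. That is a restriction argument, not a Kislyakov/Xu step.
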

\begin{proof}
By property (2), the subcouple $(H_{3/2}(P),H_4(P))$ is complemented and thus quasi\-/complemented in $(L_{3/2}(M),L_4(M))$ with a universal constant, so that we can apply Proposition \ref{technical_assumptions_KislyakovXu} with the subquadruple $(H_1(P),H_{3/2}(P),H_2(P),H_4(P))$ of $(L_1(M),L_{3/2}(M),L_2(M),L_4(M))$ to deduce that the subcouple $(H_1(P),H_4(P))$ is quasi\-/complemented in $(L_1(M),L_4(M))$ with a universal constant. As $(H_1(P^{\bot}),H_2(P^{\bot}))$ is quasi-complemented in $(L_1(M),L_2(M))$ with a universal constant, by Proposition \ref{technical_assumptions_Pisier} together with property (1) and (3) the subcouple $(H_2(P),H_\infty(P))$ is quasi\-/complemented in $(L_2(M),L_\infty(M))$ with a universal constant, so that we can again apply Proposition \ref{technical_assumptions_KislyakovXu} with the subquadruple $(H_1(P),H_2(P),H_4(P),H_\infty(P))$ of $(L_1(M),L_2(M),L_4(M),L_\infty(M))$ to deduce that the subcouple $(H_1(P),H_\infty(P))$ is quasi\-/complemented in $(L_1(M),L_\infty(M))$ with a universal constant. 
\end{proof}

By Proposition \ref{technical_assumptions_Kclosedness}, we deduce the following result.

\begin{theo}
Let $P$ be a Jones-type projection on $L_2(M)$, and let $\Phi$ be a $K$-parameter space such that the exact interpolation space
\[E(M):=K_\Phi(L_1(M),L_\infty(M))\]
has order-continuous norm. Then
\[H_E(P)=K_\Phi(H_1(P),H_{\infty}(P))\]
with equivalent norms, with universal constants.
\end{theo}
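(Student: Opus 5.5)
The plan is to read this statement as a direct application of Proposition \ref{technical_assumptions_Kclosedness} to the couple $(E_0(M),E_1(M))=(L_1(M),L_\infty(M))$. The only input that proposition needs beyond the technical assumptions is quasi-complementation of $(H_1(P),H_\infty(P))$ in $(L_1(M),L_\infty(M))$ with a controlled constant. The previous theorem supplies exactly this, with a universal constant.

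First I will check the hypotheses. Since $P$ is of Jones-type, it satisfies the technical assumptions by property (1). The spaces $E_0(M)=L_1(M)$ and $E_1(M)=L_\infty(M)$ are exact interpolation spaces for $(L_1(M),L_\infty(M))$. Their Hardy spaces $H_{L_1}(P)=H_1(P)$ and $H_{L_\infty}(P)=H_\infty(P)$ agree with the notation of the preceding theorem, which gives the quasi-complementation constant $C$, universal. The space $E(M)=K_\Phi(L_1(M),L_\infty(M))$ is assumed to have order-continuous norm.

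Proposition \ref{technical_assumptions_Kclosedness} then identifies $H_E(P)$ with $(H_1(P)+H_\infty(P))\cap K_\Phi(L_1(M),L_\infty(M))$, with constants depending only on $C$. I will read its conclusion in this intended form, since the display in its statement has an evident typo. Finally, Proposition \ref{preliminaries_Kclosedness} identifies this intersection with $K_\Phi(H_1(P),H_\infty(P))$ with equivalent norms. This step uses that quasi-complementation implies $K$-closedness with the same constant $C$. The constants therefore depend only on $C$, hence are universal.

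The only step requiring care is the one inside Proposition \ref{technical_assumptions_Kclosedness}: that $(H_1(P)+H_\infty(P))\cap E(M)$ is contained in $H_E(P)$ and dense there. Point (2) gives $H_1(P)+H_\infty(P)\subset H(P)$, hence the intersection lies in $H(P)\cap E(M)=H_E(P)$. Point (3) gives density, because $(L_1\cap L_\infty)(M)\cap P(L_2(M))$ already sits inside the intersection. One more point must be checked. $K_\Phi(H_1,H_\infty)$ is complete and embeds isomorphically into $E(M)$ by $K$-closedness, so its image is closed in $E(M)$. That image contains the dense subspace just described, so it equals $H_E(P)$. This closedness is the place I would double-check, but it follows from the embedding being an isomorphism onto its range.
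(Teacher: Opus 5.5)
Your proposal is correct and follows the paper's own route. The paper obtains this theorem directly from the preceding quasi-complementation theorem via Proposition~\ref{technical_assumptions_Kclosedness}, whose displayed conclusion should indeed read $K_\Phi(H_{E_0}(P),H_{E_1}(P))$; your unpacking (containment from point (2), density from point (3), and closedness of the range of $K_\Phi(H_1(P),H_\infty(P))$ in $E(M)$ from Proposition~\ref{preliminaries_Kclosedness}) is exactly the content of that proposition's proof, with the minor detail that $H_\infty(P)\subset H(P)$ comes from the K\"othe-dual clause of point (2) applied with $E=L_1$, since $L_\infty(M)$ itself need not have order-continuous norm.
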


\begin{coro}
Let $P$ be a Jones-type projection on $L_2(M)$, and let $0<\theta<1$. Then
\[H_{p_\theta}(P)=(H_1(P),H_{\infty}(P))_{\theta,p_\theta,K}\]
with equivalent norms, with constants depending only on $\theta$, where $1/p_\theta=1-\theta$.
\end{coro}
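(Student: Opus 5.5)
This is the preceding theorem specialised to the parameter $\Phi:=\Phi_{\theta,p_\theta}$, combined with the identification $L_{p_\theta}(M)=(L_1(M),L_\infty(M))_{\theta,p_\theta,K}$. So the plan is to check the hypotheses of that theorem and then translate its conclusion.

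\emph{Step 1: identify $E(M)$.} Since $1/p_\theta=1-\theta$, we have $p_\theta=1/(1-\theta)$. Because $0<\theta<1$, this gives $1<p_\theta<\infty$. Take $\Phi:=\Phi_{\theta,p_\theta}$, which is a $J$-method parameter and hence a $K$-method parameter. Then
\[E(M):=K_\Phi(L_1(M),L_\infty(M))=(L_1(M),L_\infty(M))_{\theta,p_\theta,K}.\]
Since $\theta=1-1/p_\theta$, the classical real-interpolation identification of $L_p$ recalled in Part 1 gives $E(M)=L_{p_\theta}(M)$, with equivalent norms and constants depending only on $p_\theta$, i.e. only on $\theta$.

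\emph{Step 2: order continuity.} The preceding theorem requires $E(M)$ to have order-continuous norm with its \emph{own} norm. The space $L_{p_\theta}(M)$ has order-continuous norm (Corollary 6.2.1 of Dodds–Sukochev as recalled). Order continuity only involves norms tending to $0$ along decreasing nets, so it is preserved under passing to an equivalent norm. Hence $E(M)$ is an exact interpolation space with order-continuous norm.

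\emph{Step 3: apply the theorem and compare norms.} The preceding theorem now yields $H_E(P)=(H_1(P),H_\infty(P))_{\theta,p_\theta,K}$ with equivalent norms and universal constants. It remains to see that $H_E(P)=H_{p_\theta}(P)$ with equivalent norms. Since $E(M)$ and $L_{p_\theta}(M)$ coincide as sets, they have the same Köthe dual $E^\times(M)=L_{q}(M)$ with $1/q=1-1/p_\theta$, with equivalent norms. The weak topologies $\sigma(E(M),E^\times(M))$ therefore coincide. Consequently the weak closures of $E(M)\cap P(L_2(M))$ coincide, so $H_E(P)=H_{p_\theta}(P)$ as sets. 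Their norms are the restrictions of equivalent norms, with constants depending only on $\theta$. Composing this with the universal equivalence from the theorem gives the corollary.

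\emph{Main point to check.} The only delicate step is Step 2: making sure the order-continuity hypothesis is met by the $K$-norm itself and not only by the usual $L_{p_\theta}$-norm. This is settled by the invariance of order continuity under equivalent norms. Everything else is bookkeeping of constants, all of which depend only on $\theta$.
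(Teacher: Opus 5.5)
Your proposal is correct and follows the paper's intended route: the corollary is the preceding theorem applied with $\Phi=\Phi_{\theta,p_\theta}$, combined with the classical identification $L_{p_\theta}(M)=(L_1(M),L_\infty(M))_{\theta,p_\theta,K}$. You also check two details the paper leaves implicit. First, order continuity survives passing to the equivalent $K$-norm. Second, $H_E(P)=H_{p_\theta}(P)$ with equivalent norms, because the K\"othe duals, and hence the weak closures, coincide.
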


\section{Pisier's method}

\subsection{Stein's interpolation theorem}

For future purposes, we require a version of Stein's interpolation theorem for analytic families of operators that holds in the setting of dual spaces equipped with their w*\-/topology. Since we were unable to find such a statement in the literature, we provide a proof in a slightly weaker form, which suffices for the purposes of this paper. 

\vspace{10pt}

Let $E^*$ be a complete normed space equipped with a predual $E$, so that $E^*$ is equipped with a w*\-/topology. A function $f:\S\to E^*$ is said to be w*\textit{-holomorphic} if the function $\S\to\C$, $z\mapsto\bra u,f(z)\ket$ is holomorphic, for every $u\in E$. 

Let $(F_0^*,F_1^*)$ be a compatible couple equipped with a predual, i.e. a regular compatible couple $(F_0,F_1)$ together with an isometric isomorphism of compatible couples between $(F_0^*,F_1^*)$ and the dual of $(F_0,F_1)$ as defined in the preliminary section. Let $\mathcal{F}_*(F_0^*,F_1^*)$ denote the space of functions $f:\overline{\S}\to F_0^*+F_1^*$ w*\-/continuous on $\overline{\S}$ and w*\-/holomorphic on $\S$ (when $F_0^*+F_1^*$ is equipped with the w*\-/topology coming from the pairing with $F_0\cap F_1$), such that $f(j+it)\in F_j^*$ for $t\in\R$, $j\in\{0,1\}$, and such that the functions $\R\to F_j^*$, $t\mapsto f(j+it)$ are w*\-/continuous and bounded, for $j\in\{0,1\}$. It is clear that $\mathcal{F}(F_0^*,F_1^*)$ is a subspace of $\mathcal{F}_*(F_0^*,F_1^*)$. For $f\in\mathcal{F}_*(F_0^*,F_1^*)$, we set
\[\|f\|_{\mathcal{F}(F_0^*,F_1^*)}:=\max_{j\in\{0,1\}}\sup_{t\in\R}\|f(j+it)\|_{F_j^*}.\]
The proof of the following proposition is straightforward.

\begin{prop}\label{steintheorem_prop1}
Let $(F_0^*,F_1^*)$ and $(G_0^*,G_1^*)$ be two compatible couples equipped with preduals. Let $T:(G_0^*,G_1^*)\to(F_0^*,F_1^*)$ be a compatible bounded operator such that the operators $T:G_0^*\to F_0^*$ and $T:G_1^*\to F_1^*$ are \text{\upshape w*}\-/continuous. If $f\in\mathcal{F}_*(G_0^*,G_1^*)$, then the function 
\[\fonctbis{\B}{F_0^*+F_1^*}{z}{T(f(z))}\]
belongs to $\mathcal{F}_*(F_0^*,F_1^*)$.
\end{prop}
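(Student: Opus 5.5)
We have to check each defining condition of $\mathcal{F}_*(F_0^*,F_1^*)$ for $g(z):=T(f(z))$, reducing each one to the corresponding property of $f$ through a pre-adjoint of $T$. Set $T_j:=T|_{G_j^*}:G_j^*\to F_j^*$. Each $T_j$ is bounded and \text{\upshape w*}-continuous, so it has a pre-adjoint $S_j:F_j\to G_j$ with $\langle S_ju,v\rangle=\langle u,T_jv\rangle$ for $u\in F_j$ and $v\in G_j^*$.

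\textbf{Step 1: a pre-adjoint on the intersection.} The first step is to show that $S_0$ and $S_1$ agree on $F_0\cap F_1$ and map it into $G_0\cap G_1$. This yields an operator $S:F_0\cap F_1\to G_0\cap G_1$ with
\[\langle u,T(v)\rangle=\langle Su,v\rangle,\qquad u\in F_0\cap F_1,\ v\in G_0^*+G_1^*.\]
To prove it, take $u\in F_0\cap F_1$ and $v\in G_0^*\cap G_1^*$. The pairings $\langle u,T_0v\rangle$ and $\langle u,T_1v\rangle$ are equal, because $T_0$ and $T_1$ agree on the intersection. Hence $\langle S_0u-S_1u,v\rangle=0$ for every $v\in G_0^*\cap G_1^*$, where $S_0u-S_1u$ is viewed in $G_0+G_1$.

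We then use that $G_0^*\cap G_1^*=(G_0+G_1)^*$ separates the points of $G_0+G_1$ (Hahn--Banach, via the preliminary duality identifications). This gives $S_0u=S_1u$, and this common value lies in $G_0\cap G_1$. The displayed identity then extends to $v=v_0+v_1$ by linearity.

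I expect this step to be the only real obstacle, since it depends on how the compatible structure of the dual couple is identified with $(G_0\cap G_1)^*$.

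\textbf{Step 2: the analytic and continuity conditions.} With $S$ in hand, for $u\in F_0\cap F_1$ we have
\[\langle u,g(z)\rangle=\langle Su,f(z)\rangle,\qquad Su\in G_0\cap G_1.\]
The function $z\mapsto\langle Su,f(z)\rangle$ is continuous on $\overline{\S}$ and holomorphic on $\S$ because $f\in\mathcal{F}_*(G_0^*,G_1^*)$. Therefore $g$ is \text{\upshape w*}-continuous on $\overline{\S}$ and \text{\upshape w*}-holomorphic on $\S$.

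\textbf{Step 3: the boundary conditions.} On the boundary line $\re z=j$ we have $g(j+it)=T_j(f(j+it))\in F_j^*$. The bound $\sup_t\|g(j+it)\|_{F_j^*}\pp\|T_j\|\sup_t\|f(j+it)\|_{G_j^*}$ gives boundedness. Finally, $t\mapsto g(j+it)$ is \text{\upshape w*}-continuous, since it is the composition of the \text{\upshape w*}-continuous map $t\mapsto f(j+it)$ with the \text{\upshape w*}-continuous operator $T_j$. This completes the verification that $g\in\mathcal{F}_*(F_0^*,F_1^*)$.
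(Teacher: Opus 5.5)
Your argument is correct, and it is the routine verification the paper omits (the paper only remarks that the proof is straightforward). The one step with real content is your Step 1: the pre-adjoints $S_0,S_1$ agree on $F_0\cap F_1$ and take values in $G_0\cap G_1$, which makes $T$ continuous for the w*-topologies on the sums given by pairing with the intersections. Your separation argument through $G_0^*\cap G_1^*=(G_0+G_1)^*$ establishes this properly.
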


\begin{mdframed}[skipabove=10pt]
\begin{theo}[w*\-/Stein's interpolation theorem]\label{steintheorem_theo1}
Let $(E_0,E_1)$ be a compatible couple and let $(F_0^*,F_1^*)$ be a compatible couple equipped with a predual. Let $(T_z)_{z\in\overline{\S}}$ be a family of operators $E_0\cap E_1\to F_0^*+F_1^*$ satisfying the following two conditions. 
\begin{itemize}
    \item For every $u\in E_0\cap E_1$, the function 
\[\fonctbis{\S}{F_0^*+F_1^*}{z}{T_z(u)}\]
belongs to $\mathcal{F}_*(F_0^*,F_1^*)$. 
    \item For every $u\in E_0\cap E_1$, $t\in\R$ and $j\in\{0,1\}$, we have
    \[\|T_{j+it}(u)\|_{F_j^*}\pp\|u\|_{E_j}.\]
\end{itemize}
Then, for $u\in E_0\cap E_1$ and $0<\theta<1$ we have $T_{\theta}(u)\in(F_0^*,F_1^*)_{\theta,\infty,K}$ with \[\|T_{\theta}(u)\|_{(F_0^*,F_1^*)_{\theta,\infty,K}}\pp\|u\|_{[E_0,E_1]_\theta}.\]
\end{theo}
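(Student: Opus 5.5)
We reduce the estimate to a duality statement and then run a three-lines argument with a scalar holomorphic function. First observe that for a couple with predual, $(F_0^*,F_1^*)_{\theta,\infty,K}$ is controlled by duality with the $J$-space $(F_0,F_1)_{\theta,1,J}$. The classical duality between $(F_0,F_1)_{\theta,1}$ and $(F_0^*,F_1^*)_{\theta,\infty}$ holds for regular couples, and the constant is harmless once we use the equivalence theorem (Theorem \ref{RealInterpolation_Equivalence}). So we aim to show
\[|\bra v,T_\theta(u)\ket|\pp\|u\|_{[E_0,E_1]_\theta}\,\|v\|_{(F_0,F_1)_{\theta,1,J}}\]
for $v\in F_0\cap F_1$. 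This suffices up to the normalization of the $K$-norm. For $v=\int_0^\infty g(t)\,dt/t$ it is enough, by the triangle inequality, to bound $|\bra g(t),T_\theta(u)\ket|\pp t^{-\theta}J_t(g(t))\,\|u\|_{[E_0,E_1]_\theta}$ pointwise in $t$.

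Fix $u\in E_0\cap E_1$, $w\in F_0\cap F_1$ and $t>0$. Take $f\in\mathcal{F}(E_0,E_1)$ with $f(\theta)=u$ and $\|f\|_{\mathcal{F}}\pp\|u\|_{[E_0,E_1]_\theta}+\epsilon$. Since $T_z$ is only defined on $E_0\cap E_1$, we first approximate $f$ by functions with values in $E_0\cap E_1$. By the density result for $\mathcal{F}$ in \cite{BerghInterpolation} (Lemma 4.2.3), there are finite sums $\sum_k e^{\delta z^2+\lambda_k z}a_k$ with $a_k\in E_0\cap E_1$, and these are dense in $\mathcal{F}(E_0,E_1)$. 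We then consider the scalar function
\[h(z):=t^{z}\,e^{\delta(z^2-\theta^2)}\,\bra w,T_z(f(\theta))\ket.\]
This has a problem: $T_z$ is applied to the fixed vector $u$, whereas Stein's argument needs $\bra w,T_z(f(z))\ket$.

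The way around this is to use linearity. For $f=\sum_k \varphi_k(z)a_k$ with scalar holomorphic $\varphi_k$, we set
\[h(z)=t^{z}\sum_k\varphi_k(z)\bra w,T_z(a_k)\ket.\]
By the first hypothesis, each $z\mapsto\bra w,T_z(a_k)\ket$ is continuous on $\overline{\S}$, holomorphic on $\S$, and bounded on the boundary. On the line $\re z=j$, the second hypothesis gives
\[|h(j+it')|\pp t^j\|w\|_{F_j}\|T_{j+it'}(f(j+it'))\|_{F_j^*}\pp t^j\|w\|_{F_j}\|f(j+it')\|_{E_j}\pp J_t(w)\|f\|_{\mathcal{F}},\]
where we use the normalization $J_t(w)=\max(\|w\|_{F_0},t\|w\|_{F_1})$. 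The three-lines (Phragmén–Lindelöf) lemma then yields $|h(\theta)|=t^\theta|\bra w,T_\theta(u)\ket|\pp J_t(w)\|f\|_{\mathcal{F}}$. Letting the approximation and $\epsilon$ tend to zero gives the pointwise estimate. Integrating in $t$ gives $|\bra v,T_\theta u\ket|\pp\|u\|_{[E_0,E_1]_\theta}\|v\|_{(F_0,F_1)_{\theta,1,J}}$. Finally, $F_0\cap F_1$ is dense in the $J$-space and the $K$-norm of an element of $F_0^*+F_1^*$ is dominated by its norm as a functional on the $J$-space; together these give $T_\theta(u)\in(F_0^*,F_1^*)_{\theta,\infty,K}$ with the stated bound.

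The main obstacle is the growth of $h$ in the strip, which the three-lines lemma needs to control, together with passing to the limit in the approximation. The first hypothesis only gives boundedness on the boundary lines and w*-continuity, not a uniform bound inside $\S$. I would handle this with the Gaussian factor $e^{\delta z^2}$, which is already present in the approximants. The remaining task is to argue that the function $z\mapsto\bra w,T_z(a)\ket$ is bounded on $\overline{\S}$, since the bound on $\partial\S$ plus continuity does not give this directly. If needed, I would strengthen the hypothesis of admissible growth, in the spirit of Stein's original theorem, and check that this is available in the paper's applications. The passage to the limit requires $T_\theta$ to be continuous for $\|\cdot\|_{[E_0,E_1]_\theta}$ on $E_0\cap E_1$. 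This holds because the estimate is proved directly on the approximants: if $f_n(\theta)\neq u$, correct by the bounded difference, using linearity of $T_\theta$ and the same estimate applied to $f_n-f_m$.
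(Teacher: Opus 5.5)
Your architecture matches the paper's: three lines applied to the scalar function $z\mapsto t^{z}\langle w,T_z(f(z))\rangle$ for $f=\sum_k\varphi_k a_k$ with $a_k\in E_0\cap E_1$, then duality to read off $K$-functionals. The gap is in how you produce $f$.

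The density lemma of Bergh--L\"ofstr\"om gives finite sums $f_n\to f$ in $\mathcal{F}(E_0,E_1)$. Hence you only get $f_n(\theta)\to u$ in $[E_0,E_1]_\theta$, and your three-lines estimate bounds $T_\theta(f_n(\theta))$, not $T_\theta(u)$. Applying the estimate to $f_n-f_m$ shows that $\langle w,T_\theta(f_n(\theta))\rangle$ converges. But identifying the limit with $\langle w,T_\theta(u)\rangle$ requires $\langle w,T_\theta(u-f_n(\theta))\rangle\to0$, that is, continuity of $T_\theta$ on $E_0\cap E_1$ for the $[E_0,E_1]_\theta$-norm. That is the conclusion itself. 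A priori, the hypotheses only control $T_\theta b$ through $\|b\|_{E_0}^{1-\theta}\|b\|_{E_1}^{\theta}$.

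The ``correction by the bounded difference'' fails for the same reason. Let $\psi$ be a scalar function with $\psi(\theta)=1$. Three lines applied to $\psi$ shows that $\psi\,(u-f_n(\theta))$ has $\mathcal{F}$-norm at least $\|u-f_n(\theta)\|_{E_0}^{1-\theta}\|u-f_n(\theta)\|_{E_1}^{\theta}$. This quantity need not tend to $0$ when $f_n(\theta)\to u$ only in $[E_0,E_1]_\theta$; this already happens for the couple $(L_1,L_\infty)$.

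What is missing is an \emph{exact} finite-sum representation: if $u\in E_0\cap E_1$ and $\|u\|_{[E_0,E_1]_\theta}<1$, then $u=f(\theta)$ for some $f\in\mathcal{F}_\tau(E_0,E_1)$ with $\|f\|_{\mathcal{F}(E_0,E_1)}<1$. This is Stafney's lemma, which the paper quotes precisely so that no limit passage is needed. Put differently, your argument proves the bound with $\|u\|_{[E_0,E_1]_\theta}$ replaced by the infimum of $\|f\|_{\mathcal{F}(E_0,E_1)}$ over finite sums with $f(\theta)=u$. Stafney's lemma is exactly the statement that this infimum equals $\|u\|_{[E_0,E_1]_\theta}$.

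Two smaller remarks. First, the detour through $(F_0,F_1)_{\theta,1,J}$ is harmless, since that duality is isometric, but it is unnecessary. Appealing to the equivalence theorem would cost a $\theta$-dependent constant, incompatible with the constant $1$ claimed. Your pointwise bound $t^{\theta}|\langle w,T_\theta u\rangle|\le J_t(w)\,\|u\|_{[E_0,E_1]_\theta}$ for all $w\in F_0\cap F_1$ already gives $K_{1/t}(T_\theta u,F_0^*,F_1^*)\le t^{-\theta}\|u\|_{[E_0,E_1]_\theta}$, via the isometric identification $(F_0\cap tF_1)^*=F_0^*+t^{-1}F_1^*$. This is how the paper concludes.

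Second, your worry about growth inside the strip is legitimate, but it does not distinguish your argument from the paper's. Its three-lines step also tacitly uses that $z\mapsto\langle w,g(z)\rangle$ is bounded on $\overline{\S}$. The definition of $\mathcal{F}_*$ does not require this, but it holds in the paper's application. Building boundedness on $\overline{\S}$ into $\mathcal{F}_*$ is the natural repair, rather than changing the hypotheses of the theorem.
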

\end{mdframed}
\vspace{5pt}


For the proof, we need a couple of preliminary results. If $(E_0,E_1)$ is a compatible couple, let $\mathcal{F}_\tau(E_0,E_1)$ denote the set of functions $f:\overline{\S}\to E_0+E_1$ which can be written as a finite sum $f=\sum_ng_n\otimes u_n$ where $u_n\in E_0\cap E_1$, and where $g_n:\overline{\S}\to\C$ is a bounded continuous function, holomorphic on $\S$, such that the continuous function $\R\to\C$, $t\mapsto g(j+it)$ vanishes at infinity for $j\in\{0,1\}$. It is clear that $\mathcal{F}_\tau(E_0,E_1)$ is a subspace of $\mathcal{F}(E_0,E_1)$.

\begin{lemm}[\cite{StafneySteinTheorem}[Lemma 2.5]\label{steintheorem_lemm1}
Let $(E_0,E_1)$ be a compatible couple, $0<\theta<1$ and $u\in E_0\cap E_1$ such that $\|u\|_{[E_0,E_1]_\theta}<1$. Then there is $f\in\mathcal{F}_\tau(E_0,E_1)$ such that $u=f(\theta)$ and 
$\|f\|_{\mathcal{F}(E_0,E_1)}<1$.
\end{lemm}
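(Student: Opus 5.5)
The plan is to follow Stafney's density argument. Fix $u\in E_0\cap E_1$ with $\|u\|_{[E_0,E_1]_\theta}<1$. By definition there is $h\in\mathcal{F}(E_0,E_1)$ with $h(\theta)=u$ and $\|h\|_{\mathcal{F}(E_0,E_1)}<1$. I want to replace $h$ by a function whose values lie in a finite-dimensional subspace of $E_0\cap E_1$, with coefficients that are scalar analytic functions, without changing its value at $\theta$ and while keeping the norm below $1$.

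\emph{Step 1 (regularisation).} First multiply $h$ by a Gaussian factor $e^{\delta(z-\theta)^2}$ with $\delta>0$ small. This keeps $h(\theta)=u$, changes the norm by at most a factor close to $1$, and gives rapid decay at infinity on the boundary lines. Next approximate the boundary values uniformly by $E_0\cap E_1$-valued functions. Since the boundary functions $t\mapsto h(j+it)$ are continuous and vanish at infinity, they are uniformly approximable in $E_j$. The standard route is to represent $h$ through its boundary data: write $h(z)$ as a Poisson/Cauchy-type integral over $\partial\S$, discretise that integral by Riemann sums, and obtain finite sums $\sum_n g_n(z)v_n$ with $g_n$ scalar analytic functions such as $e^{\delta(z-z_n)^2}/(z-z_n)$ shifted off the boundary. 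The catch is that Riemann sums produce vectors $v_n\in E_0+E_1$, not necessarily in $E_0\cap E_1$.

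\emph{Step 2 (the main obstacle).} Getting $v_n\in E_0\cap E_1$ is the hard part, because $E_0\cap E_1$ need not be dense in $E_j$; no regularity is assumed. I would use Stafney's trick instead. The space $\mathcal{F}_\tau$-type functions $\sum g_n\otimes u_n$ with $u_n\in E_0\cap E_1$ is dense in $\mathcal{F}(E_0,E_1)$ in the $\mathcal{F}$-norm, by the classical result in \cite{BerghInterpolation} (Lemma 4.2.3), which approximates $h$ by $\sum_k e^{\delta z^2+\lambda_k z}a_k$ with $a_k\in E_0\cap E_1$. For such a sum, the boundary values decay only because of the Gaussian factor, and the exponential factors are bounded continuous analytic functions, so the sum lies in $\mathcal{F}_\tau$. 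Thus I obtain $g\in\mathcal{F}_\tau$ with $\|g-h\|_{\mathcal{F}}<\varepsilon$.

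\emph{Step 3 (fixing the value at $\theta$).} After the approximation, $g(\theta)=u+w$ with $w\in E_0\cap E_1$, since $g(\theta)$ is a finite combination of elements of $E_0\cap E_1$. The bound $\|w\|_{[E_0,E_1]_\theta}\le\|g-h\|_{\mathcal{F}}<\varepsilon$ only controls $w$ in the interpolation norm, not in the $E_0\cap E_1$ norm, which is exactly the subtlety Stafney handles. I would set $f(z)=g(z)-\varphi(z)w$ with $\varphi(z)=e^{(z-\theta)^2}$, so that $\varphi(\theta)=1$ and $f\in\mathcal{F}_\tau$ with $f(\theta)=u$. This requires $\|w\|_{E_0\cap E_1}$ to be small, which is not automatic. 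To arrange it, I would choose the approximation so that $g(\theta)=u$ exactly. One way is to include $u$ itself among the vectors: approximate $h-\varphi u$, which vanishes at $\theta$, by $\sum_k\psi_k a_k$, and then replace each scalar $\psi_k$ by $\psi_k-\psi_k(\theta)\varphi$, so that the combined term $\varphi(z)\sum_k\psi_k(\theta)a_k$ equals $\varphi(z)\,(g(\theta)-u)$-type corrections. Controlling that correction is the same obstacle restated. I expect this to require Stafney's finer argument of choosing the approximants via conformal mapping to the disc, where the value at the centre is an average of boundary values. That would give $\|w\|_{E_0\cap E_1}$ controlled by boundary approximations in each $E_j$ separately, making the correction of norm less than $1-\|h\|_{\mathcal{F}}$ and finishing the proof.
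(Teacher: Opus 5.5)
Your Steps 1--2 are fine. Step 1 is unnecessary. The density result of Bergh--L\"ofstr\"om (Lemma 4.2.3) does hold without any regularity assumption, because the Fourier coefficients of the periodised function can be computed on either boundary line and so lie in $E_0\cap E_1$.

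The gap is Step 3, which you leave open. You never produce an $f\in\mathcal{F}_\tau(E_0,E_1)$ with $f(\theta)=u$ exactly, and the mechanism you suggest cannot supply one. Writing $w=(g-h)(\theta)$ as a Poisson integral over $\partial\S$ splits $w=w_0+w_1$ with $\|w_0\|_{E_0}$ and $\|w_1\|_{E_1}$ small. That controls $w$ in $E_0+E_1$ (or in $[E_0,E_1]_\theta$), never in $E_0\cap E_1$. Nothing prevents $\|w\|_{E_0\cap E_1}$ from being large, so any strategy that approximates first and then subtracts a scalar multiple $\varphi w$ fails. Your variant with $\psi_k-\psi_k(\theta)\varphi$ is the same strategy, as you yourself noticed. (The paper gives no argument here and simply cites Stafney.)

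The missing idea is to build the zero at $\theta$ in multiplicatively, before approximating, with a factor that is unimodular on $\partial\S$. You were one step away when you looked at $h-\varphi u$: divide it by such a factor instead of re-normalising afterwards. Let
\[B(z)=\frac{e^{i\pi z}-e^{i\pi\theta}}{e^{i\pi z}-e^{-i\pi\theta}},\qquad \psi(z)=e^{(z-\theta)^2}.\]
Then $B$ is bounded and continuous on $\overline{\S}$, holomorphic on $\S$, satisfies $|B|=1$ on $\partial\S$, and has a single simple zero in $\overline{\S}$, at $\theta$.

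Since $u\in E_0\cap E_1$ (this is exactly where that hypothesis enters), $k:=(h-\psi u)/B$ lies in $\mathcal{F}(E_0,E_1)$. The singularity at $\theta$ is removable, $k$ is bounded, and on the boundary $\|k(j+it)\|_{E_j}=\|h(j+it)-\psi(j+it)u\|_{E_j}\to0$.

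Now approximate $k$, not $h$, by some $\tilde k\in\mathcal{F}_\tau(E_0,E_1)$ with $\|k-\tilde k\|_{\mathcal{F}}<1-\|h\|_{\mathcal{F}}$, and set $f:=\psi u+B\tilde k$. Then:
\begin{itemize}
\item $f\in\mathcal{F}_\tau(E_0,E_1)$, since $Bg_n$ is again an admissible scalar factor.
\item $f(\theta)=u$ exactly, because $B(\theta)=0$ and $\psi(\theta)=1$.
\item $h-f=B(k-\tilde k)$ with $|B|=1$ on $\partial\S$, so $\|f\|_{\mathcal{F}}\pp\|h\|_{\mathcal{F}}+\|k-\tilde k\|_{\mathcal{F}}<1$.
\end{itemize}
No estimate of any correction term in $E_0\cap E_1$ is ever needed.
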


\begin{lemm}\label{steintheorem_lemm2}
Let $(F_0^*,F_1^*)$ be a compatible couple equipped with a predual $(F_0,F_1)$, $f\in\mathcal{F}_*(F_0^*,F_1^*)$ and $0<\theta<1$. Then $f(\theta)\in(F_0^*,F_1^*)_{\theta,\infty}$ with
\[\|f(\theta)\|_{(F_0^*,F_1^*)_{\theta,\infty,K}}\pp\|f\|_{\mathcal{F}(F_0^*,F_1^*)}.\]
\end{lemm}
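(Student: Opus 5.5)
We need to show that $K_s(f(\theta),F_0^*,F_1^*)\pp s^\theta\|f\|_{\mathcal{F}(F_0^*,F_1^*)}$ for every $s>0$; this is exactly the $(\theta,\infty,K)$ norm bound. The plan is to dualize. Since $(F_0,F_1)$ is regular, $(F_0\cap F_1)^*=F_0^*+F_1^*$ with equal norms. Moreover, the $K_s$-norm on $F_0^*+F_1^*$ is the dual norm of $J_{s^{-1}}$ on $F_0\cap F_1$, since $F_0^*+sF_1^*=(F_0\cap s^{-1}F_1)^*$. Hence
\[K_s(f(\theta))=\sup\big\{|\bra u,f(\theta)\ket|\ :\ u\in F_0\cap F_1,\ \max\{\|u\|_{F_0},s^{-1}\|u\|_{F_1}\}\pp1\big\}.\]
So it suffices to fix such a $u$ and to bound the scalar quantity $|\bra u,f(\theta)\ket|$ by $s^\theta\|f\|$.

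Fix $u$ and consider $g(z):=s^{-z}\bra u,f(z)\ket$. By the definition of $\mathcal{F}_*$ and the fact that $u\in F_0\cap F_1$ is admissible for the w*-topology, $g$ is continuous on $\overline{\S}$ and holomorphic on $\S$. On the boundary we have $|g(it)|\pp\|u\|_{F_0}\|f(it)\|_{F_0^*}\pp\|f\|$ and $|g(1+it)|\pp s^{-1}\|u\|_{F_1}\|f(1+it)\|_{F_1^*}\pp\|f\|$. Granting the three lines lemma, we get $|g(\theta)|\pp\|f\|$, that is, $|\bra u,f(\theta)\ket|\pp s^\theta\|f\|$. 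Taking the supremum over $u$ gives the claim.

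The main obstacle is justifying the three lines (Phragmén–Lindelöf) lemma, which requires $g$ to be bounded on $\overline{\S}$, or at least of admissible growth. Boundedness on the boundary lines is given, but interior boundedness of $z\mapsto\bra u,f(z)\ket$ is not explicitly assumed. Several routes are available:
\begin{itemize}
\item If boundedness of $f$ in $F_0^*+F_1^*$ on $\overline{\S}$ is part of the intended definition, or follows from the uniform boundedness principle applied to the w*-continuous family on compact pieces, we are done directly.
\item Otherwise, I would first try to show that $g$ is bounded on $\overline{\S}$. The strategy is to note that w*-continuity on the compact sets $\{z\in\overline{\S}:|\mathrm{Im}\,z|\pp R\}$ gives pointwise boundedness, and hence, via the uniform boundedness principle in the Banach space $F_0\cap F_1$, norm-boundedness on each such set. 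This alone controls only bounded heights, though. If it is insufficient, I would use the standard damping device: replace $g$ by $g_\epsilon(z)=e^{\epsilon(z^2-1)}g(z)$.
\item Phragmén–Lindelöf for the strip only needs growth below $\exp(e^{a|\mathrm{Im} z|})$ with $a<\pi$. If no growth control is available at all, I would argue via a conformal map of the strip onto the disc and the Poisson representation of the bounded boundary data. The aim is to show that $g$ coincides with the Poisson integral of its boundary values, which requires $g$ to lie in a Nevanlinna-type class.
\end{itemize}
I expect the intended reading to be that $f$ is bounded on $\overline{\S}$, in which case the argument above is complete.
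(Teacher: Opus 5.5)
Your argument is the paper's argument. The paper also computes $K_s(f(\theta),F_0^*,F_1^*)$ as the norm of $f(\theta)$ in $F_0^*+sF_1^*=(F_0\cap s^{-1}F_1)^*$. It then applies Hadamard's three lines theorem to $z\mapsto\bra u,f(z)\ket$ in product form, followed by $\|u\|_{F_0}^{1-\theta}(s^{-1}\|u\|_{F_1})^{\theta}\pp\max\{\|u\|_{F_0},s^{-1}\|u\|_{F_1}\}$. Putting the weight into $g(z)=s^{-z}\bra u,f(z)\ket$, as you do, is the same computation.

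The obstacle you flag is genuine, and the paper does not resolve it either. It cites the three lines theorem, yet the definition of $\mathcal{F}_*(F_0^*,F_1^*)$ only imposes boundedness on the two boundary lines. Interior boundedness also cannot be derived from the stated hypotheses, because without it the lemma is false.

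Take $F_0=F_1=\C$, so that $\|v\|_{(F_0^*,F_1^*)_{\theta,\infty,K}}=\sup_{t>0}t^{-\theta}\min\{1,t\}|v|=|v|$, and let $f_n(z)=\exp(-in\,e^{i\pi z})$. Then $|f_n(it)|=|f_n(1+it)|=1$ for all $t\in\R$, so $f_n\in\mathcal{F}_*(\C,\C)$ with $\|f_n\|_{\mathcal{F}(\C,\C)}=1$. However, $|f_n(\theta)|=e^{n\sin(\pi\theta)}$, which exceeds $1$ and is unbounded in $n$. Since $|f_n(x+iy)|=\exp(n\sin(\pi x)e^{-\pi y})$, these functions grow exactly at the Phragm\'en--Lindel\"of threshold as $y\to-\infty$. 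Consequently none of your fallback routes can succeed: uniform boundedness on bounded heights, damping by $e^{\epsilon(z^2-1)}$, Phragm\'en--Lindel\"of, or a Poisson/Nevanlinna representation.

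The correct repair is the reading you settle on at the end. Add to the definition of $\mathcal{F}_*$ that $z\mapsto\bra u,f(z)\ket$ is bounded on $\overline{\S}$ for every $u\in F_0\cap F_1$; by the uniform boundedness principle this is equivalent to $f$ being norm-bounded in $F_0^*+F_1^*$. With this addition your proof is complete. The extra condition is also harmless in the paper's application. There $|f_z(s)|=e^{-s(1-\re z)/q}$ is dominated by $f_0(s)$ for $s<0$ and by $1$ for $s\pg0$, so $y\otimes f_z$ stays bounded in $(L_{q,\infty}+L_\infty)(M\bar{\otimes}N)$ uniformly in $z$.
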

\begin{proof}
Fix $t>0$. If $u\in F_0\cap F_1$, then by Hadamard's three-lines theorem applied to the function $\overline{\S}\to\C$, $z\mapsto\bra u,f(z)\ket$, we have
\[|\bra u,f(\theta)\ket|\pp\sup_{s\in\R}|\bra u,f(is)\ket|^{1-\theta}\sup_{s\in\R}|\bra u,f(1+is)\ket|^{\theta}.\]
Now, if we denote $C:=\|f\|_{\mathcal{F}(F_0^*,F_1^*)}$, for $s\in\R$ we have
\[|\bra u,f(is)\ket|\pp\|u\|_{F_0}\|f(j+is)\|_{F_0^*}\pp C\|u\|_{F_0},\]
\[|\bra u,f(1+is)\ket|\pp\|u\|_{F_1}\|f(j+is)\|_{F_1^*}\pp C\|u\|_{F_1}=Ct\|u\|_{t^{-1}F_1}.\]
Thus, by combining the previous estimates, we get
\[|\bra u,f(\theta)\ket|\pp C\|u\|_{F_0}^{1-\theta}t^{\theta}\|u\|_{t^{-1}F_1}^{\theta}\pp Ct^{\theta}\|u\|_{F_0\cap t^{-1}F_1}.\]
Finally, we obtain 
\begin{align*}
K_t(f(\theta),F_0^*,F_1^*)&=\|f(\theta)\|_{F_0^*+tF_1^*}\\
&=\sup_{\protect\substack{u\in F_0\cap F_1\\\|u\|_{F_0\cap t^{-1}F_1}\pp1}}|\bra u,f(\theta)\ket|\\
&\pp\sup_{\protect\substack{u\in F_0\cap F_1\\\|u\|_{F_0\cap tF_1}\pp1}}Ct^{\theta}\|u\|_{F_0\cap t^{-1}F_1}\\
&\pp Ct^{\theta}.
\end{align*}
This shows indeed that $f(\theta)\in(F_0^*,F_1^*)_{\theta,\infty,K}$ with $\|f(\theta)\|_{(F_0^*,F_1^*)_{\theta,\infty,K}}\pp C$.
\end{proof}

Now we are able to complete the proof of Theorem \ref{steintheorem_theo1}.

\begin{proof}[Proof of w*\-/Stein's interpolation theorem]
Fix $u\in E_0\cap E_1$ and $0<\theta<1$ with $\|u\|_{[E_0,E_1]_\theta}<1$. By Lemma \ref{steintheorem_lemm1}, we have $u=f(\theta)$ with $f=\sum_nf_n\otimes u_n\in\mathcal{F}_\tau(E_0,E_1)$ such that 
\[\|f\|_{\mathcal{F}(E_0,E_1)}<1.\]
Let $g:\overline{\S}\to F_0^*+F_1^*$ be the function such that $g(z)=T_z(f(z))=\sum_nf_n(z)T_z(u_n)$ for $z\in\overline{\S}$. By hypothesis, it is clear that $g$ belongs to $\mathcal{F}_*(F_0^*,F_1^*)$. Moreover, if $t\in\R$ then by hypothesis
\[\|g(j+it)\|_{F_j^*}=\|T_{j+it}(f(j+it))\|_{F^*_j}\pp\|f(j+it)\|_{E_j}\pp\|f\|_{\mathcal{F}(E_0,E_1)}\]
and thus
\[\|g\|_{\mathcal{F}(F_0^*,F_1^*)}\pp\|f\|_{\mathcal{F}(E_0,E_1)}<1.\]
As we have
\[g(\theta)=T_\theta(f(\theta))=T_\theta(u),\]
by Lemma \ref{steintheorem_lemm2} we deduce that $T_\theta(u)\in(F_0^*,F_1^*)_{\theta,\infty,K}$ with \[\|T_\theta(u)\|_{(F_0^*,F_1^*)_{\theta,\infty,K}}\pp\|g\|_{\mathcal{F}(F_0^*,F_1^*)}<1.\]
The proof is complete.
\end{proof}

\subsection{Haagerup's embedding}

 
Let $N$ denote the commutative von Neumann algebra $L_\infty(\R)$ equipped with the (n.s.f) trace $\sigma$ such that
\[\sigma(f)=\int_{-\infty}^{\infty}f(t)e^tdt,\ \ \ \text{for}\ f\in N_+.\]
In the sequel we fix an exponent $1<q<\infty$ and we consider the dual exponent $1<p<\infty$, i.e. such that 
\[\frac{1}{p}+\frac{1}{q}=1.\]
For $z\in\overline{\B}$, let $p_z\in[1,p]$ and $q_z\in[q,\infty]$ be the exponents such that
\[\frac{1}{p_z}=\frac{1-\re(z)}{p}+\re(z),\ \ \ \ \ \ \ \ \ \ \ \frac{1}{q_z}=\frac{1-\re(z)}{q}.\]  
Finally, for $z\in\overline{\B}$, we consider the continuous function $f_z\in C(\R)$ such that
\[f_z(s)=e^{-s(1-z)/q},\ \ \ \ \ \ \text{for}\ s\in\R.\]
A direct computation shows that
\[\lambda_{f_z}(s)=s^{-q_z},\ \ \ \mu_{f_z}(s)=s^{-1/q_z}\ \ \text{for}\ s>0.\]
Thus, we get
\begin{align*}
\|f_z\|_{L_{q_z,\infty}(N)}&=\sup_{t>0}t^{-1/p_z}K_t(f_z,L_1(M\bar{\otimes}N),L_\infty(M\bar{\otimes}N))\\
&=\sup_{t>0}t^{-1/p_z}\int_{0}^{t}\mu_{f_z}(s)ds\\
&=\sup_{t>0}t^{-1/p_z}\int_{0}^{t}s^{-1/q_z}ds\\
&=p_z.
\end{align*}
and in particular $f_z\in L_{q_z,\infty}(N)$. In the sequel, the compatible couple $(L_{q,\infty}(N),L_\infty(N))$ is equipped with its canonical predual coming from its canonical pairing with the compatible couple $(L_{p,1}(N),L_1(N))$ so that we can consider the space $\mathcal{F}_*(L_{q,\infty}(N),L_\infty(N))$ as defined in the previous paragraph.



\begin{lemm}\label{amplification_lemma1}
The function
\[\fonctbis{\overline{\S}}{L_{q,\infty}(N)+L_\infty(N)}{z}{f_z}\]
is well-defined and belongs to $\mathcal{F}_*(L_{q,\infty}(N),L_\infty(N))$.
\end{lemm}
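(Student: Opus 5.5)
The plan is to verify the defining conditions of $\mathcal{F}_*(L_{q,\infty}(N),L_\infty(N))$ one by one. Everything rests on the pointwise identity $|f_z(s)|=e^{-s(1-\re z)/q}$ together with a fixed splitting of $\R$ at $s=0$. The compatible couple is paired with $(L_{p,1}(N),L_1(N))$. So the \text{\upshape w*}-topology on the sum space is the one induced by $g\in L_{p,1}(N)\cap L_1(N)$ through
\[\bra g,h\ket=\sigma(gh)=\int_{-\infty}^{\infty}g(s)h(s)e^sds.\]

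\textbf{Step 1 (membership in the sum, with a uniform bound).} Write $f_z=f_z\mathbf{1}_{(-\infty,0)}+f_z\mathbf{1}_{[0,\infty)}$ for $z\in\overline{\S}$.
\begin{itemize}
\item For $s\pg0$ and $0\pp\re z\pp1$ we have $|f_z(s)|\pp1$. Hence the second piece lies in $L_\infty(N)$ with norm at most $1$.
\item For $s<0$ we have $|f_z(s)|=e^{|s|(1-\re z)/q}\pp e^{|s|/q}=f_0(s)$. Since $\mu$ is monotone under pointwise domination, $\|f_z\mathbf{1}_{(-\infty,0)}\|_{L_{q,\infty}(N)}\pp\|f_0\|_{L_{q,\infty}(N)}=p$, using the computation above with $z=0$, i.e. $q_0=q$ and $p_0=p$.
\end{itemize}
Therefore $f_z\in L_{q,\infty}(N)+L_\infty(N)$, and for all $z\in\overline{\S}$ we get the pointwise domination
\[|f_z|\pp F:=f_0\mathbf{1}_{(-\infty,0)}+\mathbf{1}_{[0,\infty)}.\]

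\textbf{Step 2 (boundary values).} For $z=it$ we have $|f_{it}|=f_0$, so $f_{it}\in L_{q,\infty}(N)$ with norm exactly $p$. For $z=1+it$ we have $|f_{1+it}|=1$, so $f_{1+it}\in L_\infty(N)$ with norm $1$. This gives the boundedness of both boundary functions.

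For their \text{\upshape w*}-continuity:
\begin{itemize}
\item Fix $g\in L_{p,1}(N)$. Then $t\mapsto\sigma(gf_{it})$ is continuous by dominated convergence, with dominating function $|g|f_0$. This function is $\sigma$-integrable by the generalized H\"older inequality, since $L_{p,1}(N)^{\times}=L_{q,\infty}(N)$.
\item Fix $g\in L_1(N)$. Then $t\mapsto\sigma(gf_{1+it})$ is continuous by dominated convergence, with dominating function $|g|$.
\end{itemize}

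\textbf{Step 3 (interior).} Fix $g\in L_{p,1}(N)\cap L_1(N)$. By Step 1, $|g(s)f_z(s)|e^s\pp|g(s)|F(s)e^s$, and this is integrable because $|g|F\pp|g|f_0+|g|$ with each term in $L_1(N,\sigma)$ by Step 2.
\begin{itemize}
\item Dominated convergence gives continuity of $z\mapsto\bra g,f_z\ket$ on $\overline{\S}$.
\item The integrand $s\mapsto g(s)e^{-s(1-z)/q}e^s$ is entire in $z$ for each fixed $s$, with a $z$-independent integrable majorant on $\overline{\S}$. Fubini and Morera's theorem then give holomorphy on $\S$. Alternatively, one can differentiate under the integral on compact subsets of $\S$, where $|s|\,|f_z(s)|$ is still dominated by a multiple of $F$ after slightly shrinking $\re z$.
\end{itemize}

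The only point I expect to need care is the integrability of the majorant $|g|F$ near $s\to-\infty$. There $F=f_0$ blows up while the weight $e^s$ decays, so one cannot argue pointwise. This is resolved by the Köthe duality $L_{p,1}^{\times}=L_{q,\infty}$ quoted earlier, rather than by an explicit estimate. Everything else is routine dominated convergence.
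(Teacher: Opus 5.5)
Your proof is correct. Step 2 matches the paper's argument. Pairing $f_{it}$ against $g\in L_{p,1}(N)$ reduces, via $L_{p,1}(N)^{\times}=L_{q,\infty}(N)$, to integrating $e^{ist/q}$ against the integrable function $f_0g$; the pairing of $f_{1+it}$ against $g\in L_1(N)$ is immediate.

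The difference is in how you get membership in the sum space. The paper uses the reiteration theorem,
\[L_{q_z,\infty}(N)=(L_{q,\infty}(N),L_\infty(N))_{\re(z),\infty},\]
together with the computation $f_z\in L_{q_z,\infty}(N)$. You instead split at $s=0$ and obtain the single $z$-independent majorant $F=f_0\mathbf{1}_{(-\infty,0)}+\mathbf{1}_{[0,\infty)}\in L_{q,\infty}(N)+L_\infty(N)$.

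Your route is more elementary and also more complete. The uniform majorant is exactly what makes w*-continuity on $\overline{\S}$ (by dominated convergence) and w*-holomorphy on $\S$ (by Fubini and Morera) rigorous. The paper only asserts these with ``in a similar way, it is easy to see''.

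The paper's route does record the finer fact $f_z\in L_{q_z,\infty}(N)$ for each $z$, which is what the amplification operators $F_z$ need later. That fact is computed separately anyway and is not required for this lemma.
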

\begin{proof}
If $z\in\overline{\S}$, then by the reiteration theorem for the real method, we have
\[L_{q_z,\infty}(N)=(L_{q,\infty}(N),L_\infty(N))_{\re(z),\infty}.\]
As $f_z\in L_{q_z,\infty}(N)$, we deduce that $f_z\in L_{q,\infty}(N)+L_\infty(N)$. Thus, the function $\overline{\S}\to L_{q,\infty}(N)+L_\infty(N)$, $z\mapsto f_z$ is indeed well-defined. Moreover, the functions $\R\to L_{q,\infty}(N)$, $t\mapsto f_{it}$ and $\R\to L_\infty(N)$, $t\mapsto f_{1+it}$ are clearly norm-bounded. Now, if $g\in L_1(N)$, then for every $t\in\R$ we have
\[\sigma(f_{1+it}g)=\int_{-\infty}^{\infty}e^{ist/q}g(s)d\sigma(s).\]
Hence the function $\R\to\C$, $t\mapsto\sigma(f_{1+it}g)$ is continuous. Now, if $g\in L_{p,1}(N)$, then for every $t\in\R$ we have
\[\sigma(f_{it}g)=\int_{-\infty}^{\infty}e^{-s(1-it)/q}g(s)d\sigma(s)=\int_{-\infty}^{\infty}e^{ist/q}f_0(s)g(s)d\sigma(s).\]
As $f_0\in L_{q,\infty}(N)=L_{p,1}(N)^{\times}$, we have $f_0g\in L_1(N)$ and thus the function $\R\to\C$, $t\mapsto\sigma(f_{it}g)$ is continuous. Hence, the functions $\R\to L_{q,\infty}(N)$, $t\mapsto f_{it}$ and $\R\to L_\infty(N)$, $t\mapsto f_{1+it}$ are both w*\-/continuous. In a similar way, it is easy to see that the function 
\[\fonctbis{\overline{\S}}{L_{q,\infty}(N)+L_\infty(N)}{z}{f_z}\]
is w*\-/continuous on $\overline{\S}$ and w*\-/holomorphic on $\S$.
\end{proof}

Now, let $M$ be a semifinite von Neumann algebra.

If $z\in\overline{\S}$, as $f_z\in L_{q_z,\infty}(N)$, by Proposition \ref{integration_amplification} we know that the operator
\[\fonct{F_z}{L_{q_z}(M)}{L_{q_z,\infty}(M\bar{\otimes}N)}{x}{x\otimes f_z}.\]
is well-defined. In the sequel, the compatible couple $(L_{q,\infty}(M\bar{\otimes}N),L_\infty(M\bar{\otimes}N))$ is equipped with its predual coming from its canonical pairing with the compatible couple $(L_{p,1}(M\bar{\otimes}N),L_1(M\bar{\otimes}N))$, so that we can consider the space $\mathcal{F}_*(L_{q,\infty}(M\bar{\otimes}N),L_\infty(M\bar{\otimes}N))$ as defined in the previous paragraph.

\begin{mdframed}[skipabove=10pt]
\begin{theo}\label{amplification_theo1}
The family of operators $(F_z)_{z\in\overline{\S}}$ satisfies the following conditions.
\begin{itemize} 
    \item for every $x\in(L_q\cap L_\infty)(M)$, the function
\[\fonctbis{\overline{\S}}{L_{q,\infty}(M\bar{\otimes}N)+L_\infty(M\bar{\otimes}N)}{z}{F_z(x)}\]
 belongs to $\mathcal{F}_*(L_{q,\infty}(M\bar{\otimes}N),L_\infty(M\bar{\otimes}N))$,
    \item for every $z\in\overline{\S}$ and $x\in L_{q_z}(M)$ we have
\[\|F_z(x)\|_{L_{q_z,\infty}(M\bar{\otimes}N)}=p_z\|x\|_{L_{q_z}(M)}.\]
\end{itemize}
\end{theo}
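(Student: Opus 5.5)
We split the statement into its two bullet points. We first prove the norm identity in the second bullet, since it is needed to control the boundary behaviour in the first.

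\emph{The norm identity.} Fix $z\in\overline{\S}$ and $x\in L_{q_z}(M)$. By Theorem \ref{integration_Holmstedt} and the definition of $\|\cdot\|_{L_{q_z,\infty}}$, it suffices to compute the singular function of $x\otimes f_z$. Since $|f_z|=f_{\re z}$ depends only on $\re z$, and $\lambda_{f_z}(s)=s^{-q_z}$, the distribution function of the tensor product is
\[\lambda_{x\otimes f_z}(t)=\int_0^\infty\lambda_{f_z}(t/\mu_x(s))\,ds=t^{-q_z}\int_0^\infty\mu_x(s)^{q_z}ds=t^{-q_z}\|x\|_{L_{q_z}}^{q_z}.\]
This is the same formula used in the proof of Proposition \ref{integration_amplification}, but here $\lambda_{f_z}$ is an exact power, so we get equality rather than an inequality. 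Consequently
\[\mu_{x\otimes f_z}(s)=\|x\|_{L_{q_z}}\,s^{-1/q_z}=\|x\|_{L_{q_z}}\,\mu_{f_z}(s).\]
The computation already done for $f_z$ then gives
\[\|x\otimes f_z\|_{L_{q_z,\infty}}=\|x\|_{L_{q_z}}\,\|f_z\|_{L_{q_z,\infty}}=p_z\|x\|_{L_{q_z}}.\]
The case $q_z=\infty$, i.e. $\re z=1$, is checked separately: there $|f_z|=1$, so $\|x\otimes f_z\|_\infty=\|x\|_\infty$ and $p_z=1$.

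\emph{Membership in $\mathcal{F}_*$.} Let $x\in(L_q\cap L_\infty)(M)$ be fixed. We view $F_z(x)=T_x(f_z)$, where $T_x:g\mapsto x\otimes g$. The plan is to transfer the regularity of $z\mapsto f_z$ established in Lemma \ref{amplification_lemma1} through $T_x$ by means of Proposition \ref{steintheorem_prop1}. For this we need $T_x$ to be a compatible bounded operator
\[T_x:(L_{q,\infty}(N),L_\infty(N))\to(L_{q,\infty}(M\bar{\otimes}N),L_\infty(M\bar{\otimes}N))\]
that is w*-continuous on both endpoints.
\begin{itemize}
\item On $L_{q,\infty}(N)$: boundedness and w*-continuity follow from Proposition \ref{integration_amplification}, applied with exponent $q$ and using $x\in L_q(M)$.
\item On $L_\infty(N)$: $g\mapsto x\otimes g$ is bounded with norm $\|x\|_\infty$ and is normal, hence w*-continuous. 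This is the classical case $p=\infty$ of Proposition \ref{integration_amplification}.
\end{itemize}
We must also check that $T_x$ is well defined on the sum $L_{q,\infty}(N)+L_\infty(N)$, i.e. that the two definitions agree on the intersection. This holds because both are given by $x\otimes g$ inside $L_0(M\bar{\otimes}N)$. Proposition \ref{steintheorem_prop1} then yields $z\mapsto F_z(x)\in\mathcal{F}_*$. The boundary boundedness required in the definition also follows directly from the norm identity above.

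\emph{Main obstacle.} The delicate point is the w*-continuity and w*-holomorphy on the whole closed strip, with respect to the pairing with $L_{p,1}\cap L_1$ of $M\bar{\otimes}N$. Proposition \ref{steintheorem_prop1} handles this once we know $T_x$ is w*-continuous at both endpoints, so the main obstacle reduces to the endpoint w*-continuity of $T_x$. If the formulation of Proposition \ref{steintheorem_prop1} (endpoint continuity only) turns out to be insufficient for interior points, the fallback is a direct check on elementary tensors $w=y\otimes g$:
\[\langle w,F_z(x)\rangle=\tau(xy)\,\sigma(f_zg),\]
which is holomorphic in $z$ by Lemma \ref{amplification_lemma1}. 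We then extend to all $w$ in the predual by density of the algebraic tensor product, together with the uniform bound $\sup_z\|F_z(x)\|_{L_{q,\infty}+L_\infty}<\infty$ coming from the norm identity. The extension uses that locally uniform limits of holomorphic functions are holomorphic.
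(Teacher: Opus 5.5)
Your proposal is correct and follows the paper's proof. The norm identity comes from the same exact computation $\lambda_{x\otimes f_z}(t)=t^{-q_z}\|x\|_{L_{q_z}(M)}^{q_z}$, with $\re z=1$ treated separately as trivial. Membership in $\mathcal{F}_*$ is obtained, as in the paper, by combining Lemma \ref{amplification_lemma1}, the endpoint w*-continuity of $g\mapsto x\otimes g$ from Proposition \ref{integration_amplification}, and Proposition \ref{steintheorem_prop1}. Your fallback is unnecessary: endpoint w*-continuity already gives a single preadjoint on the intersection of the preduals, and that is all Proposition \ref{steintheorem_prop1} needs to handle interior points.
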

\end{mdframed}
\begin{proof}

$\bullet$ Let $x\in(L_q\cap L_\infty)(M)$. Because of Proposition \ref{integration_amplification}, we know that the operators
\[\fonctbis{L_{q,\infty}(M)}{L_{q,\infty}(M\bar{\otimes}N)}{f}{x\otimes f}\ \ \ \ \ \ \ \text{and}\ \ \ \ \fonctbis{L_\infty(N)}{L_\infty(M\bar{\otimes}N)}{f}{x\otimes f}\]
are both w*\-/continuous. By using Lemma \ref{amplification_lemma1} together with Proposition \ref{steintheorem_prop1} we deduce that the function
\[\fonctbis{\overline{\S}}{L_{q,\infty}(M\bar{\otimes}N)+L_\infty(M\bar{\otimes}N)}{z}{x\otimes f_z}\]
 belongs to $\mathcal{F}_*(L_{q,\infty}(M\bar{\otimes}N),L_\infty(M\bar{\otimes}N))$. 
 
$\bullet$ Let $z\in\overline{\B}$. We can assume $\re(z)<1$, otherwise the result is trivial. Then, if $x\in L_{q_z}(M)$ we have
\begin{align*}
\lambda_{x\otimes f_z}(s)&=\int_{0}^{\infty}\lambda_{f_z}(s/\mu_x(t))dt=\int_{0}^{\infty}(s/\mu_x(t))^{-q_z}dt\\
&=s^{-q_z}\int_{0}^{\infty}\mu_x(t)^{q_z}dt=s^{-q_z}\|x\|_{L_{q_z}(M)}^{q_z}.
\end{align*}
thus, we get
\[\mu_{x\otimes f_z}(s)=s^{-1/q_z}\|x\|_{L_{q_z}(M)}\]
and then
\begin{align*}
\|F_z(x)\|_{L_{q_z,\infty}(M\bar{\otimes}N)}&=\|x\otimes f_z\|_{L_{q_z,\infty}(M\bar{\otimes}N)}\\
&=\sup_{t>0}t^{-1/p_z}K_t(x\otimes f_z,L_1(M\bar{\otimes}N),L_\infty(M\bar{\otimes}N))\\
&=\sup_{t>0}t^{-1/p_z}\int_{0}^{t}\mu_{x\otimes f_z}(s)ds\\
&=\sup_{t>0}t^{-1/p_z}\|x\|_{L_{q_z}(M)}\int_{0}^{t}s^{-1/q_z}ds\\
&=\sup_{t>0}t^{-1/p_z}(1-1/q_z)^{-1}t^{1-1/q_z}\|x\|_{L_{q_z}(M)}\\
&=p_z\|x\|_{L_{q_z}(M)}.
\end{align*}
\end{proof}

\begin{rem}
Let $1<q\pp\infty$. The range of the operator 
\[\fonct{F_0}{L_q(M)}{L_{q,\infty}(M\bar{\otimes}N)}{x}{x\otimes f_0}\]
is the so-called \textit{Haagerup's $L_q$-space} associated with $M$.
\end{rem}

\subsection{Pisier's method theorem}

Let $M$ be a semifinite von Neumann algebra.

\begin{mdframed}[backgroundcolor=black!10,rightline=false,leftline=false,topline=false,bottomline=false,skipabove=10pt]
A projection $P$ on $L_2(M)$ is said to satisfy \textit{Pisier's method assumptions} if, for every semifinite von Neumann algebra $N$, the amplified projection $Q:=P\otimes I$ on the Hilbertian tensor product $L_2(M)\otimes L_2(N)=L_2(M\bar{\otimes}N)$ is of Jones-type, and the algebraic tensor product $H_\infty(P^{\bot})\odot L_\infty(N)$ is a subspace of $H_\infty(Q^{\bot})$.
\end{mdframed}
\vspace{5pt}

Now we turn to the statement of the main result of the section.

\begin{mdframed}[skipabove=10pt]
\begin{maintheo}[Pisier's method]\label{theorem_PisierMethod}
Let $P$ be a projection on $L_2(M)$ that satifies Pisier's method assumptions. If $0<\theta<1$ and $1<p<\infty$ then
\[[H_p(P),H_1(P)]_\theta=H_{p_\theta}(P)\]
with equivalent norms, with constants depending only on $p,\theta$, where $\frac{1}{p_\theta}=\frac{1-\theta}{p}+\theta$. 
\end{maintheo}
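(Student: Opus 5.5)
One inclusion is free. The couple $(H_p(P),H_1(P))$ is a subcouple of $(L_p(M),L_1(M))$, and the complex method is an exact functor. By Riesz--Thorin and reiteration, $[L_p(M),L_1(M)]_\theta=L_{p_\theta}(M)$. Hence $[H_p(P),H_1(P)]_\theta$ embeds contractively into $L_{p_\theta}(M)$. Its range lies in $H(P)\cap L_{p_\theta}(M)$ by technical assumption (2), and this equals $H_{p_\theta}(P)$. The real work is the reverse inequality
\[\|x\|_{[H_p(P),H_1(P)]_\theta}\pp C_{p,\theta}\|x\|_{L_{p_\theta}(M)}\]
for $x$ in the dense subspace $(L_1\cap L_\infty)(M)\cap P(L_2(M))$, which is dense by technical assumption (3).

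I will obtain this by duality, using Corollary \ref{ComplexInterpolation_Dual}. The couple $(H_p(P),H_1(P))$ is regular by assumption (3). By assumption (1) and the duality proposition for subcouples, its dual couple is $(L_q(M)/H_q(P^\bot),L_\infty(M)/H_\infty(P^\bot))$, with $1/p+1/q=1$; this uses that $H_p(P)+H_1(P)$ is closed in $L_p+L_1$, which follows from $K$-closedness. So it suffices to prove the following: for a class $\bar y$ in the intersection of the dual couple, the norm of $\bar y$ in $L_{q_\theta}(M)/H_{q_\theta}(P^\bot)$, which is the dual of $H_{p_\theta}(P)$, is at most $C\|\bar y\|_{[L_q/H_q(P^\bot),L_\infty/H_\infty(P^\bot)]_\theta}$. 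In other words, I need a lower bound for complex interpolation of the quotient couple.

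This is exactly where Pisier's trick enters. Take $g\in\mathcal{F}(L_q/H_q(P^\bot),L_\infty/H_\infty(P^\bot))$ with $g(\theta)=\bar y$ and norm $<1$. By Lemma \ref{steintheorem_lemm1}, used in its dual form, I may assume $g=\sum_n g_n\otimes\bar y_n$ with $y_n\in(L_q\cap L_\infty)(M)$, where the $y_n$ lift the $\bar y_n$ thanks to Proposition \ref{preliminaries_quotient_couple}. I then apply Haagerup's embedding: set $T_z(\bar w)=\overline{w\otimes f_z}$, taking values in quotients by $H(Q^\bot)$ of the couple $(L_{q,\infty}(M\bar\otimes N),L_\infty(M\bar\otimes N))$. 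The hypothesis $H_\infty(P^\bot)\odot L_\infty(N)\subset H_\infty(Q^\bot)$, together with its $L_{q,\infty}$ analogue obtained by w*-density, makes $T_z$ well defined on quotients. Theorem \ref{amplification_theo1}, combined with the fact that the amplification is contractive on the quotient norms, gives the bounds $\|T_{j+it}\|\pp p_{j}$ on the boundary. The w*-Stein theorem (Theorem \ref{steintheorem_theo1}) then yields
\[\overline{y\otimes f_\theta}\in\big(L_{q,\infty}/H_{q,\infty}(Q^\bot),\,L_\infty/H_\infty(Q^\bot)\big)_{\theta,\infty,K}\]
with norm at most $C$.

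Now I use that $Q$ is of Jones-type. The main theorem of the Jones-type section gives that $(H_1(Q),H_\infty(Q))$ is quasi-complemented. By Proposition \ref{technical_assumptions_Pisier}, the subcouple $(H(Q^\bot)\cap L_{q,\infty},H_\infty(Q^\bot))$ is $K$-closed, after reiteration via Theorem \ref{preliminaries_Holmstedt}. Proposition \ref{technical_assumptions_Jclosedness}, together with the equivalence theorem, then identifies the real interpolation space of the quotient couple with $L_{q_\theta,\infty}(M\bar\otimes N)/H_{q_\theta,\infty}(Q^\bot)$, up to constants. Finally I descend from $M\bar\otimes N$ to $M$. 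For $x\in H_{p_\theta}(P)$, pair $y\otimes f_\theta$ against $x\otimes h$, where $h\in L_{p_\theta,1}(N)$ is normalised against $f_\theta$ (a suitable truncated power of $e^{s}$). The element $x\otimes h$ annihilates $H(Q^\bot)$, so
\[|\tau(xy)|\,|\sigma(f_\theta h)|\pp\|\overline{y\otimes f_\theta}\|\,\|x\|_{L_{p_\theta}}\|h\|_{L_{p_\theta,1}}.\]
The ratio $\|h\|/|\sigma(f_\theta h)|$ is controlled by $\|f_\theta\|_{L_{q_\theta,\infty}}=p_\theta$, and this gives the required dual estimate.

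I expect the main obstacle to be the quotient bookkeeping in the Stein step. Specifically, I must verify that $z\mapsto\overline{y\otimes f_z}$ lies in $\mathcal{F}_*$ of the quotient couple, which means w*-continuity on quotients of $L_{q,\infty}$ by a w*-closed subspace. I also need $Q$-Jones-type information to be available for weak-type spaces $L_{q,\infty}$, whose norm is not order-continuous. My first attempt will be to work throughout with the duals of $L_{p,1}$, which has order-continuous norm, so that Proposition \ref{technical_assumptions_Jclosedness} applies with $E_\theta=L_{p_\theta,1}$ and $\Phi=\Phi_{\theta,\infty}$.
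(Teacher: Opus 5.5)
\textbf{There is a genuine gap, in the descent from $M\bar{\otimes}N$ back to $M$.} Up to that step your route is the paper's: the trivial inclusion; the reduction via Corollary \ref{ComplexInterpolation_Dual} to a lower bound on the quotient couple; Haagerup's embedding $y\mapsto y\otimes f_z$ fed into the w*-Stein theorem; and the identification of the $(\theta,\infty)$-space of $(L_{q,\infty}/H_{q,\infty}(Q^{\bot}),L_\infty/H_\infty(Q^{\bot}))$ with $L_{q_\theta,\infty}(M\bar{\otimes}N)/H_{q_\theta,\infty}(Q^{\bot})$ through the Jones-type property of $Q^{\bot}$.

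Your displayed inequality tacitly uses $\|x\otimes h\|_{L_{p_\theta,1}(M\bar{\otimes}N)}\pp\|x\|_{L_{p_\theta}(M)}\|h\|_{L_{p_\theta,1}(N)}$, and this is false. If $h\neq0$ and $\varepsilon>0$ satisfies $\lambda_h(\varepsilon)>0$, then
\[\lambda_{x\otimes h}(s)=\int_0^\infty\lambda_h\big(s/\mu_x(t)\big)\,dt\pg\lambda_h(\varepsilon)\,\lambda_x(s/\varepsilon),\]
so $\|x\otimes h\|_{L_{p_\theta,1}}\gtrsim_h\|x\|_{L_{p_\theta,1}}$. For an indicator $h$, $\|x\otimes h\|_{L_{p,1}}$ is comparable to $\|x\|_{L_{p,1}}\|h\|_{L_{p,1}}$. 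Consequences:
\begin{itemize}
\item For a fixed truncated power $h$ and $x\in H_{p_\theta}(P)\setminus L_{p_\theta,1}(M)$, the element $x\otimes h$ is not even in $L_{p_\theta,1}$, so your pairing is undefined.
\item Even on the dense subspace $(L_1\cap L_\infty)(M)\cap P(L_2(M))$, the ratio $\|x\otimes h\|_{L_{p_\theta,1}}/\|x\|_{L_{p_\theta}}$ is unbounded.
\end{itemize}
So as written, nothing controls $\|\bar y\|_{L_{q_\theta}/H_{q_\theta}(P^{\bot})}$ by the quotient norm of $\overline{y\otimes f_\theta}$. That is precisely the estimate the whole argument hinges on.

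\textbf{How the paper closes this step.} It works on the side of $y$ rather than $x$ (second part of Lemma \ref{step1_lemm5}):
\begin{itemize}
\item $P$ and the quotient map have the same kernel $H_{q_\theta}(P^{\bot})$, so $\bar y=\overline{P(y)}$ and $\|\bar y\|\pp\|P(y)\|_{L_{q_\theta}(M)}$.
\item $\|P(y)\|_{L_{q_\theta}(M)}=p_\theta^{-1}\|P(y)\otimes f_\theta\|_{L_{q_\theta,\infty}}$. This uses the \emph{equality} in Theorem \ref{amplification_theo1}, not just the upper bound you invoked.
\item $P(y)\otimes f_\theta=Q(y\otimes f_\theta)$ by Lemma \ref{step1_lemm4}.
\item $Q$ kills $H_{q_\theta,\infty}(Q^{\bot})$ and is bounded on $L_{q_\theta,\infty}(M\bar{\otimes}N)$, so $\|Q(y\otimes f_\theta)\|_{L_{q_\theta,\infty}}\pp\|Q\|_{L_{p_\theta,1}\to L_{p_\theta,1}}\|\overline{y\otimes f_\theta}\|$ by Lemma \ref{step1_lemm3}.
\end{itemize}
The ingredients missing from your sketch are therefore the Lorentz-space boundedness of $Q$ (from the $L_p$-boundedness in the Jones-type definition, by real interpolation) and the isometric character of Haagerup's embedding.

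If you want to keep elementary tensors, the test function must depend on $x$ and a limit must be taken. For $x\in(L_1\cap L_\infty)(M)\cap P(L_2(M))$ and $h_L(s)=e^{-s/p_\theta}\mathbf{1}_{[0,L]}(s)$, one has $\sigma(f_\theta h_L)=L$. A distribution-function computation gives $\|x\otimes h_L\|_{L_{p_\theta,1}}\lesssim L\|x\|_{L_{p_\theta}}+C(x)$, where $C(x)$ depends on $\|x\|_{L_1}$ and $\|x\|_{L_\infty}$ but not on $L$. Dividing by $L$ and letting $L\to\infty$ then yields the dual estimate on a dense set. That is a different and nontrivial estimate from the one you wrote, and it would have to be proved.
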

\end{mdframed}
\vspace{5pt}

The proof of Theorem \ref{theorem_PisierMethod} follows Pisier's original approach. Let $P$ be a projection on $L_2(M)$ that satifies Pisier's method assumptions with constant $C$. Let $N$ denote the commutative semifinite von Neumann algebra introduced in the previous paragraph. Let $\tau,\sigma$ denote the trace of $M,N$ respectively. Let $\tau\otimes\sigma$ denote the tensor product trace on the tensor product von Neumann algebra $M\bar{\otimes}N$. Let $Q:=P\otimes I$ denote the amplified projection on the Hilbertian tensor product $L_2(M)\otimes L_2(N)=L_2(M\bar{\otimes}N)$. 

Let $1<p<\infty$ be a fixed exponent, and let $1<q<\infty$ denote the dual exponent, i.e. such that 
\[\frac{1}{p}+\frac{1}{q}=1.\] 
For $z\in\overline{\S}$, let $p_z\in[1,p]$ and $q_z\in[q,\infty]$ be the exponents such that
\[\frac{1}{p_z}=\frac{1-\re(z)}{p}+\re(z),\ \ \ \ \ \ \ \ \ \ \ \frac{1}{q_z}=\frac{1-\re(z)}{q}.\]  
Note that we have 
\[\frac{1}{p_z}+\frac{1}{q_z}=1.\]
Moreover, we have $\re(z)<1$ if and only if $p_z\neq1$ and $q_z\neq\infty$.

As $P$ satisfies the technical assumptions, the compatible couple $(H_p(P),H_1(P))$ is regular, so that we can consider the dual compatible couple $(H_{p}(P)^*,H_{1}(P)^*)$ as defined in the preliminary section of the paper. Then, there is a contractive compatible (antilinear) operator 
\[I^*:(L_q(M),L_\infty(M))\to(H_p(P)^*,H_1(P)^*)\]
such that
\[\bra x,I^*(y)\ket=\tau(xy^*),\ \ \ \text{for}\ y\in L_q(M)+L_\infty(M),\ x\in H_p(P)\cap H_1(P).\]
By the Hahn-Banach theorem, if $z\in\overline{\S}$ then $I^*$ induces a contractive and surjective (antilinear) operator 
\[I^*:L_{q_z}(M)\to H_{p_z}(P)^*\]

\begin{lemm}\label{step1_lemm0}
The operator $I^*:L_q(M)\cap L_\infty(M)\to H_p(P)^*\cap H_1(P)^*$ is surjective.
\end{lemm}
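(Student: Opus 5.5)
The plan is to identify $H_p(P)^*\cap H_1(P)^*$ with a quotient of $L_q(M)\cap L_\infty(M)$ and then invoke Proposition \ref{preliminaries_quotient_couple}.

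\emph{Step 1: the dual couple as a quotient couple.} By the duality results recalled in Part 1 (the proposition from \cite{JansonRealInterpolation}, Proposition 6.2), applied to the regular subcouple $(H_p(P),H_1(P))$ of the regular couple $(L_p(M),L_1(M))$, the dual couple $(H_p(P)^*,H_1(P)^*)$ is canonically isomorphic to the quotient couple
\[\bigl(L_q(M)/H_p(P)^{\bot},\ L_\infty(M)/H_1(P)^{\bot}\bigr).\]
Two hypotheses of that proposition must be checked. Regularity comes from point (3) of the technical assumptions. Closedness of $H_p(P)+H_1(P)$ in $L_p(M)+L_1(M)$ follows from $K$-closedness, since Jones-type implies quasi-complementation of $(H_1(P),H_\infty(P))$; interpolating (Theorem \ref{preliminaries_Holmstedt}, or Proposition \ref{technical_assumptions_KislyakovXu} as in the proof that Jones-type projections give quasi-complemented subcouples) gives that $(H_1(P),H_p(P))$ is $K$-closed in $(L_1(M),L_p(M))$.

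By point (1) of the technical assumptions, the annihilators are $H_q(P^{\bot})$ and $H_\infty(P^{\bot})$ under trace duality; the antilinearity of $I^*$ only amounts to conjugation, which is harmless. Under this identification, $I^*$ becomes the canonical projection
\[(L_q(M),L_\infty(M))\to\bigl(L_q(M)/H_q(P^{\bot}),\ L_\infty(M)/H_\infty(P^{\bot})\bigr).\]

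\emph{Step 2: normality and surjectivity.} It then suffices, by Proposition \ref{preliminaries_quotient_couple}, to know that $(H_q(P^{\bot}),H_\infty(P^{\bot}))$ is a normal subcouple of $(L_q(M),L_\infty(M))$. The Janson proposition already asserts that the annihilator couple is normal, so this is immediate. Alternatively, one can argue directly: by Theorem \ref{preliminaries_Pisier} (Proposition \ref{technical_assumptions_Pisier}) this subcouple is quasi-complemented, hence normal, since quasi-complementation gives $A_j=(A_0+A_1)\cap E_j$, and $K$-closedness gives closedness of the sum. Proposition \ref{preliminaries_quotient_couple} then shows that every element of the intersection of the quotients lifts to $L_q(M)\cap L_\infty(M)$, which is exactly the surjectivity claimed.

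\emph{Main obstacle.} The delicate point is making the identification in Step 1 precise. The intersection $H_p(P)^*\cap H_1(P)^*$ is defined inside $(H_p(P)\cap H_1(P))^*$, whereas the quotient intersection lives in $(L_q+L_\infty)(M)/(H_q(P^{\bot})+H_\infty(P^{\bot}))$. I would verify directly that if $\phi\in H_p(P)^*\cap H_1(P)^*$, then Hahn–Banach extensions $y_0\in L_q(M)$ and $y_1\in L_\infty(M)$ of $\phi$ agree on $H_p(P)\cap H_1(P)$. By density (point (3)) and point (2), this gives that $y_0-y_1$ annihilates $H_p(P)+H_1(P)$. It then remains to show that $y_0-y_1\in H_q(P^{\bot})+H_\infty(P^{\bot})$, which is precisely where normality and $K$-closedness are used.
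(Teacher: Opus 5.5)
Your proposal is correct and follows the paper's argument: identify $(H_p(P)^*,H_1(P)^*)$ with the quotient couple $(L_q(M)/H_q(P^{\bot}),L_\infty(M)/H_\infty(P^{\bot}))$, which is normal by quasi-complementation (the paper's justification, your ``alternative'') or equivalently by Janson's Proposition 6.2, and then apply Proposition \ref{preliminaries_quotient_couple}. One small slip in your last paragraph: $y_0-y_1\in(L_q+L_\infty)(M)$ only pairs with $(L_p\cap L_1)(M)$, so what it annihilates is $H_p(P)\cap H_1(P)$, not $H_p(P)+H_1(P)$; that side check is already covered by the Janson identification you rely on.
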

\begin{proof}
As the subcouple $(H_q(P^{\bot}),H_\infty(P^{\bot}))$ is quasi-complemented in $(L_q(M),L_\infty(M))$, we can consider the quotient compatible couple $(L_q(M)/H_q(P^{\bot}),L_\infty(M)/H_\infty(P^{\bot})$. As $(L_q(M)/H_q(P^{\bot}),L_\infty(M)/H_\infty(P^{\bot}))$ and $(H_p(P)^*,H_1(P)^*)$ are canonicaly (anti-)isomorphic as compatible couples, we are reduced to justifying the surjectivity of the projection $L_q(M)\cap L_\infty(M)\to L_q(M)/H_q(P^{\bot})\cap L_\infty(M)/H_\infty(P^{\bot})$. This is proved in Proposition \ref{preliminaries_quotient_couple}.
\end{proof}

\begin{lemm}
If $z\in\overline{\S}$, then the kernel of the operator $I^*:L_{q_z}(M)\to H_{p_z}(P)^*$ is $H_{q_z}(P^{\bot})$.
\end{lemm}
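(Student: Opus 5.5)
The kernel consists of those $y\in L_{q_z}(M)$ with $\tau(xy^*)=0$ for every $x\in H_p(P)\cap H_1(P)$. The plan is to identify this set with the orthogonal of $H_{p_z}(P)$ under trace duality between $L_{p_z}(M)$ and $L_{q_z}(M)$. Point (1) of the technical assumptions then identifies that orthogonal with $H_{q_z}(P^{\bot})$. The orthogonal here is computed for the bilinear pairing $(x,y)\mapsto\tau(xy^*)$. Since $P$ is self-adjoint on $L_2(M)$, the spaces $H_E(P^{\bot})$ are stable under $y\mapsto y^*$ only up to the conventions already fixed in the paper. I will therefore follow the paper's convention that trace duality is $\tau(xy^*)$, as in the corollary defining it.

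\textbf{Step 1: reduction to orthogonality against $H_{p_z}(P)$.} The operator $I^*$ is defined by duality against $H_p(P)\cap H_1(P)$. For $z\in\overline{\S}$, the induced map $L_{q_z}(M)\to H_{p_z}(P)^*$ comes from the inclusion $H_p(P)\cap H_1(P)\subset H_{p_z}(P)$, so I must check that this intersection is dense in $H_{p_z}(P)$. By point (3) of the technical assumptions, $(L_1\cap L_\infty)(M)\cap P(L_2(M))$ is norm-dense in $H_{p_z}(P)$ when $p_z<\infty$, which always holds since $p_z\in[1,p]$. For $p_z=1$ I interpret $L_1$ as having order-continuous norm, so (3) still applies. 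By (2), this set lies in $H(P)\cap L_p(M)\cap L_1(M)=H_p(P)\cap H_1(P)$. Hence $y\in\ker I^*$ if and only if $\tau(xy^*)=0$ for all $x$ in a dense subset of $H_{p_z}(P)$. By Hölder's inequality, $x\mapsto\tau(xy^*)$ is continuous on $L_{p_z}(M)$, so this is equivalent to $y\perp H_{p_z}(P)$.

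\textbf{Step 2: identification of the orthogonal.} Take $E(M)=L_{p_z}(M)$. It has order-continuous norm, and $E^{\times}(M)=L_{q_z}(M)$, including the case $p_z=1$ with $q_z=\infty$. Point (1) of the technical assumptions says that the orthogonal of $H_E(P)$ in $E^{\times}(M)$ is $H_{E^{\times}}(P^{\bot})$. This is exactly $H_{q_z}(P^{\bot})$, where for $q_z=\infty$ it is understood as the w*-closed space, consistent with the definition of $H_\infty$.

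\textbf{Main obstacle.} The delicate point is the endpoint $\re(z)=1$, where $p_z=1$ and $q_z=\infty$. There, density in $H_1(P)$ and the duality $L_1^{\times}=L_\infty$ must be used in place of reflexivity. Both are covered by the technical assumptions, since $L_1(M)$ is an exact interpolation space with order-continuous norm. I also need the inclusion $H_p(P)\cap H_1(P)\subset H_{p_z}(P)$ for intermediate $z$. I would justify it through point (2): $H(P)\cap L_p\cap L_1\subset H(P)\cap L_{p_z}$.
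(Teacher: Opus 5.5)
Your proposal is correct and follows the paper's proof. The kernel is the annihilator of $H_p(P)\cap H_1(P)$ for $\tau(xy^*)$, which by density coincides with the annihilator of $H_{p_z}(P)$, and point (1) of the technical assumptions identifies this with $H_{q_z}(P^{\bot})$; your appeal to points (2) and (3) only makes explicit the inclusion $H_p(P)\cap H_1(P)\subset H_{p_z}(P)$ and the density that the paper asserts without comment. (Your aside about stability of $H_E(P^{\bot})$ under $y\mapsto y^*$ is unnecessary, and such stability fails in general, e.g. for Riesz projections, but nothing in your argument depends on it.)
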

\begin{proof}
By definition the kernel of this operator is
\[\big\{y\in L_{q_z}(M)\ :\ \tau(xy^*)=0,\ \forall x\in H_p(P)\cap H_1(P)\big\}.\]
As $H_p(P)\cap H_1(P)$ is a norm-dense subspace of $H_{p_z}(P)$, we see that the above set coincides with the orthogonal of $H_{p_z}(P)$ w.r.t. the duality bewteen $L_{q_z}(M)$ and $L_{p_z}(M)$, i.e. $H_{q_z}(P^{\bot})$ because $P$ satisfies the technical assumptions.
\end{proof}

As $P$ is of Jones-type, and because $P$ is self-adjoint, we easily get the following result.

\begin{lemm}
If $z\in\overline{\S}$, $\re(z)<1$, then $P$ extends to a bounded and idempotent compatible operator
\[P:(L_{p_z}(M),L_{q_z}(M))\to(L_{p_z}(M),L_{q_z}(M))\]
such that the two operators $P:L_{p_z}(M)\to L_{p_z}(M)$ and $P:L_{q_z}(M)\to L_{q_z}(M)$ have range $H_{p_z}(P)$ and $H_{q_z}(P)$ respectively, and have kernel $H_{p_z}(P^{\bot})$ and $H_{q_z}(P^{\bot})$ respectively. Finally, the operators 
\[P:L_{p_z}(M)\to L_{p_z}(M),\ \ \ P:L_{q_z}(M)\to L_{q_z}(M)\]
are conjugate to each others w.r.t. trace duality between $L_{p_z}(M)$ and $L_{q_z}(M)$ in the sense that 
\[\tau(P(x)y^*)=\tau(xP(y)^*)\]
for $x\in L_{p_z}(M)$, $y\in L_{q_z}(M)$.
\end{lemm}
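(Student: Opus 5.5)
The plan is to start from property (2) of the Jones-type definition: for $1<r<\infty$, $P$ restricted to $(L_1\cap L_\infty)(M)$ (dense in $L_r(M)$) extends to a bounded operator on $L_r(M)$ with a constant depending only on $r$. If $\re(z)<1$, both $p_z$ and $q_z$ lie in $(1,\infty)$. Since $P$ is self-adjoint on $L_2(M)$, for $x,y\in(L_1\cap L_\infty)(M)$ we have
\[\tau(P(x)y^*)=\bra P(x),y\ket_{L_2}=\bra x,P(y)\ket_{L_2}=\tau(xP(y)^*).\]
By density of $(L_1\cap L_\infty)(M)$ in $L_{p_z}(M)$ and $L_{q_z}(M)$, together with continuity of both sides, this identity extends to all $x\in L_{p_z}(M)$ and $y\in L_{q_z}(M)$. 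This gives the conjugacy statement. The two extensions agree on $L_{p_z}(M)\cap L_{q_z}(M)$, since both are obtained from the same operator on the common dense subspace; the compatibility check is routine. So we get a compatible bounded operator on $(L_{p_z}(M),L_{q_z}(M))$. Idempotence follows from $P^2=P$ on $(L_1\cap L_\infty)(M)$ and density.

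Next I will identify the range and kernel, taking $r\in\{p_z,q_z\}$. Each $L_r(M)$ has order-continuous norm, so the technical assumptions apply.
\begin{itemize}
\item \emph{Range inside $H_r(P)$.} For $x\in(L_1\cap L_\infty)(M)$ we have $P(x)\in L_r(M)\cap P(L_2(M))\subset H_r(P)$. Since $H_r(P)$ is weakly closed, hence norm closed, density gives $P(L_r(M))\subset H_r(P)$.
\item \emph{$H_r(P)$ inside the range.} By assumption (3), $(L_1\cap L_\infty)(M)\cap P(L_2(M))$ is norm-dense in $H_r(P)$, and $P$ acts as the identity on it. By continuity, $P=\mathrm{id}$ on $H_r(P)$. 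Hence $\ima P=H_r(P)$.
\item \emph{Kernel.} The same argument applied to $P^{\bot}=I-P$ (bounded on $L_r(M)$, and satisfying the technical assumptions by the earlier proposition) shows that $I-P$ is idempotent on $L_r(M)$ with range $H_r(P^{\bot})$. Therefore $\ker P=\ima(I-P)=H_r(P^{\bot})$.
\end{itemize}

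The only point needing care is density. I need $(L_1\cap L_\infty)(M)\cap P(L_2(M))$ to be norm-dense in $H_r(P)$, and not merely weak*-dense. This is guaranteed because $L_r(M)$ has order-continuous norm, which is exactly the setting of assumption (3).
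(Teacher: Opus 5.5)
Your proposal is correct and follows the route the paper indicates: the paper gives no written proof, only the remark that the lemma follows easily because $P$ is of Jones-type and self-adjoint, and you fill in exactly those details (boundedness from property (2), conjugacy from self-adjointness plus density, and range and kernel from technical assumption (3) applied to $P$ and to $P^{\bot}$). The step you call routine, namely that the extensions to $L_{p_z}(M)$ and $L_{q_z}(M)$ agree on $L_{p_z}(M)\cap L_{q_z}(M)$, follows in one line from your conjugacy identity (and its complex-conjugate form), since $(L_1\cap L_\infty)(M)$ separates the points of $(L_1+L_\infty)(M)$.
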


\begin{lemm}\label{step1_lemm2}
If $z\in\overline{\S}$, $\re(z)<1$ and $y\in L_{q_z}(M)$, then $I^*(y)=I^*(P(y))$ and 
\[\|P(y)\|_{L_{q_z}(M)}\pp\|P\|_{L_{p_z}(M)\to L_{p_z}(M)}\|I^*(y)\|_{H_{p_z}(P)^*}.\]
\end{lemm}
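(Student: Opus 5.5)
The proof splits into two parts: the identity $I^*(y)=I^*(P(y))$, and the norm estimate. For both we use the previous lemma, which gives $P$ as a bounded idempotent on $L_{q_z}(M)$ with kernel $H_{q_z}(P^{\bot})$ and range $H_{q_z}(P)$, self-adjoint with respect to trace duality between $L_{p_z}(M)$ and $L_{q_z}(M)$.

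\emph{The identity.} Write $y=P(y)+(y-P(y))$. The second term lies in the kernel of $P:L_{q_z}(M)\to L_{q_z}(M)$, namely $H_{q_z}(P^{\bot})$. By the lemma identifying the kernel of $I^*:L_{q_z}(M)\to H_{p_z}(P)^*$ with $H_{q_z}(P^{\bot})$, we get $I^*(y-P(y))=0$. By linearity, $I^*(y)=I^*(P(y))$. Alternatively, one can check this directly: for $x\in H_p(P)\cap H_1(P)\subset H_{p_z}(P)$ we have $P(x)=x$, so
\[\tau(xy^*)=\tau(P(x)y^*)=\tau(xP(y)^*).\]

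\emph{The norm estimate.} We dualize. Since $1<p_z<\infty$ when $\re(z)<1$, we have $L_{q_z}(M)=L_{p_z}(M)^*$ isometrically via trace duality. Hence
\[\|P(y)\|_{L_{q_z}(M)}=\sup\big\{|\tau(xP(y)^*)|\ :\ x\in L_{p_z}(M),\ \|x\|_{L_{p_z}(M)}\pp1\big\}.\]
By self-adjointness, $\tau(xP(y)^*)=\tau(P(x)y^*)$, and $P(x)\in H_{p_z}(P)$ with $\|P(x)\|_{L_{p_z}(M)}\pp\|P\|_{L_{p_z}\to L_{p_z}}$. It remains to show
\[|\tau(wy^*)|\pp\|w\|_{L_{p_z}(M)}\|I^*(y)\|_{H_{p_z}(P)^*}\quad\text{for all }w\in H_{p_z}(P).\]
This holds by definition when $w\in H_p(P)\cap H_1(P)$. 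For general $w\in H_{p_z}(P)$ it follows by density: $H_p(P)\cap H_1(P)$ is norm-dense in $H_{p_z}(P)$ (technical assumption (3), as used in the previous lemma), and both sides are continuous in $w$ for the $L_{p_z}$-norm, since $y\in L_{q_z}(M)$. Taking the supremum over $x$ gives the claimed inequality.

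The only delicate point is the meaning of $\|I^*(y)\|_{H_{p_z}(P)^*}$. The functional $I^*(y)$ is a priori defined only on $H_p(P)\cap H_1(P)$, and its norm is the dual norm of the intermediate space $H_{p_z}(P)$ in the sense of the duality subsection; this is precisely the supremum over $w$ in that dense subspace. With this reading the density step above is justified, and there is no further obstacle.
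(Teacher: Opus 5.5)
Your proof is correct and follows essentially the same route as the paper. The identity comes from $y-P(y)\in\ker P=H_{q_z}(P^{\bot})=\ker I^*$, and the estimate comes from the duality $L_{q_z}(M)=L_{p_z}(M)^{\times}$ combined with the self-adjointness of $P$ under trace duality. Your explicit density step, extending $|\tau(wy^*)|\leq\|w\|_{L_{p_z}(M)}\|I^*(y)\|_{H_{p_z}(P)^*}$ from $H_p(P)\cap H_1(P)$ to all of $H_{p_z}(P)$, is slightly more careful than the paper. The paper pairs $P(x)$ with $I^*(y)$ for $x\in(L_p\cap L_1)(M)$ without remarking that $P(x)$ need not a priori lie in $H_p(P)\cap H_1(P)$, where $I^*(y)$ is originally defined.
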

\begin{proof}
As the two operators $P:L_{q_z}(M)\to L_{q_z}(M)$ and $I^*:L_{q_z}(M)\to H_{p_z}(P)^*$ have the same kernel, namely $H_{q_z}(P^{\bot})$, we have $I^*(y)=I^*(P(y))$ for every $y\in L_{q_z}(M)$. Besides, if $y\in L_{q_z}(M)$, we have
\begin{align*}
\|P(y)\|_{L_{q_z}(M)}&=\|P(y)\|_{L_{p_z}^{\times}(M)}\\
&=\sup_{x\in (L_p\cap L_1)(M),\ \|x\|_{L_{p_z}(M)}\pp1}|\tau(xP(y)^*)|\\
&=\sup_{x\in (L_p\cap L_1)(M),\ \|x\|_{L_{p_z}(M)}\pp1}|\tau(P(x)y^*)|\\
&=\sup_{x\in (L_p\cap L_1)(M),\ \|x\|_{L_{p_z}(M)}\pp1}|\bra P(x),I^*(y)\ket|\\
&\pp\sup_{x\in (L_p\cap L_1)(M),\ \|x\|_{L_{p_z}(M)}\pp1}\|I^*(y)\|_{H_{p_z}(P)^*}\|P(x)\|_{H_{p_z}(P)}\\
&=\|I^*(y)\|_{H_{p_z}(P)^*}\|P\|_{L_{p_z}(M)\to L_{p_z}(M)}.
\end{align*}
\end{proof} 

As $Q=P\otimes I$ satisfies the same assumptions as $P$, the compatible couple $(H_{p,1}(Q),H_1(Q))$ is also regular, so that we can consider the dual compatible couple $(H_{p,1}(Q)^*,H_{1}(Q)^*)$. Then, as before, there is a contractive compatible (antilinear) operator 
\[J^*:(L_{q,\infty}(M\bar{\otimes}N),L_{\infty}(M\bar{\otimes}N))\to(H_{p,1}(Q)^*,H_{1}(Q)^*)\] such that
\[\bra x,J^*(y)\ket=(\tau\otimes\sigma)(xy^*),\ \ \ \text{for}\ y\in(L_{q,\infty}+L_\infty)(M\bar{\otimes}N),\ x\in H_{p,1}(Q)\cap H_1(Q),.\]
Again, by the Hahn-Banach theorem, if $z\in\overline{\S}$ then $J^*$ induces a contractive and surjective (antilinear) operator 
\[J^*:L_{q_z,\infty}(M\bar{\otimes}N)\to H_{p_z,1}(Q)^*\]
The proofs of the three following results are similar to the previous ones.

\begin{lemm}
If $z\in\overline{\S}$, then the kernel of the operator $J^*:L_{q_z,\infty}(M\bar{\otimes}N)\to H_{p_z,1}(Q)^*$ is $H_{q_z,\infty}(Q^{\bot})$.
\end{lemm}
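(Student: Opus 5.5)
I will copy the proof of the analogous lemma for $I^*$, with $M$ replaced by $M\bar{\otimes}N$, $P$ by $Q$, and the pair $(L_{p_z},L_{q_z})$ by $(L_{p_z,1},L_{q_z,\infty})$.

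\textbf{Step 1: identify the kernel.} By definition, the kernel of $J^*:L_{q_z,\infty}(M\bar{\otimes}N)\to H_{p_z,1}(Q)^*$ is
\[\big\{y\in L_{q_z,\infty}(M\bar{\otimes}N)\ :\ (\tau\otimes\sigma)(xy^*)=0,\ \forall x\in H_{p,1}(Q)\cap H_1(Q)\big\}.\]
Here $H_{p_z,1}(Q)$ is understood as the intermediate Hardy space attached to the couple $(H_{p,1}(Q),H_1(Q))$.

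\textbf{Step 2: density.} I will show that $H_{p,1}(Q)\cap H_1(Q)$ is norm-dense in $H_{p_z,1}(Q)$. By point (3) of the technical assumptions for $Q$, the space $(L_1\cap L_\infty)(M\bar{\otimes}N)\cap Q(L_2)$ is norm-dense in $H_E(Q)$ for every exact interpolation space $E$ with order-continuous norm. Since $L_{p_z,1}$ has order-continuous norm, this applies with $E=L_{p_z,1}$. That dense subspace is contained in $H_{p,1}(Q)\cap H_1(Q)$ by point (2). Hence the kernel equals the orthogonal of $H_{p_z,1}(Q)$ with respect to trace duality between $L_{p_z,1}$ and $L_{p_z,1}^{\times}=L_{q_z,\infty}$.

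\textbf{Step 3: conclude.} Point (1) of the technical assumptions for $Q$, applied with $E=L_{p_z,1}$, identifies this orthogonal with $H_{E^\times}(Q^\bot)=H_{q_z,\infty}(Q^\bot)$, which is the claim.

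\textbf{Endpoint $\re(z)=1$.} When $\re(z)=1$ we have $p_z=1$, and the notation $L_{1,1}$ should be read as $L_1$, with $L_{\infty,\infty}=L_\infty$. The same argument then goes through verbatim.

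The main point needing care is Step 2 at the endpoints. One needs that $L_{p_z,1}$ is an exact interpolation space with order-continuous norm and Köthe dual $L_{q_z,\infty}$; this is exactly the result of \cite{DoddsSukochevIntegration} recalled in the preliminaries, so I expect no real obstacle.
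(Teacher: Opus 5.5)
Your proof is correct and follows the paper's route. The paper proves this lemma by transposing its argument for $I^*$: the kernel is the annihilator of the dense subspace $H_{p,1}(Q)\cap H_1(Q)$, hence of all of $H_{p_z,1}(Q)$, and point (1) of the technical assumptions for $Q$ (with $E=L_{p_z,1}$, $E^{\times}=L_{q_z,\infty}$) identifies this annihilator with $H_{q_z,\infty}(Q^{\bot})$. Your appeal to point (2) in Step 2 is unnecessary, since $(L_1\cap L_\infty)(M\bar{\otimes}N)\cap Q(L_2(M\bar{\otimes}N))$ lies in $H_{p,1}(Q)\cap H_1(Q)$ directly by definition.
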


\begin{lemm}
If $z\in\overline{\S}$, $\re(z)<1$, then $Q$ extends to a compatible bounded operator
\[Q:(L_{p_z,1}(M\bar{\otimes}N),L_{q_z,\infty}(M\bar{\otimes}N))\to(L_{p_z,1}(M\bar{\otimes}N),L_{q_z,\infty}(M\bar{\otimes}N))\]
such that the two operators $Q:L_{p_z,1}(M\bar{\otimes}N)\to L_{p_z,1}(M\bar{\otimes}N)$ and $Q:L_{q_z,\infty}(M\bar{\otimes}N)\to L_{q_z,\infty}(M\bar{\otimes}N)$ have range $H_{p_z,1}(Q)$ and $H_{q_z,\infty}(Q)$ respectively, and have kernel $H_{p_z,1}(Q^{\bot})$ and $H_{q_z,\infty}(Q^{\bot})$ respectively. Finally, the operators \[Q:L_{p_z,1}(M\bar{\otimes}N)\to L_{p_z,1}(M\bar{\otimes}N),\ \ \ Q:L_{q_z,\infty}(M\bar{\otimes}N)\to L_{q_z,\infty}(M\bar{\otimes}N)\]
are conjugate to each other w.r.t. trace duality betwenn $L_{p_z,1}(M\bar{\otimes}N)$ and $L_{q_z,\infty}(M\bar{\otimes}N)$.
\end{lemm}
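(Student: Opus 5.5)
The plan is to get boundedness of $Q$ on the Lorentz spaces by real interpolation between two $L_r$-spaces. The range and kernel identifications then follow from the technical assumptions, and the conjugation relation from self-adjointness of $Q$ on $L_2(M\bar{\otimes}N)$ together with density arguments.

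Since $P$ satisfies Pisier's method assumptions, $Q=P\otimes I$ is of Jones-type. By property (2), $Q$ is therefore bounded on $L_r(M\bar{\otimes}N)$ for every $1<r<\infty$, with constant depending only on $r$. Fix $z\in\overline{\S}$ with $\re(z)<1$, so that $1<p_z,q_z<\infty$.

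\textbf{Step 1: boundedness.} Choose $1<r_0<p_z<r_1<\infty$, and similarly $1<s_0<q_z<s_1<\infty$. By the reiteration theorem applied to $(L_1,L_\infty)$, together with $L_r=(L_1,L_\infty)_{1-1/r,r,K}$, we get
\[L_{p_z,1}(M\bar{\otimes}N)=(L_{r_0},L_{r_1})_{\eta,1,K}\quad\text{and}\quad L_{q_z,\infty}(M\bar{\otimes}N)=(L_{s_0},L_{s_1})_{\eta',\infty,K}\]
with equivalent norms, for suitable $\eta,\eta'$. Since $Q$ is a compatible operator on $(L_{r_0},L_{r_1})$ and on $(L_{s_0},L_{s_1})$, and $K$-methods are exact interpolation functors, $Q$ extends boundedly to both Lorentz spaces. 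The identity $Q^2=Q$ holds on $L_{r_0}+L_{r_1}$: it holds on the intersection $L_{r_0}\cap L_{r_1}$, and each $L_{r_j}$ is the norm closure of that intersection. Hence the extension is idempotent.

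\textbf{Step 2: conjugation.} For $x,y\in(L_1\cap L_\infty)(M\bar{\otimes}N)$ we have $(\tau\otimes\sigma)(Q(x)y^*)=(\tau\otimes\sigma)(xQ(y)^*)$, because $Q$ is self-adjoint on $L_2$. The space $L_{p_z,1}$ has order-continuous norm, so $L_1\cap L_\infty$ is norm-dense in it and w*-dense in $L_{q_z,\infty}=L_{p_z,1}^{\times}$.

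I would first extend the identity in $x$ by norm continuity. Extending it in $y$ requires knowing that $Q$ is w*-continuous on $L_{q_z,\infty}$, and I expect this to be the main technical point. To handle it, I would compare the interpolated operator on $L_{q_z,\infty}$ with the Banach adjoint of $Q:L_{p_z,1}\to L_{p_z,1}$, identified through trace duality. Both operators are defined on $L_{s_0}+L_{s_1}$ and pair identically against $L_1\cap L_\infty$. The trace duality preliminaries say that $L_1\cap L_\infty$ separates points of the Köthe duals, so the two operators coincide. Consequently $Q$ is w*-continuous on $L_{q_z,\infty}$, and the conjugation identity follows.

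\textbf{Step 3: range and kernel.} Let $E=L_{p_z,1}(M\bar{\otimes}N)$.
\begin{itemize}
\item For $x\in L_1\cap L_\infty$ we have $Q(x)\in E\cap Q(L_2)\subset H_E(Q)$. By norm density and continuity, $Q(E)\subset H_E(Q)$.
\item $Q$ acts as the identity on $(L_1\cap L_\infty)\cap Q(L_2)$, which is norm-dense in $H_E(Q)$ by technical assumption (3). Hence $Q$ is the identity on $H_E(Q)$, and so the range of $Q$ on $E$ is exactly $H_E(Q)$.
\item Since $I-Q=Q^{\bot}$ and $Q^{\bot}$ also satisfies the technical assumptions, the same argument shows that the kernel of $Q$ on $E$ is the range of $I-Q$, namely $H_E(Q^{\bot})$.
\end{itemize}
For $L_{q_z,\infty}=E^{\times}$ I would run the same argument with w*-topologies. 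This uses three facts: the w*-continuity of $Q$ obtained in Step 2; the w*-density given by technical assumption (3); and the w*-closedness of $H_{E^{\times}}(Q)$, which holds by definition of $H_{E^{\times}}(Q)$ as a $\sigma(E^{\times},E)$-closure.
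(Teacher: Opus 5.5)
Your proposal is correct and is essentially the argument the paper has in mind. The paper writes no proof here, saying only, via the analogous lemma for $P$, that the statement follows easily from $Q$ being of Jones-type and self-adjoint, and it later uses your identification of $Q$ on $L_{q_z,\infty}(M\bar{\otimes}N)$ as the antidual of $Q$ on $L_{p_z,1}(M\bar{\otimes}N)$ to get w*-continuity.

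The one step you should make explicit is why the interpolated $Q$ pairs against $L_1\cap L_\infty$ in the same way as that antidual. Split $y\in L_{q_z,\infty}$ as $y_0+y_1$ with $y_j\in L_{s_j}$, and use that $Q$ on $L_{s_j}$ is the trace-dual of $Q$ on $L_{s_j'}$, since $L_1\cap L_\infty$ is norm-dense in both. Note that the adjoint of $Q$ on $L_{p_z,1}$ is only defined on $L_{q_z,\infty}$, not on all of $L_{s_0}+L_{s_1}$ as you wrote.
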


\begin{lemm}\label{step1_lemm3}
If $z\in\overline{\B}$, $\re(z)<1$ and $y\in L_{q_z,\infty}(M\bar{\otimes}N)$, then $J^*(y)=J^*(Q(y))$ and 
\[\|Q(y)\|_{L_{q_z,\infty}(M\bar{\otimes}N)}\pp\|Q\|_{L_{p_z,1}(M\bar{\otimes}N)\to L_{p_z,1}(M\bar{\otimes}N)}\|J^*(y)\|_{H_{p_z,1}(Q)^*}.\]
\end{lemm}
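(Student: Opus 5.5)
The statement to prove is Lemma \ref{step1_lemm3}. I would mirror the proof of Lemma \ref{step1_lemm2}, replacing the couple $(L_{p_z}(M),L_{q_z}(M))$ by $(L_{p_z,1}(M\bar{\otimes}N),L_{q_z,\infty}(M\bar{\otimes}N))$ and $P$ by $Q$. Throughout, I use the two preceding lemmas: the kernel of $J^*$ on $L_{q_z,\infty}(M\bar{\otimes}N)$ is $H_{q_z,\infty}(Q^{\bot})$, and $Q$ extends to a bounded idempotent with the stated ranges, kernels and self-conjugacy.

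\emph{First step.} The operator $Q:L_{q_z,\infty}(M\bar{\otimes}N)\to L_{q_z,\infty}(M\bar{\otimes}N)$ is idempotent with kernel $H_{q_z,\infty}(Q^{\bot})$. Hence $y-Q(y)\in H_{q_z,\infty}(Q^{\bot})=\ker J^*$, which gives $J^*(y)=J^*(Q(y))$.

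\emph{Second step.} I compute the norm of $Q(y)$ by Köthe duality. Since $L_{p_z,1}$ has order-continuous norm and $L_{p_z,1}^{\times}=L_{q_z,\infty}$ isometrically (theorem of the preliminaries, for $\re(z)<1$), and $(L_1\cap L_\infty)(M\bar{\otimes}N)$ separates points, we have
\[\|Q(y)\|_{L_{q_z,\infty}}=\sup\big\{|(\tau\otimes\sigma)(xQ(y)^*)|\ :\ x\in(L_{p,1}\cap L_1)(M\bar{\otimes}N),\ \|x\|_{L_{p_z,1}}\pp1\big\}.\]
The supremum may be restricted to this dense class because $L_{p_z,1}$ has order-continuous norm, so $L_1\cap L_\infty$ is norm dense in it.

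\emph{Third step.} By self-conjugacy, $(\tau\otimes\sigma)(xQ(y)^*)=(\tau\otimes\sigma)(Q(x)y^*)$. Here $Q(x)$ lies in $H_{p,1}(Q)\cap H_1(Q)$, since $Q$ is bounded on $L_{p,1}$ and on $L_1$ via the Jones-type boundedness on the relevant interpolation spaces. So this pairing equals $\langle Q(x),J^*(y)\rangle$, which is bounded by $\|J^*(y)\|_{H_{p_z,1}(Q)^*}\|Q(x)\|_{L_{p_z,1}}\pp\|J^*(y)\|\,\|Q\|_{L_{p_z,1}\to L_{p_z,1}}$.

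The main obstacle is justifying the identification in the third step. One must check that $Q(x)$ lies in $H_{p,1}(Q)\cap H_1(Q)$, which is where the defining formula for $J^*$ holds. One must also check that the norm on $H_{p_z,1}(Q)^*$ is computed against the $L_{p_z,1}$ norm via density of $H_{p,1}(Q)\cap H_1(Q)$ in $H_{p_z,1}(Q)$, using technical assumption (3).

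If $Q$ is not directly bounded on $L_{p,1}\cap L_1$, I would instead take $x$ in $(L_1\cap L_\infty)(M\bar{\otimes}N)$. Then $Q(x)\in L_2$, and I would approximate it using assumption (3), so that the pairing identity passes to the limit.
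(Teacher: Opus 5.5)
Your proposal follows the paper's route: the paper says this lemma is proved ``similarly'' to Lemma~\ref{step1_lemm2}, that is, $y-Q(y)\in H_{q_z,\infty}(Q^{\bot})=\ker J^*$, followed by K\"othe duality $L_{p_z,1}^{\times}=L_{q_z,\infty}$ and the self-conjugacy of $Q$, which are exactly your three steps. One correction: your main-line claim that $Q$ is bounded on $L_1$ (so that $Q(x)\in H_1(Q)$) is false in general. Jones-type only gives $L_r$-boundedness for $1<r<\infty$, and martingale transforms are not $L_1$-bounded. So your fallback is the argument that actually works. The identity $\bra w,J^*(y)\ket=(\tau\otimes\sigma)(wy^*)$ extends from $H_{p,1}(Q)\cap H_1(Q)$ to all $w\in H_{p_z,1}(Q)$ by the density in technical assumption~(3) together with the generalized H\"older inequality, and this extension is what the paper tacitly uses.
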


For $z\in\overline{\S}$, let 
\[\fonct{F_z}{L_{q_z}(M)}{L_{q_z,\infty}(M\bar{\otimes}N)}{x}{x\otimes f_z}\]
denote the amplification operator studied in the previous section. 

\begin{lemm}\label{step1_lemm4}
If $z\in\overline{\S}$, $\re(z)<1$, we have $Q(F_z(x))=F_z(P(x))$ for every $x\in L_{q_z}(M)$.
\end{lemm}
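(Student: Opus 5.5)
We want $Q(x\otimes f_z)=P(x)\otimes f_z$ for $x\in L_{q_z}(M)$, where $Q=P\otimes I$ has been extended to $L_{q_z,\infty}(M\bar{\otimes}N)$ by the previous lemma. The plan is to first establish the identity on a dense set of nice elements, using the elementary tensor structure of $Q$ on $L_2(M\bar{\otimes}N)=L_2(M)\otimes L_2(N)$, and then to pass to general $x$ and $f_z$ by continuity.

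\textbf{Step 1: the $L_2$ tensor identity.} Take $x\in (L_1\cap L_\infty)(M)$ and $g\in (L_1\cap L_\infty)(N)$. Then $x\otimes g\in L_2(M\bar{\otimes}N)$ and, by definition of the amplified projection, $Q(x\otimes g)=P(x)\otimes g$. Here $P(x)\in L_2(M)$, and the Jones-type assumption ensures $P(x)\in L_{p_z}(M)\cap L_{q_z}(M)$. Since the extension of $Q$ to $L_{q_z,\infty}$ agrees with $Q$ on $L_2\cap L_{q_z,\infty}$ (both being extensions of the same operator on $(L_1\cap L_\infty)(M\bar{\otimes}N)$), the identity holds there.

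\textbf{Step 2: approximating $f_z$ by $g$.} Fix $x\in(L_1\cap L_\infty)(M)$. The function $f_z$ lies in $L_{q_z,\infty}(N)$ but not in $L_2(N)$. Choose truncations $g_n:=f_z\mathbf 1_{[-n,n]}\in(L_1\cap L_\infty)(N)$; these converge to $f_z$ in the w*-topology of $L_{q_z,\infty}(N)$, by dominated convergence after pairing against $L_{p_z,1}(N)\subset L_1+L_\infty$. By Proposition \ref{integration_amplification}, the map $g\mapsto x\otimes g$ is w*-continuous into $L_{q_z,\infty}(M\bar{\otimes}N)$, and likewise $g\mapsto P(x)\otimes g$, since $P(x)\in L_{q_z}(M)$. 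The remaining point is that $Q$ is w*-continuous on $L_{q_z,\infty}$. This holds because $Q$ on $L_{q_z,\infty}$ is the adjoint of $Q$ on $L_{p_z,1}$, which is exactly the conjugacy statement in the previous lemma. Passing to the limit gives $Q(x\otimes f_z)=P(x)\otimes f_z$.

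\textbf{Step 3: approximating $x$.} For general $x\in L_{q_z}(M)$, approximate it in $L_{q_z}$ norm by $x_n\in(L_1\cap L_\infty)(M)$; this is possible since $q_z<\infty$ when $\re z<1$. By Theorem \ref{amplification_theo1}, $F_z$ is bounded from $L_{q_z}(M)$ to $L_{q_z,\infty}(M\bar{\otimes}N)$. Moreover $P$ is bounded on $L_{q_z}(M)$ and $Q$ is bounded on $L_{q_z,\infty}$, so norm continuity concludes.

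\textbf{Expected obstacle.} The delicate point is the consistency in Step 2: we need the w*-continuity of the extended $Q$, together with the fact that the truncations converge w* rather than in norm, since $L_{q_z,\infty}$ is not order continuous. If the duality argument is shaky, an alternative is to test against $w\in (L_1\cap L_\infty)(M)\odot(L_1\cap L_\infty)(N)$. Using self-adjointness of $Q$ on such $w$, both sides of the identity have the same pairing with every $w$. These $w$ separate the points of $L_1+L_\infty$, which gives the identity directly.
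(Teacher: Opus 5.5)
Your proof is correct and follows essentially the paper's route: you establish $Q(x\otimes g)=P(x)\otimes g$ on nice elementary tensors, then extend using the w*-continuity of $Q$ on $L_{q_z,\infty}(M\bar{\otimes}N)$, which holds because it is the antidual of $Q$ on $L_{p_z,1}(M\bar{\otimes}N)$. You also make explicit two points the paper leaves implicit: the w*-continuity of $g\mapsto x\otimes g$ from Proposition \ref{integration_amplification}, and a concrete w*-approximation of $f_z$ by its truncations; and you use norm density in $x$ (possible since $q_z<\infty$) where the paper invokes the w*-continuity of $P$.
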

\begin{proof}
By definition, we have $Q(x\otimes f)=P(x)\otimes f$ for $x\in(L_2\cap L_{q_z})(M)$ and $f\in(L_2\cap L_{q_z,\infty})(N)$, and thus this extends for every $x\in L_{q_z}(M)$ and $f\in L_{q_z,\infty}(N)$ because $P:L_{q_z}(M)\to L_{q_z}(M)$ and $Q:L_{q_z,\infty}(M\bar{\otimes}N)\to L_{q_z,\infty}(M\bar{\otimes}N)$ are w*\-/continuous as they are the antiduals of $P:L_{p_z}(M)\to L_{p_z}(M)$ and $Q:L_{p_z,1}(M\bar{\otimes}N)\to L_{p_z,1}(M\bar{\otimes}N)$ respectively. In particular, if $x\in L_{q_z}(M)$, we get
\[Q(F_z(x))=Q(x\otimes f_z)=P(x)\otimes f_z=F_z(P(x))\]
as desired.
\end{proof}

\begin{lemm}
If $z\in\overline{\S}$, then $F_z:L_{q_z}(M)\to L_{q_z,\infty}(M\bar{\otimes}N)$ maps $H_{q_z}(P^{\bot})$ into $H_{q_z,\infty}(Q^{\bot})$. 
\end{lemm}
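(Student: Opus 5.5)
We need to show: if $x\in H_{q_z}(P^{\bot})$ then $x\otimes f_z\in H_{q_z,\infty}(Q^{\bot})$.

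We split into two cases according to whether $\re(z)<1$ or $\re(z)=1$.

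\emph{Case $\re(z)<1$.} By the preceding lemma, $H_{q_z}(P^{\bot})$ is the kernel of $P:L_{q_z}(M)\to L_{q_z}(M)$. Likewise, $H_{q_z,\infty}(Q^{\bot})$ is the kernel of $Q:L_{q_z,\infty}(M\bar{\otimes}N)\to L_{q_z,\infty}(M\bar{\otimes}N)$. Let $x\in H_{q_z}(P^{\bot})$, so $P(x)=0$. Lemma \ref{step1_lemm4} then gives
\[Q(F_z(x))=F_z(P(x))=F_z(0)=0.\]
Hence $F_z(x)$ lies in the kernel of $Q$, which is $H_{q_z,\infty}(Q^{\bot})$. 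This case is immediate.

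\emph{Case $\re(z)=1$.} Here $q_z=\infty$, so $L_{q_z}(M)=M$ and $L_{q_z,\infty}(M\bar{\otimes}N)=M\bar{\otimes}N$. The function $f_z(s)=e^{-s(1-z)/q}$ has modulus one, so $f_z\in L_\infty(N)$. By Pisier's method assumptions, $H_\infty(P^{\bot})\odot L_\infty(N)\subset H_\infty(Q^{\bot})$. So for $x\in H_\infty(P^{\bot})$ we get $F_z(x)=x\otimes f_z\in H_\infty(Q^{\bot})$.

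This uses the convention $H_{\infty,\infty}:=H_\infty$. We should check this is consistent with the definitions: the space $L_{\infty,\infty}$ is not formally defined in the paper, but by the definition of $F_z$ at $\re z=1$ the target space is $L_\infty(M\bar{\otimes}N)$. So the statement at the endpoint reads exactly as the inclusion above.

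The only delicate point is the first case. There we need the identification $\ker Q=H_{q_z,\infty}(Q^{\bot})$ on $L_{q_z,\infty}(M\bar{\otimes}N)$, and the identity $Q F_z=F_z P$ on all of $L_{q_z}(M)$ (not merely on a dense subspace). Both are provided by the lemmas just above: the kernel description for $Q$ on the weak-type spaces, and Lemma \ref{step1_lemm4}, whose proof uses the w*-continuity of $P$ and $Q$. With these in hand the case is closed.

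Alternatively, one could avoid the kernel description and argue directly. For $x\in(L_2\cap L_{q_z})(M)\cap P^{\bot}(L_2(M))$ and $f\in (L_2\cap L_{q_z,\infty})(N)$, one has $x\otimes f\in Q^{\bot}(L_2)$. Passing to w*-limits using assumption (3) for $P^{\bot}$ and Proposition \ref{integration_amplification} would then give the result. I would use the kernel argument, since it is shorter.
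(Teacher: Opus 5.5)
Your proof is correct and follows the paper's argument. For $\re(z)=1$ you use the inclusion $H_\infty(P^{\bot})\odot L_\infty(N)\subset H_\infty(Q^{\bot})$ from Pisier's method assumptions, noting that $f_z$ is unimodular; for $\re(z)<1$ you use $P(x)=0$, the intertwining $QF_z=F_zP$ of Lemma \ref{step1_lemm4}, and the identification of $H_{q_z,\infty}(Q^{\bot})$ with the kernel of $Q$ on $L_{q_z,\infty}(M\bar{\otimes}N)$.
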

\begin{proof}
By Pisier's method assumptions, we know that $H_\infty(P^{\bot})\odot L_\infty(M)$ is a subspace of $H_\infty(Q^{\bot})$. As a consequence, if $\re(z)=1$, then $F_z$ clearly maps $H_\infty(P^{\bot})$ into $H_\infty(Q^{\bot})$. Now, we assume $\re(z)<1$. If $x\in H_{q_z}(P^{\bot})$, then $P(x)=0$, and thus by Lemma \label{step1_lemma4} we get $Q(F_z(x))=F_z(P(x))=0$, showing that $F_z(x)\in H_{q_z,\infty}(Q^{\bot})$ as required.
\end{proof}

As a consequence of the above lemma, for $z\in\overline{\S}$ there is a unique operator 
\[T_z:H_{p_z}(P)^*\to H_{p_z,1}(Q)^*\]
that makes the following diagram commute.
\begin{center}
\begin{tikzcd}
L_{q_z}(M) \arrow[r, "F_z"] \arrow[d, "I^*"']       & {L_{q_z,\infty}(M\bar{\otimes}N)} \arrow[d, "J^*"]   \\
H_{p_z}(P)^* \arrow[r, "T_z"] & {H_{p_z,1}(Q)^*}
\end{tikzcd}
\end{center}

\begin{lemm}\label{step1_lemm5}
The family of operators $(T_z)_{z\in\overline{\S}}$ satisfies the following properties.
\begin{itemize}  
    \item if $\phi\in H_p(P)^*\cap H_1(P)^*$, the function 
\[\fonctbis{\overline{\S}}{H_{p,1}(Q)^*+H_1(Q)^*}{z}{T_{\overline{z}}(\phi)}\]
belongs to $\mathcal{F}_*(H_{p,1}(Q)^*,H_1(Q)^*)$,
    \item for every $z\in\overline{\S}$ and $\phi\in H_{p_z}(P)^*$ we have
\[\|T_z(\phi)\|_{H_{p_z,1}(Q)^*}\pp p_z\|\phi\|_{H_{p_z}(P)^*}\]
and if in addition $\re(z)<1$, then
\[\|\phi\|_{H_{p_z}(P)^*}\pp p_z^{-1}\|Q\|_{L_{p_z,1}(M\bar{\otimes}N)\to L_{p_z,1}(M\bar{\otimes}N)}\|T_z(\phi)\|_{H_{p_z,1}(Q)^*}\]
\end{itemize}
\end{lemm}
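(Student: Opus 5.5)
The plan is to prove both properties by lifting through the commutative diagram $J^*F_z=T_zI^*$. Every $\phi$ in the relevant dual space is of the form $I^*(y)$, and the estimates then follow from the norm identity of Theorem \ref{amplification_theo1} combined with Lemmas \ref{step1_lemm2}, \ref{step1_lemm3} and \ref{step1_lemm4}.

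\emph{Analyticity.} Let $\phi\in H_p(P)^*\cap H_1(P)^*$. By Lemma \ref{step1_lemm0} we can write $\phi=I^*(y)$ with $y\in L_q(M)\cap L_\infty(M)$.

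Since $I^*$ and $J^*$ are antilinear, put $x:=y$. Then, for every $z$, we have $T_z(\phi)=J^*(F_z(y))=J^*(y\otimes f_z)$.

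By Theorem \ref{amplification_theo1}, the map $z\mapsto y\otimes f_z$ belongs to $\mathcal{F}_*(L_{q,\infty}(M\bar{\otimes}N),L_\infty(M\bar{\otimes}N))$. Now $J^*$ is antilinear, and on each endpoint it is w*-continuous into $H_{p,1}(Q)^*$, resp. $H_1(Q)^*$. Indeed, $J^*$ is the antidual of the inclusions $H_{p,1}(Q)\to L_{p,1}$ and $H_1(Q)\to L_1$, so it is w*-continuous. We therefore apply Proposition \ref{steintheorem_prop1} in its antilinear form. Composing with an antilinear map turns holomorphy in $z$ into antiholomorphy, and this is exactly why the statement uses $z\mapsto T_{\overline z}(\phi)$. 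Concretely, for $u\in H_{p,1}(Q)\cap H_1(Q)$ we have
\[\langle u,T_{\overline z}(\phi)\rangle=(\tau\otimes\sigma)\big(u\,(y\otimes f_{\overline z})^*\big),\]
and $(f_{\overline z})^*=\overline{f_{\overline z}}=f_z$ up to the $y^*$ factor, which is holomorphic in $z$. The w*-continuity and boundedness on the boundary lines follow in the same way, from Theorem \ref{amplification_theo1}'s norm bounds (which give $p_{it}=p$ and $p_{1+it}=1$) and from Lemma \ref{amplification_lemma1}. This bookkeeping of conjugations is the only delicate point here, and I would handle it by writing everything through the scalar pairing, as above.

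\emph{Upper bound.} Let $\phi\in H_{p_z}(P)^*$. Since $I^*:L_{q_z}(M)\to H_{p_z}(P)^*$ is a coisometric quotient by Hahn--Banach, for each $\epsilon>0$ we can choose $y$ with $I^*(y)=\phi$ and $\|y\|_{L_{q_z}}\le(1+\epsilon)\|\phi\|$. Then
\[\|T_z\phi\|=\|J^*F_z y\|\le\|F_zy\|_{L_{q_z,\infty}}=p_z\|y\|_{L_{q_z}}\le p_z(1+\epsilon)\|\phi\|,\]
using that $J^*$ is contractive.

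\emph{Lower bound, for $\re z<1$.} Take any $y$ with $I^*(y)=\phi$. By Lemma \ref{step1_lemm2} we have $\phi=I^*(Py)$, hence $\|\phi\|\le\|Py\|_{L_{q_z}}$. By Theorem \ref{amplification_theo1} and Lemma \ref{step1_lemm4},
\[\|Py\|_{L_{q_z}}=p_z^{-1}\|F_z(Py)\|_{L_{q_z,\infty}}=p_z^{-1}\|Q(F_zy)\|_{L_{q_z,\infty}}.\]
Lemma \ref{step1_lemm3} then bounds the right-hand side by
\[p_z^{-1}\|Q\|_{L_{p_z,1}\to L_{p_z,1}}\|J^*(F_zy)\|=p_z^{-1}\|Q\|\,\|T_z\phi\|,\]
which is the claimed inequality.
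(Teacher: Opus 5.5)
Your proposal is correct and follows the paper's proof step for step. For the first item you lift $\phi=I^*(y)$ via Lemma \ref{step1_lemm0} and push $z\mapsto F_z(y)$ through the w*-continuous $J^*$ with Proposition \ref{steintheorem_prop1}; for the upper bound you use a Hahn--Banach lift, contractivity of $J^*$ and the norm identity of Theorem \ref{amplification_theo1}; for the lower bound you run the same chain Lemma \ref{step1_lemm2}, Theorem \ref{amplification_theo1}, Lemma \ref{step1_lemm4}, Lemma \ref{step1_lemm3}. Your explicit check that $(y\otimes f_{\overline z})^*=y^*\otimes f_z$, so that $z\mapsto\langle u,T_{\overline z}(\phi)\rangle$ is holomorphic even though $J^*$ is antilinear, makes precise a point the paper passes over. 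Your $(1+\epsilon)$ is harmless but not needed, since Hahn--Banach already gives an exact norm-preserving lift.
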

\begin{proof}
The proof of the lemma is based on Theorem \ref{amplification_theo1}.

$\bullet$ Fix $\phi\in H_p(P)^*\cap H_1(P)^*$. Then by Lemma \ref{step1_lemm0} we can find $y\in(L_q\cap L_\infty)(M)$ such that $\phi=I^*(y)$. If $z\in\overline{\S}$, then by definition of $T_z$ we have $T_z(\phi)=J^*(F_z(y))$. As the antilinear operators
\[J^*:L_{q,\infty}(M\bar{\otimes}N)\to H_{p,1}(M\bar{\otimes}N)^*\ \ \ \ \ \ \ \ \ J^*:L_\infty(M\bar{\otimes}N)\to H_1(M\bar{\otimes}N)^*\]
are clearly *-weakly continuous as dual operators, and because we know that the function 
\[\fonctbis{\overline{\S}}{L_{q,\infty}(M\bar{\otimes}N)+L_\infty(M\bar{\otimes}N)}{z}{F_z(y)}\]
belongs to $\mathcal{F}_*(L_{q,\infty}(M\bar{\otimes}N),L_{q,\infty}(M\bar{\otimes}N))$, by an application of Proposition \ref{steintheorem_prop1} we get the desired conclusion.

$\bullet$ Fix $z\in\overline{\S}$ and $\phi\in H_{p_z}(P)^*$. Then, by the Hahn-Banach theorem there is $y\in L_{q_z}(M)$ such that $\phi=I^*(y)$ and $\|y\|_{L_{q_z}(M)}=\|\phi\|_{H_{p_z}(P)^*}$. Then
\begin{align*}
\|T_z(\phi)\|_{H_{p_z,1}(Q)^*}&=\|J^*(F_z(y))\|_{H_{p_z,1}(Q)^*}\\
&\pp \|F_z(y)\|_{L_{q_z,\infty}(M\bar{\otimes}N)}\\
&=p_z\|y\|_{L_{q_z}(M)}\\
&=p_z\|\phi\|_{H_{p_z}(P)^*}
\end{align*}
Now we assume that $\re(z)<1$. By Lemma \ref{step1_lemm2} we have $\phi=I^*(y)=I^*(P(y))$, and thus
\[\|\phi\|_{H_{p_z}(P)^*}\pp\|P(y)\|_{L_{q_z}(M)}.\]
By Proposition \ref{step1_lemm4}, we get
\begin{align*}
\|P(y)\|_{L_{q_z}(M)}&=p_z^{-1}\|F_z(P(y))\|_{L_{q_z,\infty}(M\bar{\otimes}N)}\\
&=p_z^{-1}\|Q(F_z(y))\|_{L_{q_z,\infty}(M\bar{\otimes}N)}.\\
\end{align*}
Moreover, by Lemma \ref{step1_lemm3} we have
\begin{align*}
\|Q(F_z(y))\|_{L_{q_z,\infty}(M\bar{\otimes}N)}&\pp\|Q\|_{L_{p_z,1}(M\bar{\otimes}N)\to L_{p_z,1}(M\bar{\otimes}N)}\|J^*(F_z(y))\|_{H_{p_z,1}(Q)^*}\\
&=\|Q\|_{L_{p_z,1}(M\bar{\otimes}N)\to L_{p_z,1}(M\bar{\otimes}N)}\|T_z(I^*(y))\|_{H_{p_z,1}(Q)^*}\\
&=\|Q\|_{L_{p_z,1}(M\bar{\otimes}N)\to L_{p_z,1}(M\bar{\otimes}N)}\|T_z(\phi)\|_{H_{p_z,1}(Q)^*}
\end{align*}
By combining the above estimates we get the desired result.
\end{proof}

\begin{lemm}\label{step1_lemm1}
If $0<\theta<1$, we have 
\[H_{p_\theta,1}(Q)^*=(H_{p,1}(Q)^*,H_1(Q)^*)_{\theta,\infty,K}\]
with equivalent norms, with constants depending only on $p,\theta$.
\end{lemm}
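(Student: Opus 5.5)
Since $Q=P\otimes I$ is of Jones-type, the approach is duality: identify the dual couple with a quotient couple, then transfer real interpolation to the quotient via $J$-closedness.

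\textbf{Step 1: the dual couple as a quotient couple.} As in Lemma \ref{step1_lemm0}, the dual couple $(H_{p,1}(Q)^*,H_1(Q)^*)$ is canonically (anti-)isomorphic to
\[
\bigl(L_{q,\infty}(M\bar{\otimes}N)/H_{q,\infty}(Q^{\bot}),\ L_\infty(M\bar{\otimes}N)/H_\infty(Q^{\bot})\bigr).
\]
Similarly $H_{p_\theta,1}(Q)^*\simeq L_{q_\theta,\infty}(M\bar{\otimes}N)/H_{q_\theta,\infty}(Q^{\bot})$. Both identifications use Hahn--Banach together with point (1) of the technical assumptions for $Q$, which identifies the annihilators with the Hardy spaces of $Q^\bot$. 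We are therefore reduced to showing
\[
L_{q_\theta,\infty}/H_{q_\theta,\infty}(Q^{\bot})=\bigl(L_{q,\infty}/H_{q,\infty}(Q^{\bot}),\,L_\infty/H_\infty(Q^{\bot})\bigr)_{\theta,\infty}.
\]

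\textbf{Step 2: quasi-complementation.} We show that $(H_{p,1}(Q),H_1(Q))$ is quasi-complemented in $(L_{p,1},L_1)(M\bar{\otimes}N)$. By the Jones-type theorem, $(H_1(Q),H_\infty(Q))$ is quasi-complemented in $(L_1,L_\infty)$. Since $L_{p,1}=(L_1,L_\infty)_{1/q,1,K}$, parts (2)--(3) of Theorem \ref{preliminaries_Holmstedt} (reiteration with one endpoint kept) give $K$-closedness of $(H_{p,1}(Q),H_1(Q))$. Here we use Proposition \ref{technical_assumptions_Kclosedness} to identify $K_{\Phi}(H_1(Q),H_\infty(Q))$ with $H_{p,1}(Q)$; this is valid since $L_{p,1}$ has order-continuous norm. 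The intersection condition follows from point (2) of the technical assumptions, so the earlier proposition upgrades $K$-closedness to quasi-complementation.

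\textbf{Step 3: apply Proposition \ref{technical_assumptions_Jclosedness}.} Take $E_0=L_{p,1}$ and $E_1=L_1$, both order-continuous, and $\Phi=\Phi_{\theta,\infty}$. By real reiteration and the equivalence theorem,
\[
J_\Phi(L_{q,\infty},L_\infty)=(L_{q,\infty},L_\infty)_{\theta,\infty}=L_{q_\theta,\infty}=L_{p_\theta,1}^{\times},
\]
with $L_{p_\theta,1}$ order-continuous. The proposition then yields
\[
L_{q_\theta,\infty}/H_{q_\theta,\infty}(Q^{\bot})=J_\Phi\bigl(L_{q,\infty}/H_{q,\infty}(Q^{\bot}),\,L_\infty/H_\infty(Q^{\bot})\bigr),
\]
with constants depending only on $p,\theta$. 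The equivalence theorem converts $J_\Phi$ to $(\cdot,\cdot)_{\theta,\infty,K}$, and combining with Step 1 gives the lemma.

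\textbf{Main obstacle.} The delicate point is Step 1. One must check that the quotient norms, and the compatible-couple structure coming from the normality provided in Proposition \ref{technical_assumptions_Jclosedness}, match the dual norms of $H_{p_\theta,1}(Q)$ on the nose, up to constants. A second check is that the endpoint version of Holmstedt used in Step 2 genuinely applies, i.e.\ that $\Phi_{1/q,1}$ contains a concave unbounded function such as $t\mapsto t^{1/q}\wedge$ truncated variants. If it does not, I would instead use Kislyakov/Xu (Proposition \ref{technical_assumptions_KislyakovXu}) with intermediate Lorentz spaces.
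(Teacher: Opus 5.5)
Your proposal is correct and follows the paper's route: identify the dual couple with the quotient couple $(L_{q,\infty}/H_{q,\infty}(Q^{\bot}),L_\infty/H_\infty(Q^{\bot}))$, get quasi-complementation from the Jones-type theorem and Theorem \ref{preliminaries_Holmstedt}, use reiteration and the equivalence theorem to write $L_{q_\theta,\infty}$ as a $J$-space, and conclude with Proposition \ref{technical_assumptions_Jclosedness}. The only difference is where you apply Holmstedt. You apply it on the predual side to $(H_1(Q),H_\infty(Q))$, which gives quasi-complementation of $(H_{p,1}(Q),H_1(Q))$ in $(L_{p,1},L_1)$; this is exactly the hypothesis of Proposition \ref{technical_assumptions_Jclosedness}, and Proposition \ref{technical_assumptions_Kclosedness} applies because $L_{p,1}$ is order-continuous. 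The paper instead applies it on the dual side to $(H_1(Q^{\bot}),H_\infty(Q^{\bot}))$ in $(L_{q,\infty},L_\infty)$. Your concavity concern is harmless: $t\mapsto\min(t,t^{\alpha})$ with $0<\alpha<1/q$ lies in $\Phi_{1/q,1}$ and is continuous, positive, unbounded and concave, although $t^{1/q}$ itself does not lie in $\Phi_{1/q,1}$.
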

\begin{proof}
As $Q$, and thus also $Q^{\bot}$, is of Jones-type, we know that $(H_1(Q^{\bot}),H_\infty(Q^{\bot}))$ is quasi-complemented in $(L_1(M\bar{\otimes}N),L_\infty(M\bar{\otimes}N))$ with a universal constant. By Theorem \ref{preliminaries_Holmstedt}, we deduce that $(H_{q,\infty}(Q^{\bot}),H_\infty(Q^{\bot}))$ is quasi-complemented in $(L_{q,\infty}(M\bar{\otimes}N),L_\infty(M\bar{\otimes}N))$ with a constant depending only on $p$. Moreover, by the reiteration theorem for the real method together with the equivalence theorem between the $K$-method and the $J$-method, we have
\[L_{q_\theta,\infty}(M\bar{\otimes}N)=(L_{q,\infty}(M\bar{\otimes}N),L_\infty(M\bar{\otimes}N))_{\theta,\infty,J}\]
with equivalent norms, with constants depending only on $p,\theta$. As a consequence of Proposition \ref{technical_assumptions_Jclosedness}, we get
\[L_{q_\theta,\infty}(M\bar{\otimes}N)/H_{q_\theta,\infty}(Q^{\bot})=(L_{q,\infty}(M\bar{\otimes}N)/H_{q,\infty}(Q^{\bot}),L_\infty(M\bar{\otimes}N)/H_\infty(Q^{\bot}))_{\theta,\infty,K}\]
with equivalent norms, with constants depending only on $p,\theta$. The desired conclusion follows.
\end{proof}





Finally, we are now able to complete the proof of Theorem \ref{theorem_PisierMethod}.

Fix $\phi\in H_p(P)^*\cap H_1(P)^*$ and $0<\theta<1$. Then, we can apply our weak-* version of Stein's interpolation theorem to the analytic family of operators $(p^{z-1}T_{\overline{z}})_{z\in\overline{\S}}$. It follows that $T_\theta(\phi)\in(H_{p,1}(Q)^*,H_1(Q)^*)_{\theta,\infty,K}$ with 
\[\|T_\theta(\phi)\|_{(H_{p,1}(Q)^*,H_1(Q)^*)_{\theta,\infty,K}}\pp p^{1-\theta}\|\phi\|_{[H_p(P)^*,H_1(P)^*]_\theta}\]
By Lemma \ref{step1_lemm1} we deduce that $T_\theta(\phi)\in H_{p_\theta,1}(Q)^*$ with
\[\|T_\theta(\phi)\|_{H_{p_\theta,1}(Q)^*}\pp C_{p,\theta}p^{1-\theta}\|\phi\|_{[H_p(P)^*,H_1(P)^*]_\theta}\]
where $C_{p,\theta}$ depends on $p,\theta$ only. Finally, by the last point of Theorem \ref{step1_lemm5}, we deduce that $\phi\in H_{p_\theta}(P)^*$ with
\[\|\phi\|_{H_{p_\theta}(P)^*}\pp C_{p,\theta} p^{1-\theta}p_\theta^{-1}\|Q\|_{L_{p_\theta,1}(M\bar{\otimes}N)\to L_{p_\theta,1}(M\bar{\otimes}N)}\|\phi\|_{[H_p(P)^*,H_1(P)^*]_\theta}\]
By the duality theorem for the complex method (more precisely Corollary \ref{ComplexInterpolation_Dual}), we deduce that, for every $x\in H_{p_\theta}(P)$, we have $x\in[H_p(P),H_1(P)]_\theta$, with
\[\|x\|_{[H_p(P),H_1(P)]_\theta}\pp C_{p,\theta}p^{1-\theta}p_\theta^{-1}\|Q\|_{L_{p_\theta,1}(M\bar{\otimes}N)\to L_{p_\theta,1}(M\bar{\otimes}N)}\|x\|_{H_{p_\theta}(M)}.\]
The proof is complete.

\begin{rem}
A careful application of Holmstedt's formula in Lemma \ref{step1_lemm1} yields the estimate
\[C_{p,\theta}\lesssim [\theta(1-\theta)]^{-1}\]
Moreover, under the additional assumption that $Q$ satisfies a weak-type $(1,1)$ inequality with a universal constant, Marcinkiewicz interpolation theorem gives
\[\|Q\|_{L_{p_\theta,1}(M\bar{\otimes}N)\to L_{p_\theta,1}(M\bar{\otimes}N)}\lesssim p_\theta q_\theta.\]
As a consequence, for $x\in H_{p_\theta}(P)$ we obtain the estimate
\[\|x\|_{[H_p(P),H_1(P)]_\theta}\lesssim  [\theta(1-\theta)]^{-1}p^{1-\theta}q_\theta\|x\|_{H_{p_\theta}(M)}.\]
This is the best estimate that can be obtained by this method.
\end{rem}

\section{Exemple: Analytic Hardy spaces}

In this paragraph, we detail how the well-known case of analytic Hardy spaces fit into our framework of abstract Hardy spaces.

\vspace{10pt}

Let $M$ be a semifinite von Neumann algebra, with trace denoted $\tau$, with modular conjugation denoted $J$, and equipped with a \textit{subdiagonal algebra}, i.e. a w*-closed (but note necessarily $\ast$-closed) subalgebra $A$ of $M$ such that $A+J(A)$ is w*-dense in $M$, the restriction of the trace to the \textit{diagonal} $D:=A\cap J(A)$ is still semifinite, and $E_D(xy)=E_D(x)E_D(y)$ for every $x,y\in A$, where $E_D$ denotes the conditional expectation of $M$ onto $D$. 

If $E(M)$ is an exact interpolation space for the compatible couple $(L_1(M),L_\infty(M))$, we define $H_E(A)$ to be the $\sigma(E(M),E^{\times}(M))$\-/closure of $A\cap E(M)$ in $E(M)$. We will also denote $H_p(A):=H_{L_p}(A)$ for $1\pp p\pp\infty$. They are the so-called \textit{Hardy spaces} associated with $A$, as defined in \cite{BekjanSubdiagonalHardySpaces}. 

\begin{rem}
if $E(M)$ is an exact interpolation space for $(L_1(M),L_\infty(M))$, then for $x\in H_E(A)$ and $y\in H_{E^{\times}}(A)$ we clearly have
\begin{equation}\label{subdiagonal_multiplicativity}
E_D(xy)=E_D(x)E_D(y)
\end{equation}
\end{rem}

Let denote
\[A^0:=\{x\in A\ :\ E_D(x)=0\}.\]
If $E(M)$ is an exact interpolation space for $(L_1(M),L_\infty(M))$, we define $H_E^0(A)$ to be the $\sigma(E(M),E^{\times}(M))$-weak closure of $A^0\cap E(M)$ in $E(M)$. Then, clearly 
\[H_E^0(A)=\big\{x\in H_E(M)\ :\ E_D(x)=0\big\}.\]
We will also denote $H_p^0(A):=H_{L_p}^0(A)$ for $1\pp p\pp\infty$.

\begin{lemm}\label{Saito_Lemma}
If $M$ is finite, then 
\[H_1(A)=\{x\in L_1(M)\ :\ \tau(xy)=0,\ \forall y\in A^0\}\]
and 
\[H_1^0(A)=\{x\in L_1(M)\ :\ \tau(xy)=0,\ \forall y\in A\}.\]
\end{lemm}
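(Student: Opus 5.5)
The inclusions ``$\subset$'' come from the multiplicativity of $E_D$; the inclusions ``$\supset$'' come from a Hahn--Banach separation. Since $M$ is finite, $M\subset L_1(M)$, so $A\cap L_1(M)=A$ and $H_1(A)$ is the $\sigma(L_1(M),M)$-weak closure of $A$.

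For ``$\subset$'', take $x\in A$ and $y\in A^0$. Since $\tau\circ E_D=\tau$, the multiplicativity of $E_D$ on $A$ gives $\tau(xy)=\tau(E_D(xy))=\tau(E_D(x)E_D(y))=0$. The right-hand set is an annihilator of a subset of $M$, hence $\sigma(L_1(M),M)$-closed, so the inclusion passes to the weak closure $H_1(A)$. The same computation with $x\in A^0$ and $y\in A$ gives $H_1^0(A)\subset\{x\in L_1(M):\tau(xy)=0,\ \forall y\in A\}$.

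For ``$\supset$'', let $x\in L_1(M)$ annihilate $A^0$ and suppose $x\notin H_1(A)$. Since $H_1(A)$ is $\sigma(L_1(M),M)$-closed, Hahn--Banach gives $y\in M$ with $\tau(ay)=0$ for all $a\in A$ but $\tau(xy)\neq0$. It therefore suffices to prove
\[\{y\in M\ :\ \tau(ay)=0,\ \forall a\in A\}\subset A^0,\]
because then $\tau(xy)=0$, a contradiction. Symmetrically, for $H_1^0(A)$ it suffices that $\{y\in M:\tau(ay)=0,\ \forall a\in A^0\}\subset A$.

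To prove these two inclusions I would pass to $L_2(M)$, which is possible since $M$ is finite. Write $J(A)=A^*$. From the multiplicativity of $E_D$, the three subspaces $[A^0]_2$, $L_2(D)$ and $[(A^0)^*]_2$ are mutually orthogonal. From the w*-density of $A+A^*$ in $M$, hence $L_2$-density, they span $L_2(M)$, giving the orthogonal decomposition
\[L_2(M)=[A^0]_2\oplus L_2(D)\oplus[(A^0)^*]_2 .\]
If $y\in M$ satisfies $\tau(ay)=0$ for all $a\in A$, then $y\perp A^*=D+(A^0)^*$ in $L_2$, so $y\in[A^0]_2\cap M$. Likewise, annihilating $A^0$ puts $y$ in $[A]_2\cap M$.

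The main obstacle is the final step, showing $M\cap[A]_2=A$ (equivalently $M\cap[A^0]_2=A^0$); this is precisely Arveson's maximality property. For finite $M$ it is known that a w*-closed subdiagonal algebra with $A+A^*$ w*-dense is automatically maximal (Exel; Ji--Ohwada--Saito), and I would invoke that result. Failing a citation, I would try to prove it directly. Take $y\in M\cap[A]_2$ and $b\in A^0$; then $yb\in[A^0]_2$ (approximate $y$ in $L_2$ by elements of $A$ and multiply on the right by the bounded $b$). Iterating, $\tau(y b c)=0$ for suitable products, so $y$ lies in the w*-closed algebra $\{m\in M:mA^0\subset[A^0]_2\}$. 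One then shows this algebra is subdiagonal and contains $A$, so it equals $A$ by the uniqueness of the maximal subdiagonal extension. Given this, both inclusions hold and the lemma follows.
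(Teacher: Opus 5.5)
Your main argument is correct, but it takes a different route from the paper. The paper gives no argument: it simply cites Saito's Lemmas 3 and 4.

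You instead rederive those lemmas. The inclusions $\subset$ come from $\tau\circ E_D=\tau$ and multiplicativity. The inclusions $\supset$ come from Hahn--Banach for the pairing $L_1(M)^*=M$. Through the orthogonal decomposition $L_2(M)=[A^0]_2\oplus L_2(D)\oplus[(A^0)^*]_2$, this reduces everything to the identity $M\cap[A]_2=A$ (equivalently $M\cap[A^0]_2=A^0$), and that reduction is sound.

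Note that the set $M\cap[A]_2=\{x\in M:\tau(xa)=0,\ \forall a\in A^0\}$ is elementarily the maximal subdiagonal extension $A_m$ of $A$:
\begin{itemize}
\item it is a subdiagonal algebra, by continuity of the product $L_2\times L_2\to L_1$;
\item it contains every subdiagonal $B\supset A$ with the same expectation, since $\tau(xa)=\tau(E_D(x)E_D(a))=0$ for $x\in B$, $a\in A^0$.
\end{itemize}
So the only deep input is Exel's theorem (or Ji--Ohwada--Saito): a $\sigma$-weakly closed subdiagonal algebra of a finite von Neumann algebra is automatically maximal.

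What your route buys is transparency about where the content lies. Saito's lemmas are stated for finite \emph{maximal} subdiagonal algebras, while the paper's definition of subdiagonal algebra contains no maximality hypothesis. The paper's citation therefore silently depends on the same automatic-maximality theorem that you invoke explicitly.

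Your fallback argument should be discarded. Suppose you did show that $\{m\in M: mA^0\subset[A^0]_2\}$ is subdiagonal. Uniqueness of the maximal subdiagonal extension would only place it inside $A_m$. Concluding that it equals $A$ requires $A=A_m$, which is exactly the maximality you were trying to establish. The proof is complete only with the citation of Exel's theorem.
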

\begin{proof}
This is proved for instance in \cite{SaitoSubdiagonalHardySpaces}[Lemma 3, Lemma 4].
\end{proof}

The proof of the following lemma is straightforward.

\begin{lemm}
Let $p\in D$ be a finite projection. Then $pAp$ is a subdiagonal algebra of $pMp$ with diagonal $pDp$. Moreover, if $E(M)$ is an exact interpolation space for $(L_1(M),L_\infty(M))$, then $H_{pEp}(pAp)=pH_E(A)p$ and $H_{pEp}^0(pAp)=pH_E^0(A)p$.
\end{lemm}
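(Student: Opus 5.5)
The statement has two parts. First, $pAp$ is a subdiagonal algebra of $pMp$ with diagonal $pDp$. Second, the Hardy spaces compress: $H_{pEp}(pAp)=pH_E(A)p$, and the analogue holds for $H^0$. I will treat them in that order.

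\emph{Subdiagonal structure.} Since $p\in D\subset A$ and $A$ is an algebra, $pAp\subset A$, and $pAp$ is a w*-closed subalgebra of $pMp$. The modular conjugation acts as the adjoint, $J(x)=x^*$, so $J(pAp)=pJ(A)p$. Compressing the w*-dense subspace $A+J(A)$ by the w*-continuous map $x\mapsto pxp$ shows that $pAp+J(pAp)$ is w*-dense in $pMp$. The diagonal is
\[pAp\cap pJ(A)p=p(A\cap J(A))p=pDp,\]
where the first equality uses that $x=pxp$ for every $x$ in either set. The trace restricted to $pDp$ is finite because $p$ is finite.

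Next I need the conditional expectation of $pMp$ onto $pDp$. Since $p\in D$, the bimodule property gives $E_D(pxp)=pE_D(x)p$, so the compression of $E_D$ is the trace-preserving conditional expectation $E_{pDp}$. Multiplicativity then follows from that of $E_D$: for $x,y\in pAp$ we have $E_{pDp}(xy)=E_D(xy)=E_D(x)E_D(y)$.

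\emph{Hardy spaces.} For $H_{pEp}(pAp)=pH_E(A)p$, I first note that $pE(M)p$, viewed as an interpolation space for $(L_1(pMp),L_\infty(pMp))$, is exactly the range of the contractive compression $x\mapsto pxp$ on $E(M)$. Its Köthe dual is $pE^\times(M)p$, and trace duality is compatible with compression: $\tau(pxp\,y)=\tau(x\,pyp)$. It follows that compression is continuous for $\sigma(E,E^\times)$ and for $\sigma(pEp,pE^\times p)$.

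For the inclusion $pH_E(A)p\subset H_{pEp}(pAp)$, compression maps $A\cap E(M)$ into $pAp\cap pE(M)p$. By weak continuity it maps the weak closure $H_E(A)$ into the weak closure $H_{pEp}(pAp)$.

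For the inclusion $H_{pEp}(pAp)\subset pH_E(A)p$, I use $pAp\cap pEp\subset A\cap E(M)$, since its elements satisfy $x=pxp$. This gives $H_{pEp}(pAp)\subset H_E(A)$. Every element $x$ of $H_{pEp}(pAp)$ satisfies $x=pxp$, so $x\in pH_E(A)p$.

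The $H^0$ statements follow in the same way. One uses $E_{pDp}(pxp)=pE_D(x)p$ together with the description $H^0_E=\{x\in H_E : E_D(x)=0\}$, or repeats the argument with $A^0$ in place of $A$.

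The only delicate point is the continuity of the compression for the weak topologies defined through the Köthe duals, and the identification $(pEp)^\times=pE^\times p$. I expect both to follow directly from the definitions, using the norm equality $\|pxp\|\le\|x\|$ and the fact that $E(M)$ is an exact interpolation space.
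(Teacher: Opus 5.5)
Your proposal is correct, and it is the routine verification the paper has in mind: the paper gives no proof, calling the lemma straightforward.

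The one step you left as an expectation needs slightly more than you indicate. That step is the inclusion $(pEp)^{\times}\subset E^{\times}(M)$, which your first inclusion $pH_E(A)p\subset H_{pEp}(pAp)$ needs for the weak continuity of $x\mapsto pxp$. It does not follow from $\|pxp\|_{E(M)}\le\|x\|_{E(M)}$ alone: for $x\in E(M)$ and $y=pyp\in(pEp)^{\times}$ you must control $xy=(xp)y$, not $(pxp)y$.

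You can close this with polar decomposition. Write $xp=v|xp|$. Then $|xp|=v^*xp$ lies in $E(M)$ by the bimodule inequality $\|axb\|_{E(M)}\le\|a\|\,\|x\|_{E(M)}\|b\|$, and $|xp|=p|xp|p$. Hence $|xp|\in pE(M)p$, so $xy=v\,(|xp|\,y)\in L_1(M)$ and $y\in E^{\times}(M)$.

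The opposite inclusion $pE^{\times}(M)p\subset(pEp)^{\times}$ and the identity $\tau(pxp\,y)=\tau(x\,pyp)$ are immediate.
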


\begin{mdframed}[skipabove=10pt]
\begin{theo}\label{subdiagonal_theo1}
Let $E(M)$ be an exact interpolation space for $(L_1(M),L_\infty(M))$ with order\-/continuous norm. Then 
\[H_E(A)=\{x\in E(M)\ :\ \tau(xy)=0,\ \forall y\in H_{E^{\times}}^0(A)\}\]
and 
\[H_E^0(A)=\{x\in E(M)\ :\ \tau(xy)=0,\ \forall y\in H_{E^\times}(A)\}.\]
\end{theo}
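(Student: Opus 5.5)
The plan is to reduce to the finite case of Lemma \ref{Saito_Lemma} by compressing with finite projections of the diagonal, and then to pass to the limit using order-continuity of the norm.

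\emph{Easy inclusions.} We have $H_E(A)\subseteq\{x\in E(M):\tau(xy)=0,\ \forall y\in H^0_{E^\times}(A)\}$, and similarly for $H_E^0(A)$. Indeed, for $x\in H_E(A)$ and $y\in H^0_{E^\times}(A)$, the multiplicativity \eqref{subdiagonal_multiplicativity} gives $E_D(xy)=E_D(x)E_D(y)=0$. Since $E_D$ is trace preserving and $xy\in L_1(M)$ by the generalized H\"older inequality, we get $\tau(xy)=\tau(E_D(xy))=0$. The same computation handles $x\in H^0_E(A)$ and $y\in H_{E^\times}(A)$.

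\emph{Reverse inclusion.} Let $x\in E(M)$ annihilate $H^0_{E^\times}(A)$. Since $D$ is semifinite, fix an increasing net $(e_\alpha)_\alpha$ of finite projections of $D$ with $\sup_\alpha e_\alpha=1$; this is also an increasing net of finite projections in $M$. By the compression lemma above, $e_\alpha Ae_\alpha$ is subdiagonal in the finite algebra $e_\alpha Me_\alpha$. For $y\in(e_\alpha Ae_\alpha)^0\subseteq e_\alpha A^0e_\alpha\subseteq H^0_{E^\times}(A)$ we have $\tau(e_\alpha xe_\alpha y)=\tau(xe_\alpha ye_\alpha)=\tau(xy)=0$. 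Here one checks that $e_\alpha A^0e_\alpha\subseteq A^0$, which holds because $e_\alpha\in D$ and $E_D$ is $D$-bimodular. Also $e_\alpha xe_\alpha\in L_1(e_\alpha Me_\alpha)$, since $e_\alpha$ is finite, so $L_\infty$ and hence $E$ sits inside $L_1$ on the corner.

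By Lemma \ref{Saito_Lemma}, $e_\alpha xe_\alpha\in H_1(e_\alpha Ae_\alpha)$. Applying the compression lemma with $L_1$ in place of $E$, this space equals $e_\alpha H_1(A)e_\alpha$. We want to upgrade this to $e_\alpha xe_\alpha\in H_{e_\alpha Ee_\alpha}(e_\alpha Ae_\alpha)=e_\alpha H_E(A)e_\alpha$. For this I would show that $H_1(e_\alpha Ae_\alpha)\cap E(e_\alpha Me_\alpha)=H_{E}(e_\alpha Ae_\alpha)$ in the finite case, by the following duality argument. An element of $E$ lying in $H_1$ annihilates $A^0$, hence annihilates its $\sigma(E^\times,E)$-closure $H^0_{E^\times}$. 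By the bipolar theorem, it lies in $H_E$ provided $H_E$ equals the annihilator of $H^0_{E^\times}$ in the finite case. In the finite case this reduces to the fact that the annihilators of $A$ and $A^0$ relative to $L_\infty$ are controlled by Saito's lemma. This step, identifying $H_1\cap E$ with $H_E$ in the finite case, is where I expect the main obstacle: it requires $A\cap E$ to be dense in the right topology, i.e.\ a Hahn--Banach separation argument with functionals in $E^\times$, which is itself in $L_1$ on a finite algebra.

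\emph{Passage to the limit.} Once $e_\alpha xe_\alpha\in H_E(A)$ for all $\alpha$, order-continuity and the corollary on the nets $(p_\alpha xp_\alpha)_\alpha$ give $e_\alpha xe_\alpha\to x$ in norm. Since $H_E(A)$ is weakly closed, hence norm closed, we conclude $x\in H_E(A)$. The statement for $H^0_E(A)$ follows in the same way using the second identity of Lemma \ref{Saito_Lemma}. Alternatively, it follows from the first statement together with $H^0_E(A)=\{x\in H_E(A):E_D(x)=0\}$: testing against $y\in D\cap E^\times\subseteq H_{E^\times}(A)$ forces $E_D(x)=0$.
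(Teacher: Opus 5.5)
Your compression step is correct, but it only reduces the theorem to the finite case. For $x\in E(M)$ annihilating $H^0_{E^\times}(A)$, the element $e_\alpha xe_\alpha$ satisfies the same hypothesis in the finite corner $e_\alpha Me_\alpha$, and your final limit argument works once $e_\alpha xe_\alpha\in H_E(A)$. In the finite case, however, you have nothing beyond Lemma~\ref{Saito_Lemma}, which is exactly the instance $E=L_1$. The upgrade from $H_1(e_\alpha Ae_\alpha)$ to $H_E(e_\alpha Ae_\alpha)$ is therefore the whole content of the theorem, and it is left open.

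Your bipolar sketch is circular: it assumes that, in the corner, $H_E$ is the annihilator of $H^0_{E^\times}$, which is the statement being proved. Saito's lemma cannot deliver this by duality either. Write $B:=e_\alpha Ae_\alpha$ and let $z\in E^\times$ annihilate $B$. Saito only places $z$ in the $L_1$-closure of $B^0$. Since $\tau(x'\,\cdot\,)$ is not $L_1$-continuous for an unbounded $x'\in E$, you cannot conclude $\tau(x'z)=0$. You would need $z$ in the $\sigma(E^\times,E)$-closure of $B^0$, which is the dual form of the same claim. The inclusion $E^\times\subseteq L_1$ on a finite corner does not help.

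The paper arranges the argument differently. It takes $y\in E^\times$ annihilating $H_E(A)$ with $\tau(xy)\neq0$ and compresses \emph{both} $x$ and $y$, with indices $\alpha,\beta$ inside a common corner $p_\gamma Mp_\gamma$. Both halves of Lemma~\ref{Saito_Lemma} then give $p_\alpha xp_\alpha\in H_1$ and $p_\beta yp_\beta\in H^0_1$. Multiplicativity of the conditional expectation onto the diagonal gives $\tau(p_\alpha xp_\alpha\,p_\beta yp_\beta)=0$, and the paper passes to the limit (norm in $E$ for $x$, w* in $E^\times$ for $y$). This never asks for $p_\alpha xp_\alpha\in H_E(A)$.

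Be aware, though, that the multiplicativity remark is justified only for pairs in $H_E\times H_{E^\times}$. For a finite corner, the statement ``$\tau(x'z')=0$ whenever $x'\in H_1\cap E$ and $z'\in H^0_1\cap E^\times$'' is equivalent, via Saito and Hahn--Banach, to your missing identity $H_1\cap E=H_E$. So switching to the paper's route does not remove the difficulty.

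What is genuinely needed is a finite-trace Smirnov-type fact, $H_1(B)\cap E(N)=H_E(B)$ for finite $N$. For $E=L_p$ with $1<p<\infty$ it follows from the $L_p$-boundedness of the Riesz projection $P$. Indeed, if $\tau(xb)=0$ for all $b\in B^0$, then $\tau(P^\perp(x)w)=\tau(x\,P^\perp(w^*)^*)=0$ for every $w\in L_q$, because $P^\perp(w^*)^*$ lies in the $L_q$-closure of $B^0$. For a general order-continuous $E$ it requires further input that neither your proposal nor Saito's lemma supplies.
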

\end{mdframed}
\begin{proof}
If $x\in H_E(A)$ and $y\in H_{E^{\times}}^0(A)$, then by \eqref{subdiagonal_multiplicativity} we have
\[\tau(xy)=\tau(E_D(xy))=\tau(E_D(x)E_D(y))=\tau(0)=0.\]
Hence we obtaine one inclusion. By contradiction, we assume the converse inclusion is strict, so that there is $x\in E(M)\setminus H_E(A)$ such that $\tau(xy)=0$ for every $y\in H_{E^{\times}}^0(A)$. As $E(M)$ has order-continuous norm, by the Hahn-Banach theorem there is $y\in E^{\times}(M)$ such that $\tau(xy)\neq0$ and $\tau(x'y)=0$ for every $x\in H_E(A)$. Since the restriction of the trace to $D$ is semifinite, there is an increasing net $(p_\alpha)_\alpha$ of finite projections of $D$ such that $\sup_\alpha p_\alpha=1$. Then, by Lemma \ref{Saito_Lemma}, we have $p_\alpha xp_\alpha\in H_1(p_\alpha Ap_\alpha)$ and $p_\alpha y^*p_\alpha\in H_1^0(p_\alpha Ap_\alpha)$. Indeed, if $x_\alpha'\in p_\alpha Ap_\alpha$, then $x_\alpha'\in H_E(A)$ and thus
\[\tau(x_\alpha'p_\alpha yp_\alpha)=\tau(p_\alpha x'_\alpha p_\alpha y)=\tau(x_\alpha' y)=0.\]
Similarly, if $y_\alpha'\in (p_\alpha Ap_\alpha)^0$, then $y_\alpha'\in H_{E^{\times}}^0(A)$ and thus
\[\tau(p_\alpha xp_\alpha y'_\alpha)=\tau(xp_\alpha y_\alpha'p_\alpha)=\tau(xy'_\alpha)=0.\]
Thus, if $\gamma$ is such that $\gamma\pg\alpha$ and $\gamma\pg\beta$, we have $p_\alpha xp_\alpha\in H_1(p_\gamma Ap_\gamma)$ and $p_\beta yp_\beta\in J(H_1^0(p_\gamma Ap_\gamma))$, and by \eqref{subdiagonal_multiplicativity} we deduce that
\[E_{p_\gamma Dp_\gamma}(p_\alpha xp_\alpha p_\beta yp_\beta)=E_{p_\gamma Dp_\gamma}(p_\alpha xp_\alpha)E_{p_\gamma Dp_\gamma}(p_\beta yp_\beta)=0.\]
In particular, we get
\[\tau(p_\alpha xp_\alpha p_\beta yp_\beta)=\tau(E_{p_\gamma Dp_\gamma}(p_\alpha xp_\alpha p_\beta yp_\beta))=0.\]
As the net $(p_\alpha xp_\alpha)_\alpha$ converges to $x$ in $E(M)$ for the norm topology and the net $(p_\alpha yp_\alpha)_\alpha$ converges to $y$ in $E^{\times}(M)$ for the w*-topology, we deduce that $\tau(xy)=0$, hence the contradiction. Hence $H_{E^{\times}}^0(A)$ is the orthogonal of $H_E(A)$. Similarly, we prove that $H_{E^{\times}}(A)$ is the orthogonal of $H_E^0(A)$.
\end{proof}

\begin{mdframed}[skipabove=10pt]
\begin{theo}\label{subdiagonal_theo2}
Let $E(M)$ be an exact interpolation space for $(L_1(M),L_\infty(M))$ with order\-/continuous norm. Then $A\cap L_1(M)$ reps. $A^0\cap L_1(M)$ is dense in $H_E(A)$ resp. $H_E^0(A)$ for the norm topology of $E(M)$ and is dense in $H_{E^{\times}}(A)$ resp. $H_{E^{\times}}^0(A)$ for the \text{\upshape w*}-topology of $E^{\times}(M)$.
\end{theo}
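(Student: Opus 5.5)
We treat the norm-density statement first, then the w*-density statement, using Theorem \ref{subdiagonal_theo1} and the bipolar theorem. Note first that $A\cap L_1(M)$ need not lie in $E(M)$, so we work with $A\cap(L_1\cap L_\infty)(M)=A\cap L_1(M)$, which is contained in every exact interpolation space. It is contained in $H_E(A)$ (resp. $H_{E^\times}(A)$), since it lies in $A\cap E(M)$. The same holds for $A^0$. So only density is at issue.

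\textbf{Norm density in $H_E(A)$.} Since $E(M)$ has order-continuous norm, $E^{\times}(M)=E(M)^*$. By Hahn--Banach, it suffices to show the following: if $y\in E^{\times}(M)$ satisfies $\tau(xy)=0$ for all $x\in A\cap L_1(M)$, then $\tau(xy)=0$ for all $x\in H_E(A)$. Fix an increasing net $(p_\alpha)$ of finite projections of $D$ with $\sup_\alpha p_\alpha=1$. For $x\in H_E(A)$, $p_\alpha xp_\alpha\to x$ in norm by the corollary on finite projections. We claim $p_\alpha xp_\alpha$ lies in the norm closure of $A\cap L_1(M)$. Indeed, $p_\alpha xp_\alpha\in H_1(p_\alpha Ap_\alpha)\cap E$, and in the finite algebra $p_\alpha Mp_\alpha$ the set $p_\alpha Ap_\alpha\subset A\cap L_1(M)$ is $L_1$-dense in $H_1(p_\alpha Ap_\alpha)$ by Lemma \ref{Saito_Lemma} and Hahn--Banach. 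The delicate point is upgrading $L_1$-density to $E$-norm density inside $p_\alpha Mp_\alpha$. Here I would argue by duality again: on $p_\alpha Mp_\alpha$, $y$ restricts to $p_\alpha yp_\alpha\in E^\times(p_\alpha Mp_\alpha)\subset L_1(p_\alpha M p_\alpha)$. Vanishing on $p_\alpha Ap_\alpha$, Lemma \ref{Saito_Lemma} (applied with the roles of $A$ and its adjoint/annihilator, i.e.\ the second identity) gives $p_\alpha y p_\alpha\in H_1^0$ of the adjoint algebra, equivalently $p_\alpha yp_\alpha$ annihilates $H_1(p_\alpha Ap_\alpha)$ through the multiplicativity \eqref{subdiagonal_multiplicativity}. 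Hence $\tau(p_\alpha xp_\alpha y)=\tau(p_\alpha xp_\alpha\, p_\alpha yp_\alpha)=0$. Letting $\alpha\to\infty$ gives $\tau(xy)=0$. The argument for $A^0$ and $H_E^0(A)$ is identical, using the first identity of Lemma \ref{Saito_Lemma}.

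\textbf{w*-density in $H_{E^\times}(A)$.} By the bipolar theorem for the pair $(E(M),E^\times(M))$, the w*-closure of $A\cap L_1(M)$ is the annihilator of its preannihilator $\{x\in E(M):\tau(xy)=0\ \forall y\in A\cap L_1(M)\}$. We then identify this preannihilator with $H_E^0(A)$ via the finite-projection reduction. For $x$ in it, each $p_\alpha xp_\alpha$ annihilates $p_\alpha Ap_\alpha$, so $p_\alpha xp_\alpha\in H_1^0(p_\alpha Ap_\alpha)$ by Lemma \ref{Saito_Lemma}, hence lies in $H^0_E(A)$. Letting $\alpha\to\infty$ in norm shows $x\in H_E^0(A)$ (closed). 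Theorem \ref{subdiagonal_theo1} then says the annihilator of $H_E^0(A)$ is exactly $H_{E^\times}(A)$, which gives the density. The case of $A^0$ and $H_{E^\times}^0(A)$ is symmetric.

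\textbf{Main obstacle.} The hardest step is the transfer from the finite corners back to $M$. One must check that $p_\alpha H_E(A)p_\alpha=H_{p_\alpha Ep_\alpha}(p_\alpha Ap_\alpha)$ sits inside $H_1(p_\alpha Ap_\alpha)$ and that membership in the finite $H_1^0$ forces membership in $H^0_E(A)$. This uses the stated corner lemma and the fact that $p_\alpha$ is finite, so that $p_\alpha E p_\alpha\subset L_1$.
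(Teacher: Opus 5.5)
There is a genuine gap. It sits exactly where you route the argument through $H_1$ of the finite corners.

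\textbf{The w*-part.} You infer from $p_\alpha xp_\alpha\in H_1^0(p_\alpha Ap_\alpha)$ (with $p_\alpha xp_\alpha\in E(M)$) that $p_\alpha xp_\alpha\in H_E^0(A)$. The corner lemma and the finiteness of $p_\alpha$ only give
$p_\alpha H_E^0(A)p_\alpha=H^0_{p_\alpha Ep_\alpha}(p_\alpha Ap_\alpha)\subset H_1^0(p_\alpha Ap_\alpha)$, which is the \emph{opposite} inclusion. The inclusion you need, $H_1^0\cap E\subset H_E^0$ in the corner, is the nontrivial half of the identity $H^0_E=H^0_1\cap E$ in a finite algebra. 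It requires a separate duality argument (e.g.\ Theorem \ref{subdiagonal_theo1} applied in the corner), and nothing you cite supplies it.

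\textbf{The norm part.} The same problem occurs. The multiplicativity \eqref{subdiagonal_multiplicativity} applies to a pair lying in $H_E\times H_{E^\times}$-type spaces, whereas you only know $p_\alpha yp_\alpha\in H_1^0(p_\alpha Ap_\alpha)\cap E^\times$. Moreover, ``annihilates $H_1(p_\alpha Ap_\alpha)$'' is not even meaningful for a pairing of two $L_1$ elements.

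Lemma \ref{Saito_Lemma} only describes annihilators of \emph{bounded} elements of the corner. ``$p_\alpha yp_\alpha$ annihilates $p_\alpha Ap_\alpha$'' is already your hypothesis, so Saito adds nothing. The real difficulty is passing from bounded elements to their closure in $E$ or $E^\times$. You could close the norm-part step by noting (corner lemma) that $p_\alpha xp_\alpha\in H_{p_\alpha Ep_\alpha}(p_\alpha Ap_\alpha)$ is an $E$-norm limit of elements of $p_\alpha Ap_\alpha\subset A\cap L_1(M)$. But that is precisely the ``claim'' you set out to prove, so the duality becomes pointless.

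\textbf{The missing idea: compress the generators.} Compress the elements whose closure \emph{defines} the Hardy spaces, rather than arbitrary elements of them. Since $E(M)^*=E^\times(M)$, weak and norm closures of subspaces of $E(M)$ coincide. Hence $H_E(A)$ is the norm closure of $A\cap E(M)$, and $H_{E^\times}(A)$ is the w*-closure of $A\cap E^\times(M)$.

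Take $x\in A\cap E(M)$ (resp.\ $A\cap E^\times(M)$).
\begin{enumerate}[nosep]
\item Then $p_\alpha xp_\alpha\in A$, because $p_\alpha\in D\subset A$.
\item It is a bounded operator supported by a projection of finite trace, so $p_\alpha xp_\alpha\in A\cap L_1(M)$.
\item By the corollary on finite projections, $p_\alpha xp_\alpha\to x$ in norm (resp.\ w*).
\end{enumerate}
For $A^0$, add $E_D(p_\alpha xp_\alpha)=p_\alpha E_D(x)p_\alpha=0$.

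This is the paper's whole proof. It needs no Hahn--Banach, bipolar theorem, Lemma \ref{Saito_Lemma} or Theorem \ref{subdiagonal_theo1}. The same observation also repairs your dual arguments: if $y$ annihilates $A\cap L_1(M)$ and $x\in A\cap E(M)$, then $\tau(xy)=\lim_\alpha\tau(p_\alpha xp_\alpha y)=0$. At that point the detour is superfluous.

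Two smaller slips. First, $A\cap L_1(M)\subset (L_1\cap L_\infty)(M)\subset E(M)$ always holds, because $A\subset M$; your opening remark says otherwise. Second, the $L_1$-density of $p_\alpha Ap_\alpha$ in $H_1(p_\alpha Ap_\alpha)$ is the definition of that space, not a consequence of Lemma \ref{Saito_Lemma}.
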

\end{mdframed}
\begin{proof}
It suffices to prove the statement for $H_E(A)$ and $H_{E^{\times}}(A)$. Since the restriction of the trace to $D$ is semifinite, there is an increasing net $(p_\alpha)_\alpha$ of finite projections of $D$ such that $\sup_\alpha p_\alpha=1$. If $x\in A\cap E(M)$ resp. $x\in A\cap E^{\times}(M)$ then we know that $(p_\alpha xp_\alpha)_\alpha$ converges to $x$ for the norm topology of $E(M)$ resp. for the w*-topology of $E^{\times}(M)$. As $p_\alpha xp_\alpha$ clearly belongs to $A\cap L_1(M)$, the proof is complete. 
\end{proof}

The associated \textit{Riesz projection} $P$ is the projection of $L_2(M)$ onto $H_2(A)$.

\begin{mdframed}[skipabove=10pt]
\begin{theo}\label{subdiagonal_theo3}
Let $E(M)$ be an exact interpolation space for $(L_1(M),L_\infty(M))$ with order\-/continuous norm. Then
\[H_E(P)=H_E(A),\ \ \ \ \ H_{E^{\times}}(P)=H_{E^{\times}}(A),\]
\[H_E(P^{\bot})=J(H_E^0(A)),\ \ \ \ \ H_{E^{\times}}(P^{\bot})=J(H_{E^{\times}}^0(A)).\]
\end{theo}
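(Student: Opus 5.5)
We prove the four identities in the order $H_E(P)=H_E(A)$, then the orthogonals, then the duals. By definition $H_2(P)=P(L_2(M))=H_2(A)$.

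\textbf{Inclusion $H_E(A)\subseteq H_E(P)$.} By Theorem \ref{subdiagonal_theo2}, $A\cap L_1(M)$ is norm-dense in $H_E(A)$. Since $A\subseteq M$, an element of $A\cap L_1(M)$ lies in $L_1\cap L_\infty\subseteq L_2$. It therefore belongs to $A\cap L_2(M)\subseteq H_2(A)=P(L_2(M))$. Hence $A\cap L_1(M)\subseteq E(M)\cap P(L_2(M))$, and taking closures gives $H_E(A)\subseteq H_E(P)$. The same argument with the w*-density part of Theorem \ref{subdiagonal_theo2} gives $H_{E^\times}(A)\subseteq H_{E^\times}(P)$.

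\textbf{Reverse inclusion via Theorem \ref{subdiagonal_theo1}.} By that theorem, $H_E(A)$ is the annihilator of $H^0_{E^\times}(A)$ under the bilinear pairing $\tau(xy)$. Since $H_E(A)$ is weakly closed, it suffices to show that every $x\in E(M)\cap P(L_2(M))$ satisfies $\tau(xy)=0$ for all $y\in H^0_{E^\times}(A)$.
\begin{enumerate}
\item By Theorem \ref{subdiagonal_theo2} and w*-continuity of $y\mapsto\tau(xy)$, we may take $y\in A^0\cap L_1(M)\subseteq L_2(M)$.
\item Then $\tau(xy)=\langle x,y^*\rangle_{L_2}$, and $x\in H_2(A)$.
\item We need $y^*=J(y)$ (up to the identification of $J$ with the adjoint on $L_2$) to be orthogonal to $H_2(A)$. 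By Theorem \ref{subdiagonal_theo1} with $E=L_2$, $H_2(A)$ is exactly the annihilator of $H^0_2(A)$, and $y\in A^0\cap L_2\subseteq H_2^0(A)$.
\end{enumerate}
So $\tau(xy)=0$, giving $H_E(P)\subseteq H_E(A)$. The dual space $E^\times$ is handled the same way, using the second half of Theorem \ref{subdiagonal_theo1}, where the roles are symmetric.

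\textbf{The complement $P^\bot$.} On $L_2$, $P^\bot(L_2(M))$ is the orthogonal of $H_2(A)$ for the sesquilinear product. By Theorem \ref{subdiagonal_theo1} at $E=L_2$, this orthogonal equals $J(H^0_2(A))$. Then $H_E(P^\bot)$ is the weak closure of $E(M)\cap J(H_2^0(A))$. We show it equals $J(H^0_E(A))$ by the same two-sided argument:
\begin{enumerate}
\item The density of $J(A^0\cap L_1)$ (Theorem \ref{subdiagonal_theo2}, transported by the isometric conjugation $J$ on every symmetric space) gives $\supseteq$.
\item The annihilator description $J(H^0_E(A))=\{x:\tau(xy^*)=0\ \forall y\in H_{E^\times}(A)\}$ from Theorem \ref{subdiagonal_theo1}, tested on the dense set $y\in A\cap L_1$, gives $\subseteq$.
\end{enumerate}

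The main obstacle is the bookkeeping between the bilinear pairing $\tau(xy)$ of Theorem \ref{subdiagonal_theo1}, the sesquilinear $L_2$ inner product, and the action of $J$ on $L_p$. We must check that $J$, meaning $x\mapsto x^*$ in the tracial setting, is an isometric, weakly bicontinuous antilinear bijection of $E(M)$ and of $E^\times(M)$ preserving $L_1\cap L_\infty$. This is what lets Theorems \ref{subdiagonal_theo1} and \ref{subdiagonal_theo2} transfer from $A$ to $J(A)$.
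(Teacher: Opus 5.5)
Your proof is correct and follows the paper's route. You sandwich $E(M)\cap P(L_2(M))$ between $A\cap L_1(M)$ and $H_E(A)$ using Theorems \ref{subdiagonal_theo1} and \ref{subdiagonal_theo2}, identify $P^{\bot}(L_2(M))=J(H_2^0(A))$ via Theorem \ref{subdiagonal_theo1} at $E=L_2$, and repeat the argument. For the $E^{\times}$ identities, the annihilator description of $H_{E^{\times}}(A)$ inside $E^{\times}(M)$ comes from the bipolar theorem applied to Theorem \ref{subdiagonal_theo1}, not from a symmetric application of that theorem to $E^{\times}$, which need not have order-continuous norm.
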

\end{mdframed}
\begin{proof}
From Theorem \ref{subdiagonal_theo1}, it is clear that $H_2(M)\cap E(M)$ sits between $A\cap L_1(M)$ and $A\cap E(M)$. Thus, from Theorem \ref{subdiagonal_theo2} we deduce that $H_E(P)=H_E(A)$. Besides, by Theorem \ref{subdiagonal_theo2} the range of $P^{\bot}$ is $J(H_2^0(A))$, so that $H_E(P)$ is the norm-closure of $J(H_2^0(A))\cap E(M)$ in $E(M)$. Again, as a consequence of Theorem \ref{subdiagonal_theo1} and Theorem \ref{subdiagonal_theo2}, we see that the latter coincides with $J(H_E^0(A))$.
\end{proof}

\begin{mdframed}[skipabove=10pt]
\begin{theo}\label{subdiagonal_theo4}
The Riesz projection $P$ is of Jones-type.
\end{theo}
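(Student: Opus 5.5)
We have to check the three conditions in the definition of a Jones-type projection for the Riesz projection $P$: (1) the technical assumptions, (2) $L_p$-boundedness for $1<p<\infty$, and (3) $K$-closedness of $(H_1(P),H_2(P))$ and $(H_1(P^{\bot}),H_2(P^{\bot}))$ in $(L_1(M),L_2(M))$. Throughout, the identifications of Theorem \ref{subdiagonal_theo3} let us replace $H_E(P)$ by $H_E(A)$ and $H_E(P^{\bot})$ by $J(H_E^0(A))$.

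\textbf{Condition (1).} For the technical assumptions we use the results just proved.
\begin{enumerate}
    \item[(1.1)] Theorem \ref{subdiagonal_theo1} identifies the orthogonal of $H_E(A)$ with $H^0_{E^\times}(A)$, up to the involution. Combined with Theorem \ref{subdiagonal_theo3}, this gives exactly that the orthogonal of $H_E(P)$ is $H_E(P^\bot)$, and similarly for the K\"othe dual side. Some care is needed with the conjugate-linear pairing $\tau(xy^*)$ versus $\tau(xy)$, which is where $J$ and the adjoint enter.
    \item[(1.2)] For the space in point (2), we take $H(P)$ to be the annihilator in $(L_1+L_\infty)(M)$ of $A^0\cap L_1(M)$ (adjusted by the adjoint). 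It is weakly closed by construction. Intersecting it with $E(M)$ gives $H_E(A)$, by Theorem \ref{subdiagonal_theo1} and the density in Theorem \ref{subdiagonal_theo2}.
    \item[(1.3)] For point (3), Theorem \ref{subdiagonal_theo2} gives density of $A\cap L_1(M)$, and $p_\alpha A p_\alpha\subset (L_1\cap L_\infty)(M)\cap P(L_2(M))$. This yields the norm and w*-density required.
\end{enumerate}

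\textbf{Condition (2).} The $L_p$-boundedness of the Riesz projection for $1<p<\infty$, with constants depending only on $p$, is the noncommutative M.\ Riesz theorem for subdiagonal algebras. We would quote it from Marsalli–West in the finite case and Bekjan in the semifinite case, or reduce to the finite corners $p_\alpha M p_\alpha$ via the lemma on $pAp$. That lemma gives uniform constants, and we pass to the limit using the convergence $p_\alpha x p_\alpha\to x$.

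\textbf{Condition (3).} This is the main obstacle. We need that $(H_1(A),H_2(A))$ is $K$-closed in $(L_1(M),L_2(M))$ with a universal constant, and similarly for $(J(H_1^0(A)),J(H_2^0(A)))$. The second case reduces to the first: $H^0_p(A)$ is the kernel of $E_D$ restricted to $H_p(A)$, $E_D$ is contractive on both $L_1$ and $L_2$, and $J$ is isometric.
\begin{enumerate}
    \item[(3.1)] First try Pisier's route: prove $K$-closedness of $(H_1,H_\infty)$ directly. The plan is to take $x\in H_1(A)+H_2(A)$ with a decomposition $x=y_0+y_1$, $y_0\in L_1$, $y_1\in L_2$. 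Using a noncommutative Riesz–Szeg\H{o} type factorization (Saito, Bekjan), we write $|y_0|$-adapted outer elements $h\in H_\infty(A)$ and then split $x=hP(h^{-1}x)\cdots$ in the spirit of Kislyakov's lemma. Concretely, we need an outer $a\in A$ with $|a|$ comparable to a truncation of $|y_1|$ and $\|a\|_\infty\le 1$. Multiplying by $a$ converts the $L_1$ piece into an $L_2$ piece while keeping analyticity. The Riesz projection's $L_2$-boundedness (norm $1$) then controls the error.
    \item[(3.2)] If this construction is too delicate, the fallback is to cite Bekjan's result (\cite{BekjanSubdiagonalHardySpaces}), which establishes $K$-closedness of $(H_1(A),H_\infty(A))$. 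Then we apply Theorem \ref{preliminaries_Holmstedt}(3) with $\Phi_1=\Phi_{1/2,2}$ to descend to $(H_1,H_2)$ inside $(L_1,L_2)$, invoking Proposition \ref{technical_assumptions_Kclosedness} to identify $K_{\Phi}(H_1,H_\infty)$ with $H_2(A)$ using the technical assumptions already verified.
\end{enumerate}
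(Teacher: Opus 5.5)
Your proposal is correct and follows essentially the paper's route. The technical assumptions come from Theorems \ref{subdiagonal_theo1}--\ref{subdiagonal_theo3} together with the bipolar theorem, $K$-closedness is imported from Bekjan, and it is passed to $(H_1^0(A),H_2^0(A))$, hence to $P^{\bot}$ via $J$, using $E_D$ exactly as you indicate.

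The differences are minor. The paper cites \cite{BekjanSubdiagonalHardySpaces}[Lemma 6.2] directly for $(H_1(A),H_2(A))$, so neither your sketch (3.1), which as written is not a proof, nor your Holmstedt descent (3.2) from $(H_1,H_\infty)$ is needed. You also explicitly justify the $L_p$-boundedness required in condition (2), which the paper's proof leaves implicit.
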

\end{mdframed}
\begin{proof}
The previous results (together with the bipolar theorem) show that $P$ satisfies the technical assumptions. It is also proved in \cite{BekjanSubdiagonalHardySpaces}[Lemma 6.2] that the subcouple $(H_1(A),H_2(A))$ is $K$-closed in $(L_1(M),L_2(M))$ with a universal constant. As a consequence, the subcouple $(H_1^0(A),H_2^0(A))$ is also $K$-closed in $(L_1(M),L_2(M))$ with a universal constant. By Theorem \ref{subdiagonal_theo3}, this shows that $P$ is indeed of Jones-type.
\end{proof}

\begin{lemm}\label{subdiagonal_lemma}
Let $N$ be a semifinite von Neumann algebra, and let $A\bar{\otimes}N$ denote the \text{\upshape w*}\-/closure of $A\odot N$ in $M\bar{\otimes}N$. Then $A\bar{\otimes}N$ is a subdiagonal algebra of $M\bar{\otimes}N$. Moreover, the amplified projection $P\otimes I$ on the hilbertian tensor product $L_2(M\bar{\otimes}N)$ coincides with the Riesz projection associated with $A\bar{\otimes}N$.
\end{lemm}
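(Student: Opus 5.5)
We must show two things: that $A\bar{\otimes}N$ is a subdiagonal algebra of $M\bar{\otimes}N$, and that the Riesz projection of $A\bar{\otimes}N$ coincides with $P\otimes I$. We treat them in this order.

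\textbf{Subdiagonality.} We use the trace $\tau\otimes\sigma$ on $M\bar{\otimes}N$, whose modular conjugation $J_{M\bar\otimes N}$ acts as $x\mapsto x^*$ and restricts to $J\otimes J_N$ on elementary tensors. The conditions are checked one at a time.
\begin{enumerate}
\item $A\bar{\otimes}N$ is a w*-closed subalgebra, because $A\odot N$ is an algebra and multiplication is separately w*-continuous.
\item Since $J_{M\bar\otimes N}$ is w*-continuous, $J_{M\bar\otimes N}(A\bar\otimes N)\supseteq J(A)\odot N$. Hence $A\bar\otimes N+J_{M\bar\otimes N}(A\bar\otimes N)$ contains $(A+J(A))\odot N$, which is w*-dense in $M\bar{\otimes}N$.
\item Let $E:=E_D\otimes \mathrm{id}_N$, the trace-preserving conditional expectation onto $D\bar{\otimes}N$. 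The identity $E(xy)=E(x)E(y)$ holds on $A\odot N$ by a direct computation. It then extends to $A\bar\otimes N$ by separate w*-continuity, approximating first in $x$ and then in $y$.
\item The diagonal is $(A\bar\otimes N)\cap J_{M\bar\otimes N}(A\bar\otimes N)$. It contains $D\bar{\otimes}N$, and it equals $D\bar\otimes N$: if $x$ lies in the intersection, then $x-E(x)$ lies in the kernel of $E$ both in $A\bar\otimes N$ and in its adjoint, and this forces $x=E(x)$. To see this, note that $\tau\otimes\sigma(z^*z)=\tau\otimes\sigma(E(z^*z))$. Using multiplicativity with $z^*\in A\bar\otimes N$, we get $E(z^*z)=E(z^*)E(z)=0$, so $z=0$ by faithfulness. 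Finally, the trace restricted to $D\bar\otimes N$ is $\tau|_D\otimes\sigma$, which is semifinite.
\end{enumerate}

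\textbf{Identification of the Riesz projection.} The Riesz projection $R$ of $A\bar\otimes N$ is the orthogonal projection of $L_2(M\bar{\otimes}N)$ onto $H_2(A\bar\otimes N)$. This space is the closure of $(A\bar\otimes N)\cap L_2$; Theorem~\ref{subdiagonal_theo2} gives this density already for $(A\bar\otimes N)\cap L_1$. The range of $P\otimes I$ is $H_2(A)\otimes L_2(N)$, the $L_2$-closure of $(A\cap L_2(M))\odot(N\cap L_2(N))$. It therefore suffices to prove
\[H_2(A\bar\otimes N)=H_2(A)\otimes L_2(N).\]

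The inclusion $\supseteq$ is immediate: for $a\in A\cap L_2$ and $f\in N\cap L_2$, the element $a\otimes f$ lies in $(A\bar\otimes N)\cap L_2$.

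For $\subseteq$, we use Theorem~\ref{subdiagonal_theo1} twice.
\begin{enumerate}
\item Applied to $M$ with $E=L_2$, it shows that $H_2(A)^\perp=J(H^0_2(A))$, using the bilinear pairing $\tau(xy)$. Consequently $(H_2(A)\otimes L_2(N))^\perp$ is the closed span of the elements $y\otimes g$ with $y\in H_2^0(A)$ and $g\in L_2(N)$, again in the bilinear pairing.
\item Now take $x\in(A\bar\otimes N)\cap L_2$. For such $y,g$, the element $y\otimes g$ belongs to $H^0_2(A\bar\otimes N)$, since $E(y\otimes g)=E_D(y)\otimes g=0$.
\item Theorem~\ref{subdiagonal_theo1} applied to $A\bar\otimes N$ then gives $(\tau\otimes\sigma)(x(y\otimes g))=0$. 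Hence $x$ is orthogonal to $(H_2(A)\otimes L_2(N))^\perp$, so $x\in H_2(A)\otimes L_2(N)$.
\end{enumerate}
Taking closures gives the inclusion, and therefore $R=P\otimes I$.

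\textbf{Main obstacle.} The delicate step is the second one above. Passing from elementary tensors to all of $H_2^0(A\bar\otimes N)$ is not needed, but we must know that elementary tensors from $H_2^0(A)$ actually lie in $H_2^0(A\bar\otimes N)$. This requires approximating $y\in H_2^0(A)$ in $L_2$ by elements of $A^0\cap L_2$, which is provided by Theorem~\ref{subdiagonal_theo2}. We must also approximate $g\in L_2(N)$ by elements of $N\cap L_2$, which is a standard density.
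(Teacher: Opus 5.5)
Your proposal is correct and is essentially the paper's argument. The paper checks that the Riesz projection $Q$ of $A\bar{\otimes}N$ agrees with $P\otimes I$ on the total set $(H_2(A)+J(H_2^0(A)))\odot(N\cap L_2(N))$, using $L_2(M)=H_2(A)\oplus J(H_2^0(A))$ and the orthogonality of $H_2(A\bar{\otimes}N)$ and $J(H_2^0(A\bar{\otimes}N))$, which is exactly the content of your equality-of-ranges argument; your explicit verification of subdiagonality fills in what the paper dismisses as routine. One small slip: in the bilinear pairing $\tau(xy)$ the annihilator of $H_2(A)$ is $H_2^0(A)$ itself, while $J(H_2^0(A))$ is the Hilbert-space orthocomplement, and the former is the version you actually use in the next line.
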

\begin{proof}
It is routine to check that $A\bar{\otimes}N$ is indeed a subdiagonal algebra of $M\bar{\otimes}N$. Let $Q$ denote the Riesz projection associated with $A\bar{\otimes}N$. Fix $u\in A\cap L_2(M)$, $v\in A^0\cap L_2(M)$ and $y\in N\cap L_2(N)$. Then we have $u\otimes y\in A\bar{\otimes}N\cap L_2(M\bar{\otimes}N)\subset H_2(A\bar{\otimes}N)$ and $v\otimes y^*\in (A\bar{\otimes}N)^0\cap L_2(M\bar{\otimes}N)\subset H_2^0(A\bar{\otimes}N)$. By Theorem \ref{subdiagonal_theo1} we know that $P,Q$ are the projections with range $H_2(A),H_2(A\bar{\otimes}N)$ and kernel $J(H_2^0(A)),J(H_2^0(A\bar{\otimes}N))$, thus we get
\[Q((u+v^*)\otimes y)=Q(u\otimes y)+Q((v\otimes y^*)^*)=u\otimes y=P(u)\otimes y\]
As $A\cap L_2(M)$ is dense in $H_2(A)$, $A^0\cap L_2(M)$ is dense in $H_2^0(A)$ and $N\cap L_2(N)$ is dense in $L_2(N)$, we deduce that 
\[Q(x\otimes y)=P(x)\otimes y\]
for every $x\in H_2(A)+J(H_2^0(A))$ and $y\in L_2(M)$. Thus $Q=P\otimes I$ as desired.
\end{proof}

\begin{mdframed}[skipabove=10pt]
\begin{theo}\label{subdiagonal_theo5}
The Riesz projection $P$ satisfies Pisier's method assumptions.
\end{theo}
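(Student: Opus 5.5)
We have to check two things for every semifinite von Neumann algebra $N$ with $Q:=P\otimes I$ on $L_2(M\bar{\otimes}N)$. First, $Q$ is of Jones-type. Second, $H_\infty(P^{\bot})\odot L_\infty(N)\subset H_\infty(Q^{\bot})$. Both will come from Lemma \ref{subdiagonal_lemma}, which identifies $Q$ with the Riesz projection of the subdiagonal algebra $A\bar{\otimes}N$ of $M\bar{\otimes}N$.

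\textbf{Jones-type.} By Lemma \ref{subdiagonal_lemma}, $A\bar{\otimes}N$ is a subdiagonal algebra of $M\bar{\otimes}N$ and $Q$ is its Riesz projection. Theorem \ref{subdiagonal_theo4} applies to an arbitrary semifinite von Neumann algebra with a subdiagonal algebra, so it gives directly that $Q$ is of Jones-type. The constants are universal because Theorem \ref{subdiagonal_theo4} rests on Bekjan's $K$-closedness lemma, whose constant does not depend on the algebra. The same is true of the $L_p$-boundedness of the Riesz projection, which is also part of the Jones-type definition.

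\textbf{Tensor inclusion.} By Theorem \ref{subdiagonal_theo3} applied to $E=L_1$ and then dualized (so that $E^\times=L_\infty$),
\[H_\infty(P^{\bot})=J(H_\infty^0(A)),\qquad H_\infty(Q^{\bot})=J(H_\infty^0(A\bar{\otimes}N)).\]
Here $H_\infty^0(A)$ is the w*-closure of $A^0$, namely $A^0$ itself, since $A$ is w*-closed and $E_D$ is normal. Write $\mathcal{D}:=D\bar{\otimes}N$ for the diagonal of $A\bar{\otimes}N$; its conditional expectation is $E_{\mathcal{D}}=E_D\otimes\mathrm{id}_N$. Take $v\in A^0$ and $y\in N$. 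Then $v\otimes y\in A\bar{\otimes}N$ and
\[E_{\mathcal{D}}(v\otimes y)=E_D(v)\otimes y=0,\]
so $v\otimes y\in(A\bar{\otimes}N)^0$. The modular conjugation of $M\bar{\otimes}N$ restricts to $J\otimes J_N$ on elementary tensors; for the trace-induced conjugation this is just the adjoint, $x\mapsto x^*$. Hence
\[J(v)\otimes y=v^*\otimes y=(v\otimes y^*)^*\in J\big((A\bar{\otimes}N)^0\big)\subset H_\infty(Q^{\bot}).\]
Taking linear spans gives $H_\infty(P^{\bot})\odot L_\infty(N)\subset H_\infty(Q^{\bot})$.

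\textbf{Main obstacle.} The identification of $H_\infty(P^{\bot})$ with $J(A^0)$ needs care, since Theorem \ref{subdiagonal_theo3} is stated for spaces with order-continuous norm. I would handle $L_\infty=L_1^\times$ through the $E^\times$ part of that theorem with $E=L_1$, and then use $H^0_{L_\infty}(A)=A^0$, which holds because $A^0$ is w*-closed. Beyond this, one has to confirm that $E_D\otimes\mathrm{id}$ is the trace-preserving expectation onto $\mathcal{D}$. This is routine: it is normal, trace preserving, and agrees with the correct expectation on the w*-dense subalgebra $M\odot N$.
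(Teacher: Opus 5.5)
Your proposal is correct and follows the paper's route: the Jones-type property of $Q$ is obtained exactly as in the paper's one-line proof, by combining Lemma~\ref{subdiagonal_lemma} with Theorem~\ref{subdiagonal_theo4}. Your check of $H_\infty(P^{\bot})\odot L_\infty(N)\subset H_\infty(Q^{\bot})$, via Theorem~\ref{subdiagonal_theo3} and $A^0\odot N\subset(A\bar{\otimes}N)^0$, spells out what the paper leaves implicit; like the paper's proof of Lemma~\ref{subdiagonal_lemma}, it takes for granted that the diagonal of $A\bar{\otimes}N$ is $D\bar{\otimes}N$.
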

\end{mdframed}
\begin{proof}
This is a straightforward consequence of Theorem \ref{subdiagonal_theo4} together with Lemma \ref{subdiagonal_lemma}.
\end{proof}

By applying Theorem \ref{theorem_PisierMethod}, we obtain that, if $0<\theta<1$ and $1<p<\infty$ then
\[[H_p(A),H_1(A)]_\theta=H_{p_\theta}(A)\]
with equivalent norms, with constants depending only on $p,\theta$, where $\frac{1}{p_\theta}=\frac{1-\theta}{p}+\theta$. Naturally, this partially recovers the results obtained in \cite{BekjanSubdiagonalHardySpaces} where the main result treats the general case $0<p\pp\infty$. As mentionned in the introduction, by our approach we cannot reach such a level of generality since we restrict ourselves within the framework of normed spaces.

\clearpage

\part{Applications to martingale theory}

\section{Filtrations}

Let $M$ be a semifinite von Neumann algebra, with trace denoted $\tau$, and equipped with a \textit{filtration}, i.e. an increasing sequence $(M_n)_{n\pg1}$ of von Neumann subalgebras of $M$ whose union $\cup_{n\pg1}M_n$ is w*\-/dense in $M$ and such that there is a trace\-/preserving normal faithful conditional expectation $E_n$ of $M$ onto $M_n$ for every $n\pg1$. Then $(E_n)_{n\pg1}$ is an increasing sequence of commuting projections. For every $n\pg1$, we set
\[D_n:=E_n-E_{n-1}\]
(with the convention $E_0:=0$). Then $(D_n)_{n\pg1}$ is a sequence of mutually orthogonal projections that commute with the $(E_n)_{n\pg1}$. We will refer to them as the \textit{increment projections} associated with the filtration. 

\begin{theo}\label{filtrations_theorem1}
Let $E(M)$ be an exact interpolation space for $(L_1(M),L_\infty(M))$ with order\-/continuous norm.
\begin{enumerate}[nosep]
    \item If $x\in E(M)$, then the sequence $(E_n(x))_{n\pg1}$ converges to $x$ in $E(M)$ for the norm topology. 
    \item If $y\in E^{\times}(M)$, then the sequence $(E_n(y))_{n\pg1}$ converges to $y$ in $E^{\times}(M)$ for the \text{\upshape w*}\-/topology.
\end{enumerate}
\end{theo}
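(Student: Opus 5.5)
The plan is to follow the scheme of the corollary on the nets $(p_\alpha xp_\alpha)_\alpha$ in the noncommutative integration section: a density argument plus a uniform bound on the $E_n$, using only that each $E_n$ is a contraction on $E(M)$ and that the $E_n$ increase to the identity. Each $E_n$ extends to a contractive idempotent compatible operator on $(L_1(M),L_\infty(M))$. Since $E(M)$ is an exact interpolation space, $\|E_n\|_{E(M)\to E(M)}\pp1$ for every $n$. Trace-preservation gives $\tau(E_n(x)y)=\tau(xE_n(y))$ for $x,y\in(L_1\cap L_\infty)(M)$. By density and exactness of $E^{\times}(M)$, this identity extends to all $x\in E(M)$, $y\in E^{\times}(M)$.

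For (1), I will use the standard $\epsilon/3$ argument, so it suffices to find a norm-dense subset $\mathcal{D}\subset E(M)$ with $E_n(x)\to x$ for each $x\in\mathcal{D}$. Take $\mathcal{D}:=\bigcup_{m}(L_1\cap L_\infty)(M_m)$. For $x\in(L_1\cap L_\infty)(M_m)$ we have $E_n(x)=x$ once $n\pg m$, so convergence on $\mathcal{D}$ is trivial. Given this density, for general $x$ pick $x'\in\mathcal{D}$ with $\|x-x'\|_{E(M)}<\epsilon$. Then for $n$ large,
\[\|x-E_n(x)\|_{E(M)}\pp\|x-x'\|_{E(M)}+\|E_n(x'-x)\|_{E(M)}<2\epsilon.\]

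The main obstacle is proving that $\mathcal{D}$ is norm-dense in $E(M)$. I will argue by duality using order-continuity: $E(M)^*=E^{\times}(M)$, so by Hahn--Banach it suffices to show that any $y\in E^{\times}(M)$ with $\tau(xy)=0$ for all $x\in\mathcal{D}$ vanishes. Fix such a $y$, fix $m$, and take $a\in M_m$ together with a projection $e\in M_m$ of finite trace (these exist since the trace is semifinite on $M_m$). Then $eae\in\mathcal{D}$, so $\tau(eae\,y)=0$, and hence $\tau(a\,E_m(eye))=0$. Letting $e\uparrow1$ in $M_m$ shows $E_m(y)=0$ for every $m$, where $E_m$ is extended to $E^{\times}(M)\subset(L_1+L_\infty)(M)$. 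Consequently $\tau(zy)=0$ for all $z\in(L_1\cap L_\infty)(M)$ with $z=E_m(z)$, and also for all $z\in\bigcup_m(L_1\cap L_\infty)(M_m)$. Since $\bigcup_m M_m$ is w*-dense in $M$, I then need to reach all of $(L_1\cap L_\infty)(M)$. I will do this with Kaplansky density: approximate $z\in(L_1\cap L_\infty)(M)$ boundedly in the strong topology by $z_\alpha\in\bigcup_mM_m$. Compressing by finite projections $e_\alpha\in M_{m_\alpha}$ increasing to $1$, the terms $\tau(e_\alpha z_\alpha e_\alpha y)$ converge to $\tau(zy)$, since $y\in L_1+L_\infty$ can be split and the $L_1$ part handled by normality. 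This justification is the delicate step. A cleaner alternative I would try first is to reduce to the case $\tau|_{M_1}$ semifinite: take finite projections $e\in M_1$, work in the finite algebra $eMe$ with filtration $eM_ne$, and apply the classical $L_2$ martingale convergence there, which gives $E_n(x)\to x$ in $L_2$ for $x\in L_2$. I would then transfer to $E(M)$ via the norm density of $(L_1\cap L_\infty)(M)$ and the corollary on $(p_\alpha xp_\alpha)_\alpha$.

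For (2), fix $y\in E^{\times}(M)$ and $x\in E(M)$. By the extended duality identity, $\tau(xE_n(y))=\tau(E_n(x)y)$. By (1), $E_n(x)\to x$ in norm in $E(M)$, and $y$ acts as a bounded functional on $E(M)$ through trace duality. Hence $\tau(xE_n(y))\to\tau(xy)$. Since $E(M)$ is the predual of $E^{\times}(M)$ (order-continuity), this is exactly w*-convergence of $E_n(y)$ to $y$.
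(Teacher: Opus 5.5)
Your architecture is the paper's: contractivity of $E_n$ on $E(M)$, norm density of $\mathcal{D}=\bigcup_m(L_1\cap L_\infty)(M_m)$ and the $2\epsilon$ estimate for (1), then $\tau(xE_n(y))=\tau(E_n(x)y)\to\tau(xy)$ for (2). Your extension of the duality identity is also fine: extend first in $x$ by norm density, using that $E_n$ maps $E^{\times}(M)$ into itself. The paper settles the density of $\mathcal{D}$ in one line (w*-density of $\bigcup_nM_n$ gives $\sigma(E,E^{\times})$-density, hence norm density). So you only go beyond the paper in justifying that step, and there your first route does not work as written.

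After the Hahn--Banach reduction, you let $z_\alpha\to z$ boundedly strongly and $e_\alpha\uparrow1$ along the same index. That only handles the $L_1$-part of $y$. For the $L_\infty$-part you only have $|\tau(e_\alpha(z_\alpha-z)e_\alpha y_\infty)|\le\|y_\infty\|_\infty\|e_\alpha(z_\alpha-z)e_\alpha\|_1$. Bounded strong convergence gives no control of $L_1$-norms on corners whose trace tends to infinity. For instance, in $L_\infty(\R)$ we have $1_{[n,2n]}\to0$ strongly, yet $\int_n^{2n}(1+t)^{-1/2}\,dt\to\infty$, and $(1+|t|)^{-1/2}$ is a bounded element of $L_3(\R)=L_{3/2}(\R)^{\times}$. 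Your second route also leaves a step open. $L_2$-convergence of $E_n(w)$ in a finite corner still has to be upgraded to $E(M)$-norm convergence, for example using the uniform bound $\|E_n(w)-w\|_\infty\le2\|w\|_\infty$, the finiteness of the corner and order continuity.

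The repair uses only what you already computed: compress $y$ rather than $z$, and fix the projection before taking any limit.
\begin{itemize}
\item Let $e\in M_1$ be a projection of finite trace. Then $eye\in L_1(M)$ because $y\in(L_1+L_\infty)(M)$.
\item For every $a\in\bigcup_kM_k$ we have $eae\in\mathcal{D}$, hence $\tau(a\,eye)=\tau(eae\,y)=0$.
\item The functional $a\mapsto\tau(a\,eye)$ is normal on $M$ and vanishes on the w*-dense subspace $\bigcup_kM_k$, so $eye=0$.
\item Let $e\uparrow1$ through finite projections of $M_1$. The corollary on $(p_\alpha yp_\alpha)_\alpha$ gives $eye\to y$ w* in $E^{\times}(M)$, so $y=0$.
\end{itemize}
No Kaplansky density and no detour through $E_m(y)=0$ are needed. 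With this in place your proof is complete and coincides with the paper's.
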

\begin{proof}
Let $x\in E(M)$ and $\epsilon>0$. As $\cup_{n\pg1}M_n$ is w*-dense in $M$, we deduce that $\cup_{n\pg1}(L_1\cap L_\infty)(M_n)$ is $\sigma(E(M),E^{\times}(M))$-weakly dense in $E(M)$, and thus norm-dense in $E(M)$ because $E(M)$ has an order-continuous norm. Hence, there is $k\pg1$ and $y\in(L_1\cap L_\infty)(M_k)$ such that $\|x-y\|_{E(M)}<\epsilon$. Then, for all $n\pg k$, we have
\begin{align*}
\|E_n(x)-x\|_{E(M)}&=\|E_n(x)+E_n(y)+y-x\|_{E(M)}\\
&\pp\|E_n(x-y)\|_{E(M)}+\|x-y\|_{E(M)}\\
&\pp2\|x-y\|_{E(M)}<2\epsilon
\end{align*}
which shows that $(E_n(x))_{n\pg1}$ converges in norm to $x$. Now, if $y\in E^{\times}(M)$ then for every $x\in E(M)$ we get
\[\tau(xE_n(y))=\tau(E_n(x)y)\underset{n\to\infty}{\to}\tau(xy)\]
as desired.
\end{proof}

\begin{coro}\label{filtrations_corollary1}
Let $E(M)$ be an exact interpolation space for $(L_1(M),L_\infty(M))$ with order\-/continuous norm. If $x\in E(M)$ and $y\in E^{\times}(M)$, then 
\[\tau(xy)=\sum_{n=1}^{\infty}\tau(D_n(x)D_n(y)).\]
\end{coro}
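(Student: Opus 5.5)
We need to prove Corollary \ref{filtrations_corollary1}: for $x\in E(M)$ and $y\in E^{\times}(M)$ we have $\tau(xy)=\sum_{n\pg1}\tau(D_n(x)D_n(y))$. The plan is to reduce to the partial sums $E_N$, apply Theorem \ref{filtrations_theorem1}, and then expand the partial sums over the increments using orthogonality.

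First we record the self-adjointness of $E_N$ with respect to trace duality. For $a\in E(M)$ and $b\in E^{\times}(M)$ we have $\tau(E_N(a)b)=\tau(aE_N(b))=\tau(E_N(a)E_N(b))$. This follows from the bimodule property and trace preservation of the conditional expectation: $\tau(E_N(a)b)=\tau(E_N(E_N(a)b))=\tau(E_N(a)E_N(b))$. This is first valid for $a,b\in(L_1\cap L_\infty)(M)$. It extends to $E(M)\times E^{\times}(M)$ because $E_N$ is contractive on $E(M)$ and on $E^{\times}(M)$, both being exact interpolation spaces. Norm density of $L_1\cap L_\infty$ in $E(M)$ and w*-density in $E^{\times}(M)$ handle the two variables one at a time, using generalized Hölder's inequality. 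This identity is in fact already used in the proof of Theorem \ref{filtrations_theorem1}.

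Next, with $S_N:=\tau(E_N(x)E_N(y))=\tau(E_N(x)y)$, Theorem \ref{filtrations_theorem1}(1) gives $E_N(x)\to x$ in norm in $E(M)$. Hence $|\tau(E_N(x)y)-\tau(xy)|\pp\|E_N(x)-x\|_{E(M)}\|y\|_{E^{\times}(M)}\to0$, so $S_N\to\tau(xy)$. No w*-convergence is needed here; the norm convergence in $E(M)$ suffices.

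Finally, we expand $S_N$ by writing $E_N=\sum_{n\pp N}D_n$, which gives
\[S_N=\sum_{n,m\pp N}\tau(D_n(x)D_m(y)).\]
For $n\neq m$ the term vanishes. Assume $m<n$; the case $n<m$ is symmetric. Then $\tau(D_n(x)D_m(y))=\tau(D_n(x)E_{m}(D_m(y)))$, and by the duality identity applied with $E_m$ this equals $\tau(E_mD_n(x)\,D_m(y))$. Since $E_mD_n=0$ when $m<n$, these cross terms vanish. The identities $E_mD_n=0$ and $D_nD_m=\delta_{nm}D_n$ hold on $L_1+L_\infty$ because they are compositions of the contractive compatible operators $E_k$ and hold on the dense space $L_1\cap L_\infty$, hence everywhere by continuity. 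Therefore $S_N=\sum_{n\pp N}\tau(D_n(x)D_n(y))$, and letting $N\to\infty$ gives the claimed convergent series with sum $\tau(xy)$.

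The only delicate point is justifying the duality and orthogonality relations on $E(M)\times E^{\times}(M)$ rather than on $L_2$. These relations are routine once density and the contractivity of $E_n$ on exact interpolation spaces are invoked.
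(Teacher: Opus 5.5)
Your proof is correct and follows the route the paper intends (the corollary is stated without proof as an immediate consequence of Theorem \ref{filtrations_theorem1}): $\tau(xy)=\lim_N\tau(E_N(x)y)=\lim_N\tau(E_N(x)E_N(y))$, and the cross terms vanish by orthogonality of the increments.

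There is one small correction to your justification of the duality identity. Approximating $b\in E^{\times}(M)$ in the w*-topology would require w*-continuity of $E_N$ on $E^{\times}(M)$, and that is essentially what you are trying to prove. Instead, note that $\tau(E_N(a)b)=\tau(aE_N(b))$ already holds for $a\in(L_1\cap L_\infty)(M)$ and every $b\in(L_1+L_\infty)(M)$, since this is how $E_N$ is extended to $L_1$. Then use norm density in $E(M)$ in the first variable only. Likewise, $E_mE_n=E_{\min(m,n)}$ holds on $L_\infty$ by the tower property and on $L_1$ by duality. It does not follow from density of $(L_1\cap L_\infty)(M)$ in $(L_1+L_\infty)(M)$, which fails in general.
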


\section{Martingale projections}

Let $M$ be a semifinite von Neumann algebra equipped with a filtration $(M_n)_{n\pg1}$ with associated conditional expectations denoted $(E_n)_{n\pg1}$ and associated increment projections denoted $(D_n)_{n\pg1}$. 

Now, let $I$ be a fixed set of positive integers, and let $P$ be the \textit{martingale projection} associated with the data $(M,(M_n)_{n\pg1},I)$, i.e. the projection on $L_2(M)$ such that, if $x\in L_2(M)$ then
\[P(x)=\sum_{n\in I}D_n(x),\ \ \ \text{in}\ L_2(M).\]
Note that the complement projection $P^{\bot}$ coincides with the martingale projection associated with the data $(M,(M_n)_{n\pg1},I^{\bot})$ where $I^{\bot}$ is the complement of $I$ in the set of positive integers. In the sequel, we denote
\[H(P):=\big\{x\in(L_1+L_\infty)(M)\ :\ \forall n\notin I,\ D_n(x)=0\big\}\]
so that we have
\[P(L_2(M))=L_2(M)\cap H(P).\]

\begin{mdframed}[skipabove=10pt]
\begin{theo}\label{martingale_projection_theorem1}
Let $E(M)$ be an exact interpolation space for $(L_1(M),L_\infty(M))$ with order\-/continuous norm. Then, we have
\[H_E(P)=E(M)\cap H(P),\]
\[H_{E^{\times}}(P)=E^{\times}(M)\cap H(P).\]
Moreover, the subspace
\[(L_1\cap L_\infty)(M)\cap H(P)\]
is norm-dense in $H_E(P)$ and \text{\upshape w*}-dense in $H_{E^{\times}}(P)$.
\end{theo}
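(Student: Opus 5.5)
The plan is to prove three facts in order: the density statement, then the two identifications $H_E(P)=E(M)\cap H(P)$ and $H_{E^{\times}}(P)=E^{\times}(M)\cap H(P)$. The tool throughout is the approximation $x\mapsto E_n(x)$ from Theorem \ref{filtrations_theorem1}, combined with a truncation by finite projections. The basic observation is that each $D_n$ extends to a contractive compatible operator on $(L_1(M),L_\infty(M))$, being a difference of conditional expectations. Since $E_n$ and $D_k$ commute, $D_kE_n(x)=E_nD_k(x)$, so $E_n$ maps $H(P)$ into itself. Moreover $D_k$ is continuous for the weak topology of $(L_1+L_\infty)(M)$ induced by trace duality with $(L_1\cap L_\infty)(M)$, because it is self-adjoint for that duality. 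Hence $H(P)$ is weakly closed, and $E(M)\cap H(P)$ is $\sigma(E,E^{\times})$-closed in $E(M)$; the analogous statement holds for $E^{\times}(M)$.

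\textbf{Inclusions $H_E(P)\subset E(M)\cap H(P)$ and $H_{E^{\times}}(P)\subset E^{\times}(M)\cap H(P)$.} I first check that $E(M)\cap P(L_2(M))\subset H(P)$: if $x\in E(M)\cap L_2(M)$ with $P(x)=x$, then $D_n(x)=D_nP(x)=0$ for $n\notin I$. Taking $\sigma(E,E^{\times})$-closures and using the closedness just established gives the inclusion. The same argument works in $E^{\times}(M)$.

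\textbf{Density and reverse inclusions.} Let $x\in E(M)\cap H(P)$. By Theorem \ref{filtrations_theorem1}, $E_n(x)\to x$ in norm, and $E_n(x)=\sum_{k\le n,\,k\in I}D_k(x)$ lies in $E(M_n)\cap H(P)$. It remains to approximate $E_n(x)$ by elements of $(L_1\cap L_\infty)(M)\cap H(P)$. Since $E_n(x)$ is a finite sum of increments $D_k(x)$ with $k\in I$, it suffices to approximate each $D_k(x)$ with $k\in I$. For this I take $y\in(L_1\cap L_\infty)(M)$ close to $x$ in $E(M)$, which is possible by order continuity. Then $D_k(y)\in(L_1\cap L_\infty)(M)$ lies in the range of $D_k$, hence in $H(P)$, and $\|D_k(x)-D_k(y)\|_{E}\le 2\|x-y\|_E$. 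This gives $\sum_{k\le n,\,k\in I}D_k(y)\in(L_1\cap L_\infty)(M)\cap H(P)$ approximating $E_n(x)$. Such elements lie in $L_2(M)\cap H(P)=P(L_2(M))$, so they lie in $E(M)\cap P(L_2(M))$. This yields norm density and the reverse inclusion for $E$.

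For $y\in E^{\times}(M)\cap H(P)$, the sequence $E_n(y)$ converges to $y$ in the w*-topology. Each $E_n(y)$ is a finite sum of $D_k(y)$ with $k\in I$, $k\le n$. I approximate each $D_k(y)$ w*-by $D_k(y_\alpha)$, where $(y_\alpha)_\alpha\subset(L_1\cap L_\infty)(M)$ is w*-dense, using the w*-continuity of $D_k$ on $E^{\times}(M)$ (it is the adjoint of $D_k$ on $E(M)$). This yields w*-density of $(L_1\cap L_\infty)(M)\cap H(P)$ in $E^{\times}(M)\cap H(P)$. Since $\sigma(E^{\times},E)$ is the w*-topology, this gives $E^{\times}(M)\cap H(P)\subset H_{E^{\times}}(P)$.

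\textbf{Main obstacle.} The delicate point is justifying the continuity properties. One needs that $D_k$ acts compatibly on $E(M)$ and $E^{\times}(M)$, which follows from exact interpolation, and that it is self-adjoint under trace duality. Self-adjointness is first checked on $(L_1\cap L_\infty)(M)$ via $\tau(E_n(a)b)=\tau(aE_n(b))$ and then extended by density. These facts are what make $H(P)$ weakly closed and the approximations legitimate.
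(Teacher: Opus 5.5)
Your proposal is correct and follows essentially the same route as the paper: reduce via $E_n(x)\to x$ to the finite sum $\sum_{k\in I,k\le n}D_k(x)$, approximate it by $\sum_{k\in I,k\le n}D_k(y)$ with $y\in(L_1\cap L_\infty)(M)$, and treat the dual case the same way in the w*-topology. Your explicit proof that $H(P)$ is weakly closed, via self-adjointness of $D_k$ under trace duality, supplies the w*-closedness of $E^{\times}(M)\cap H(P)$ that the paper leaves implicit in ``the dual statement is proved similarly''; one small correction is that $D_k=E_k-E_{k-1}$ has norm at most $2$ rather than being contractive, but the bound $2$ is all you actually use.
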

\end{mdframed}
\begin{proof}
It is clear that
\[E(M)\cap H(P)=\big\{x\in E(M)\ :\ \forall n\notin I,\ D_n(x)=0\big\}\] 
is a norm-closed subspace of $E(M)$ that contains $E(M)\cap P(L_2(M))$, and moreover $E(M)\cap P(L_2(M))$ clearly contains $(L_1\cap L_\infty)(M)\cap H(P)$. Thus, it suffices to show that $(L_1\cap L_\infty)(M)\cap H(P)$ is norm-dense in $E(M)\cap H(P)$. Let $x\in E(M)\cap H(P)$. As the sequence $(E_n(x))_{n\pg1}$ converges to $x$ in $E(M)$ for the norm topology and also belongs to $E(M)\cap H(P)$, we can assume that there is $n\pg1$ such that $E_n(x)=x$, so that we have
\[x=\sum_{k=1}^{n}D_k(x)=\sum_{k\in I,k\pp n}D_k(x).\]
As $(L_1\cap L_\infty)(M)$ is norm-dense in $E(M)$, there is a net $(y_\alpha)_\alpha$ of $(L_1\cap L_\infty)(M)$ that converges to $x$ in $E(M)$ for the norm topology. We set
\[x_\alpha:=\sum_{k\in I,k\pp n}D_k(y_\alpha).\]
Then $x_\alpha\in(L_1\cap L_\infty)(M)\cap H(P)$. As the net $(x_\alpha)_\alpha$ clearly converges to $x$ in $E(M)$ for the norm topology, the proof is complete. The dual statement is proved similarly.
\end{proof}

\begin{mdframed}[skipabove=10pt]
\begin{theo}\label{martingale_projection_theorem2}
The martingale projection $P$ satisfies the technical assumptions.
\end{theo}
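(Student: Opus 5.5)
We verify the three conditions of the technical assumptions one by one, relying throughout on Theorem \ref{martingale_projection_theorem1} and Corollary \ref{filtrations_corollary1}. Fix an exact interpolation space $E(M)$ for $(L_1(M),L_\infty(M))$ with order-continuous norm.

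\emph{Conditions (2) and (3).} For (2) we take $H(P)$ to be the space already defined,
\[H(P)=\big\{x\in(L_1+L_\infty)(M)\ :\ \forall n\notin I,\ D_n(x)=0\big\}.\]
We must check that it is weakly closed in $(L_1+L_\infty)(M)$ with respect to trace duality with $(L_1\cap L_\infty)(M)$. Each $D_n=E_n-E_{n-1}$ is trace-selfadjoint: for $x\in(L_1+L_\infty)(M)$ and $y\in(L_1\cap L_\infty)(M)$ we have $\tau(D_n(x)y)=\tau(xD_n(y))$, and $D_n(y)\in(L_1\cap L_\infty)(M)$. Since $(L_1\cap L_\infty)(M)$ separates the points of $(L_1+L_\infty)(M)$, the condition $D_n(x)=0$ is equivalent to $\tau(xD_n(y))=0$ for all $y\in(L_1\cap L_\infty)(M)$. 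Hence
\[H(P)=\bigcap_{n\notin I}\ \bigcap_{y\in(L_1\cap L_\infty)(M)}\ker\tau(\,\cdot\,D_n(y)),\]
an intersection of weakly closed hyperplanes, so $H(P)$ is weakly closed. The identities $H_E(P)=E(M)\cap H(P)$ and $H_{E^\times}(P)=E^\times(M)\cap H(P)$ are exactly Theorem \ref{martingale_projection_theorem1}. Condition (3) is the density part of the same theorem, since $(L_1\cap L_\infty)(M)\cap H(P)=(L_1\cap L_\infty)(M)\cap P(L_2(M))$.

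\emph{Condition (1), first inclusion.} We show that the annihilator of $H_E(P)$ in $E^\times(M)$ is $H_{E^\times}(P^\bot)$; the pairing is $\tau(xy^*)$, so we work with adjoints where needed. By the Remark at the start of Part 2, $H_{E^\times}(P^\bot)$ is contained in this orthogonal.

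\emph{Condition (1), converse inclusion.} Let $y\in E^\times(M)$ be orthogonal to $H_E(P)$, and fix $n\in I$. For $x\in(L_1\cap L_\infty)(M)$, the element $D_n(x)$ lies in $H_E(P)$. Using selfadjointness and the fact that $D_n$ commutes with the adjoint (conditional expectations are $*$-preserving), we get
\[0=\tau(D_n(x)y^*)=\tau\big(x\,D_n(y)^*\big).\]
This holds for all $x\in(L_1\cap L_\infty)(M)$, which separates points of $E^\times(M)$, so $D_n(y)=0$ for every $n\in I$. Thus $y\in E^\times(M)\cap H(P^\bot)$. By Theorem \ref{martingale_projection_theorem1} applied to $P^\bot$, which is the martingale projection for $I^\bot$, we conclude $y\in H_{E^\times}(P^\bot)$.

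\emph{Condition (1), dual case.} The orthogonal of $H_{E^\times}(P)$ in $E(M)$ is handled symmetrically: test against $D_n(x)$ with $x\in(L_1\cap L_\infty)(M)$, which lie in $H_{E^\times}(P)$ for $n\in I$. Corollary \ref{filtrations_corollary1} gives an alternative route, since $\tau(xy^*)=\sum_n\tau(D_n(x)D_n(y)^*)$ immediately shows orthogonality of elements with complementary supports in $n$.

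The main technical point is justifying the duality identities $\tau(D_n(x)y)=\tau(xD_n(y))$ for $x\in(L_1+L_\infty)(M)$ and $y$ in the relevant spaces. I would obtain this from the identity for $E_n$, valid on $L_1\times L_\infty$ and $L_\infty\times L_1$, extended by bilinearity. Everything else is direct.
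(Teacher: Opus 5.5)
Your proof is correct and takes the same route as the paper, which reduces conditions (2) and (3) to Theorem \ref{martingale_projection_theorem1} and calls condition (1) an easy consequence of that theorem. You supply the omitted details, and they check out: weak closedness of $H(P)$ via trace-selfadjointness of the $D_n$, and the polar computation by testing against $D_n(x)$ for $n\in I$, followed by Theorem \ref{martingale_projection_theorem1} applied to $P^{\bot}$.
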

\end{mdframed}
\begin{proof}
Let $E(M)$ be an exact interpolation space for $(L_1(M),L_\infty(M))$ with order\-/continuous norm. It suffices to show that the polar of $H_E(P)$, $H_{E^{\times}}(P)$ w.r.t. trace duality coincides respectively with $H_{E^{\times}}(P^{\bot})$, $H_E(P^{\bot})$. This is an easy consequence of Theorem \ref{martingale_projection_theorem1}.
\end{proof}

\begin{mdframed}[skipabove=10pt]
\begin{theo}\label{martingale_projection_theorem4}
The martingale projection $P$ is of Jones-type.
\end{theo}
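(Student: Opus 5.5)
To show that $P$ is of Jones-type, we check the three defining conditions in turn. Condition (1) is Theorem \ref{martingale_projection_theorem2}, so the work lies in (2) and (3).

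For (2), we use the noncommutative Burkholder--Gundy inequalities. These show that $(D_n)_{n\pg1}$ is an unconditional decomposition of $L_p(M)$ for $1<p<\infty$, with constant depending only on $p$. Concretely, for signs $\varepsilon_n=\pm1$ the martingale transform $\sum_n\varepsilon_nD_n$ is bounded on $L_p(M)$ with constant $c_p$; this is Pisier--Xu for $p\pg2$, then duality, or Randrianantoanina's weak type $(1,1)$ bound with Marcinkiewicz interpolation. Taking $\varepsilon_n=1$ for $n\in I$ and $\varepsilon_n=-1$ otherwise gives
\[P=\tfrac12\Big(I+\sum_n\varepsilon_nD_n\Big).\]
Hence $P$ is $L_p(M)$-bounded with a constant depending only on $p$, independently of $M$, the filtration and $I$. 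These constants are uniform under amplification $M\bar{\otimes}N$, which will also be needed later for Pisier's method assumptions.

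For (3), we need the $K$-closedness of $(H_1(P),H_2(P))$ in $(L_1(M),L_2(M))$ with a universal constant. The same statement for $P^{\bot}$ then follows by replacing $I$ with its complement. The plan is to import the main result of \cite{Moyart2024}, which essentially establishes exactly this for martingale projections.

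If a self-contained argument is preferred, the route is as follows. Given $x\in H_1(P)+H_2(P)$ and $t>0$, take a near-optimal decomposition $x=u_0+u_1$ with $u_0\in L_1(M)$ and $u_1\in L_2(M)$. We then want to correct it into a decomposition inside $H(P)$. Applying $P$ directly fails on $L_1(M)$, so the decomposition must first be adapted to the martingale structure. One way to do this is a noncommutative Gundy or Cuculescu decomposition of $u_0$ at the level determined by $t$. This splits $u_0$ into three pieces:
\begin{itemize}[nosep]
    \item[$\triangleright$] an $L_2$-good part, on which $P$ is bounded;
    \item[$\triangleright$] a part with controlled $L_1$ norm of its increment sum;
    \item[$\triangleright$] a part supported on a projection of small trace.
\end{itemize}
Applying $P$ to each piece separately, using $Pu_0+Pu_1=x$, gives the required $K$-functional bound. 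One also checks that $(L_1\cap L_\infty)(M)\cap H(P)$ is dense enough for the closures to agree with $H_1(P)$ and $H_2(P)$, which follows from Theorem \ref{martingale_projection_theorem1}.

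The main obstacle is step (3), specifically the $L_1$ endpoint, where $P$ is unbounded and a Gundy-type decomposition is required. Step (2) is standard, and (1) is already proved.
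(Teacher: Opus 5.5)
Your main route is the same as the paper's. Condition (1) is Theorem~\ref{martingale_projection_theorem2}. Condition (2) follows from the uniform $L_p$-boundedness of martingale transforms; the paper cites the weak type $(1,1)$ estimate of \cite{NarcisseMartingaleTransforms}. Condition (3) is imported from \cite{Moyart2024} for $I$ and again for its complement, since $P^{\bot}$ is also a martingale projection. The optional self-contained sketch is not needed, and as written it would not close the argument: the image under $P$ of the small-trace piece of a Gundy-type decomposition is controlled only in weak $L_1$, not in $L_1$, and cutting it down by a projection does not preserve $H(P)$.
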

\end{mdframed}
\begin{proof}
As a martingale transform, it is well-known (see \cite{NarcisseMartingaleTransforms}[Theorem 3.1]) that $P$ is of weak-type $(1,1)$ with a universal constant, and thus is is $L_p$-bounded with a constant depending only on $p$, for every $1<p<\infty$. Moreover, it is proved in \cite{Moyart2024}[Theorem 2.8] that the subcouple $(H_1(P),H_2(P))$ is $K$-closed in $(L_1(M),L_2(M))$ with a universal constant. As $P^{\bot}$ is also a martingale projection, we deduce that the subcouple $(H_1(P^{\bot}),H_2(P^{\bot}))$ is also $K$-closed in $(L_1(M),L_2(M))$ with a universal constant. Thus $P$ is indeed of Jones-type.
\end{proof}

\begin{lemm}\label{martingale_projection_lemma1}
Let $N$ be a tracial von Neumann algebra. Then the amplified projection $Q:=P\otimes I$ on $L_2(M\bar{\otimes}N)=L_2(M)\otimes L_2(N)$ is the martingale projection associated with the data $(M\bar{\otimes}N,(M_n\bar{\otimes}N)_{n\pg1},I)$.
\end{lemm}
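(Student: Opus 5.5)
The plan is to identify the increment projections of the amplified filtration and then to check the identity $Q=P\otimes I$ on a total set of elementary tensors. First I will verify that $(M_n\bar{\otimes}N)_{n\pg1}$ is a filtration of $M\bar{\otimes}N$ in the sense of the paper. It is increasing, and its union is w*-dense, because $\cup_n M_n\odot N$ is w*-dense in $M\bar{\otimes}N$. The restriction of $\tau\otimes\sigma$ to $M_n\bar{\otimes}N$ is again semifinite: take an increasing net of finite projections of $M_n$ (semifiniteness of $\tau|_{M_n}$ is implicit in the existence of a trace-preserving conditional expectation) and tensor it with finite projections of $N$. The trace-preserving conditional expectation of $M\bar{\otimes}N$ onto $M_n\bar{\otimes}N$ is then $E_n\otimes \mathrm{id}_N$. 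This is the standard tensor product of normal conditional expectations, and it is characterized by
\[(\tau\otimes\sigma)\big((E_n\otimes\mathrm{id})(w)\,v\big)=(\tau\otimes\sigma)(wv)\quad\text{for } v\in M_n\bar{\otimes}N,\]
which one checks on elementary tensors and extends by normality.

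Second, I will restrict to $L_2$. On $L_2(M\bar{\otimes}N)=L_2(M)\otimes L_2(N)$, the extension of $E_n\otimes\mathrm{id}$ is the orthogonal projection onto $L_2(M_n\bar{\otimes}N)=L_2(M_n)\otimes L_2(N)$. This projection is exactly $E_n\otimes I$, the Hilbertian tensor product of the orthogonal projection $E_n$ on $L_2(M)$ with the identity. Consequently the increment projections of the new filtration are $\widetilde D_n=D_n\otimes I$.

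Third, I will compare the two projections on elementary tensors. Let $\widetilde P$ denote the martingale projection of $(M\bar{\otimes}N,(M_n\bar{\otimes}N)_{n\pg1},I)$. For $x\in L_2(M)$ and $f\in L_2(N)$,
\[\widetilde P(x\otimes f)=\sum_{n\in I}D_n(x)\otimes f=P(x)\otimes f=(P\otimes I)(x\otimes f).\]
The series converges in $L_2$, since the $D_n\otimes I$ are mutually orthogonal and $\|D_n(x)\otimes f\|=\|D_n(x)\|\,\|f\|$. Both $\widetilde P$ and $Q$ are bounded operators, and elementary tensors are total in the Hilbertian tensor product, so $\widetilde P=Q$.

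The main technical point is the identification of the $L_2$-extension of $E_n\otimes\mathrm{id}_N$ with $E_n\otimes I$. I would settle it by showing that $E_n\otimes I$ is a self-adjoint idempotent on $L_2(M)\otimes L_2(N)$ whose range is the closure of $(L_2\cap L_\infty)(M_n)\odot(L_2\cap L_\infty)(N)$. That closure is $L_2(M_n\bar{\otimes}N)$, by density of the algebraic tensor product in $L_2$ of the von Neumann tensor product.
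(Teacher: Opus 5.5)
Your proof is correct. The paper states this lemma without proof, and your argument is the routine verification it has in mind: identify the $L_2$-extension of the amplified conditional expectations with $E_n\otimes I$, hence the new increments with $D_n\otimes I$, then compare $\widetilde P$ and $P\otimes I$ on a total set of elementary tensors, just as the paper does for the analogous Lemma \ref{subdiagonal_lemma} on Riesz projections. Note also that you only use semifiniteness of the trace on $N$, which is what is actually needed: Pisier's method assumptions quantify over all semifinite $N$, and the proof of Theorem \ref{theorem_PisierMethod} uses $N=L_\infty(\R)$ with the infinite trace $f\mapsto\int f(t)e^t\,dt$.
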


\begin{mdframed}[skipabove=10pt]
\begin{theo}\label{martingale_projection_theorem5}
The martingale projection $P$ satisfies Pisier's method assumptions.
\end{theo}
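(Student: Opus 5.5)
To verify Pisier's method assumptions, I fix an arbitrary semifinite von Neumann algebra $N$ and reduce everything to results already established for martingale projections. There are two things to check: that $Q:=P\otimes I$ is of Jones-type, and that $H_\infty(P^{\bot})\odot L_\infty(N)\subset H_\infty(Q^{\bot})$.

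\emph{Step 1: $Q$ is of Jones-type.} By Lemma \ref{martingale_projection_lemma1}, $Q$ is the martingale projection associated with the data $(M\bar{\otimes}N,(M_n\bar{\otimes}N)_{n\pg1},I)$. First I check that $(M_n\bar{\otimes}N)_{n\pg1}$ is a genuine filtration of $M\bar{\otimes}N$.
\begin{enumerate}[nosep]
    \item[$\triangleright$] The algebras increase, since the $M_n$ do.
    \item[$\triangleright$] Their union is w*-dense, because $\cup_n M_n\odot N$ is w*-dense in $M\bar{\otimes}N$.
    \item[$\triangleright$] The maps $E_n\otimes \mathrm{id}_N$ are trace-preserving normal faithful conditional expectations onto $M_n\bar{\otimes}N$, and the tensor trace restricted to $M_n\bar{\otimes}N$ is semifinite.
\end{enumerate}
Theorem \ref{martingale_projection_theorem4} then applies verbatim to this filtered algebra, so $Q$ is of Jones-type. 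Its constants are universal, since that theorem only uses the universal weak-type $(1,1)$ bound and the universal $K$-closedness from \cite{Moyart2024}.

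\emph{Step 2: the inclusion $H_\infty(P^{\bot})\odot L_\infty(N)\subset H_\infty(Q^{\bot})$.} Note that $Q^{\bot}=P^{\bot}\otimes I$ is the martingale projection attached to $I^{\bot}$ for the amplified filtration. Theorem \ref{martingale_projection_theorem1} applies only to order-continuous $E(M)$, which excludes $L_\infty=L_1^{\times}$; its dual statement with $E=L_1$ gives
\[H_\infty(Q^{\bot})=L_\infty(M\bar{\otimes}N)\cap H(Q^{\bot})=\{w\in M\bar{\otimes}N\ :\ \forall n\in I,\ D_n^{M\bar{\otimes}N}(w)=0\}.\]
The same description holds for $H_\infty(P^{\bot})$ inside $M$. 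Take $x\in H_\infty(P^{\bot})$ and $f\in L_\infty(N)$. The increments of the amplified filtration are $D_n\otimes\mathrm{id}_N$, so for $n\in I$,
\[D_n^{M\bar{\otimes}N}(x\otimes f)=D_n(x)\otimes f=0.\]
Hence $x\otimes f\in H_\infty(Q^{\bot})$, and the inclusion follows by linearity.

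\emph{Main obstacle.} The delicate point is the identification $D_n^{M\bar{\otimes}N}=D_n\otimes \mathrm{id}_N$ on $L_\infty$ (and on $L_1+L_\infty$), not merely on $L_2$ as in Lemma \ref{martingale_projection_lemma1}. I would handle it as follows. Both maps are normal, w*-continuous conditional-expectation differences, and they agree on the w*-dense subspace $(L_1\cap L_\infty)(M)\odot(L_1\cap L_\infty)(N)$. By uniqueness of the trace-preserving conditional expectation, the conditional expectation onto $M_n\bar{\otimes}N$ is $E_n\otimes\mathrm{id}_N$. This settles the identification.
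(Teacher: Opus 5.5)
Your proposal is correct and is essentially the paper's own argument. The paper's proof simply cites Lemma~\ref{martingale_projection_lemma1} with Theorem~\ref{martingale_projection_theorem4} to get that $Q$ is of Jones-type, and Theorem~\ref{martingale_projection_theorem1} (in its dual form $H_\infty(Q^{\bot})=L_\infty(M\bar{\otimes}N)\cap H(Q^{\bot})$) for the inclusion $H_\infty(P^{\bot})\odot L_\infty(N)\subset H_\infty(Q^{\bot})$, exactly as in your Steps 1 and 2. Your identification of the amplified conditional expectations with $E_n\otimes\mathrm{id}_N$, via uniqueness of the trace-preserving conditional expectation, correctly supplies the detail the paper leaves implicit; the appeal to agreement on a dense subspace is not even needed.
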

\end{mdframed}
\begin{proof} 
This is a straightforward consequence of Lemma \ref{martingale_projection_lemma1} together with Theorem \ref{martingale_projection_theorem1} and Theorem \ref{martingale_projection_theorem4}.
\end{proof}

\begin{mdframed}[skipabove=10pt]
\begin{coro}\label{martingale_projection_corollary1}
If $1<p<\infty$ and $0<\theta<1$, then
\[[H_p(P),H_1(P)]_\theta=H_{p_\theta}(P)\]
with equivalent norms, with constants depending only on $p,\theta$, where 
\[\frac{1}{p_\theta}=\frac{1-\theta}{p}+\theta.\]
\end{coro}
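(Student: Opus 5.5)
This corollary should follow by applying Theorem \ref{theorem_PisierMethod} (Pisier's method) to the martingale projection $P$ attached to $(M,(M_n)_{n\pg1},I)$. The only hypothesis to check is that $P$ satisfies Pisier's method assumptions, and this is the content of Theorem \ref{martingale_projection_theorem5}. The plan is therefore to confirm the ingredients of that theorem and then invoke the main theorem.

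The verification of Pisier's method assumptions proceeds in three steps.

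First, let $N$ be any semifinite von Neumann algebra. By Lemma \ref{martingale_projection_lemma1}, the amplification $Q=P\otimes I$ is again a martingale projection, attached to the data $(M\bar{\otimes}N,(M_n\bar{\otimes}N)_{n\pg1},I)$. Note that $(M_n\bar{\otimes}N)_{n\pg1}$ is a filtration of $M\bar{\otimes}N$: its union is w*-dense, and the trace-preserving conditional expectations are $E_n\otimes \mathrm{id}$.

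Second, Theorem \ref{martingale_projection_theorem4} applied in $M\bar{\otimes}N$ shows that $Q$ is of Jones-type. This rests on the universal weak-type $(1,1)$ bound for martingale transforms and on the $K$-closedness result of \cite{Moyart2024}. Both constants are universal, so they do not depend on $N$.

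Third, we need the inclusion $H_\infty(P^{\bot})\odot L_\infty(N)\subset H_\infty(Q^{\bot})$. By Theorem \ref{martingale_projection_theorem1} applied in $M\bar{\otimes}N$ with $E=L_1$, so that $E^\times=L_\infty$, we have
\[H_\infty(Q^{\bot})=\big\{w\in L_\infty(M\bar{\otimes}N)\ :\ \forall n\in I,\ D_n\otimes \mathrm{id}(w)=0\big\}.\]
For $x\in H_\infty(P^{\bot})$ and $f\in L_\infty(N)$ we then compute $(D_n\otimes\mathrm{id})(x\otimes f)=D_n(x)\otimes f=0$ for every $n\in I$, which gives the inclusion.

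The one step needing care is the identity $(E_n\otimes\mathrm{id})(x\otimes f)=E_n(x)\otimes f$ for $x\in M$ and $f\in N$, together with the identification of the $D_n$ for the amplified filtration with $D_n\otimes\mathrm{id}$. Both follow from the construction of the tensor product conditional expectation, and I regard this as the main (mild) obstacle.

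With the assumptions verified, Theorem \ref{theorem_PisierMethod} gives $[H_p(P),H_1(P)]_\theta=H_{p_\theta}(P)$ with constants depending only on $p$ and $\theta$. These constants are uniform in the filtration and in $I$, because the Jones-type constants and the bound on $\|Q\|_{L_{p_\theta,1}\to L_{p_\theta,1}}$ are universal (the latter by Marcinkiewicz interpolation from the weak $(1,1)$ and $L_2$ bounds).
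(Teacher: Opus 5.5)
Your proposal is correct and follows the paper's route: the corollary is Theorem \ref{theorem_PisierMethod} applied to $P$, with Pisier's method assumptions supplied by Theorem \ref{martingale_projection_theorem5}, whose proof combines Lemma \ref{martingale_projection_lemma1}, Theorem \ref{martingale_projection_theorem4} and Theorem \ref{martingale_projection_theorem1} exactly as you do. The paper calls the inclusion $H_\infty(P^{\bot})\odot L_\infty(N)\subset H_\infty(Q^{\bot})$ straightforward, and your verification of it, via the description of $H_\infty(Q^{\bot})$ as the common kernel of the $D_n\otimes\mathrm{id}$ for $n\in I$, is precisely that step made explicit.
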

\end{mdframed}
\vspace{5pt}

\section{Bimartingale projections}

Let $M$ be a semifinite von Neumann algebra equipped with two filtrations $(M_n^{-})_{n\pg1}$, $(M_n^{+})_{n\pg1}$ with associated conditional expectations denoted by $(E_n^{-})_{n\pg1}$, $(E_n^{+})_{n\pg1}$ and increment projections denoted by $(D_n^{-})_{n\pg1}$, $(D_n^{+})_{n\pg1}$ respectively. We will assume that the two  filtrations \textit{commute} in the sense that for every $m,n\pg1$, we have
\begin{equation*}
E_m^{-}E_n^{+}=E_n^{+}E_m^{-}.
\end{equation*}
We start with a preliminary result.

\begin{lemm}\label{bimartingale_projection_lemma1}
Let $E(M)$ be an exact interpolation space for $(L_1(M),L_\infty(M))$ with order\-/continuous norm. 
\begin{enumerate}[nosep]
    \item If $x\in E(M)$ then $(E_n^{-}E_n^{+}(x))_{n\pg1}$ converges to $x$ in $E(M)$ for the norm topology.
    \item If $y\in E^{\times}(M)$ then $(E_n^{-}E_n^{+}(y))_{n\pg1}$ converges to $y$ in $E^{\times}(M)$ for the \text{\upshape w*}\-/topology.
\end{enumerate}
\end{lemm}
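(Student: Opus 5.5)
We follow the proof of Theorem \ref{filtrations_theorem1}, with the single conditional expectations $E_n$ replaced by the products $F_n:=E_n^{-}E_n^{+}$. The first step is to record the structural facts about $F_n$. Since the two filtrations commute, $F_n=E_n^{+}E_n^{-}$. Moreover $F_n$ is a composition of two trace-preserving conditional expectations, so it extends to a contractive compatible operator on $(L_1(M),L_\infty(M))$. Because $E(M)$ and $E^{\times}(M)$ are exact interpolation spaces, $F_n$ is therefore contractive on both of them. Finally, $F_n$ is self-adjoint for trace duality: $\tau(F_n(x)y)=\tau(xF_n(y))$, since each factor is.

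The key observation for density is that $F_n$ fixes $(L_1\cap L_\infty)(M_k^{-}\cap M_k^{+})$ once $n\geq k$. Rather than relying on the w*-density of $\cup_k(M_k^{-}\cap M_k^{+})$, which might fail, I would argue as follows. Let $x\in E(M)$ and $\epsilon>0$. By Theorem \ref{filtrations_theorem1} applied to the filtration $(M_n^{+})$, choose $m$ with $\|x-E_m^{+}(x)\|_{E(M)}<\epsilon$. Then, applying Theorem \ref{filtrations_theorem1} to $(M_n^{-})$ and the element $E_m^{+}(x)$, choose $k\geq m$ with $\|E_m^{+}(x)-E_k^{-}E_m^{+}(x)\|_{E(M)}<\epsilon$. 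Set $y:=E_k^{-}E_m^{+}(x)$.

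For $n\geq k$, commutation together with $E_n^{-}E_k^{-}=E_k^{-}$ and $E_n^{+}E_m^{+}=E_m^{+}$ gives
\[F_n(y)=E_n^{-}E_n^{+}E_k^{-}E_m^{+}(x)=E_n^{-}E_k^{-}E_n^{+}E_m^{+}(x)=E_k^{-}E_m^{+}(x)=y.\]
Hence, for $n\geq k$,
\[\|F_n(x)-x\|_{E(M)}\leq\|F_n(x-y)\|_{E(M)}+\|y-x\|_{E(M)}\leq2\|x-y\|_{E(M)}<4\epsilon,\]
which proves the first point. It is worth noting that this argument uses order-continuity only through Theorem \ref{filtrations_theorem1}.

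For the second point, take $y\in E^{\times}(M)$ and $x\in E(M)$. Self-adjointness and the first point give
\[\tau(xF_n(y))=\tau(F_n(x)y)\to\tau(xy),\]
because $|\tau((F_n(x)-x)y)|\leq\|F_n(x)-x\|_{E(M)}\|y\|_{E^{\times}(M)}$ by the generalized H\"older inequality. Since $E(M)$ is the predual of $E^{\times}(M)$, this is exactly w*-convergence.

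The only delicate point is the identity $F_n(y)=y$. This is where the commutation hypothesis is genuinely used, since it is what allows $E_n^{+}$ to pass across $E_k^{-}$.
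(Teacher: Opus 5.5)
Your proof is correct, but it takes a longer route than the paper's. The paper uses the decomposition $E_n^{-}E_n^{+}(x)-x=E_n^{-}\bigl(E_n^{+}(x)-x\bigr)+\bigl(E_n^{-}(x)-x\bigr)$. Together with contractivity of $E_n^{-}$ on $E(M)$, this gives
\[
\|E_n^{-}E_n^{+}(x)-x\|_{E(M)}\leq\|E_n^{+}(x)-x\|_{E(M)}+\|E_n^{-}(x)-x\|_{E(M)},
\]
and both terms tend to $0$ by Theorem \ref{filtrations_theorem1} applied to each filtration. The second point is then exactly your duality argument.

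So the first point needs no commutation at all. Your closing remark that commutation is ``genuinely used'' is true of your construction of an eventually fixed approximant $E_k^{-}E_m^{+}(x)$, but not of the statement itself. Commutation does enter both proofs through the symmetry $\tau(E_n^{-}E_n^{+}(x)y)=\tau(xE_n^{-}E_n^{+}(y))$ in the second point. Even there it could be avoided: apply the first point to $E_n^{+}E_n^{-}$, since $\tau(xE_n^{-}E_n^{+}(y))=\tau(E_n^{+}E_n^{-}(x)y)$ without any commutation.

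What your approach buys is the extra information that $E_n^{-}E_n^{+}$ eventually fixes a norm-dense family of elements. This is the kind of reduction the paper performs later in Theorem \ref{bimartingale_projection_theorem1}. The paper's approach buys brevity, and a first point that holds for any pair of filtrations.
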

\begin{proof}
If $x\in E(M)$, then 
\begin{align*}
\|E_n^{-}E_n^{+}(x)-x\|_{E(M)}&=\|E_n^{-}(E_n^{+}(x)-x)+E_n^{-}(x)-x\|_{E(M)}\\
&\pp\|E_n^{+}(x)-x\|_{E(M)}+\|E_n^{-}(x)-x\|_{E(M)}\underset{n\to\infty}{\to}0.
\end{align*}
Now, if $y\in E^{\times}(M)$ and $x\in E(M)$ then 
\[\tau(xE_n^{-}E_n^{+}(y))=\tau(E_n^{-}E_n^{+}(x)y)\underset{n\to\infty}{\to}\tau(xy).\]
\end{proof}

Now, let $I^{-},I^{+}$ be two fixed sets of positive integers, and let $P$ be the \textit{bimartingale projection} associated with the data $(M,(M_n^{-})_{n\pg1},(M_n^{+})_{n\pg1},I^{-},I^{+})$, i.e. the projection on $L_2(M)$ such that, if $x\in L_2(M)$ then
\[P(x)=\sum_{m\in I^{-},n\in I^{+}}D_m^{-}D_n^{+}(x),\ \ \ \ \text{in}\ L_2(M).\]
In the following, we denote
\[H(P):=\big\{x\in(L_1+L_\infty)(M)\ :\ \forall m\notin I^{-},\ D_m^{-}(x)=0,\ \forall n\notin I^{+},\ D_n^{+}(x)=0\big\}\]
so that we have
\[P(L_2(M))=L_2(M)\cap H(P).\]

\begin{mdframed}[skipabove=10pt]
\begin{theo}\label{bimartingale_projection_theorem1}
Let $E(M)$ be an exact interpolation space for $(L_1(M),L_\infty(M))$ with order\-/continuous norm. Then we have
\[H_E(P)=E(M)\cap H(P),\]
\[H_{E^{\times}}(P)=E^{\times}(M)\cap H(P).\]
Moreover, the subspace
\[(L_1\cap L_\infty)(M)\cap H(P)\]
is norm-dense in $H_E(P)$ and \text{\upshape w*}-dense in $H_{E^{\times}}(P)$.
\end{theo}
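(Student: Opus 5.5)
I will follow the proof of Theorem \ref{martingale_projection_theorem1} closely, with Lemma \ref{bimartingale_projection_lemma1} replacing Theorem \ref{filtrations_theorem1}.

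First, $E(M)\cap H(P)$ is a norm-closed subspace of $E(M)$. Indeed, each $D_m^{-}$ and $D_n^{+}$ is a contractive compatible operator on $(L_1(M),L_\infty(M))$ up to a factor $2$, so it is bounded on $E(M)$. Second, $E(M)\cap H(P)$ contains $E(M)\cap P(L_2(M))$. To see this, take $x\in L_2(M)$ with $P(x)=x$. The commutation $E_m^{-}E_n^{+}=E_n^{+}E_m^{-}$ gives that all the $D_m^{-}$, $D_n^{+}$ commute, so $D_{m}^{-}P=0$ for $m\notin I^{-}$ and $D_n^{+}P=0$ for $n\notin I^{+}$. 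Third, $E(M)\cap P(L_2(M))$ contains $(L_1\cap L_\infty)(M)\cap H(P)$, since on $L_2(M)$ membership in $H(P)$ means $x=\sum_{m\in I^-,n\in I^+}D_m^-D_n^+x=P(x)$. Therefore it suffices to prove that $(L_1\cap L_\infty)(M)\cap H(P)$ is norm-dense in $E(M)\cap H(P)$. Then $H_E(P)$, which is the weak closure and hence the norm closure since the subspace is convex, equals $E(M)\cap H(P)$, and the density statement follows.

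Let $x\in E(M)\cap H(P)$. The operators $E_n^{-}E_n^{+}$ commute with all $D_m^{\pm}$, so $E_n^{-}E_n^{+}(x)\in E(M)\cap H(P)$. By Lemma \ref{bimartingale_projection_lemma1}(1), $E_n^{-}E_n^{+}(x)\to x$ in norm. We may therefore assume $x=E_n^{-}E_n^{+}(x)$ for some $n$. Expanding $E_n^{-}=\sum_{m\le n}D_m^{-}$ and $E_n^{+}=\sum_{k\le n}D_k^{+}$ and using $x\in H(P)$, we get
\[x=\sum_{m\in I^-,m\le n}\ \sum_{k\in I^+,k\le n}D_m^{-}D_k^{+}(x).\]
This is a finite sum. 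Now pick a net $(y_\alpha)$ in $(L_1\cap L_\infty)(M)$ converging to $x$ in $E(M)$, and set $x_\alpha:=\sum_{m\in I^-,m\le n}\sum_{k\in I^+,k\le n}D_m^-D_k^+(y_\alpha)$. Then $x_\alpha\in (L_1\cap L_\infty)(M)$. It also lies in $H(P)$: by commutativity and orthogonality of the increments, $D_{m'}^-x_\alpha=0$ for $m'\notin I^-$, and likewise for $D^+$. Finally, $x_\alpha\to x$ in norm because the operators applied are finitely many and bounded on $E(M)$.

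The dual statement is handled the same way with the w*-topology. Here $E^{\times}(M)\cap H(P)$ is w*-closed, since each $D_m^{\pm}$ is w*-continuous on $E^\times(M)$ as the adjoint of its action on $E(M)$ under trace duality. Lemma \ref{bimartingale_projection_lemma1}(2) reduces to $y=E_n^-E_n^+(y)$. We then approximate $y$ in the w*-topology by elements of $(L_1\cap L_\infty)(M)$, which is w*-dense in $E^\times(M)$, and apply the same finite sum of increments, which is w*-continuous.

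The main point needing care is the algebra of the commuting increments: that $D_m^{\pm}$ commute with each other and with $E_n^{\mp}$, and that the projection $P$ and the space $H(P)$ correspond correctly. This all follows from $E_m^{-}E_n^{+}=E_n^{+}E_m^{-}$ by expanding the differences $D=E_k-E_{k-1}$.
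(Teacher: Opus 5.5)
Your proposal is correct and follows the paper's proof essentially verbatim. You sandwich $E(M)\cap P(L_2(M))$ between $(L_1\cap L_\infty)(M)\cap H(P)$ and the closed space $E(M)\cap H(P)$, reduce via Lemma \ref{bimartingale_projection_lemma1} to $x=E_n^-E_n^+(x)$, expand into the finitely many increments $D_m^-D_k^+$ with $m\in I^-$, $k\in I^+$, and apply these to an approximating net from $(L_1\cap L_\infty)(M)$. Your treatment of the w*-case (w*-closedness of $E^\times(M)\cap H(P)$, and w*-continuity of the increments as adjoints under trace duality) just makes explicit what the paper dismisses as ``proved similarly.''
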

\end{mdframed}
\begin{proof}
It is clear that 
\[E(M)\cap H(P):=\big\{x\in E(M)\ :\ \forall m\notin I^{-},\ D_m^{-}(x)=0,\ \forall n\notin I^{+},\ D_n^{+}(x)=0\big\}\]
is a norm-closed subspace of $E(M)$ that contains $E(M)\cap P(L_2(M))$, and moreover $E(M)\cap P(L_2(M))$ clearly contains $(L_1\cap L_\infty)(M)\cap H(P)$. Thus, it suffices to show that $(L_1\cap L_\infty)(M)\cap H(P)$ is norm-dense in $E(M)\cap H(P)$. Let $x\in E(M)\cap H(P)$. As the sequence $(E_n^{-}E_n^{+}(x))_{n\pg1}$ converges to $x$ in $E(M)$ for the norm topology and also belongs to $E(M)\cap H(P)$, we can assume that there is $n\pg1$ such that $E_n^{-}(x)=E_n^{+}(x)=x$, so that we have
\[x=\sum_{i,j=1}^{n}D_i^{-}D_j^{+}(x)=\sum_{i\in I^{-},j\in I^{+},i,j\pp n}D_i^{-}D_j^{+}(x).\]
As $(L_1\cap L_\infty)(M)$ is norm-dense in $E(M)$, there is a net $(y_\alpha)_\alpha$ of $(L_1\cap L_\infty)(M)$ that converges to $x$ in $E(M)$ for the norm topology. We set
\[x_\alpha:=\sum_{i\in I^{-},j\in I^{+},i,j\pp n}D_i^{-}D_j^{+}(y_\alpha).\]
Then $x_\alpha\in(L_1\cap L_\infty)(M)\cap H(P)$. As the net $(x_\alpha)_\alpha$ clearly converges to $x$ in $E(M)$ for the norm topology, the proof is complete. The dual statement is proved similarly.    
\end{proof}

\begin{mdframed}[skipabove=10pt]
\begin{theo}\label{bimartingale_projection_theorem2}
The bimartingale projection $P$ satisfies the technical assumptions.
\end{theo}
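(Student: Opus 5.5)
We verify the three conditions of the technical assumptions in turn, following the pattern of Theorem \ref{martingale_projection_theorem2} and using Theorem \ref{bimartingale_projection_theorem1} as the main input.

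\textbf{Conditions (2) and (3).} We take
\[H(P)=\big\{x\in(L_1+L_\infty)(M)\ :\ D_m^{-}(x)=0\ \forall m\notin I^{-},\ D_n^{+}(x)=0\ \forall n\notin I^{+}\big\}.\]
This space is weakly closed in $(L_1+L_\infty)(M)$. Indeed, each $D_m^{\pm}$ is self-adjoint with respect to trace duality and preserves $(L_1\cap L_\infty)(M)$. Hence $D_m^{-}(x)=0$ if and only if $\tau(xD_m^{-}(w))=0$ for all $w\in(L_1\cap L_\infty)(M)$, so $H(P)$ is an intersection of kernels of weakly continuous functionals. Condition (2) and condition (3) are then exactly the content of Theorem \ref{bimartingale_projection_theorem1}.

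\textbf{Condition (1): description of $P^{\bot}$.} This is the only real work, because $P^{\bot}$ is \emph{not} a bimartingale projection of the same form. I would describe it as
\[P^{\bot}=P_{I^{-,c}}^{-}+P_{I^{-}}^{-}P_{I^{+,c}}^{+},\]
where $P^{\pm}_J:=\sum_{n\in J}D_n^{\pm}$ are commuting martingale projections. Thus $H_E(P^{\bot})$ should be the closure of the corresponding $L_2$-range intersected with $E(M)$.

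\textbf{Condition (1): the first orthogonality.} We must show $H_E(P)^{\bot}=H_{E^{\times}}(P^{\bot})$, the orthogonal being taken in $E^{\times}(M)$. The inclusion $\supseteq$ is the Remark after the definition of abstract Hardy spaces. For $\subseteq$, let $y\in E^{\times}(M)$ annihilate $H_E(P)$.
\begin{itemize}
\item Set $y_n:=E_n^{-}E_n^{+}(y)$. By Lemma \ref{bimartingale_projection_lemma1}, $y_n\to y$ in the w*-topology. Since $E_n^{-}E_n^{+}$ is trace-self-adjoint and maps $H_E(P)$ into itself (it commutes with all $D_m^{\pm}$), each $y_n$ still annihilates $H_E(P)$.
\item Write $y_n=\sum_{i,j\pp n}D_i^{-}D_j^{+}(y)$. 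By duality against $D_i^{-}D_j^{+}(x)$ with $x\in(L_1\cap L_\infty)(M)$, every term with $i\in I^{-}$, $j\in I^{+}$ vanishes.
\item Each remaining term $D_i^{-}D_j^{+}(y)$ lies in $E^{\times}(M)\cap P^{\bot}$-range. It can be approximated in the w*-topology by elements $D_i^{-}D_j^{+}(w_\alpha)$ with $w_\alpha\in(L_1\cap L_\infty)(M)$ (using w*-density of $L_1\cap L_\infty$ and w*-continuity of the finitely many $D$'s). These approximants lie in $L_2(M)\cap E^{\times}(M)\cap P^{\bot}(L_2(M))$.
\end{itemize}
Hence $y\in H_{E^{\times}}(P^{\bot})$.

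\textbf{Condition (1): the second orthogonality.} The statement $H_{E^{\times}}(P)^{\bot}=H_E(P^{\bot})$ is proved symmetrically, using norm convergence in Lemma \ref{bimartingale_projection_lemma1}(1).

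\textbf{Main obstacle.} I expect the main obstacle to be the bookkeeping in the first orthogonality: justifying that the finite truncations $y_n$ preserve orthogonality, and that the surviving blocks genuinely lie in $H_{E^{\times}}(P^{\bot})$ rather than merely in some larger space. The commutation $E_m^{-}E_n^{+}=E_n^{+}E_m^{-}$ is used throughout to make all $D_i^{-}D_j^{+}$ mutually orthogonal, trace-self-adjoint projections on $(L_1+L_\infty)(M)$.
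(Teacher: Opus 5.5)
Your proof is correct. For conditions (2) and (3) it agrees with the paper: both reduce to Theorem \ref{bimartingale_projection_theorem1}, and you also verify the weak closedness of $H(P)$, which the paper leaves implicit. For condition (1), however, you take a different route.

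The paper factors $P=P_{-}P_{+}$ with $P_{\pm}=\sum_{n\in I^{\pm}}D_n^{\pm}$ and notes that $H_E(P)=H_E(P_{-})\cap H_E(P_{+})$. It then combines the martingale case (Theorem \ref{martingale_projection_theorem2}) with the duality identity $(A\cap B)^{\bot}=\overline{A^{\bot}+B^{\bot}}^{\,\mathrm{w}*}$. This shows that the polar of $H_E(P)$ is the w*-closure of $H_{E^{\times}}(P_{-}^{\bot})+H_{E^{\times}}(P_{+}^{\bot})$. It remains only to check that this sum lies in the w*-closure of $E^{\times}(M)\cap P^{\bot}(L_2(M))$, which the paper does with $(L_1\cap L_\infty)(M)$-approximants annihilated by $P_{\pm}$, hence by $P$.

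You instead truncate with $E_n^{-}E_n^{+}$ and split $y_n$ into the finitely many blocks $D_i^{-}D_j^{+}(y)$. You kill the blocks with $(i,j)\in I^{-}\times I^{+}$ by testing against $D_i^{-}D_j^{+}\big((L_1\cap L_\infty)(M)\big)\subset H_E(P)$. You approximate the remaining blocks by $D_i^{-}D_j^{+}(w_\alpha)\in(L_1\cap L_\infty)(M)\cap P^{\bot}(L_2(M))$.

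What each approach buys:
\begin{itemize}
\item Your argument is self-contained: it needs neither the martingale case nor the polar-of-an-intersection duality. It also yields directly that $(L_1\cap L_\infty)(M)\cap P^{\bot}(L_2(M))$ is w*-dense (resp.\ norm-dense) in the polar, i.e.\ condition (3) for $P^{\bot}$.
\item The paper's argument is shorter and more structural. It relies only on the factorization into commuting projections already known to satisfy the technical assumptions, together with $H_E(P)=H_E(P_{-})\cap H_E(P_{+})$, and not on the explicit block structure.
\end{itemize}

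Two cosmetic remarks. First, the observation that each $y_n$ still annihilates $H_E(P)$ is never used, since the vanishing of the $I^{-}\times I^{+}$ blocks is derived from $y$ itself. Second, drop the phrase ``lies in $E^{\times}(M)\cap P^{\bot}$-range'', because $D_i^{-}D_j^{+}(y)$ need not belong to $L_2(M)$; it is your subsequent approximation that places it in $H_{E^{\times}}(P^{\bot})$.
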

\end{mdframed}
\begin{proof}
Let $E(M)$ be an exact interpolation space for $(L_1(M),L_\infty(M))$ with order\-/continuous norm. It suffices to show that the polar of $H_E(P)$, $H_{E^{\times}}(P)$ w.r.t. trace duality coincides respectively with $H_{E^{\times}}(P^{\bot})$, $H_E(P^{\bot})$. Let $P_-$ and $P_+$ denote the two commuting projections on $L_2(M)$ such that, if $x\in L_2(M)$, then 
\[P_-(x)=\sum_{m\in I^{-}}D_m^{-}(x),\ \ \ \ \text{in}\ L_2(M),\]
\[P_+(x)=\sum_{n\in I^{+}}D_n^{+}(x),\ \ \ \ \text{in}\ L_2(M).\]
Then we clearly have 
\[P=P_{-}P_+=P_{+}P_-\]
and 
\[H_E(P)=H_E(P_-)\cap H_E(P_{+}).\]
As a consequence of the previous section, the polar of $H_E(P)$ w.r.t. trace duality is the w*-closure of $H_{E^{\times}}(P_{-}^{\bot})+H_{E^{\times}}(P_{+}^{\bot})$. Thus, it suffices to show that $P^{\bot}(L_2(M))\cap E^{\times}(M)$ is w*-dense in $H_{E^{\times}}(P_{-}^{\bot})+H_{E^{\times}}(P_{+}^{\bot})$. Let $x^{\pm}\in H_{E^{\times}}(P_{\pm}^{\bot})$. From the previous section we know that there is a net $(x_\alpha^{\pm})_\alpha$ of $(L_1\cap L_\infty)(M)$ that converges to $x^{\pm}$ in $E^{\times}(M)$ for the w*-topology and such that $D_n^{\pm}(x)=0$ for every $n\in I^{\pm}$. As $P_{\pm}(x_\alpha^{\pm})=0$, we deduce that $P(x_\alpha^{-}+x_\alpha^{+})=0$. Thus $x^{-}+x^{+}$ is in the w*-closure of $P^{\bot}(L_2(M))\cap E^{\times}(M)$, as desired. The dual statement is proved similarly.
\end{proof}

Now we introduce two assumptions on $(M,(M_n^{-})_{n\pg1},(M_n^{+})_{n\pg1},I^{-},I^{+})$.

\begin{mdframed}[backgroundcolor=black!10,rightline=false,leftline=false,topline=false,bottomline=false,skipabove=10pt]
$\mathbf{(\star)}$ For every $m\notin I^{-}$ and $n\notin I^{+}$, we have
\begin{equation}
D_m^{-}D_n^{+}=D_n^{+}D_m^{-}=0.
\end{equation}
\end{mdframed}
\vspace{5pt}

\begin{mdframed}[backgroundcolor=black!10,rightline=false,leftline=false,topline=false,bottomline=false,skipabove=10pt]
$\mathbf{(\star\star)}$ We have $I^{-}=\{2m-1\ :\ m\pg1\}$, $I^{+}=\{2n\ :\ n\pg1\}$, there are semifinite von Neumann algebras $A$ and $B$ equipped with filtrations $(A_n)_{n\pg1}$ and $(B_n)_{n\pg1}$ such that $M=A\bar{\otimes}B$, and such that, for $n\pg1$ we have
\begin{equation*}
\begin{aligned}
M_{2n-1}^{-}:=A_n\bar{\otimes}B_n,\ \ \ \ \ \ \ \ \ \ \ \ \ M_{2n}^{-}:=A_{n+1}\bar{\otimes}B_n,\\
M_{2n-1}^{+}:=A_n\bar{\otimes}B_{n+1},\ \ \ \ \ \ \ \ \ \ \ \ \ M_{2n}^{+}:=A_{n+1}\bar{\otimes}B_{n+1}.
\end{aligned}
\end{equation*}
\end{mdframed}
\vspace{5pt}

\begin{lemm}\label{bimartingale_projection_lemma4}
$\mathbf{(\star\star)}$ is stronger than $\mathbf{(\star)}$.
\end{lemm}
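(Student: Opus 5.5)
The plan is to make both families of conditional expectations explicit under $\mathbf{(\star\star)}$, compute the increment projections $D_m^{-}$, $D_n^{+}$ as elementary tensors of one-filtration operators, and then check directly that every product in $\mathbf{(\star)}$ vanishes.

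Let $(E_n^A)_{n\pg1}$, $(E_n^B)_{n\pg1}$ denote the conditional expectations of the filtrations of $A$ and $B$, and $(D_n^A)$, $(D_n^B)$ their increments, with $E_0^A=E_0^B=0$. The first step is to recall that on $L_2(M)=L_2(A)\otimes L_2(B)$ the trace-preserving conditional expectation onto $A_n\bar{\otimes}B_m$ is $E_n^A\otimes E_m^B$. This holds on elementary tensors by uniqueness of the conditional expectation, and extends by density. Consequently
\[E_{2n-1}^{-}=E_n^A\otimes E_n^B,\quad E_{2n}^{-}=E_{n+1}^A\otimes E_n^B,\quad E_{2n-1}^{+}=E_n^A\otimes E_{n+1}^B,\quad E_{2n}^{+}=E_{n+1}^A\otimes E_{n+1}^B.\]
From these formulas the two filtrations visibly commute, since all the $E^A$'s commute among themselves and all the $E^B$'s commute among themselves.

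The second step computes the relevant increments. The indices not in $I^{-}$ are the even integers $m=2k$ with $k\pg1$, and
\[D_{2k}^{-}=E_{k+1}^A\otimes E_k^B-E_k^A\otimes E_k^B=D_{k+1}^A\otimes E_k^B.\]
The indices not in $I^{+}$ are the odd integers $n=2l-1$. For $l\pg2$ we have
\[D_{2l-1}^{+}=E_l^A\otimes E_{l+1}^B-E_l^A\otimes E_l^B=E_l^A\otimes D_{l+1}^B.\]
For $l=1$ the convention $E_0^{+}=0$ gives $D_1^{+}=E_1^A\otimes E_2^B$.

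The third step is the product computation:
\[D_{2k}^{-}D_{2l-1}^{+}=(D_{k+1}^AE_l^A)\otimes(E_k^BD_{l+1}^B),\]
and the reverse product gives the same expression because the factors commute. Using $D_j^AE_l^A=0$ for $j>l$ and $E_k^BD_j^B=0$ for $j>k$, the first factor vanishes unless $k+1\pp l$, and the second vanishes unless $l+1\pp k$. These two conditions are incompatible, so the product is $0$. In the case $l=1$ the first factor is $D_{k+1}^AE_1^A$, which is $0$ since $k+1\pg2$.

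The only mildly delicate point is the first step, namely identifying the conditional expectations onto $A_n\bar{\otimes}B_m$ as tensor products on $L_2$, together with the boundary case $l=1$. Everything else is bookkeeping with the indices.
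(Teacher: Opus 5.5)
Your proof is correct and is essentially the paper's argument. Like the paper, you write $D_{2k}^{-}=D_{k+1}^A\otimes E_k^B$ and $D_{2l-1}^{+}=E_l^A\otimes D_{l+1}^B$ (treating $D_1^{+}=E_1^A\otimes E_2^B$ separately), then observe that the $A$-factor vanishes when $k\pg l$ and the $B$-factor vanishes when $k\pp l$; your explicit handling of the boundary case $l=1$ and of the reverse product is slightly cleaner than the paper's write-up, which states its general formula for all $n\pg1$ although it only holds for $n\pg2$.
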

\begin{proof}
Assume $\mathbf{(\star\star)}$ holds. Let $(E_n^A)_{n\pg1}$ and $(E_n^B)_{n\pg1}$ respectively denote the the conditional expectations associated with the filtrations $(A_n)_{n\pg1}$ and $(B_n)_{n\pg1}$. Then, if $m\pg1$ we have
\[D^{-}_{2m}D^{+}_1=(E^A_{m+1}\otimes E^B_m-E^A_m\otimes E^B_m)(E^A_1\otimes E_2^B)=(E^A_{m+1}-E^A_m)E_1^A\otimes E_m^BE_1^B=0.\]
If $m\pg1$ and $n\pg1$, we have
\begin{align*}
    D^{-}_{2m}D^{+}_{2n-1}&=(E^A_{m+1}\otimes E^B_m-E^A_m\otimes E^B_m)(E^A_n\otimes E^B_{n+1}-E^A_n\otimes E^B_n)\\
    &=\big((E^A_{m+1}-E^A_m)\otimes E^B_m\big)\big((E^A_n\otimes(E^B_{n+1}-E^B_n)\big)\\
    &=E_n^A(E^A_{m+1}-E^A_m)\otimes E^B_m(E^B_{n+1}-E^B_n).
\end{align*}
But if $m\pp n$, then $E^B_m(E^B_{n+1}-E^B_n)=0$, and if $m\pg n$, then $E_n^A(E^A_{m+1}-E^A_m)=0$. Thus, $D^{-}_{2m}D^{+}_{2n-1}$ is always $0$, as required.
\end{proof}

\begin{mdframed}[skipabove=10pt]
\begin{theo}\label{bimartingale_projection_theorem7}
Assume $\mathbf{(\star\star)}$ holds. Then the bimartingale projection $P$ is of Jones-type.
\end{theo}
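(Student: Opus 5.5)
We have to check the three conditions in the definition of a Jones-type projection. Condition (1), the technical assumptions, is Theorem \ref{bimartingale_projection_theorem2}. The substantive work is in conditions (2) and (3).

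\textbf{Step 1: reduction to a single martingale projection.} Under $\mathbf{(\star)}$, which holds by Lemma \ref{bimartingale_projection_lemma4}, $D_m^-D_n^+=0$ for $m\notin I^-$ and $n\notin I^+$. Hence $P_-^{\bot}P_+^{\bot}=0$, and therefore
\[P^{\bot}=I-P_-P_+=P_-^{\bot}+P_+^{\bot}.\]
Under $\mathbf{(\star\star)}$ the filtrations interlace: $M_{2n-1}^-\subset M_{2n-1}^+\subset M_{2n}^-\subset M_{2n}^+\subset M_{2n+1}^-$. So there is a single filtration $(\mathcal{M}_k)_{k\pg1}$ with $\mathcal{M}_1=A_1\bar\otimes B_1$, $\mathcal{M}_2=A_1\bar\otimes B_2$, $\mathcal{M}_3=A_2\bar\otimes B_2,\dots$, obtained by alternately enlarging the $B$-factor and the $A$-factor. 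We then check directly, in the style of Lemma \ref{bimartingale_projection_lemma4}, that $P$ is the martingale projection for this filtration and a suitable index set $I$. Concretely, $P$ keeps exactly the increments of $(\mathcal{M}_k)$ that are simultaneously increments of $M^-$ at odd times and of $M^+$ at even times.

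The concrete computation is as follows. With $\Delta^A_n:=E^A_{n+1}-E^A_n$ and $\Delta^B_n:=E^B_{n+1}-E^B_n$, the increments of $(\mathcal{M}_k)$ are of the form $E^A_n\otimes\Delta^B_n$ or $\Delta^A_n\otimes E^B_{n+1}$. One checks that $D^-_{2m-1}$, $D^+_{2n}$, $D^-_{2m}$ and $D^+_{2n-1}$ are sums of these increments; for instance $D^-_{2m}=\Delta^A_m\otimes E^B_m$. Since everything is a product of commuting conditional expectations, $P=P_-P_+$ is a sum of such increments. If this identification holds, conditions (2) and (3) follow from Theorem \ref{martingale_projection_theorem4} applied to $(M,(\mathcal{M}_k),I)$.

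\textbf{Step 2: fallback if $P$ is not a single martingale projection.} For (2), $P=P_-P_+$ is a product of two martingale transforms, so its $L_p$-bound is at most $\|P_-\|_{p}\|P_+\|_{p}$, which is controlled by \cite{NarcisseMartingaleTransforms}. For (3), we treat the two subcouples separately.

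For $P^{\bot}=P_-^{\bot}+P_+^{\bot}$, the $K$-closedness of $(H_1(P^{\bot}),H_2(P^{\bot}))$ reduces to decomposing $x=x_0+x_1$ with $x_0\in L_1$ and $x_1\in L_2$. We would apply Moyart's $K$-closedness \cite{Moyart2024}[Theorem 2.8] to the martingale projections $P_\pm^{\bot}$. The goal is to split $x=P_-^{\bot}x+P_-P_+^{\bot}x$ and to control each piece by decompositions inside $H(P_\pm^{\bot})$. Here the orthogonality $\mathbf{(\star)}$ is what keeps the cross terms $P_-^{\bot}P_+^{\bot}$ from appearing.

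For $P$ itself, $H_p(P)=H_p(P_-)\cap H_p(P_+)$, and intersections are the difficult direction. For this we would rely on the interlaced single filtration from Step 1, which rewrites the intersection as one martingale Hardy space.

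\textbf{Expected obstacle.} The main obstacle is the $K$-closedness of $(H_1(P),H_2(P))$. I expect the proof to hinge on verifying the interlacing identity in Step 1 carefully, including the boundary index $D_1^\pm$. That identity is what makes the intersection tractable, after which the result reduces cleanly to Theorem \ref{martingale_projection_theorem4}.
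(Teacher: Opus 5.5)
\textbf{Step 1 is false, and it is the load-bearing step.} Your handling of conditions (1) and (2) is fine and matches the paper: Theorem \ref{bimartingale_projection_theorem2} gives (1), and $P=P_-P_+$ is a product of two martingale transforms, which gives (2).

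The two filtrations do not interlace. $M^+_{2n-1}=A_n\bar{\otimes}B_{n+1}$ is not contained in $M^-_{2n}=A_{n+1}\bar{\otimes}B_n$, because $B_{n+1}\not\subset B_n$ in general. They are two different staircase paths through the grid $(A_i\bar{\otimes}B_j)_{i,j}$: one enlarges the $A$-factor first, the other the $B$-factor first.

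Set $D^A_i:=E^A_i-E^A_{i-1}$ and $D^B_j:=E^B_j-E^B_{j-1}$. A direct computation gives $D^-_{2m-1}=E^A_m\otimes D^B_m$ and $D^+_{2n}=D^A_{n+1}\otimes E^B_{n+1}$. Hence
\[P_-=\sum_{i\pp j}D^A_i\otimes D^B_j,\qquad P_+=\sum_{2\pp i,\ j\pp i}D^A_i\otimes D^B_j,\qquad P=\sum_{i\pg 2}D^A_i\otimes D^B_i,\]
so $P$ is a diagonal projection.

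The increments of your $(\mathcal{M}_k)$ are $E^A_n\otimes D^B_{n+1}$ and $D^A_{n+1}\otimes E^B_{n+1}=\sum_{j\pp n+1}D^A_{n+1}\otimes D^B_j$. $P$ keeps only the summand $D^A_{n+1}\otimes D^B_{n+1}$ of the latter, so $P$ is not a sum of increments of $(\mathcal{M}_k)$. Your own example already shows the problem: $D^-_{2m}=\Delta^A_m\otimes E^B_m$ is a proper piece of the increment $\Delta^A_m\otimes E^B_{m+1}$, not a sum of increments. The reduction to Theorem \ref{martingale_projection_theorem4} therefore collapses.

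\textbf{Consequences for condition (3).} The $K$-closedness of $(H_1(P),H_2(P))$ goes with Step 1, since your fallback also delegates it to Step 1. This is the genuinely two-parameter ingredient, and it is exactly where $\mathbf{(\star\star)}$, not just $\mathbf{(\star)}$, is used. The paper does not derive it from one-filtration theory; it imports it from \cite[Fact 14]{Moyart2024}.

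For $P^{\bot}$, the identity $P^{\bot}=P_-^{\bot}+P_+^{\bot}$ under $\mathbf{(\star)}$ is correct, but your sketch is not yet an argument. $P_-^{\bot}$ and $P_-P_+^{\bot}$ are unbounded on $L_1(M)$, so splitting $x$ through them destroys the $L_1$-part of a near-optimal decomposition $x=x_0+x_1$. Also, $K$-closedness does not pass to sums of subcouples for free. The paper takes this step from \cite[Lemma 2.13]{Moyart2024}. As written, neither half of condition (3) is established.
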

\end{mdframed}
\begin{proof}
As the composition of two martingale projections, we know that $P$ is $L_p$-bounded with a constant depending only on $p$, for every $1<p<\infty$. It is proved in \cite{Moyart2024}[Lemma 2.13] that, under $\mathbf{(\star)}$, the subcouple $(H_1(P^{\bot}),H_2(P^{\bot}))$ is $K$-closed in $(L_1(M),L_2(M))$ with a universal constant. Finally, it is proved in \cite{Moyart2024}[Fact 14] that, under $\mathbf{(\star\star)}$, the subcouple $(H_1(P),H_2(P))$ is $K$-closed in $(L_1(M),L_2(M))$ with a universal constant. The proof is complete.
\end{proof}

The previous results show that the projection $P$ satisfies the technical conditions. 

\begin{lemm}\label{bimartingale_projection_lemma7}
Let $N$ be a tracial von Neumann algebra. Then the amplified projection $Q:=P\otimes I$ on $L_2(M\bar{\otimes}N)=L_2(M)\otimes L_2(N)$ is the bimartingale projection associated with the data 
\[(M\bar{\otimes}N,(M_n^{-}\bar{\otimes}N)_{n\pg1},(M_n^{+}\bar{\otimes}N)_{n\pg1},I^{-},I^{+}).\]
\end{lemm}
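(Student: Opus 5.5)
The plan is to check the identity $Q=P\otimes I$ directly on elementary tensors and then extend by density. For this, let $\widetilde{E}_n:=E_n^{\pm}\otimes \mathrm{id}_N$ denote the conditional expectation of $M\bar{\otimes}N$ onto $M_n^{\pm}\bar{\otimes}N$. This is the same step as in Lemma \ref{martingale_projection_lemma1}, applied twice.

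\textbf{Step 1: the amplified data form commuting filtrations.} First I will check that $(M_n^{\pm}\bar{\otimes}N)_{n\pg1}$ are filtrations of $M\bar{\otimes}N$. They are increasing because $(M_n^{\pm})_{n\pg1}$ are. Their union is w*-dense because $\cup_n M_n^{\pm}\odot N$ is w*-dense in $M\bar{\otimes}N$. The trace-preserving normal conditional expectations are $E_n^{\pm}\otimes\mathrm{id}_N$; their existence is standard, and on $L_2$ they act as the Hilbertian tensor product $E_n^{\pm}\otimes I$. The two amplified filtrations commute, since
\[(E_m^-\otimes I)(E_n^+\otimes I)=E_m^-E_n^+\otimes I=E_n^+E_m^-\otimes I=(E_n^+\otimes I)(E_m^-\otimes I).\]
Consequently the increment projections of the amplified filtrations are $D_n^{\pm}\otimes I$ on $L_2(M)\otimes L_2(N)$.

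\textbf{Step 2: comparison of the two projections.} Let $Q'$ be the bimartingale projection associated with the amplified data. For $x\in L_2(M)$ and $y\in L_2(N)$ we have
\[Q'(x\otimes y)=\sum_{m\in I^-,n\in I^+}(D_m^-D_n^+\otimes I)(x\otimes y)=\Big(\sum_{m\in I^-,n\in I^+}D_m^-D_n^+x\Big)\otimes y=P(x)\otimes y,\]
with the sums converging in $L_2$. The middle equality uses that $z\mapsto z\otimes y$ is bounded from $L_2(M)$ to $L_2(M\bar{\otimes}N)$, so it commutes with the $L_2$-convergent sum. Both $Q'$ and $P\otimes I$ are bounded on $L_2(M\bar{\otimes}N)$, and the span of elementary tensors is dense there. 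Hence $Q'=P\otimes I$.

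\textbf{Main obstacle.} The only delicate point is Step 1, namely identifying the $L_2$-extension of $E_n^{\pm}\otimes\mathrm{id}_N$ with $E_n^{\pm}\otimes I$. I will verify this on $(L_1\cap L_\infty)(M)\odot(L_1\cap L_\infty)(N)$, which is dense in $L_2(M\bar{\otimes}N)$, using trace preservation and the $L_2$-contractivity of both operators.
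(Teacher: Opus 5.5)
Your proposal is correct. The paper states this lemma, like Lemma \ref{martingale_projection_lemma1}, without proof as a routine verification, and your argument is exactly the verification it leaves implicit: you check that $(M_n^{\pm}\bar{\otimes}N)_{n\pg1}$ are commuting filtrations whose increments on $L_2(M)\otimes L_2(N)$ are $D_n^{\pm}\otimes I$, then compare the two bounded projections on elementary tensors and conclude by density. One point deserves to be made explicit: the double sum defining $P$ converges unconditionally in $L_2(M)$ because the $D_m^{-}D_n^{+}$ are mutually orthogonal projections (the filtrations commute), and this is what justifies passing the bounded map $z\mapsto z\otimes y$ through the sum.
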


\begin{mdframed}[skipabove=10pt]
\begin{theo}\label{bimartingale_projection_theorem8}
Assume $\mathbf{(\star\star)}$ holds. Then the bimartingale projection $P$ satisfies Pisier's method assumptions.
\end{theo}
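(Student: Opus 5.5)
We must check the two conditions in Pisier's method assumptions for every semifinite von Neumann algebra $N$. The first is that $Q:=P\otimes I$ is of Jones-type. The second is that $H_\infty(P^{\bot})\odot L_\infty(N)\subset H_\infty(Q^{\bot})$. We follow the martingale case (Theorem \ref{martingale_projection_theorem5}).

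\textbf{Step 1: $Q$ is of Jones-type.} By Lemma \ref{bimartingale_projection_lemma7}, $Q$ is the bimartingale projection associated with the data
\[(M\bar{\otimes}N,(M_n^{-}\bar{\otimes}N)_{n\pg1},(M_n^{+}\bar{\otimes}N)_{n\pg1},I^{-},I^{+}).\]
We check that these amplified data again satisfy $\mathbf{(\star\star)}$. The index sets $I^\pm$ are unchanged. Write $M\bar{\otimes}N=A\bar{\otimes}(B\bar{\otimes}N)$, and use the filtrations $(A_n)_{n\pg1}$ and $(B_n\bar{\otimes}N)_{n\pg1}$. Then, for instance,
\[M_{2n-1}^{-}\bar{\otimes}N=A_n\bar{\otimes}(B_n\bar{\otimes}N),\]
and the other three identities hold in the same way. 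Theorem \ref{bimartingale_projection_theorem7} then shows that $Q$ is of Jones-type, with constants independent of $N$ since they are universal there.

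One technical point is that $(B_n\bar{\otimes}N)_{n\pg1}$ must be a filtration in the sense of the paper. Its union is w*-dense in $B\bar{\otimes}N$ because $\cup_n B_n$ is w*-dense in $B$. The conditional expectations are $E_n^B\otimes I$, which are trace-preserving for the semifinite trace on $B\bar\otimes N$. Lemma \ref{bimartingale_projection_lemma7} is stated for tracial $N$; we read this as semifinite, or else note that its proof only uses $D_m^\pm\otimes I$ on elementary tensors.

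\textbf{Step 2: the tensor inclusion.} Let $x\in H_\infty(P^{\bot})$ and $f\in L_\infty(N)$. By Theorem \ref{bimartingale_projection_theorem1} applied to $P^{\bot}$, there is a description $H_\infty(P^{\bot})=M\cap H(P^{\bot})$. This uses $E=L_1$ and $E^\times=L_\infty$, where the order-continuity hypothesis holds. However $P^{\bot}$ is not itself a bimartingale projection, so $H(P^\bot)$ must be taken from the technical assumptions (2), via the proposition stating that $P^\bot$ satisfies them.

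To avoid identifying $H(P^\bot)$ explicitly, we argue by duality instead. By the technical assumptions (1) for $Q$, $H_\infty(Q^{\bot})$ is the annihilator of $H_1(Q)$ in $L_\infty(M\bar{\otimes}N)$. By (3), it suffices to test against the dense set $(L_1\cap L_\infty)(M\bar\otimes N)\cap Q(L_2)$. We show
\[(\tau\otimes\sigma)\big((x\otimes f)w^*\big)=0\quad\text{for every } w\in(L_1\cap L_\infty)(M\bar\otimes N)\cap Q(L_2).\]
Slice $w$ by $g\in L_1(N)$, setting $w_g:=(I\otimes\sigma(\cdot\, g))(w)\in (L_1\cap L_\infty)(M)$. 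Since the $D_m^\pm\otimes I$ commute with slice maps, $w_g\in H(P)$, so $w_g\in H_1(P)\cap H_\infty(P)$. Then $\tau(xw_g^*)=0$, because $H_\infty(P^\bot)$ annihilates $H_1(P)$ by (1) for $P$. Taking $g=\bar f$ gives the claim.

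\textbf{Main obstacle.} The delicate part is Step 2: justifying that the slice map commutes with $D_m^{\pm}\otimes I$ on $L_1$, and that $Q(L_2)\cap (L_1\cap L_\infty)$ is exactly cut out by the $D^\pm\otimes I$ conditions. Both follow from Theorem \ref{bimartingale_projection_theorem1} for $Q$ together with the normality of $E_n^\pm\otimes I$. Step 1 is a routine bookkeeping check of $\mathbf{(\star\star)}$.
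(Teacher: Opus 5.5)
Your proof is correct. Step 1 follows the paper (Lemma \ref{bimartingale_projection_lemma7} plus Theorem \ref{bimartingale_projection_theorem7}), and it is actually more careful than the paper's. You regroup $M\bar{\otimes}N=A\bar{\otimes}(B\bar{\otimes}N)$ with filtrations $(A_n)_{n\pg1}$ and $(B_n\bar{\otimes}N)_{n\pg1}$, which verifies $\mathbf{(\star\star)}$ for the amplified data. The paper only asserts that the amplified data satisfy $\mathbf{(\star)}$, although the theorem it invokes requires $\mathbf{(\star\star)}$.

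Step 2 starts from the same duality reduction as the paper: show that $x\otimes f$ annihilates $H_1(Q)$. The mechanism is genuinely different. The paper first truncates $w\in H_1(Q)$ with $(E_n^{-}\otimes I)(E_n^{+}\otimes I)$, so that $w$ becomes a finite sum of terms $(D_i^{-}\otimes I)(D_j^{+}\otimes I)(w)$ with $i\in I^{-}$ and $j\in I^{+}$. It then approximates $w$ in $L_1$ by elements of $L_1(M)\odot L_1(N)$ and applies this finite, $L_1$-bounded sum, obtaining elementary tensors whose first factor lies in $H_1(P)$. The truncation is needed because $Q$ itself is not $L_1$-bounded.

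Your slice map $R_h:L_1(M\bar{\otimes}N)\to L_1(M)$, the predual of $y\mapsto y\otimes h$, intertwines $D_m^{\pm}\otimes I$ with $D_m^{\pm}$. This follows from a one-line duality computation using that the $E_n^{\pm}$ are trace preserving. Hence $R_h(w)\in L_1(M)\cap H(P)=H_1(P)$ for every $w\in H_1(Q)$ directly. You need no truncation, no tensor approximation, and not even the density from technical assumption (3). The paper's route uses only elementary tensors; yours is shorter and more conceptual.

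One small slip needs fixing. You slice by $g\in L_1(N)$ and claim $w_g\in(L_1\cap L_\infty)(M)$, which can fail for unbounded $g$. You then take $g=\bar f$ with $f\in L_\infty(N)$ only, which need not lie in $L_1(N)$. Instead, use the $L_1$-level slice by an element of $N$; it lands in $L_1(M)$, which is all you need. For noncommutative $N$ the correct choice is $h=f^*$, since $(\tau\otimes\sigma)\big((x\otimes f)w^*\big)=\tau\big(x\,R_{f^*}(w)^*\big)$.
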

\end{mdframed}
\begin{proof}
Let $N$ be a semifinite von Neumann algebra. As the data $(M,(M_n^{-})_{n\pg1},(M_n^{+})_{n\pg1},I^{-},I^{+})$ satisfies $\mathbf{(\star)}$, we see that this is also the case for the data
\[(M\bar{\otimes}N,(M_n^{-}\bar{\otimes}N)_{n\pg1},(M_n^{+}\bar{\otimes}N)_{n\pg1},I^{-},I^{+}).\]
Thus, by Lemma \ref{bimartingale_projection_lemma7} together with Theorem \ref{bimartingale_projection_theorem7}, we deduce that the amplified projection $Q:=P\otimes I$ on $L_2(M\bar{\otimes}N)$ is of Jones-type. Thus, it only remains to justify that $H_\infty(P^{\bot})\odot L_\infty(N)$ is included in $H_\infty(P^{\bot}\otimes I)$. Let $x\in H_\infty(P^{\bot})$ and $f\in L_\infty(N)$. As $H_\infty(P^{\bot}\otimes I)$ is the polar of $H_1(P\otimes I)$ w.r.t. trace duality, it suffices to show that $\tau((x\otimes f)w)=0$ for every $w\in H_1(P\otimes I)$. Let $w\in H_1(P\otimes I)$. As the sequence $((E_n^{-}\otimes I)(E_n^{+}\otimes I)(w))_{n\pg1}$ converges to $w$ in $L_1(M\bar{\otimes}N)$, and also belongs to $H_1(P\otimes I)$, we can assume that there is $n\pg1$ such that $(E_n^{-}\otimes I)(w)=(E_n^{+}\otimes I)(w)=w$, so that we have
\[w=\sum_{i\in I^{-},j\in I^{+},i,j\pp n}(D_i\otimes I)^{-}(D_j^{+}\otimes I)(w).\]
Besides, as the algebraic tensor product $L_1(M)\odot L_1(N)$ is norm-dense in $L_1(M\bar{\otimes}N)$, there is a net $(w_\alpha)_\alpha$ of $L_1(M)\odot L_1(N)$ that converges to $w$ in $L_1(M\bar{\otimes}N)$. Finally, we set
\[w_\alpha':=\sum_{i\in I^{-},j\in I^{+},i,j\pp n}(D_i\otimes I)^{-}(D_j^{+}\otimes I)(w_\alpha).\]
Then clearly the net $(w_\alpha')_\alpha$ converges to $w$ in $L_1(M\bar{\otimes}N)$. Thus, it suffices to check that $(\tau\otimes\sigma)((x\otimes f)w_\alpha')=0$. But we can write
\[w_\alpha=\sum_\beta y_{\alpha\beta}\otimes g_{\alpha\beta}\]
with $y_{\alpha\beta}\in L_1(M)$ and $g_{\alpha\beta}\in L_1(N)$. Thus, we have
\begin{align*}
w_\alpha'&=\sum_\beta\sum_{i\in I^{-},j\in I^{+},i,j\pp n}(D_i\otimes I)^{-}(D_j^{+}\otimes I)(y_{\alpha\beta}\otimes g_{\alpha\beta})\\
&=\sum_\beta\sum_{i\in I^{-},j\in I^{+},i,j\pp n}D_i^{-}D_j^{+}(y_{\alpha\beta})\otimes g_{\alpha\beta}\\
&=\sum_\beta\Big[\sum_{i\in I^{-},j\in I^{+},i,j\pp n}D_i^{-}D_j^{+}(y_{\alpha\beta})\Big]\otimes g_{\alpha\beta}\\
&=\sum_\beta y_{\alpha\beta}'\otimes g_{\alpha\beta}
\end{align*}
where 
\[y_{\alpha\beta}':=\sum_{i\in I^{-},j\in I^{+},i,j\pp n}D_i^{-}D_j^{+}(y_{\alpha\beta})\]
clearly belongs to $H_1(P)$. Thus, we have $\tau(xy_{\alpha\beta}')=0$ and then
\begin{align*}
(\tau\otimes\sigma)((x\otimes f)w_\alpha')&=\sum_\beta(\tau\otimes\sigma)((x\otimes f)(y_{\alpha\beta}'\otimes g_{\alpha\beta}))\\
&=\sum_\beta(\tau\otimes\sigma)(xy'_{\alpha\beta}\otimes fg_{\alpha\beta})\\
&=\sum_\beta\tau(xy'_{\alpha\beta})\sigma(fg_{\alpha\beta})=0
\end{align*}
as desired.
\end{proof}

\begin{mdframed}[skipabove=10pt]
\begin{coro}\label{bimartingale_projection_corollary1}
Assume $\mathbf{(\star\star)}$ holds. If $1<p<\infty$ and $0<\theta<1$, then
\[[H_p(P),H_1(P)]_\theta=H_{p_\theta}(P)\]
with equivalent norms, with constants depending only on $p,\theta$, where 
\[\frac{1}{p_\theta}=\frac{1-\theta}{p}+\theta.\]
\end{coro}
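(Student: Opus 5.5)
The corollary follows from Theorem \ref{theorem_PisierMethod} once we know that the bimartingale projection $P$ satisfies Pisier's method assumptions under $\mathbf{(\star\star)}$. That fact is exactly Theorem \ref{bimartingale_projection_theorem8}. So the plan is to invoke Theorem \ref{bimartingale_projection_theorem8} and then apply Theorem \ref{theorem_PisierMethod} with the given $1<p<\infty$ and $0<\theta<1$. This yields $[H_p(P),H_1(P)]_\theta=H_{p_\theta}(P)$ with equivalent norms, where the constants depend only on $p$ and $\theta$.

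One point needs checking. The constants in Theorem \ref{theorem_PisierMethod} are controlled by the Jones-type constants of the amplified projection $Q=P\otimes I$, together with the norm $\|Q\|_{L_{p_\theta,1}\to L_{p_\theta,1}}$. These must be independent of the data, so that the final constants depend only on $p$ and $\theta$.

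\begin{itemize}
\item By Lemma \ref{bimartingale_projection_lemma7}, $Q$ is again a bimartingale projection, namely the one attached to the amplified filtrations $(M_n^{\pm}\bar{\otimes}N)_{n\pg1}$.
\item The amplified data still satisfy $\mathbf{(\star\star)}$: take $M\bar{\otimes}N=A\bar{\otimes}(B\bar{\otimes}N)$ with the filtration $(B_n\bar{\otimes}N)_{n\pg1}$.
\item Hence the universal $K$-closedness constants from \cite{Moyart2024} used in Theorem \ref{bimartingale_projection_theorem7} apply uniformly to $Q$.
\item $Q$ is a composition of two martingale projections, each of weak type $(1,1)$ with a universal constant. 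Marcinkiewicz interpolation therefore gives $L_{p_\theta,1}$-boundedness with a constant depending only on $p_\theta$.
\end{itemize}

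The only potential obstacle is this uniformity of constants, and it is already built into the formulation of Pisier's method assumptions. Otherwise the argument is a direct combination of the two theorems.
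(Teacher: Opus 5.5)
Your proposal is correct and follows the paper's route: the corollary is Theorem \ref{theorem_PisierMethod} applied to $P$, with Pisier's method assumptions supplied by Theorem \ref{bimartingale_projection_theorem8}, and the uniformity of constants you check is already built into the definition of a Jones-type projection. Your remark that $\mathbf{(\star\star)}$ itself passes to the amplified data via $M\bar{\otimes}N=A\bar{\otimes}(B\bar{\otimes}N)$ is a worthwhile precision: the paper's proof of Theorem \ref{bimartingale_projection_theorem8} only records that $\mathbf{(\star)}$ is preserved, whereas Theorem \ref{bimartingale_projection_theorem7} requires $\mathbf{(\star\star)}$.
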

\end{mdframed}
\vspace{5pt}

\section{Martingale Hardy spaces}

Let $M$ be a semifinite von Neumann algebra equipped with a filtration $(M_n)_{n\pg1}$ with associated conditional expectations denoted $(E_n)_{n\pg1}$ and associated increment projections denoted $(D_n)_{n\pg1}$. For $1\pp p\pp\infty$, and $\dagger\in\{r,c,rc\}$, let $L_p(M,\ell_2^\dagger)$ denote the associated row/column/mixed $\ell_2$-valued $L_p$-space, as introduced in the seminal paper \cite{PisierXuMartingales}. 

A sequence $(x_n)_{n\pg1}$ of $(L_1+L_\infty)(M)$ is \textit{adapted} if $E_n(x_n)=x_n$ for every $n\pg1$. For $1\pp p\pp\infty$, and $\dagger\in\{r,c,rc\}$, we set 
\[L_p^{\text{\upshape ad}}(M,\ell_2^\dagger):=\big\{x=(x_n)_{n\pg1}\in L_p(M,\ell_2^\dagger)\ :\ (x_n)_{n\pg1}\ \text{\ is\ an\ adapted\ sequence}\big\}.\]
It is clear that they are closed subspaces of $L_p(M,\ell_2^\dagger)$. Using the transference techniques mentionned in \cite{Moyart2024}[Lemma 3.1], the following result is a direct consequence of Corollary \ref{martingale_projection_corollary1}.

\begin{mdframed}[skipabove=10pt]
\begin{theo}\label{Musat_theorem_theorem1}
Let $\dagger\in\{r,c,rc\}$. If $1<p<\infty$ and $0<\theta<1$, then
\[[L_p^{\text{\upshape ad}}(M,\ell_2^\dagger),L_1^{\text{\upshape ad}}(M,\ell_2^\dagger)]_{\theta}=L_{p_\theta}^{\text{\upshape ad}}(M,\ell_2^\dagger)\]
with equivalent norms, with constants depending only on $p,\theta$, where 
\[\frac{1}{p_\theta}=\frac{1-\theta}{p}+\theta.\]
\end{theo}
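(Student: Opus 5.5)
The plan is to transfer Corollary \ref{martingale_projection_corollary1} to the adapted sequence spaces. We realize $L_p^{\text{\upshape ad}}(M,\ell_2^\dagger)$ as a complemented piece of an abstract Hardy space $H_p(P)$, where $P$ is a martingale projection on a larger von Neumann algebra. We treat the column case $\dagger=c$ first. Let $\mathcal{M}:=M\bar{\otimes}B(\ell_2)$ and write $x=(x_n)_{n\pg1}$ as the column matrix $\sum_n x_n\otimes e_{n1}$. Then $L_p(M,\ell_2^c)$ is isometrically identified with $L_p(\mathcal{M})e_{11}$, the range of right multiplication by $1\otimes e_{11}$, and this range is contractively complemented in $L_p(\mathcal{M})$ simultaneously for all $1\pp p\pp\infty$.

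Following \cite{Moyart2024}[Lemma 3.1], we then build a filtration $(\mathcal{M}_k)_{k\pg1}$ of $\mathcal{M}$ and a set $I$ of indices with the following property: for every $p$, an element $y\in L_p(\mathcal{M})e_{11}$ belongs to the martingale Hardy space $H_p(P)$ exactly when its entries form an adapted sequence. A natural candidate is the filtration given by $M_k\bar{\otimes}B(\ell_2^k)$ on the upper-left corner, interlaced with the intermediate algebras in the usual transference construction, with $I$ selecting the increments that encode adaptedness. With this in place, the subcouple $(L_p^{\text{\upshape ad}}(M,\ell_2^c),L_1^{\text{\upshape ad}}(M,\ell_2^c))$ is isometrically identified with $(H_p(P)e_{11},H_1(P)e_{11})$.

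Next we need right multiplication $R:y\mapsto y(1\otimes e_{11})$ to preserve $H_p(P)$ for all $p$, so that $R$ is a contractive compatible projection of $(H_p(P),H_1(P))$ onto the corner subcouple. We expect this to hold because the filtration is built so that the conditional expectations commute with right multiplication by $1\otimes e_{11}$. By the complementation remark in the preliminaries, complex interpolation then commutes with taking the corner:
\[[H_p(P)e_{11},H_1(P)e_{11}]_\theta=[H_p(P),H_1(P)]_\theta\,e_{11}=H_{p_\theta}(P)e_{11},\]
with equivalent norms, where the last equality is Corollary \ref{martingale_projection_corollary1}. This gives the statement for $\dagger=c$.

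The case $\dagger=r$ follows by taking adjoints, which maps column spaces to row spaces and preserves adaptedness, or equivalently by using left multiplication by $e_{11}$. For $\dagger=rc$, one can either apply the same transference to the direct sum $\mathcal{M}\oplus\mathcal{M}$, which accounts for the sum or intersection structure, or, since $L_p(M,\ell_2^{rc})$ is a sum for $p<2$ and an intersection for $p\pg2$, embed it as a complemented subspace of $L_p(\mathcal{M}\oplus\mathcal{M})$. The main obstacle is the exact construction of the filtration in the transference lemma and the verification that the corner projection commutes with the martingale structure for every $p$, including the endpoints. Those details are deferred to \cite{Moyart2024}[Lemma 3.1].
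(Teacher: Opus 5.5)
\textbf{Overall route and the row/column cases.} Your overall route is the paper's. The paper's proof is a single sentence: the statement follows from Corollary~\ref{martingale_projection_corollary1} via the transference of \cite[Lemma~3.1]{Moyart2024}. For $\dagger\in\{r,c\}$ your sketch is sound. Interlace $\mathcal{M}_k$ (an upper-left $k\times k$ block over $M_k$ plus a diagonal tail over $M_k$) with $\mathcal{A}_k$ (the same with a $(k-1)\times(k-1)$ block). Then the increments $\mathcal{E}_{\mathcal{A}_k}-\mathcal{E}_{\mathcal{M}_{k-1}}$ of a column $\sum_n x_n\otimes e_{n1}$ are $\sum_{n<k}D_k(x_n)\otimes e_{n1}$. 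Moreover $1\otimes e_{11}$ lies in every algebra of the filtration, so your corner projection commutes with all the conditional expectations.

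\textbf{The gap: the case $\dagger=rc$.} This is precisely the part the paper flags as new. Working in $\mathcal{M}\oplus\mathcal{M}$ only realizes the direct-sum couple $(L_p^{\mathrm{ad}}(M,\ell_2^r)\oplus L_p^{\mathrm{ad}}(M,\ell_2^c),\ L_1^{\mathrm{ad}}(M,\ell_2^r)\oplus L_1^{\mathrm{ad}}(M,\ell_2^c))$. Its interpolation spaces are just the $r$ and $c$ results side by side.
\begin{itemize}
\item To reach the $rc$-spaces you must pass to the image of the addition map $(a,b)\mapsto a+b$ (a quotient) at exponents below $2$, and to the diagonal $\{(x,x)\}$ (a subspace) at exponents $\geq 2$. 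Neither operation commutes with complex interpolation without a bounded lifting or projection compatible with both endpoints.
\item When $p>2$ the couple is an intersection at one end and a sum at the other, so neither map is even a compatible operator on it.
\item At exponent $1$ the addition map lands in $L_1^{\mathrm{ad}}(M,\ell_2^r)+L_1^{\mathrm{ad}}(M,\ell_2^c)$ (adapted pieces only). This is a priori smaller, and carries a larger norm, than $L_1^{\mathrm{ad}}(M,\ell_2^{rc})$ (adapted elements of $L_1(M,\ell_2^r)+L_1(M,\ell_2^c)$ with arbitrary pieces). Stein's inequality, which would identify them, is unavailable at $p=1$; compare the paper's remark after Musat's theorem contrasting $L_1^{\mathrm{mi}}(M,\ell_2^{rc})$ with the mixed Hardy space.
\end{itemize}
Your second alternative has the same defect. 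Below $2$ the $rc$-space is a quotient, not a complemented subspace, of $L_p(\mathcal{M}\oplus\mathcal{M})$, and you supply no lifting that is bounded at $p=1$ and compatible with the filtration.

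\textbf{What is needed.} You need a realization of $L_q(M,\ell_2^{rc})$, for all $1\leq q<\infty$ at once, as a uniformly complemented subspace of $L_q$ of a single semifinite algebra. The complementing projection must commute with a filtration that encodes adaptedness. One way to supply it is $x\mapsto\sum_n x_n\otimes\lambda(g_n)\in M\bar{\otimes}\mathcal{L}(\mathbb{F}_\infty)$:
\begin{itemize}
\item The free Khintchine inequality gives $\|\sum_n x_n\otimes\lambda(g_n)\|_q\simeq\|x\|_{L_q(M,\ell_2^{rc})}$: a sum for $q<2$ and an intersection for $q\geq2$, including $q=1$.
\item The operator-valued Haagerup inequality shows that the projection onto the closed span of $\{a\otimes\lambda(g_n)\}$ is bounded on $M\bar{\otimes}\mathcal{L}(\mathbb{F}_\infty)$, hence, by duality and interpolation, on every $L_q$.
\item This projection commutes with the conditional expectations of the interlaced filtration $M_k\bar{\otimes}\mathcal{L}(\mathbb{F}_{k-1})\subset M_k\bar{\otimes}\mathcal{L}(\mathbb{F}_k)$. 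The odd-index increments applied to $\sum_n x_n\otimes\lambda(g_n)$ are $\sum_{n<k}D_k(x_n)\otimes\lambda(g_n)$, so these vanish exactly for adapted sequences.
\end{itemize}
Your complementation argument then runs verbatim with $I$ the even indices. Rademacher variables would not work here, since the Rademacher projection is unbounded on $L_1$.
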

\end{mdframed}

\begin{rem}
In the case $\dagger\in\{r,c\}$, the above theorem was first established in \cite{Narcisse2021}[Proposition 3.18], where the result is in fact proved in the broader quasi-normed setting. The case $\dagger=rc$ appears to be new.
\end{rem}

A sequence $(x_n)_{n\pg1}$ of $(L_1+L_\infty)(M)$ is a  \textit{martingale increment} if it is adapted and if $E_{n-1}(x_n)=0$ for every $n\pg2$. For $1\pp p\pp\infty$, and $\dagger\in\{r,c,rc\}$, we set 
\[L_p^{\text{\upshape mi}}(M,\ell_2^\dagger):=\big\{x=(x_n)_{n\pg1}\in L_p(M,\ell_2^\dagger)\ :\ (x_n)_{n\pg1}\ \text{\ is\ a\ martingale\ increment}\big\}.\]
It is clear that they are closed subspaces of $L_p^{\text{ad}}(M,\ell_2^\dagger)$. By using the same transference techniques together with Corollary \ref{bimartingale_projection_corollary1}, we obtain the following result.

\begin{mdframed}[skipabove=10pt]
\begin{theo}\label{Musat_theorem_theorem2}
Let $\dagger\in\{r,c,rc\}$. If $1<p<\infty$ and $0<\theta<1$, then
\[[L_p^{\text{\upshape mi}}(M,\ell_2^\dagger),L_1^{\text{\upshape mi}}(M,\ell_2^\dagger)]_{\theta}=L_{p_\theta}^{\text{\upshape mi}}(M,\ell_2^\dagger)\]
with equivalent norms, with constants depending only on $p,\theta$, where 
\[\frac{1}{p_\theta}=\frac{1-\theta}{p}+\theta.\]
\end{theo}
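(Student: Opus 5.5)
The plan is to realize $L_p^{\text{mi}}(M,\ell_2^\dagger)$, uniformly in $p$, as a complemented subspace of a Hardy space $H_p(P)$ of a bimartingale projection satisfying $\mathbf{(\star\star)}$. Then Corollary \ref{bimartingale_projection_corollary1} can be invoked, just as Theorem \ref{Musat_theorem_theorem1} was deduced from Corollary \ref{martingale_projection_corollary1}.

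\textbf{Step 1: the transference of \cite{Moyart2024}[Lemma 3.1].} For the column case, take $B:=B(\ell_2)$ with the filtration $(B_n)_{n\pg1}$, where $B_n$ consists of matrices supported on the first $n$ rows and columns, completed by scalars on the rest; take $A:=M$ with its filtration $(M_n)_{n\pg1}$. A sequence $(x_n)_{n\pg1}$ is sent to $\sum_n x_n\otimes e_{n1}$. This map is an isometry of $L_p(M,\ell_2^c)$ into $L_p(M\bar{\otimes}B(\ell_2))$ for every $1\pp p\pp\infty$. The row case uses $e_{1n}$. The mixed case $rc$ uses the direct sum, or equivalently $e_{n1}\oplus e_{1n}$ in $L_p(M\bar\otimes B(\ell_2)\oplus M\bar\otimes B(\ell_2))$, with the sum/intersection norm according to $p<2$ or $p\pg2$, as in \cite{Moyart2024}. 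With $\mathbf{(\star\star)}$ and $I^-$ odd, $I^+$ even, one checks on elementary tensors which entries survive. Being adapted means $x_n\otimes e_{n1}$ is $A_n\bar\otimes B_n$-measurable in the first tensor factor. The extra condition $E_{n-1}(x_n)=0$ is what kills the remaining increments in one of the two filtrations. The upshot is that the image of $L_p^{\text{mi}}(M,\ell_2^\dagger)$ equals $\mathcal{J}(L_p(M,\ell_2^\dagger))\cap H_p(P)$.

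\textbf{Step 2: complementation.} The image $\mathcal{J}(L_p(M,\ell_2^\dagger))$ is contractively complemented in the ambient $L_p$, uniformly in $p$. In the column case the projection is $y\mapsto (1\otimes e_{11}\text{-column cut})\,y$, that is, multiplication by matrix units, which is contractive on all $L_p$ for $1\pp p\pp\infty$. This projection commutes with $P$, at least on the relevant ranges. Hence $H_p(P)\cap \mathcal{J}(L_p(\ell_2^\dagger))$ is a contractively complemented subcouple of $(H_p(P),H_1(P))$, complemented by a single compatible projection.

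\textbf{Step 3: interpolation.} Complementation commutes with complex interpolation, by the complementation paragraph in the preliminaries. Combining this with Corollary \ref{bimartingale_projection_corollary1} gives the claimed identification, with constants depending only on $p,\theta$.

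The \textbf{main obstacle} is the commutation in Step 2, together with the identification of the image in Step 1. The matrix-unit cut must commute with $D^\pm_m$, at least when composed on the correct side, and the indexing of $(\star\star)$ must match the martingale-difference condition exactly. The $rc$ case for $p<2$ is a sum space, so there the complementation has to be checked for both summands simultaneously. I would first verify all of this in the column case on finite sums, and then pass to the general case by density, using Theorem \ref{bimartingale_projection_theorem1}.
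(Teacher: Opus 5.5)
For $\dagger\in\{r,c\}$ your route is the paper's: transference to a bimartingale projection satisfying $\mathbf{(\star\star)}$, a contractive compatible projection onto the image, then Corollary \ref{bimartingale_projection_corollary1}. The case $\dagger=rc$, however, has a genuine gap.

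\emph{Why the direct-sum embedding fails.} Set $\mathcal{M}:=M\bar{\otimes}B(\ell_2)$. The map $x\mapsto(\sum_n x_n\otimes e_{n1},\sum_n x_n\otimes e_{1n})$ into $L_p(\mathcal{M}\oplus\mathcal{M})$ produces, up to universal constants, the \emph{intersection} norm $\max\{\|x\|_{L_p(M,\ell_2^r)},\|x\|_{L_p(M,\ell_2^c)}\}$, and it does so for every $p$. The same holds for the arrow embedding $\sum_n x_n\otimes(e_{n1}+e_{1n})$. For $p<2$, though, $L_p(M,\ell_2^{rc})=L_p(M,\ell_2^r)+L_p(M,\ell_2^c)$ carries the sum norm. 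That space is a quotient of the direct sum, not a subspace of it. So ``with the sum norm when $p<2$'' does not define a subspace of $L_p(\mathcal{M}\oplus\mathcal{M})$, and no choice of projection fixes this.

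This is fatal rather than technical. The endpoint $L_1^{\text{\upshape mi}}(M,\ell_2^{rc})$ is of sum type in every couple you must treat, and for $p>2$ the other endpoint is of intersection type. You also cannot derive the $rc$ case from the $r$ and $c$ cases, since complex interpolation does not commute with sums or intersections of couples.

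\emph{What is needed.} You need an embedding that turns the $\ell_2^{rc}$-norm into a genuine $L_p$-norm uniformly for $1\pp p<\infty$, together with a uniformly bounded compatible projection that commutes with the conditional expectations. This is where the transference the paper borrows from \cite{Moyart2024} must do real work. One way is the free Khintchine inequality:
\begin{itemize}
\item let $B$ be generated by a free semicircular family $(s_n)_{n\pg1}$, and $B_n$ by $s_1,\dots,s_n$;
\item send $x$ to $\sum_n x_n\otimes s_n$; then $\|\sum_n x_n\otimes s_n\|_{L_p(M\bar{\otimes}B)}$ is equivalent to $\|x\|_{L_p(M,\ell_2^{rc})}$ with universal constants for $1\pp p<\infty$;
\item the first-chaos projection is bounded on $L_p(M\bar{\otimes}B)$ for $1\pp p\pp\infty$ with a universal constant, and commutes with every $E_m^A\otimes E_n^B$;
\item $E_n^B(\sum_k x_k\otimes s_k)=\sum_{k\pp n}x_k\otimes s_k$.
\end{itemize}
With this, your bookkeeping under $\mathbf{(\star\star)}$ goes through unchanged.

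Two smaller repairs are needed even for $r,c$.
\begin{itemize}
\item Completing $B(\ell_2^n)$ by scalars leaves no trace-preserving conditional expectation, because the trace is not semifinite on the scalar corner. Complete by the diagonal instead: $B_n=B(\ell_2^n)\oplus\ell_\infty(\{n+1,n+2,\dots\})$. Then $1\otimes e_{11}\in A_m\bar{\otimes}B_n$ for all $m,n$. By bimodularity, the cut $y\mapsto y(1\otimes e_{11})$ commutes exactly with every $E_k^{\pm}$, hence with $P$, so your ``main obstacle'' disappears.
\item Since $1\notin I^+$ and $D_1^+=E_1^A\otimes E_2^B$, membership in $H(P)$ forces $E_1(x_1)=0$, hence $x_1=0$ by adaptedness. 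The image is therefore the martingale increments with $x_1=0$. Split off the first coordinate: it is contractively complemented in every $L_p^{\text{\upshape mi}}(M,\ell_2^\dagger)$ and ranges over $L_p(M_1)$, where complex interpolation is classical.
\end{itemize}
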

\end{mdframed}

\begin{rem}
The case $\dagger\in\{r,c\}$ is equivalent to \cite{MusatHardy}[Theorem 3.1]. It can also be direclty deduced from the case $\dagger\in\{r,c\}$ in Theorem \ref{Musat_theorem_theorem1} together with Stein's inequality and Lépingle-Yor inequality for noncommutative martingales respectively proved in \cite{PisierXuMartingales}[Theorem 2.3] and \cite{QiuLepingle}[Theorem 2.1].
\end{rem}

For $1\pp p<\infty$, let $\mathscr{H}_p(M)$ denote the associated mixed martingale Hardy space, defined by
\[\mathscr{H}_p(M)=\left\{\begin{array}{cl}
    L_p^{\text{\upshape mi}}(M,\ell_2^r)+L_p^{\text{\upshape mi}}(M,\ell_2^c)&\text{if}\ 1\pp p<2\\
    L_p^{\text{\upshape mi}}(M,\ell_2^r)\cap L_p^{\text{\upshape mi}}(M,\ell_2^c)&\text{if}\ p\pg2
\end{array}\right.\]
As detailed in \cite{MusatHardy}, the following result is a direct consequence of the case $\dagger\in\{r,c\}$ in Theorem \ref{Musat_theorem_theorem2} together with the noncommutative Burkholder-Gundy inequality proved in \cite{JungeXuBurkholder}.

\begin{mdframed}[skipabove=10pt]
\begin{theo}[Musat]\label{Musat_theorem_theorem3}
Assume that $M$ is finite. If $1<p<\infty$ and $0<\theta<1$, then
\[[\mathscr{H}_p(M),\mathscr{H}_1(M)]_{\theta}=\mathscr{H}_{p_\theta}(M)\]
with equivalent norms, with constants depending only on $p,\theta$, where 
\[\frac{1}{p_\theta}=\frac{1-\theta}{p}+\theta.\]
\end{theo}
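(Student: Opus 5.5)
The plan is to deduce Musat's theorem from the case $\dagger\in\{r,c\}$ of Theorem \ref{Musat_theorem_theorem2}, using the noncommutative Burkholder--Gundy inequalities of \cite{JungeXuBurkholder}. These inequalities say that for $1<p<\infty$ (with $M$ finite) the map $x\mapsto(D_n(x))_{n\pg1}$ is an isomorphism from $L_p(M)$, restricted to martingales, onto $\mathscr{H}_p(M)$, with constants depending only on $p$. The difficulty is that $\mathscr{H}_p(M)$ is a sum of the row and column spaces for $p<2$ and an intersection for $p\pg2$. Since complex interpolation does not commute with sums or intersections in general, I would organise the argument to avoid this issue.

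First, realise $\mathscr{H}_p(M)$ for all $1\pp p<\infty$ as a single quotient. Let $X_p:=L_p^{\text{mi}}(M,\ell_2^r)\oplus L_p^{\text{mi}}(M,\ell_2^c)$ with the $\ell_1$-sum norm, and let $\Sigma:X_p\to\mathscr{H}_p(M)$, $(a,b)\mapsto a+b$. For $p<2$, $\Sigma$ is a coisometric quotient by definition. For $p\pg2$, Burkholder--Gundy and the $L_p$ duality $\mathscr{H}_p=(\mathscr{H}_{q})^*$ with $q<2$ show that the intersection norm is equivalent to the sum norm of $L_p^{\text{mi}}(M,\ell_2^r)+L_p^{\text{mi}}(M,\ell_2^c)$ on $p$-martingales. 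Both are equivalent to $\|x\|_{L_p(M)}$; for $p\pg2$ this is the Junge--Xu identification of $\mathscr H_p$ with $L_p$ via the mixed space. So for every $p\in[1,\infty)$, $\mathscr{H}_p(M)$ is isomorphically the quotient $X_p/\ker\Sigma$. For $p>1$ it is moreover isomorphic to $L_p(M)$, and this is where the finiteness of $M$ enters, since $M$ finite is needed to identify martingales with elements of $L_p(M)$.

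Second, interpolate. By Theorem \ref{Musat_theorem_theorem2} applied to $\dagger=r$ and $\dagger=c$ separately, together with the fact that complex interpolation commutes with direct sums, we have $[X_p,X_1]_\theta=X_{p_\theta}$. Since $\Sigma$ is a compatible bounded operator, exactness of the complex method gives the upper bound: $\Sigma$ maps $[X_p,X_1]_\theta=X_{p_\theta}$ onto $\mathscr{H}_{p_\theta}(M)$. It follows that $\mathscr{H}_{p_\theta}(M)\subset[\mathscr{H}_p(M),\mathscr{H}_1(M)]_\theta$ continuously. Surjectivity uses the first step at exponent $p_\theta$.

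Third, prove the reverse inclusion $[\mathscr{H}_p,\mathscr{H}_1]_\theta\subset\mathscr{H}_{p_\theta}$. Here I would use the embedding into a larger known scale. Composing Burkholder--Gundy with the inclusion $\mathscr{H}_1(M)\to L_1(M)$ (Davis/Burkholder, which needs no lower bound) gives a compatible bounded map $(\mathscr{H}_p,\mathscr{H}_1)\to(L_p(M),L_1(M))$. Hence $[\mathscr{H}_p,\mathscr{H}_1]_\theta\to L_{p_\theta}(M)$ boundedly, by Riesz--Thorin and reiteration, and Burkholder--Gundy at the exponent $p_\theta>1$ returns us to $\mathscr{H}_{p_\theta}(M)$ with constants depending only on $p,\theta$.

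The main obstacle is the first step in the range $p\pg2$, namely justifying that the intersection space coincides isomorphically with the sum space. This is where the full strength of the Junge--Xu inequality is used. It is also why $M$ is assumed finite: only then is $\mathscr{H}_1$ compatible with the $L_p$ scale inside a common space, which the third step requires.
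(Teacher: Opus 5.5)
\textbf{The proof only works for $1<p\le 2$.} Your third step (the inclusion $[\mathscr{H}_p(M),\mathscr{H}_1(M)]_\theta\subset\mathscr{H}_{p_\theta}(M)$ via $\mathscr{H}_1\to L_1$, Riesz--Thorin and Burkholder--Gundy at $p_\theta$) is fine for every $1<p<\infty$. Your second step does give $\mathscr{H}_{p_\theta}(M)\subset[\mathscr{H}_p(M),\mathscr{H}_1(M)]_\theta$ when $1<p\le 2$; at $p=2$ rows and columns coincide isometrically. So in that range your argument is a reasonable rendering of the deduction the paper delegates to Musat.

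\textbf{The gap is your first step for $p>2$.} There the sum $L_p^{\text{mi}}(M,\ell_2^r)+L_p^{\text{mi}}(M,\ell_2^c)$ is \emph{not} equivalent to the intersection, nor to $L_p(M)$. Take $M=L_\infty[0,1]\bar{\otimes}M_n$ with normalized trace and the dyadic filtration tensored with $M_n$. Let $d_k=\varepsilon_k\otimes e_{1k}$, with $(\varepsilon_k)$ Rademacher variables, suitably indexed so that $(d_k)$ is a martingale difference sequence. Then $(\sum_k d_k^*d_k)^{1/2}=1\otimes I_n$ and $(\sum_k d_kd_k^*)^{1/2}=\sqrt{n}\,(1\otimes e_{11})$. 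So the column norm of $(d_k)$ in $L_p$ is $1$, while the row norm is $n^{1/2-1/p}$. For $p>2$ the sum norm therefore stays $\le 1$, while the $\mathscr{H}_p$ norm (the intersection, equivalently $\|\sum_k d_k\|_p$) tends to infinity. Consequently $\Sigma\colon X_p\to\mathscr{H}_p(M)$ is not bounded with a constant depending only on $p$, so it is not the compatible operator $(X_p,X_1)\to(\mathscr{H}_p,\mathscr{H}_1)$ your second step needs.

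The duality you invoke also runs the other way. $\mathscr{H}_q(M)^*$ for $q<2$ is the \emph{intersection} at the conjugate exponent $p$. The sum at $p$ is the dual of the intersection at $q$, which is in general strictly smaller than $L_q(M)$. The direct-sum/quotient trick can only ever produce sums of row and column spaces, never intersections.

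\textbf{How to cover $p>2$.} You need a genuine reiteration step that glues your range $p\le 2$ to the $L_p$-scale.
\begin{itemize}
\item For $1<s,t<\infty$, Burkholder--Gundy makes $x\mapsto (D_n(x))_n$ and summation mutually inverse compatible isomorphisms between $(L_s(M),L_t(M))$ and $(\mathscr{H}_s(M),\mathscr{H}_t(M))$. Riesz--Thorin then gives $[\mathscr{H}_s,\mathscr{H}_t]_\eta=\mathscr{H}_u$.
\item Your argument gives $[\mathscr{H}_r,\mathscr{H}_1]_\eta=\mathscr{H}_u$ for $1<r\le 2$.
\item Apply Wolff's reiteration theorem to $A_1=\mathscr{H}_p$, $A_2=\mathscr{H}_r$, $A_3=\mathscr{H}_s$, $A_4=\mathscr{H}_1$ with $1<s<r\le 2$. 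The first point gives $A_2=[A_1,A_3]_\phi$ and the second gives $A_3=[A_2,A_4]_\psi$. Wolff's theorem then identifies $A_2$ and $A_3$ as complex interpolation spaces of $(A_1,A_4)$, which covers $p_\theta\in(1,2]$.
\item For $p_\theta\in(2,p)$, apply Theorem~\ref{ComplexInterpolation_Reiteration} to the pair $(\mathscr{H}_p,\mathscr{H}_2)$.
\end{itemize}
The ordinary reiteration theorem alone cannot replace the Wolff step. It presupposes that $\mathscr{H}_2$ is already known to be a complex interpolation space of $(\mathscr{H}_p,\mathscr{H}_1)$, which is exactly what is missing.
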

\end{mdframed}

\begin{rem}
It is clear that
\[\mathscr{H}_p(M)=L_p^{\text{\upshape mi}}(M,\ell_2^{rc}),\ \ \ \ \ \text{for}\ p\pg2.\]
Actually, as a consequence of Stein inequalities for noncommutative martingales, we have
\[\mathscr{H}_p(M)=L_p^{\text{\upshape mi}}(M,\ell_2^{rc}),\ \ \ \ \ \text{for}\ p>1.\]
with equivalent norms, with constants depending only on $p$. However, there is no reason for the two spaces $L_1^{\text{\upshape mi}}(M,\ell_2^{rc})$ and $\mathscr{H}_1(M)$ to be equal. Nevertheless, if $x\in\mathscr{H}_1(M)$, then clearly $x\in L_1^{\text{\upshape mi}}(M,\ell_2^{rc})$ with 
\[\|x\|_{L_1^{\text{\upshape mi}}(M,\ell_2^{rc})}\pp\|x\|_{\mathscr{H}_1(M)}.\] 
\end{rem}

\clearpage

\part*{Acknowledgements and References}
\addcontentsline{toc}{section}{Acknowledgements}

\begin{center}
    Acknowledgements
\end{center}

\vspace{4pt}

I am very grateful to my advisor Éric Ricard for many valuable discussions and his guidance throughout the writing of this article.

\bibliographystyle{plain}
\bibliography{BIBLIOGRAPHY} 

\end{document}